\documentclass[11pt,reqno]{amsart}

\usepackage{tikz}
\usetikzlibrary{arrows.meta} 
\usepackage{amsmath,amsthm,amsfonts,amssymb,latexsym,mathrsfs,color,extarrows}
\usepackage{subcaption}

\usepackage[backref=page]{hyperref}

\usepackage{hyperref}  
\hypersetup{
    colorlinks=true,
    urlcolor=black
}
\makeatletter
\def\thm@space@setup{%
  \thm@preskip=8pt \thm@postskip=8pt 
}
\makeatother

\newtheorem{Theorem}{Theorem}[section]
\newtheorem{Corollary}[Theorem]{Corollary}
\newtheorem{Proposition}[Theorem]{Proposition}

\newtheorem{Conjecture}[Theorem]{Conjecture}
\newtheorem{Lemma}[Theorem]{Lemma}

\theoremstyle{definition} 

\newtheorem{Definition}[Theorem]{Definition}
\newtheorem{Example}[Theorem]{Example}

\usepackage{subcaption}
\usetikzlibrary{arrows.meta}

\allowdisplaybreaks

\DeclareMathOperator{\cyc}{cyc}
\DeclareMathOperator{\des}{des}
\DeclareMathOperator{\asc}{asc}
\DeclareMathOperator{\exc}{exc}
\DeclareMathOperator{\drop}{drop}

\DeclareMathOperator{\inv}{inv}

\DeclareMathOperator{\RLmin}{RLmin}
\DeclareMathOperator{\fix}{fix}

\DeclareMathOperator{\maj}{maj}
\DeclareMathOperator{\ides}{ides}
\DeclareMathOperator{\imaj}{imaj}

\DeclareMathOperator{\Gen}{Gen}
\DeclareMathOperator{\Sec}{Sec}

\DeclareMathOperator{\Sin}{Sin}
\DeclareMathOperator{\Cos}{Cos}

\DeclareMathOperator{\KSO}{\bf KSO}
\DeclareMathOperator{\LPO}{\bf LPO}
\DeclareMathOperator{\AIO}{\bf AIO}
\DeclareMathOperator{\DIO}{\bf DIO}

\numberwithin{equation}{section}
\renewcommand{\theequation}{\arabic{section}.\arabic{equation}}

\def\AndI{\mathop{\rm And}\nolimits^{I}}  
\def\AndII{\mathop{\rm And}\nolimits^{I\!I}} 

\title{ $q$-derivative Grammar}

\author{Guo-Niu Han}
\address{I.R.M.A., UMR 7501, Universit\'e de Strasbourg et CNRS, 7 rue
Ren\'e Descartes, F-67084 Strasbourg, France}
\email{guoniu.han@unistra.fr}

\author{Kathy Q. Ji}
\address{ Center for Applied Mathematics and KL-AAGDM,
Tianjin University,
Tianjin 300072, P.R. China
}
\email{kathyji@tju.edu.cn}

\author{Huan Xiong}
\address{
 Institute for Advanced Study in Mathematics, 
   Harbin Institute of Technology,
   Heilongjiang 150001, P.R. China
}
\email{huan.xiong.math@gmail.com}

\makeatletter
\@namedef{subjclassname@2020}{%
	\textup{2020} Mathematics Subject Classification}
\makeatother

\date{2026/06/14}
	\subjclass[2020]{05A05, 05A15, 05A19, 05A30} 
	\keywords{$q$-derivative grammar, context-free grammar,  $q$-binomial inversion,     permutation statistics, Andr\'e permutations}
\begin{document}
\begin{abstract}
Context-free grammars, originating in computer science, are related to enumerative combinatorics through two distinct lines of development pioneered by Sch\"utzenberger and Chen, respectively.
In the framework established by Sch\"utzenberger and Delest–Sch\"utzenberger–Viennot, unambiguous grammars are translated into functional equations for ordinary generating functions. Inspired by Rota’s umbral calculus, Chen later developed a grammatical calculus by associating each context-free grammar with a formal derivative operator.  Dumont further developed this method through numerous combinatorial interpretations of grammars with finite and infinite alphabets.   Substantial progress in this direction has been achieved over the last decade. In this paper, we introduce a $q$-analogue of grammatical calculus, which we call the {\it  $q$-derivative grammar}. We establish the basic framework of $q$-grammars and develop the $q$-grammatical calculus for computing $q$-exponential generating functions associated with $q$-grammars. Concrete $q$-grammars are constructed to study $q$-Eulerian, $q$-Roselle and $q$-Andr\'e polynomials, including their generating functions and recurrences. This work extends the grammatical method to the $q$-setting and opens up new research directions.
\end{abstract}
\maketitle

\section{Introduction}\label{sec:intro}

Context-free grammars, originating from computer science and proposed by Noam Chomsky, serve as the theoretical foundation of programming languages,  see \cite{Chomsky-1956, Chomsky-1959, Hopcroft-Ullman-1969}. 
Sch\"utzenberger~\cite{Schutzenberger-1962,Schutzenberger-1963} observed that grammars provide recursive specifications for combinatorial structures and can therefore be used to study their ordinary generating functions. This   was later developed into the Delest--Sch\"utzenberger--Viennot (DSV) methodology~\cite{Delest1996,Viennot1985}, which translates suitable unambiguous grammars into functional equations for ordinary generating functions.  Moreover, \(q\)-analogues of the DSV methodology, formulated in terms of \(q\)-grammars, were developed by Dubernard~\cite{Dubernard1993} and Duchon~\cite{Duchon1998,Duchon1999}. In 1993, Chen~\cite{Chen-1993} associates a formal derivative with each context-free grammar and extends it by linearity and the Leibniz rule. 
Using the resulting properties of the formal derivative, rigorous grammatical calculus can be performed.   Within this framework, Chen \cite{Chen-1993} derived elegant proofs of Fa\`a di Bruno's formula, along with several identities involving Bell polynomials, Stirling numbers, and symmetric functions. In particular, the Lagrange inversion formula receives a concise grammatical interpretation, from which Cayley's formula for labeled trees follows in a natural way. 
Subsequently, Dumont~\cite{Dumont-1996}  substantially extended Chen's grammatical method by providing  numerous examples of grammars with finite and infinite alphabets together with their combinatorial interpretations. 
Since then, this approach has been further developed and applied to a broad range of enumerative problems.   An overview of these developments is presented in Appendix \ref{sec:survey}.

The objective of this paper is to develop a $q$-analogue of grammatical calculus developed by Chen ~\cite{Chen-1993}, which we refer to as the {\it  $q$-derivative grammar} (or $q$-grammar for short). To the best of our knowledge, no such $q$-analogue has been constructed in the past three decades, apart from a study about the multivariable tangent and secant $q$-derivative polynomials due to Foata and Han \cite{Foata-Han-2012}. 

Constructing a $q$-analogue of combinatorial context-free grammar poses substantial challenges. Beyond introducing the additional parameter $q$ consistently   throughout all computations, this generalization fundamentally converts elementary commutative calculations into highly nontrivial non-commutative operations.  We address this non-commutative obstacle in Section \ref{sec:noncom} using the $q$-product formula for the $q$-derivative~\cite{Foata-Han-2012}.

Having reviewed the non-commutative computation in Section \ref{sec:noncom},  we proceed in Section~\ref{sec:def} to rigorously define the formal $q$-derivative grammar.  Nevertheless, we first confront a foundational notational choice. As claimed by Chen \cite{Chen-1993}, combinatorial context-free grammars borrow terminology from formal language theory, where standard definitions are universally tuple-based, see \cite{Chomsky-1956,hopcroftintroduction, sipser2012introduction}.  In contrast, combinatorial applications traditionally adopt simplified, streamlined definitions. For the rigorous formulation of our  $q$-derivative grammar, we revert to the tuple-based framework, which is better suited to our algebraic and combinatorial requirements. We next address the natural combinatorial questions motivated by  $q$-derivative grammars. For a fixed $q$-grammar, key problems include seeking concrete combinatorial interpretations, enumerating the number of terms, and deriving associated generating functions. We systematically investigate these problems and supply illustrative examples in Section~\ref{sec:def}.

In Section \ref{secqgram}, we develop a unified computational tool, called $q$-grammatical calculus, to compute the $q$-exponential generating functions associated with the corresponding $q$-grammars. As illustrations of this method, we provide grammatical derivations of the $q$-binomial inversion formula and the $q$-Hoffman formula. We then construct specialized  $q$-grammars to produce $q$-analogs of the Eulerian polynomials, the Roselle polynomials, and two $q$-analogs of the Andr\'e polynomials in Sections \ref{sec:qgrammars} and \ref{sec:qAndre}, respectively. Based on these $q$-grammars, we obtain grammatical derivations of the $q$-exponential generating functions for the $q$-Eulerian polynomials (due to Stanley) and the cycle $q$-Roselle polynomials. Recurrence relations for the two $q$-Andr\'e polynomials are also derived using the $q$-grammatical calculus developed in 
Section~\ref{secqgram}.

Lastly,  we believe that most of the topics discussed in Appendix \ref{sec:survey} within the framework of context-free grammars can be extended to the $q$-analogue by employing  $q$-derivative grammars. Consequently, this line of research offers many promising directions for further investigation.

\section{\texorpdfstring{$q$-derivative}{q-derivative}}\label{sec:noncom}

Let $\mathbb{K}$ be a commutative ring with unity and characteristic zero.
For $f(u) \in \mathbb{K}[[u]]$, the {\it $q$-derivative operator} \cite[p.~22]{GR90} used in this paper is defined as
\begin{equation}\label{defi:qderiv}
D_qf(u):=\frac{f(u)-f(qu)}{u},
\end{equation}
which differs from the conventional form:
\[
\frac{f(qu) - f(u)}{(q-1)u}.
\]
For $n\geq 1$, we recursively define $D^n_q(u)=D_q(D^{n-1}_q(u))$.

For $f(u), g(u) \in \mathbb{K}[[u]]$, we have the product rule
\[
D_q(f(u)g(u)) = D_q(f(u))g(uq) + f(u)D_q(g(u)),
\]
In general, for $f_i(u)\in \mathbb{K}[[u]]$ where $1\leq i\leq n$, the following result holds.
\begin{Proposition} \cite[(4.2)]{Foata-Han-2012}\label{th:qLeibniz}
We have
\begin{align}\label{eq:qprod}
&D_q \left(\prod_{1 \le i \le n} f_i(u)\right) \nonumber\\
&\quad = \sum_{1 \le i \le n}
 f_1(u) \cdots f_{i-1}(u) \cdot D_q(f_i(u)) \cdot
 f_{i+1}(qu) \cdots f_n(qu).
\end{align}
\end{Proposition}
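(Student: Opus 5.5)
The plan is to prove \eqref{eq:qprod} by induction on $n$, using only the two-factor product rule stated just before Proposition~\ref{th:qLeibniz},
\[
D_q(f(u)g(u)) = D_q(f(u))\,g(qu) + f(u)\,D_q(g(u)).
\]
Note, however, that the order of factors written there does not match the formula to be proved. Directly from the definition \eqref{defi:qderiv},
\[
\frac{f(u)g(u)-f(qu)g(qu)}{u} = \frac{f(u)g(u)-f(u)g(qu)}{u}+\frac{f(u)g(qu)-f(qu)g(qu)}{u},
\]
which gives $D_q(fg)=f(u)\,D_q(g(u)) + D_q(f(u))\,g(qu)$. This is the same identity as before, since the ring is commutative. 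So the base cases $n=1$ (trivial) and $n=2$ follow, and $n=2$ is exactly \eqref{eq:qprod}: the $i=1$ term is $D_q(f_1)\,f_2(qu)$ and the $i=2$ term is $f_1(u)\,D_q(f_2)$.

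For the inductive step, write $\prod_{i\le n} f_i = F(u)\,f_n(u)$ with $F=\prod_{i\le n-1} f_i$. The two-factor rule gives
\[
D_q(F f_n) = D_q(F(u))\,f_n(qu) + F(u)\,D_q(f_n(u)).
\]
The second term is the $i=n$ summand of \eqref{eq:qprod}, since $f_1(u)\cdots f_{n-1}(u)\,D_q(f_n(u))$ has no shifted factors after it. For the first term, apply the induction hypothesis to $D_q(F)$. This yields $\sum_{i\le n-1} f_1(u)\cdots f_{i-1}(u)\,D_q(f_i(u))\,f_{i+1}(qu)\cdots f_{n-1}(qu)$, and multiplying by $f_n(qu)$ extends each tail of shifted factors to $f_{i+1}(qu)\cdots f_n(qu)$. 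These are exactly the summands $i=1,\dots,n-1$, which completes the induction.

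There is no real obstacle. The only point requiring care is the bookkeeping of which factors get evaluated at $qu$: the factors after the differentiated one are shifted, and the factors before it are not. This asymmetry is why the result is recorded as a separate proposition. Alternatively, one can prove it in one stroke by a telescoping sum: define $P_k = f_1(u)\cdots f_k(u)\,f_{k+1}(qu)\cdots f_n(qu)$, so that $P_n-P_0 = \sum_{k=1}^n (P_k-P_{k-1})$. Each difference $P_k-P_{k-1}$ equals $f_1(u)\cdots f_{k-1}(u)\,(f_k(u)-f_k(qu))\,f_{k+1}(qu)\cdots f_n(qu)$. Dividing by $u$ gives \eqref{eq:qprod} directly.
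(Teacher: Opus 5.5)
Your argument is correct and matches the paper's route. The paper gives no proof of its own: it states the two-factor rule and cites Foata--Han~(4.2), and your induction from that rule is exactly the argument implied there. The telescoping sum over $P_k$ is an equally valid one-stroke alternative.

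One small remark: there is no mismatch in factor order to worry about. The two-factor rule as the paper writes it, $D_q(f)\,g(qu)+f\,D_q(g)$, is already literally the $n=2$ case of \eqref{eq:qprod}.
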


A straightforward computation yields the following useful identity:
\begin{equation}\label{eq:Dq:qm}
D_q(f(q^m u)) = q^m \cdot D_q(f(x))\big|_{x=q^m u}.
\end{equation}

The $q$-analogues of the exponential function $e^x$, first introduced by Euler~\cite{Eu48}, are given by (see \cite[p.~9]{GR90}):
\begin{align}
 e_q(u)
 &= \sum_{n\ge 0} \frac{u^n}{(q;q)_n}
  = \frac{1}{(u;q)_\infty}, \label{defi:expfor}\\
 E_q(u)
 &= \sum_{n\ge 0} q^{n \choose 2}
    \frac{u^n}{(q;q)_n}
  = (-u;q)_\infty, \label{defi:Expfor}
\end{align}
where the $q$-shifted factorial is defined by
\[
\begin{aligned}
(u;q)_n &:=
  \begin{cases}
    1, & \text{if } n = 0, \\
    (1-u)(1-uq) \cdots (1-uq^{n-1}), & \text{if } n \ge 1,
  \end{cases} \\
(u;q)_\infty &:= \lim_{n \to \infty} (u;q)_n
  = \prod_{n \ge 0} (1 - uq^n).
\end{aligned}
\]

They both serve to define the $q$-trigonometric functions
\cite{Ja04}:
\begin{align*}
\sin_q(u)
  &:= \frac{e_q(iu) - e_q(-iu)}{2i},   \quad
\cos_q(u)
  := \frac{e_q(iu) + e_q(-iu)}{2}, \\
\Sin_q(u)
  &:= \frac{E_q(iu) - E_q(-iu)}{2i}, \quad 
\Cos_q(u)
  := \frac{E_q(iu) + E_q(-iu)}{2}, \\
\sec_q(u) &:= \frac{1}{\cos_q(u)}, \quad
\Sec_q(u) := \frac{1}{\Cos_q(u)},\quad
\tan_q(u) := \frac{\sin_q(u)}{\cos_q(u)}.
\end{align*}

The following $q$-derivative formulas for the $q$-trigonometric functions are given by Foata and Han \cite{Foata-Han-2012}: 
\begin{Proposition}[\cite{Foata-Han-2012}]\label{th:Dtanq} We have 
\[
\begin{aligned}
D_q(\tan_q(u)) &= 1 + \tan_q(u)\tan_q(qu), \\[4pt]
D_q(\sec_q(u)) &= \sec_q(qu)\,\tan_q(u), \\[4pt]
D_q(\Sec_q(u)) &= \Sec_q(u)\tan_q(qu).
\end{aligned}
\]
\end{Proposition}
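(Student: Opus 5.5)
The plan is to reduce all three identities to the $q$-derivatives of the two $q$-exponentials and then use a quotient rule that follows directly from \eqref{defi:qderiv}. First I will use the product forms \eqref{defi:expfor} and \eqref{defi:Expfor}. Since $e_q(qu)=(1-u)e_q(u)$ and $E_q(u)=(1+u)E_q(qu)$, I get
\[
D_q e_q(u)=e_q(u),\qquad D_q E_q(u)=E_q(qu).
\]
The rule $D_q(f(cu))=c\,(D_qf)(cu)$ holds for any constant $c$, and is proved exactly like \eqref{eq:Dq:qm}. Taking $c=\pm i$ and forming the combinations that define the $q$-trigonometric functions gives
\[
D_q\sin_q(u)=\cos_q(u),\quad D_q\cos_q(u)=-\sin_q(u),\quad
D_q\Sin_q(u)=\Cos_q(qu),\quad D_q\Cos_q(u)=-\Sin_q(qu).
\]

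Next I need two algebraic identities coming from $e_q(u)E_q(-u)=1$, which is immediate from the infinite products. Put $a=e_q(iu)$, $b=e_q(-iu)$, $c=E_q(iu)$, $d=E_q(-iu)$, so that $ad=bc=1$. Expanding the definitions gives:
\begin{itemize}
\item $\sin_q(u)\Cos_q(u)-\cos_q(u)\Sin_q(u)=(ad-bc)/(2i)=0$, hence $\tan_q(u)=\Sin_q(u)/\Cos_q(u)$;
\item $\cos_q(u)\Cos_q(u)+\sin_q(u)\Sin_q(u)=(ad+bc)/2=1$, which I keep as a consistency check and may not need.
\end{itemize}
Directly from \eqref{defi:qderiv} I also get the reciprocal rule
\[
D_q\bigl(1/g(u)\bigr)=-\frac{D_q g(u)}{g(u)\,g(qu)}.
\]

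Now the three formulas follow one by one.
\begin{itemize}
\item For $\sec_q$: the reciprocal rule gives $D_q\sec_q(u)=\sin_q(u)/(\cos_q(u)\cos_q(qu))=\tan_q(u)\sec_q(qu)$.
\item For $\Sec_q$: it gives $D_q\Sec_q(u)=\Sin_q(qu)/(\Cos_q(u)\Cos_q(qu))=\Sec_q(u)\,\Sin_q(qu)/\Cos_q(qu)$. The last factor is $\tan_q(qu)$ by the identity $\tan_q=\Sin_q/\Cos_q$ evaluated at $qu$.
\item For $\tan_q$: I write $\tan_q(u)=\sec_q(u)\sin_q(u)$, in that order, and apply the product rule $D_q(fg)=D_q f(u)\,g(qu)+f(u)\,D_q g(u)$. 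This gives
\[
D_q\tan_q(u)=\tan_q(u)\sec_q(qu)\sin_q(qu)+\sec_q(u)\cos_q(u)=\tan_q(u)\tan_q(qu)+1.
\]
\end{itemize}

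The main obstacle is the non-commutativity of the product rule. With the other ordering, $\sin_q\cdot\sec_q$, one would need $\cos_q^2+\sin_q^2=1$, which fails in the $q$-setting. So I must choose the factor order in the $\tan_q$ computation carefully. Similarly, the $\Sec_q$ formula needs the mixed identity $\tan_q=\Sin_q/\Cos_q$, which is the one genuinely non-obvious input.
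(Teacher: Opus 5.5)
Your proof is correct. The paper gives no proof of this proposition: it is quoted from Foata and Han and used only to motivate the rules of $G_{\tan}$, $G_{\sec}$ and $G_{\Sec}$. So there is no in-paper argument to compare against, and your derivation is a complete, self-contained verification. Its ingredients are the $q$-derivatives of $e_q$, $E_q$ and of the four $q$-trigonometric functions, the identity $\tan_q=\Sin_q/\Cos_q$ obtained from $e_q(u)E_q(-u)=1$, and the reciprocal and product rules read off from \eqref{defi:qderiv}.

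One correction to your closing remark, which does not affect the proof: the factor order in the $\tan_q$ computation is a convenience, not a necessity. With $\tan_q=\sin_q\cdot\sec_q$ you get $D_q\tan_q(u)=\sec_q(u)\sec_q(qu)\bigl(\cos_q(u)^2+\sin_q(u)^2\bigr)$. Matching this with $1+\tan_q(u)\tan_q(qu)$ does not require $\cos_q^2+\sin_q^2=1$. It requires $\cos_q(u)^2+\sin_q(u)^2=\cos_q(u)\cos_q(qu)+\sin_q(u)\sin_q(qu)$, which is true: both sides equal $e_q(iu)e_q(-iu)$, because $e_q(\pm iqu)=(1\mp iu)e_q(\pm iu)$.

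The two orderings must agree, since $\sin_q\sec_q=\sec_q\sin_q$ as power series. The product rule is asymmetric only in its form; it is not genuinely non-commutative. The quickest route is the quotient rule $D_q(f/g)=\bigl(D_qf(u)\,g(qu)-f(qu)\,D_qg(u)\bigr)/\bigl(g(u)g(qu)\bigr)$ with $f=\sin_q$, $g=\cos_q$, which gives $1+\tan_q(u)\tan_q(qu)$ in one line.
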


In the $q$-product formula \eqref{eq:qprod},  although the left-hand side of the $q$-product formula is symmetric in the functions $f_i$, this symmetry is not explicit in the right-hand side, which depends on the ordering of the functions. We illustrate this subtlety with an example. From the identity
$$D_q(\tan_q(u)) = 1 + \tan_q(u)\tan_q(qu) = 1 + \tan_q(qu)\tan_q(u),$$
there are several ways to compute $D_q^2\tan_q(u)$ via the $q$-product formula. One such method is the following:
\smallskip
\begin{align*}
D_q^2\tan_q(u)
  &= D_q \bigl(1 + \tan_q(u)\tan_q(qu)\bigr) \\
  &= \tan_q(u)\,D_q(\tan_q(qu)) + D_q(\tan_q(u))\,\tan_q(q^2u) \\
  &= \tan_q(u)\,q\bigl(1 + \tan_q(qu)\tan_q(q^2u)\bigr) \\
  & \qquad + \bigl(1 + \tan_q(u)\tan_q(qu)\bigr)\tan_q(q^2u) \\
  &= q\tan_q(u) + \tan_q(q^2u) + (1+q)\tan_q(u)\tan_q(qu)\tan_q(q^2u).
\end{align*}
To simplify notation, we introduce the shorthand
$$
x_j := \tan_q(q^j u).
$$
Using this substitution, the first identity in Proposition~\ref{th:Dtanq} combined with \eqref{eq:Dq:qm} becomes
\begin{equation}\label{eq:Dtan:subs}
D_q(x_j) = q^j \bigl(1+ x_j x_{j+1}\bigr).
\end{equation}
The first computation can then be rewritten compactly as
\begin{align*}
D_q^2 (x_0)
  &= D_q \bigl(1 + x_0 x_1\bigr) \\
  &= x_0 D_q(x_1) + D_q(x_0) x_2 \\
  &= x_0 q\bigl(1 + x_1 x_2 \bigr)  + \bigl(1 + x_0 x_1 \bigr) x_2  \\
  &= q x_0  + x_2  + (1+q)  x_0 x_1 x_2.
\end{align*}

The second natural derivation gives
\begin{align*}
D_q^2 (x_0)
  &= D_q\bigl(1 + x_1 x_0 \bigr) \\
  &= x_1 D_q(x_0) + D_q(x_1) x_1 \\
  &= x_1\bigl(1 + x_0 x_1 \bigr)
     + q\bigl(1 + x_1 x_2 \bigr) x_1 \\
  &= (1+q) x_1  +  x_0 x_1^2  + q x_1^2 x_2.
\end{align*}

Notably, both derivations depend only on the identity \eqref{eq:Dtan:subs} and the $q$-product formula, without using any special properties of the function $\tan_q$ itself. This observation motivates the definition of our  $q$-derivative grammar, which will be presented in Section~\ref{sec:def}.

\section{\texorpdfstring{Foundations of $q$-derivative grammar}{Foundations of formal $q$-derivative grammar}}\label{sec:def}

As noted by Chen \cite{Chen-1993}, the terminology of context-free grammars used in Combinatorics is inspired by that of context-free grammars in formal language theory. In formal language theory, one consistently uses {\it tuple-based definitions}, see \cite{Chomsky-1956,hopcroftintroduction, sipser2012introduction}.
In contrast, in Combinatorics, one usually prefers a much simpler way of stating definitions. 

{For our definition of a  $q$-derivative grammar, we return to the tuple-based framework, which is more convenient for our purposes.
We therefore begin by introducing some fundamental concepts that will serve as preparatory material for our tuple-based definition of the  $q$-derivative grammar.}

\subsection{Free group and group algebra} 

Free groups and group algebras are two fundamental concepts in algebra, playing important roles in group theory and representation theory.
The following definitions are well-known and can be found in \cite{ johnson1980topics, serre1977linear}.

\begin{Definition}[Free Group]\label{def: free_group}
    Let $S$ be a set. Consider the alphabet $S \cup S^{-1}$, where $S^{-1} = \{ s^{-1} \mid s \in S \}$ is a set of formal inverses. A \textit{word} is a finite sequence of elements from $S \cup S^{-1}$. A word is called \textit{reduced} if it contains no adjacent pair of the form $s s^{-1}$ or $s^{-1} s$ for any $s \in S$. The \textit{free group} on $S$, denoted $F(S)$, consists of all reduced words. The group operation is concatenation followed by reduction (removing adjacent inverse pairs). The empty word is the identity element, and the inverse of a word $x_1 x_2 \cdots x_n$ is $x_n^{-1} \cdots x_2^{-1} x_1^{-1}$.
\end{Definition}

\begin{Definition}\label{def: group_algebra}
    Let $\mathbb{K}$ be a commutative ring with unity and characteristic zero and $G$ be a group.
    The \textit{group algebra} $\mathbb{K}[G]$ is the set of all formal finite sums
    \[
    \sum_{g \in G} \alpha_g \, g, \quad \alpha_g \in \mathbb{K},
    \]
    where only finitely many coefficients $\alpha_g$ are non-zero. Addition is defined component-wise:
    \[
    \sum_{g} \alpha_g g + \sum_{g} \beta_g g = \sum_{g} (\alpha_g + \beta_g) g.
    \]
    Multiplication is defined by extending the group multiplication linearly:
    \[
    \left( \sum_{g \in G} \alpha_g g \right) \left( \sum_{h \in G} \beta_h h \right)
    = \sum_{g,h \in G} (\alpha_g \beta_h) (gh).
    \]
    This makes $\mathbb{K}[G]$ an associative $\mathbb{K}$-algebra with identity $1_{\mathbb{K}} \cdot e_G$, where $e_G$ is the identity element of $G$.
\end{Definition}

\subsection{Basic algebraic structures}
Let $q$ be an indeterminate, let $\mathbb{K}$ be a commutative ring with unity and characteristic zero, and let $S$ be a (finite or infinite) set of symbols, called \emph{master variables}.  
To each master variable $s \in S$ we associate an infinite family of indexed non-commutative variables
\[
s_0, s_1, s_2, \ldots
\]
The total set of variables is denoted by $\mathbb{S} := S^{\mathbb{N}}:=\{ s_i: s\in S, ~~i\in \mathbb{N}\}$, where $\mathbb{N} = \{0,1,2,\ldots\}$.

The fundamental algebraic object in the formal $q$-grammar framework is the \emph{group algebra $\mathbb{K}[q][F(\mathbb{S})]$ on $F(\mathbb{S})$}, denoted by $\mathbb{E}$. Therefore, an element of $\mathbb{E}$, called an \emph{expression}, is a (finite) $\mathbb{K}[q]$-linear combination of words in the variables from the free group $F(\mathbb{S})$, that is,
\[
E = \sum_{w \in F(\mathbb{S})} a_w\, w,
\]
where each coefficient $a_w$ lies in $\mathbb{K}[q]$. The sum is finite in the sense that $a_w = 0$ for all but finitely many words $w \in F(\mathbb{S})$. The scalar $a_w$ is referred to as the \emph{coefficient} of the word $w$ in the expression $E$.

\subsection{Rules}
A {\it rule} $R\colon \mathbb{S}\cup \mathbb{S}^{-1}  \rightarrow \mathbb{E}$ is a map that assigns to each variable in $\mathbb{S}\cup \mathbb{S}^{-1}$   an expression in $\mathbb{E}$. More precisely, the rule is first defined on the set of variables in $\mathbb{S}$,  where it is typically written as 
$$
\{ s_0 \mapsto R(s_0), \
s_1 \mapsto R(s_1), \ \ldots
\}.
$$
The definition is then extended to the set of formal inverses $s_i^{-1}$  by setting, for each  $s_i\in \mathbb{S}$,
$$R(s_i^{-1}):=-s_i^{-1}R(s_i)s_{i+1}^{-1}.$$

For instance, as will be discussed later, the assignment
$$
R = \{x_j \mapsto q^j(1 + x_j x_{j+1})\}
$$
constitutes a rule.
The purpose of a rule is to transform a given expression into a new expression. 

\subsection{Orders}
An {\it order} is a map that rewrites a word in $F(\mathbb{S})$ by permuting the non-commutative variables in $\mathbb{S}$. This order is extended to arbitrary expressions in $\mathbb{E}:=\mathbb{K}[q][F(\mathbb{S})]$ by linearity. In this paper, the following four orders are frequently employed.

\begin{Definition}
For a set of master variables $S=\{x,y,\ldots\}$, we define the following special orders based on priority rules.
\begin{itemize}
    \item \textbf{$\KSO$ (Keep Sequence Order)}:
    This is the identity map, which leaves the word unchanged.

    \item \textbf{$\LPO$ {\rm(}Letter-Priority Order{\rm)}}:
    This rewrites the word by prioritizing all $x$ variables before all $y$ variables, preserving their internal index order, i.e., reordering variables associated with  the order
    $$
    x_0, x_1, x_2, \ldots,\quad y_0, y_1, y_2, \ldots
    $$

    \item \textbf{$\AIO$ {\rm(}Ascending Interleaving Order{\rm)}}:
    This rewrites the word by interleaving $x$ and $y$ variables in ascending index order, i.e., reordering variables associated with  the order
    $$
    x_0, y_0, x_1, y_1, x_2, y_2, \ldots
    $$

    \item \textbf{$\DIO$ {\rm(}Descending Interleaving Order{\rm)}}:
    This rewrites the word by interleaving $x$ and $y$ variables in descending index order, i.e., reordering variables associated with  the order
    $$
    \ldots, x_2, y_2, x_1, y_1, x_0, y_0.
    $$
\end{itemize}
\end{Definition}

For example, take $S=\{x,y\}$ and let $E=x_2 y_2^{-1} x_1 x_3^2 y_3 + (1+q) y_1^2 x_1^{-1} x_2 \in \mathbb{E}$. Then
\begin{align*}
{\KSO}  (E) &= x_2 y_2^{-1} x_1 x_3^2 y_3 + (1+q) y_1^2 x_1^{-1} x_2 = E, \\
{\LPO}  (E) &= x_1 x_2 x_3^2 y_2^{-1} y_3 + (1+q)  x_1^{-1} x_2 y_1^2, \\
{\AIO}  (E) &= x_1  x_2 y_2^{-1} x_ 3^{2}y_3 + (1+q) x_1^{-1} y_1^2 x_2, \\
{\DIO}  (E) &= x_3^{2}y_3 x_2 y_2^{-1} x_1   + (1+q)  x_2  x_1^{-1} y_1^2. 
\end{align*}
\subsection{\texorpdfstring{Definition of  $q$-derivative grammar}{Definition of formal q-derivative grammar}}
We now introduce the main definition of this work.
\begin{Definition}
A formal {\it $q$-derivative grammar} {\rm(}or {\it $q$-grammar} for short{\rm )} $G$ over a commutative ring $\mathbb{K}$   with unity and characteristic zero is a triple
    $$G = (S, R, \rho),$$
    where
    \begin{itemize}
        \item $S$ denotes a finite or infinite set of {\it master variables},
        \item $R$ is a rule,
        \item $\rho$ is an order.
    \end{itemize}
\end{Definition}

For instance, the following triples define two formal \(q\)-derivative grammars:
\begin{equation}\label{def:Gtan}
G_{\tan} = \bigl(S = \{x\},\; R = \{x_j \rightarrow q^j (1 + x_j x_{j+1})\},\; \rho = \DIO \bigr),
\end{equation}
and
\begin{equation}\label{def:Gtan2}
G_{\tan'} = \bigl(S = \{x\},\; R = \{x_j \rightarrow q^j (1 + x_j x_{j+1})\},\; \rho = \LPO \bigr).
\end{equation}

\subsection{\texorpdfstring{$q$-derivative operator $D$}{q-derivative operator D}}
The goal of our framework for  $q$-derivative grammars is to perform certain computations. We begin by defining the up-arrow operator.
\begin{Definition}
The {\it up-arrow operator} $\uparrow: \mathbb{E} \rightarrow \mathbb{E}$ is a linear operator that acts on words over $F(\mathbb{S})$ by increasing the index of each variable in $\mathbb{S}\cup \mathbb{S}^{-1}$ by~$1$. In general, for a positive integer $k$, $\uparrow^k$ denotes the operator that increases the index of each variable in $\mathbb{S}\cup \mathbb{S}^{-1}$ by $k$.
\end{Definition}

For example, if $S=\{x,y\}$, then
\[
\uparrow\!\bigl(y_2^{-1}x_0x_0y_1\bigr) = y_3^{-1}x_1x_1y_2
\quad\text{and}\quad
\uparrow^3\!\bigl(y_2^{-1}x_0x_0y_1\bigr) = y_5^{-1}x_3x_3y_4.
\]

\begin{Definition}\label{defi:q-derivative}
	Let $G=(S, R, \rho)$ be a $q$-derivative grammar. The \emph{$q$-derivative operator} $D\colon \mathbb{E} \rightarrow \mathbb{E}$ associated with $G$ is the linear operator defined by
	$$
	D(w_1w_2\cdots w_n) :=  \sum_{j=1}^n \rho \Bigl( w_1w_2\cdots w_{j-1}\, R(w_j)\, \uparrow\! \bigl(w_{j+1}\cdots w_n\bigr) \Bigr),
	$$
	for each word $w_1w_2\cdots w_n$, where each $w_j \in \mathbb{S}\cup \mathbb{S}^{-1}$.
\end{Definition}

Note that the expression $\uparrow\! w_{j+1}$ is not to be interpreted as $w_{j+2}$. For instance, if $w_{j+1} = s_3$, then
$\uparrow\! w_{j+1} = \uparrow\! s_3 = s_4$.

Note that {$q$-derivative operator} is well-defined, since 
$$
D(x_ix_i^{-1})
=\rho\left(R(x_i)\cdot x_{i+1}^{-1}  -x_i\cdot x_i^{-1}R(x_i)x_{i+1}^{-1})\right)=0
$$
and
$$
D(x_i^{-1}x_i)
=\rho\left(-x_i^{-1}R(x_i)x_{i+1}^{-1}\cdot x_{i+1}  +x_i^{-1}\cdot R(x_i)\right)=0.
$$

\begin{Definition} 
	Let $G=( S, R, \rho)$
	be a  $q$-grammar.
    The high order $q$-derivative operator $D^k$ associated with $G$ is defined by 
	$ D^0(f) = f$,	and for $k\geq 1$, $ D^k(f) = D(D^{k-1}(f))$ for $f \in  \mathbb{E}$.
\end{Definition}

For example, take the grammar $G_{\tan}$ defined in  \eqref{def:Gtan}, we have
\begin{align}\label{eq:D2tan}
D(x_0) &= \DIO(1+x_0 x_1) = 1+x_1 x_0, \nonumber\\
D^2(x_0) &= \DIO(R(x_1) x_1+x_1 R(x_0)) \nonumber\\
 &= \DIO(q(1+x_1x_2) x_1+x_1  (1+x_0x_1)) \nonumber\\
 &=  (1+q)x_1 +   x_1 x_ 1x_0 + qx_2 x_1x_1. 
\end{align}
For a second example, we take the grammar $G_{\tan'}$ defined in  \eqref{def:Gtan2}. We have
\begin{align}\label{eq:D2tan2}
D(x_0) &= \LPO(1+x_0 x_1) = 1+x_0 x_1, \nonumber \\
D^2(x_0) &= \LPO(R(x_0) x_2+x_0 R(x_1)) \nonumber \\
 &= \LPO((1+x_0x_1) x_2+x_0 q (1+x_1x_2)) \nonumber \\
 &=  qx_0 + x_2+ (1+q) x_0x_1 x_2. 
\end{align}

Let $G$ be a $q$-grammar and let $f$ be an expression in $\mathbb{E}$. The high order $q$-derivative associated with $G$ can be written in the following form:
\begin{equation*}
D^k(f) =  \sum_{w \in F(\mathbb{S})} a_w\, w.
\end{equation*}
In general, the computation of higher-order \(q\)-derivatives associated with the \(q\)-grammars is lengthy 
and technically involved.
To facilitate these computations, we provide a {\tt SageMath} package, {\tt qgrammar.sage}, which is accessible on the first author's personal webpage at
$$\hbox{\tt https://irma.math.unistra.fr/\char126guoniu/qgrammar.html}.$$
On this webpage, we also compile the initial higher-order $q$-derivatives associated with the $q$-grammars investigated in this paper. 

\subsection{\texorpdfstring{Evaluation on $q$-grammars}{Evaluation on q-grammars}}
Let $G=(S, R, \rho)$ be a $q$-grammar,
and
let $\mathbb{V}$ be a commutative ring.  
Recall that $\mathbb{S}$ is the set of all non-commutative variables and $\mathbb{E}$ is the free $\mathbb{K}[q]$-algebra generated by $\mathbb{S}$.
An {\it evaluation} $\phi$ on a $q$-grammar
	is a map $\phi \colon \mathbb{S} \rightarrow \mathbb{V}$ that sends each non-commutative variable to a commutative element of $\mathbb{V}$.
Such an evaluation is typically expressed in the form:
$$
\{ x_0 \mapsto \phi(x_0), \
x_1 \mapsto \phi(x_1), \ \ldots
\}.
$$
The evaluation  map $\phi$ can be uniquely extended to a morphism  $\phi \colon \mathbb{E}  \rightarrow \mathbb{V}$ by  preserving linearity, multiplicativity and  invertibility (i.e., $\phi(x_0^{-1}):=\phi(x_0)^{-1}$).

For instance, take $\mathbb{V}=\mathbb{Z}[[u,q]]$, the assignment
$$
\phi = \{x_j \mapsto \tan_q(q^j u)\}
$$
constitutes an evaluation. Applying this evaluation to the expression \eqref{eq:D2tan2} for   the grammar $G_{\tan'}$,  we get 
$$
\phi(D^2(x_0))  
= q \tan_q(u) + \tan_q(q^2u) + (1+q) \tan_q(u)\tan_q(qu) \tan_q(q^2u) \in 
\mathbb{Z}[[u,q]].
$$

\subsection{\texorpdfstring{Research problems on $q$-grammars} {Research problems on q-grammars}}\label{sec:3.8}
Let $G=( S, R, \rho)$ be a $q$-grammar and let $f$ be an expression in $\mathbb{E}$. The high order $q$-derivative associated with $G$ on $f$ can be written in the following form:
\begin{equation}\label{Dk:gen}
D^k(f) =  \sum_{w \in F(\mathbb{S})} a_w\, w.
\end{equation}
The $q$-grammar framework gives rise to several interesting combinatorial problems, which we formulate below:
\begin{itemize}
\item \textit{Terms}: Characterize the words $w \in F(\mathbb{S})$ appearing in \eqref{Dk:gen} for which $a_w \neq 0$. These words are called \textit{terms} of the $q$-derivative.
    \item \textit{Number of terms}: Determine the cardinality of the set of terms.

\item \textit{Coefficients}: Obtain explicit formulas for the coefficients $a_w$.

\item \textit{Combinatorial model}: Provide a combinatorial interpretation for the coefficients $a_w$.

\item \textit{Generating functions}: 
\begin{enumerate}
    \item Determine an appropriate evaluation map $\phi$ such that $\phi(D^k(f))$ admits a simple closed form;
    \item Develop a general method, which we refer to as \emph{$q$-grammatical calculus}, to compute the   $q$-exponential generating functions for $\phi(D^k(f))$. 
\end{enumerate} 
\end{itemize}

\subsection{A canonical example}
Take the grammar $G_{\tan\cup \sec}$ defined in  \eqref{def:Gtan}:   
\begin{equation} \label{defi:gramartansec}
G_{\tan\cup \sec} = \bigl(\{x, y\},\; \{x_j \rightarrow q^j (1 + x_j x_{j+1}),   \; y_j\rightarrow q^j x_j y_{j+1}\},\; \DIO \bigr).
\end{equation}

We have

\begin{align*}
D^0(x_0) &= x_0, \\
D^1(x_0) &= 1+x_1 x_0, \\
D^2(x_0) &= (1+q)x_1 +   x_1^2 x_0 + qx_2 x_1^2, \\
D^3(x_0) &=
(q+ q^2)
+(1+ q )  x_1^2 
+ x_1^3 x_0 
+( q^2 + q^3 )  x_2^2 \\
&\quad +(q+ q^2  )  x_2^2  x_1^2 
+ q   x_2 x_1^3 
+( 2q^2 + 2q )  x_2 x_1
+ q^2   x_2^3 x_1
+ q^3   x_3 x_2^3, \\[5pt]
D^0(y_0) &= y_0 \\ 
D^1(y_0) &= y_1 x_0 \\ 
D^2(y_0) &= q y_2 x_1^2+ y_1+ x_1 y_1 x_0 \\ 
D^3(y_0) &= q^3 y_3 x_2^3 + q^2\, x_2 y_2+ q^2\, x_2^2 y_2 x_1 + (q^2 + 2q)\, y_2 x_1 \\
&\quad + (q^2 + q)\, x_2 y_2 x_1^2  + q\, y_2 x_1^3 +  x_1 y_1 + x_1^2y_1 x_0.  
\end{align*}

Let $D$ be the formal $q$-derivative associated with $G_{\tan\cup \sec}$ given by \eqref{defi:gramartansec} and define 
\begin{equation}\label{eq:canonicalexp1}
D^{\,n} (x_0)=\sum_{w \in F(\mathbb{S})} a_w\, w.
\end{equation}
We have the following two consequences concerning the terms and the number of terms in \eqref{eq:canonicalexp1}. However, no explicit closed-form expression for the coefficients $a_w$ has yet been established.

\begin{Proposition}\label{lem: number terms02}
  For $n \geq 3$, every term in \eqref{eq:canonicalexp1} takes exactly one of the following three forms:
\begin{enumerate}
    \item[(i)] $x_n x_{n-1}^{\,n}$;
    \item[(ii)] $x_1^{\,n} x_{0}$;
    \item[(iii)] $x_{j+1}^{\,a} x_{j}^{\,b}$, 
    where  $a,b \leq n$, $1 \leq j \leq n-2$, 
    $a+b \leq n+1$, and $a+b+n+1$ is even. 
\end{enumerate}
\end{Proposition}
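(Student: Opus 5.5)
The plan is to prove, by induction on $n$, a slightly stronger and more structural statement about the words occurring in $D^n(x_0)$ for the grammar $G_{\tan\cup\sec}$. First I record two easy facts. (a) Since $R(x_j)=q^j(1+x_jx_{j+1})$ has coefficients in $\mathbb{N}[q]$, and no inverse letters are ever created, every $D^n(x_0)$ is an $\mathbb{N}[q]$-combination of positive words. Hence no cancellation occurs, and the set of terms of $D^{n}(x_0)$ is exactly the set of words obtained from terms of $D^{n-1}(x_0)$ by one application of the rule at one position, followed by $\DIO$. (b) Each such application replaces one letter by either $1$ or two letters. Thus the degree changes by $\pm1$, so every term of $D^n(x_0)$ has degree $\le n+1$ and degree $\equiv n+1 \pmod 2$. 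This gives the conditions $a+b\le n+1$ and $a+b+n+1$ even.

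The core of the induction is closure of the shape $x_{j+1}^a x_j^b$ (written in $\DIO$ order) under one step. Applying $D$ at the $k$-th of the $x_{j+1}$'s produces
\[
x_{j+1}^{k-1}\cdot\{1 \text{ or } x_{j+1}x_{j+2}\}\cdot x_{j+2}^{a-k}x_{j+1}^{b},
\]
whose indices lie in $\{j+1,j+2\}$. Applying it at the $k$-th $x_j$ gives
\[
x_{j+1}^a x_j^{k-1}\cdot\{1 \text{ or } x_jx_{j+1}\}\cdot x_{j+1}^{b-k},
\]
whose indices lie in $\{j,j+1\}$. After $\DIO$ reordering, every term therefore has the form $x_{j+1}^a x_j^b$, possibly with $a=0$ or $b=0$.

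Three additional invariants handle the boundary cases.

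1. \emph{Index $0$.} A letter $x_0$ survives only if no letter before it is ever differentiated, and the only letters before it are $x_1$'s. The sole way to keep $x_0$ is therefore to apply the nonconstant branch to the last letter $x_0$. Inductively, the only term containing $x_0$ is $x_1^nx_0$, which is form (ii).

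2. \emph{Maximal index.} I claim the only term containing $x_n$ is $x_nx_{n-1}^n$, and that no index exceeds $n$. Indices rise by at most $1$ per step, so a term of $D^n(x_0)$ containing $x_n$ must come from a term of $D^{n-1}(x_0)$ containing $x_{n-1}$. By induction that term is $x_{n-1}x_{n-2}^{n-1}$. Among its one-step descendants, only the nonconstant branch at the first letter raises the maximal index, and it yields $x_nx_{n-1}^n$. This is form (i). Consequently, every other term has $1\le j\le n-2$.

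3. \emph{No pure top-degree powers.} Every term of degree $n+1$ contains both indices, that is, $a,b\ge1$. A degree-$(n+1)$ term arises only from a degree-$n$ term via a nonconstant branch, and the two computations above show that the lower-index block persists. So $x_k^{n+1}$ never occurs. Combined with $a+b\le n+1$, this gives $a,b\le n$.

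The base case $n=3$ is read off from the displayed $D^3(x_0)$. I expect the main obstacle to be the bookkeeping in invariant 2. One must check carefully, through the $\DIO$ reordering, that differentiating any letter other than the leading $x_{n-1}$ cannot push an index up to $n$. One must also make sure the induction hypothesis is stated strongly enough to cover the small-$n$ cases where the ranges of forms (i)–(iii) overlap.
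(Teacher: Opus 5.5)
Your argument for the inclusion ``every term of $D^n(x_0)$ has form (i), (ii) or (iii)'' is correct. It is a more careful version of what the paper calls a straightforward case analysis. The no-cancellation remark (all coefficients lie in $\mathbb{N}[q]$, so the support of $D^n(x_0)$ is exactly the set of one-step descendants of the support of $D^{n-1}(x_0)$), the $\pm1$ change of degree, the closure of two-consecutive-index supports and your three invariants together settle it. Exclusivity is then immediate: (i) contains $x_n$, (ii) contains $x_0$, and (iii) uses only indices in $\{1,\dots,n-1\}$. Invariant 3 is simpler than you make it: the nonconstant branch inserts the unshifted pair $x_mx_{m+1}$, so every term it produces already has two distinct indices.

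The gap is the reverse inclusion. Read literally, the sentence only asks for the inclusion above. The paper, however, proves the proposition as a characterization. The second half of its proof (``Conversely, \dots'') shows that every word of forms (i)--(iii) actually occurs in $D^n(x_0)$. Proposition~\ref{lem: number terms03} then computes $\Omega(D^n(x_0))$ by counting all admissible $(j,a,b)$, which needs exactly this converse.

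Your invariants 1 and 2 exhibit $x_1^nx_0$ and $x_nx_{n-1}^n$ as descendants, but you give no argument that every word of form (iii) occurs. By your fact (a), this reduces to exhibiting, for each target word, a parent term of $D^{n-1}(x_0)$ together with a position and branch producing it. The delicate case is $j=n-2$, because by your invariant 2 the only term of $D^{n-1}(x_0)$ containing $x_{n-1}$ is $x_{n-1}x_{n-2}^{n-1}$.

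The paper's own subcases (1) and (3) overlook this. For $n=4$, the word $x_3^3x_2^2$ is a term, but its proposed parent $x_3^2x_2^2$ is not a term of $D^3(x_0)$. The word arises instead from $x_3x_2^3$, via the nonconstant branch at the second $x_2$.

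One choice of parents that works for $n\ge4$:
\begin{itemize}
\item $x_{j+1}^ax_j^{n+1-a}$ with $1\le a\le n-1$ comes from $x_j^{n-1}x_{j-1}$, by the nonconstant branch at the $(n-a)$-th $x_j$. This parent is form (ii) at level $n-1$ if $j=1$, and form (iii) otherwise.
\item $x_{j+1}^nx_j$ comes from $x_{j+1}^{n-1}x_j$ if $j\le n-3$, and from $x_{n-1}x_{n-2}^{n-1}$ if $j=n-2$, in both cases by the nonconstant branch at the first $x_j$.
\item A word $x_{j+1}^ax_j^b$ with $a+b\le n-1$ comes by the constant branch:
\begin{itemize}
\item from $x_{j+1}^ax_j^{b+1}$ if $j\le n-3$ and $b\le n-2$;
\item from $x_j^{n-1}x_{j-1}$ if the word is $x_j^{n-1}$;
\item if $j=n-2$ and $a\ge1$, from $x_{n-1}x_{n-2}^{n-1}$ when $a+b=n-1$, and from the pure power $x_{n-2}^{a+b+1}$ when $a+b\le n-3$.
\end{itemize}
A word with $j=n-2$ and $a=0$ is rewritten with $j=n-3$.
\end{itemize}
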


\begin{proof}
We prove by induction on $n$.
For $n=3$, a direct computation verifies that the statement holds.
Assume the proposition holds for $n-1$ (with $n-1 \geq 3$). That is, every term in $D^{\,n-1}(x_0)$ is of one of the forms (i)–(iii) with $n$ replaced by $n-1$. Applying the operator $D$ to such a term and using the grammar rule $D(x_j) = q^j (1 + x_j x_{j+1})$, we obtain new terms. By a straightforward case analysis, one can verify that each resulting term again fits one of the forms (i)–(iii) for the index $n$.

Conversely, to show that every term of the forms (i)–(iii) for index $n$ indeed appears in $D^{\,n} (x_0)$, we construct a preimage in $D^{\,n-1} (x_0)$ for each case.

\begin{itemize}
    \item \textbf{Case (i):} $x_n x_{n-1}^{\,n}$.  
    Consider the term $x_{n-1} x_{n-2}^{\,n-1}$ in $D^{\,n-1} (x_0)$. Applying $D$ yields that $D\bigl(x_{n-1} x_{n-2}^{\,n-1}\bigr)$ contains
    \[
     D(x_{n-1})\, x_{n-1}^{\,n-1}
    = q^{\,n-1} (1 + x_n x_{n-1})\, x_{n-1}^{\,n-1}
    = q^{\,n-1} x_{n-1}^{\,n-1} + q^{\,n-1} x_n x_{n-1}^{\,n}.
    \]
    The second term (up to the scalar factor $q^{\,n-1}$) is exactly $x_n x_{n-1}^{\,n}$.

    \item \textbf{Case (ii):} $x_1^{\,n} x_{0}$.  
    Take the term $x_1^{\,n-1} x_{0}$ in $D^{\,n-1} (x_0)$. Then we obtain that $D\bigl(x_1^{\,n-1} x_{0}\bigr)$ contains
    \[
     x_1^{\,n-1} D(x_{0})
    = x_1^{\,n-1} (1 + x_1 x_{0})
    = x_1^{\,n-1} + x_1^{\,n} x_{0},
    \]
    where the second summand gives the desired term.

    \item \textbf{Case (iii):} $x_{j+1}^{\,a} x_{j}^{\,b}$ with $1 \leq j \leq n-2$, $a+b \leq n+1$, $2 \mid (a+b+n+1)$, and $a,b \leq n$.  
    We split into subcases according to the parity and size conditions. In each subcase, we exhibit a suitable term in $D^{\,n-1} (x_0)$ whose $D$-image contains $x_{j+1}^{\,a} x_{j}^{\,b}$.
    \begin{enumerate}
        \item If $a+b = n+1$ and $a \ge 2$, then $x_{j+1}^{\,a} x_{j}^{\,b}$ appears as a term in $D\bigl(x_{j+1}^{\,a-1} x_{j}^{\,b}\bigr)$.
        \item If $a+b = n+1$ and $a = 1$, then $x_{j+1}^{\,1} x_{j}^{\,b}$ can be obtained from $D\bigl(x_{j}^{\,n-1} x_{j-1}\bigr)$ (note that $b = n$ in this situation).
        \item If $a+b < n+1$ and $b \neq n-1$, then $x_{j+1}^{\,a} x_{j}^{\,b}$ is a term in $D\bigl(x_{j+1}^{\,a} x_{j}^{\,b+1}\bigr)$.
        \item If $a+b < n+1$ and $b = n-1$, then $x_{j+1}^{\,a} x_{j}^{\,n-1}$ appears in $D\bigl(x_{j} x_{j-1}^{\,b}\bigr)$ (with $j$ replaced appropriately).
    \end{enumerate}
    In each subcase, the chosen preimage lies in $D^{\,n-1} (x_0)$ by the induction hypothesis, and a direct application of $D$ verifies the claim.
\end{itemize}

The combination of the forward and backward arguments completes the induction, establishing the proposition.
\end{proof}

The following proposition determines the number of terms in \eqref{eq:canonicalexp1}. Here and in the sequel, we define $\Omega(D^k(f))$ to denote the total number of terms of the expression in $D^k(f)$, where the term-counting operator $\Omega\colon \mathbb{E} \to \mathbb{N}$ returns the cardinality of the term set of a given polynomial. 
For the examples presented at the beginning of this subsection, we have 
$\Omega(D^{2}(x_{0})) = 3$
and
$ \Omega(D^{3}(x_{0})) = 9$.
The general expressions are stated in the following proposition.

\begin{Proposition}\label{lem: number terms03} Let $D$ be the formal $q$-derivative associated with $G_{\tan\cup \sec}$ given by \eqref{defi:gramartansec}. For $n \ge 3$,
\[
\Omega(D^n(x_0))= 
\begin{cases}
2k^3 + 5k^2 +2, & \text{if } n = 2k+1,\\[4pt]  
(k+2)(2k^2-2k+1),        & \text{if } n = 2k.
\end{cases}
\]
\end{Proposition}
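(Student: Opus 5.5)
The plan is to read off $\Omega(D^n(x_0))$ directly from Proposition~\ref{lem: number terms02}. That result describes the set of terms of $D^n(x_0)$ exactly: the forward direction shows every term has one of the forms (i)--(iii), and the backward direction shows every such word occurs. So the count reduces to counting the \emph{distinct words} described by (i)--(iii). I read (iii) as allowing $a,b\ge 0$. This reading is forced by the examples: $D^3(x_0)$ contains the constant term $(q+q^2)$ and the terms $x_1^2$ and $x_2^2$. The only subtlety is that different parameters $(j,a,b)$ can give the same word when $a=0$ or $b=0$, so I will split the words of type (iii) into three disjoint classes and count each class without repetition.

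\textbf{Class A, the empty word.} This is $a=b=0$. It occurs iff $n+1$ is even, i.e.\ iff $n$ is odd, and it contributes $1$ in that case.

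\textbf{Class B, pure powers.} These are the words $x_i^m$ with $m\ge1$. They arise with $i=j+1$ and $b=0$, or with $i=j$ and $a=0$. As $j$ ranges over $1\le j\le n-2$, the index $i$ ranges over $1,\dots,n-1$. The admissible exponents are $1\le m\le n$ with $m\equiv n+1 \pmod 2$; the bound $m\le n+1$ is superseded by $m\le n$. This gives $(n-1)\cdot\lfloor n/2\rfloor$ words.

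\textbf{Class C, genuinely mixed words.} These are $x_{j+1}^a x_j^b$ with $a,b\ge1$, and such a word determines $j$, $a$ and $b$ uniquely. For each $j$ I count the pairs $(a,b)$ with $1\le a,b\le n$, $a+b\le n+1$ and $a+b\equiv n+1 \pmod 2$. Setting $s=a+b$, the value $s$ ranges over $2\le s\le n+1$ with the prescribed parity, and each $s$ contributes $s-1$ pairs; the constraint $a,b\le n$ is automatic because $s\le n+1$. The class total is therefore $(n-2)\sum_{s}(s-1)$. In addition there are the two words of forms (i) and (ii). These are distinct from all of the above, because in (i) the exponent of $x_n$ is $1$ with $j=n-1$ out of range, and in (ii) the index $0$ is out of range.

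It remains to evaluate the totals by parity. For $n=2k+1$, $s$ is even, $s=2t$ with $1\le t\le k+1$, so $\sum_s(s-1)=(k+1)^2$. The total is
\[
2+1+2k\cdot k+(2k-1)(k+1)^2=2k^3+5k^2+2.
\]
For $n=2k$, $s$ is odd, $s=2t+1$ with $1\le t\le k$, so $\sum_s(s-1)=k(k+1)$. The total is
\[
2+(2k-1)k+(2k-2)k(k+1),
\]
which I expect to simplify to $(k+2)(2k^2-2k+1)$. As a sanity check, $n=3$ gives $9$, matching $D^3(x_0)$.

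The main obstacle is not the arithmetic but the bookkeeping. One point is making sure that the zero-exponent cases in (iii) are intended, as the example shows. The other is ensuring that the boundary cases of the class counts are correct: that every $i\in\{1,\dots,n-1\}$ really arises in class B (using $i=j+1$ up to $n-1$ and $i=j$ down to $1$), and that no word of class B coincides with (i) or (ii). I would check these carefully against the explicit small cases, and, if needed, against the computer data for $n=4,5$, before finalizing the two closed forms.
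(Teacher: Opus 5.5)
Your proposal is correct and follows essentially the paper's route: both read the term set off Proposition~\ref{lem: number terms02}, with $a,b\ge 0$ allowed in (iii), and count distinct words. The paper groups by $a+b=2d$, and for $1\le d\le k$ its count $2d(2k-1)+1$ is exactly your $(2d-1)(2k-1)$ mixed words plus $2k$ pure powers, whereas you group by word shape. Your deferred even-case simplification does check out, since $2+(2k-1)k+(2k-2)k(k+1)$ and $(k+2)(2k^2-2k+1)$ both equal $2k^3+2k^2-3k+2$, and this fills in the case the paper leaves out.
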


\begin{proof}
We treat the two parity cases separately.

\noindent\textbf{Case 1: $n = 2k+1$.}  
From Proposition~\ref{lem: number terms02}, we have:
\begin{itemize}
    \item One term of type (i): $x_n x_{n-1}^{\,n}$.
    \item One term of type (ii): $x_1^{\,n} x_0$.
    \item Type (iii) contributions: For  $1\leq j \leq  n-2 = 2k-1$,  the pairs $(a,b)$ satisfies $a+b \le n+1 = 2k+2$, $a,b \le n = 2k+1$, and $2 \mid (a+b+n+1) = a+b+2k+2$. The parity condition simplifies to $a+b$ being even. Let $a+b=2d$. Then $0\leq d\leq k+1$.
    A direct enumeration yields that the number of such terms equals  $2d(2k-1)+1$ when  $0\leq d\leq k$; and equals  $(2k-1)(2k+1)$ when  $d= k+1$. Then the total number of such terms equals
    \[
    (2k-1)(2k+1)+\sum_{d=0}^{k} (2d(2k-1)+1) =
     k+1 + (2k-1)(k^2+3k+1).
    \] 
\end{itemize}

    Adding the two special terms yields the total
    \[
    \Omega(D^{2k+1}(x_0))= k+3 + (2k-1)(k^2+3k+1).
    \]

\noindent\textbf{Case 2: $n = 2k$.}  
A similar counting argument, using the same lemma, gives
\[
\Omega(D^{2k}(x_0)) = (k+2)(2k^2-2k+1).
\]
The detailed enumeration is analogous to the odd case and is therefore omitted.
\end{proof}

Concerning the combinatorial interpretation for the coefficient $a_w$ in \eqref{eq:canonicalexp1}, we recast Theorem 1.1 of \cite{Foata-Han-2012} within the framework of our $q$-grammar terminology. In \cite{Foata-Han-2012}, the authors introduce the notion of {\it $t$-permutations} and define two associated statistics, denoted ``$\ides$" and ``$\imaj$". For the sake of preserving the continuity of our exposition, we do not reproduce these definitions here.
For each triple $(k,a,b)$, let $\mathcal{T}_{n,k,a,b}$ denote the set of all $t$-permutations $w$ for which $\ides w = k$, subject to a number of additional structural constraints and let   $\mathcal{T}^{-}_{n,k,a,b+1}$ denote the set of all $t$-permutations $w=(w_0,w_1,\ldots, w_m, w_{m+1})$  in $\mathcal{T}_{n,k,a,b}$ such that $w_{m+1}=\epsilon$. The precise formulation of these constraints is given in \cite[Subsections 1.3--1.5]{Foata-Han-2012}.

\begin{Theorem}\cite[Theorem 1.1]{Foata-Han-2012} \label{thm:Foata-Han}
Let
$$
A_{n,k,a,b}(q)=\sum_{w\in \mathcal{T}_{n,k,a,b}}
q^{\imaj w}
$$
and 
$$
B_{n,k,a,b}(q)=\sum_{w\in \mathcal{T}^{-1}_{n,k,a,b+1}}
q^{\imaj w}.
$$
Then
$$
D^n(x_0)=\sum_{k,a,b}A_{n,k,a,b}(q)
x_{k+1}^b\,x_k^a,
$$
and 
$$
D^n(y_0)=\sum_{k,a,b}B_{n,k,a,b}(q)\,y_{k+1}\,
x_{k+1}^b\,x_k^a,
$$
where  $0\le k\le n-1$ and $0\le a+b\le n+1$.
\end{Theorem}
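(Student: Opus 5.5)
The theorem is a restatement, in $q$-grammar language, of Theorem~1.1 of \cite{Foata-Han-2012}. The plan is to show that the formal derivative of $G_{\tan\cup\sec}$ reproduces exactly the recurrences that Foata and Han use for their multivariable $q$-derivative polynomials. Their combinatorial theorem then transfers verbatim.

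The first step is to show that, under the order $\DIO$, every term of $D^n(x_0)$ has the form $x_{k+1}^b x_k^a$, and every term of $D^n(y_0)$ has the form $y_{k+1}x_{k+1}^b x_k^a$. This is an induction on $n$. Applying $D$ to such a word, the rule $x_j\mapsto q^j(1+x_jx_{j+1})$ is applied to one letter and all letters to its right are shifted up by one. In a $\DIO$-ordered word $x_{k+1}^b x_k^a$, differentiating a letter inside the block $x_{k+1}^b$ shifts the remaining $x_{k+1}$'s to $x_{k+2}$ and the $x_k$'s to $x_{k+1}$. After reordering, the result again involves only two consecutive indices. Differentiating inside the $x_k^a$ block likewise produces only indices $k$ and $k+1$. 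For $y$ words the letter $y_{k+1}$ sits first, which keeps the shape stable. The degree bound $a+b\le n+1$ follows because each application of $D$ changes the degree by $\pm1$.

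The second step is to write the coefficient recurrence explicitly. Differentiating the $i$-th letter of $x_{k+1}^b x_k^a$ yields a factor $q^{k+1}$ (or $q^k$) times either a deletion of that letter or its replacement by $x_jx_{j+1}$. The letters to its right are shifted up. Collecting terms after $\DIO$ reordering gives a linear recurrence for $A_{n,k,a,b}(q)$ in terms of $A_{n-1,\cdot,\cdot,\cdot}(q)$, and similarly for $B$ via $y_j\mapsto q^jx_jy_{j+1}$. I would then evaluate with $\phi(x_j)=\tan_q(q^ju)$ and $\phi(y_j)=\sec_q(q^ju)$ and check against Proposition~\ref{th:Dtanq} and \eqref{eq:Dq:qm}. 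These confirm that the formal $D$ intertwines with $D_q$ in the same way as the product formula, Proposition~\ref{th:qLeibniz}, when factors are ordered with the $u$-shift applied to later factors.

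The third step, and the main obstacle, is matching this recurrence with the one satisfied by $\sum_{w\in\mathcal T_{n,k,a,b}}q^{\imaj w}$. Those recurrences come from the insertion of the maximal letter $n$ into a $t$-permutation in \cite[Sections~1.3--1.5 and~4]{Foata-Han-2012}. I would take the Foata--Han derivation, which computes $D_q^n$ of $\tan_q$ and $\sec_q$ with precisely this ordering convention, and identify each case of inserting $n$ with one branch of the grammar rule. Deletion corresponds to the constant term $1$; splitting to $x_jx_{j+1}$; and the power $q^j$ records the increase of $\imaj$. Checking that the $\DIO$ reordering does not change the monomial attached to each insertion, that is, the bookkeeping of which letters shift, is the delicate point. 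Once the two recurrences and the initial conditions $D^0(x_0)=x_0$ and $D^0(y_0)=y_0$ agree, the identities follow by induction on $n$.
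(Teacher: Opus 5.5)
Your overall route agrees with the paper's. The paper gives no independent proof; it simply recasts Foata--Han's Theorem~1.1, whose coefficients come from an ordered application of the $q$-product rule. Checking that the formal derivative of $G_{\tan\cup\sec}$ reproduces that ordered recurrence, and then importing their $t$-permutation interpretation, is a faithful unpacking of that citation.

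There is, however, a concrete error in your first step that would corrupt the recurrence for $B_{n,k,a,b}(q)$. Under $\DIO$ the variables are ordered $\ldots,x_2,y_2,x_1,y_1,x_0,y_0$, so $x_{k+1}$ precedes $y_{k+1}$. The terms of $D^n(y_0)$ are therefore the words $x_{k+1}^b\,y_{k+1}\,x_k^a$; compare $D^2(y_0)=qy_2x_1^2+y_1+x_1y_1x_0$. The statement's $y_{k+1}x_{k+1}^bx_k^a$ has to be read up to this reordering.

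Your justification that ``$y_{k+1}$ sits first, which keeps the shape stable'' is false. If $y_{k+1}$ stood first, take $y_{k+1}x_{k+1}^bx_k^a$ with $b\ge 2$, differentiate the first $x_{k+1}$, and keep the constant term. This gives $q^{k+1}y_{k+1}x_{k+2}^{b-1}x_{k+1}^a$, which is not of the required shape.

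Stability holds precisely because $y_{k+1}$ lies to the right of the $x_{k+1}$-block:
\begin{itemize}
\item Differentiating an $x_{k+1}$ shifts $y_{k+1}$ to $y_{k+2}$ together with the $x_k$'s.
\item Differentiating $y_{k+1}$ merges the blocks into $q^{k+1}y_{k+2}x_{k+1}^{a+b+1}$.
\item Differentiating an $x_k$ leaves $x_{k+1}^b y_{k+1}$ in place, and the shifted letters join the $x_{k+1}$-block.
\end{itemize}
These are the transitions your $B$-recurrence must encode; one checks that they reproduce the listed $D^3(y_0)$.

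Separately, the evaluation check in your second step cannot carry the weight you put on it. Since $\phi$ takes values in a commutative ring, $\phi\circ D=D_q\circ\phi$ holds for every order $\rho$. So it neither singles out $\DIO$ nor determines any coefficient. The two different expansions of $D_q^2\tan_q(u)$ in Section~\ref{sec:noncom} show that the expression in the $\tan_q(q^ju)$ is not unique.

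The identification with Foata--Han must instead be made at the level of ordered words. You need to check that their coefficient recurrence is literally the one obtained by applying the $q$-product formula to the factors in $\DIO$ order. With that comparison and the corrected ordering of the $y$-terms, your induction on $n$ goes through.
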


Using Theorem \ref{thm:Foata-Han}, Foata and Han \cite{Foata-Han-2012} derived the following $q$-exponential generating functions.

\begin{Theorem}\cite[Theorem 1.5]{Foata-Han-2012} \label{thm:Foata-Hanaa}
Let \(\phi = \{ x_j \mapsto x, \, y_j \mapsto y\}\) denote the evaluation map that assigns to each variable \(x_j\) the variable \(x\) and each variable \(y_j\) the variable \(y\). We have 
\begin{align} \label{eq:gf:tan}
\sum_{n\geq 0} \phi\bigl(D^n(x_0)\bigr)\,\frac{u^n}{(q;q)_n} &=\;
\tan_q(u) \;+\; \frac{x\,\sec_q(u)\Sec_q(u)}{1 - x\,\tan_q(u)},\\[5pt]
\sum_{n\geq 0} \phi\bigl(D^n(y_0)\bigr)\,\frac{u^n}{(q;q)_n} &=\;
  \frac{y\sec_q(u)}{1 - x\,\tan_q(u)}. \label{eq:gf:sec}
\end{align}
\end{Theorem}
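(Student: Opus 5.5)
Since $\phi$ sends every $x_j$ to $x$ and every $y_j$ to $y$, it erases both the indices and the order of the letters. So $\phi\circ D$ behaves like the ordinary $q$-derivative of $\tan_q(u)$ and $\sec_q(u)$, computed with the product rule of Proposition~\ref{th:qLeibniz}. The plan is to realise this by a second evaluation that retains $u$, and then to recover the $x$- and $y$-dependence through a $q$-Taylor expansion.

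\textbf{Step 1.} Introduce the evaluation
\[
\psi=\{x_j\mapsto \tan_q(q^ju),\ y_j\mapsto \sec_q(q^ju)\}
\]
into $\mathbb{Z}[[u,q]]$. Show that $\psi(D(w))=D_q\psi(w)$ for every word $w$. For a single letter this is Proposition~\ref{th:Dtanq} combined with \eqref{eq:Dq:qm}: one gets $D_q\sec_q(q^ju)=q^j\tan_q(q^ju)\sec_q(q^{j+1}u)$, which matches $y_j\mapsto q^jx_jy_{j+1}$. For a product, Definition~\ref{defi:q-derivative} is precisely the product formula \eqref{eq:qprod}, because $\uparrow$ realises $u\mapsto qu$. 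The order $\rho=\DIO$ plays no role after the commutative evaluation.

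Iterating gives $\psi(D^n(x_0))=D_q^n\tan_q(u)$ and $\psi(D^n(y_0))=D_q^n\sec_q(u)$. This alone does not yet give $\phi$.

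\textbf{Step 2.} Use a shifted-argument version of the evaluation. Define
\[
\psi_t=\{x_j\mapsto \tan_q(q^j(t+u)),\ y_j\mapsto\sec_q(q^j(t+u))\}
\]
in the sense of $q$-translation. A more robust alternative is to work directly with Theorem~\ref{thm:Foata-Han}: the generating functions depend only on the joint distribution of $(\imaj,a+b)$ over $t$-permutations, and can be assembled by a first-letter decomposition. I prefer the following purely grammatical route.

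Write $T_n(x)=\phi(D^n(x_0))$. The key claim is
\[
T_n(x)=\sum_m \frac{x^m}{m!}\,\bigl(\text{coefficient extracted at } u=0\bigr).
\]
More concretely, I would prove the closed-form expression through a recurrence. From Theorem~\ref{thm:Foata-Han}, $\phi(D^n(x_0))=\sum_{k,a,b}A_{n,k,a,b}(q)\,x^{a+b}$. The coefficient of $x^m$ should satisfy $q$-binomial convolution recurrences derived from the combinatorial decomposition of $t$-permutations according to the position of the maximal letter. Matching these recurrences against the expansion of the right-hand sides
\[
\tan_q(u)+\sum_{m\ge1}x^m\sec_q(u)\Sec_q(u)\tan_q(u)^{m-1},\qquad
y\sum_{m\ge 0}x^m\sec_q(u)\tan_q(u)^m
\]
gives the result. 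The needed identities are $\sec_q(u)\Sec_q(u)=1+\tan_q(u)\Tan$-type relations, specifically $\cos_q(u)\Cos_q(u)+\sin_q(u)\Sin_q(u)=1$ from $e_q(z)E_q(-z)=1$.

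\textbf{Main obstacle.} The hard part is the $x$-dependence. Step 1 only produces the coefficient of $x^0$ structure, namely the specialisations $x_j\mapsto\tan_q(q^ju)$. To obtain a polynomial in $x$, I would expand in powers of $x$ and verify, coefficient by coefficient, that $[x^m]$ of each side satisfies the same $q$-recurrence in $n$. On the right-hand side, this recurrence comes from applying $D_q$ to $\sec_q\Sec_q\tan_q^{m-1}$ via Proposition~\ref{th:Dtanq}. On the left-hand side, it comes from the rule $D(x_j)=q^j(1+x_jx_{j+1})$, which raises the degree by one or lowers it by one.

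Reconciling the $q^j$ weights after the indices are forgotten is the delicate point. I expect to need a lemma saying that the $q$-weight contributed by each letter is determined by the Foata--Han statistic $\imaj$, which is exactly what Theorem~\ref{thm:Foata-Han} supplies.
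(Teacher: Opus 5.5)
Your Step~1 is correct. Under $\psi$, the rules of $G_{\tan\cup\sec}$ become Proposition~\ref{th:Dtanq} together with \eqref{eq:Dq:qm}, $\uparrow$ becomes $u\mapsto qu$, and $\DIO$ is invisible, so $\psi(D^n(x_0))=D_q^n\tan_q(u)$. However, at $u=0$ this evaluation sends every $x_j$ to $0$, so it only recovers the $x=0$ part of the theorem. Nothing after Step~1 is an actual argument:
\begin{itemize}
\item The displayed ``key claim'' is not a well-defined formula.
\item The $q$-binomial convolution recurrences for $t$-permutations are neither stated nor proved. Theorem~\ref{thm:Foata-Han} is also quoted in the paper without the definitions any such decomposition would need.
\item The coefficient-by-coefficient plan fails at its first step, because $\phi\circ D$ does not factor through $\phi$. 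Indeed, $\phi(x_0)=\phi(x_1)=x$, yet $\phi(D(x_0))=1+x^2$ while $\phi(D(x_1))=q(1+x^2)$.
\end{itemize}
So there is no recurrence for $[x^m]\phi(D^n(x_0))$ ``coming from the rule $D(x_j)=q^j(1+x_jx_{j+1})$''. The index-dependent weights $q^j$, which you flag as the delicate point, are the whole difficulty, and the proposal leaves them unresolved. The closing identity is also misstated. What is needed is $\sec_q\Sec_q=1+\tan_q^2$, which uses both the real and imaginary parts of $e_q(iu)E_q(-iu)=1$, namely $\cos_q\Cos_q+\sin_q\Sin_q=1$ and $\sin_q\Cos_q=\cos_q\Sin_q$.

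The missing idea in the paper is multiplicativity. Because $R(s_{i+1})=q\uparrow R(s_i)$, one has $D^n(\uparrow^m f)=q^{nm}\uparrow^m D^n(f)$ and the $q$-Leibniz formula $D^n(fg)=\sum_k{n\brack k}_qD^k(f)\uparrow^kD^{n-k}(g)$ (Proposition~\ref{q_linear_prodrule-prop}). Since $\phi\circ\uparrow^k=\phi$, the index-dependent powers of $q$ are absorbed into the $q$-binomial coefficients. This gives $\phi(\Gen_q(fg;u))=\phi(\Gen_q(f;u))\,\phi(\Gen_q(g;u))$ (Theorem~\ref{thm:product}); the order $\DIO$ is harmless after $\phi$. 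The paper then uses the formal inverse: Proposition~\ref{lem:f_minus_one} gives $D(y_0^{-1})=-y_0^{-1}x_0$ and $D(y_0^{-1}x_0)=y_0^{-1}$. Hence $\Gen_q(y_0^{-1};u)=y_0^{-1}(\cos_q u-x_0\sin_q u)$ and $\Gen_q(y_0^{-1}x_0;u)=y_0^{-1}(x_0\cos_q u+\sin_q u)$. Then $1=\phi(\Gen_q(y_0y_0^{-1};u))$ and $x_0=y_0\cdot y_0^{-1}x_0$ yield $y/(\cos_q u-x\sin_q u)$ and $(x+\tan_q u)/(1-x\tan_q u)$.

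Alternatively, your Step~1 can be rescued without any of this. Put the generic initial value into the evaluation instead of shifting $u$. Let $C=\cos_q u-x\sin_q u$, $N=x\cos_q u+\sin_q u$, $X=N/C$ and $Y=y/C$. From $D_q\cos_q=-\sin_q$ and $D_q\sin_q=\cos_q$ one gets $C(qu)=C(u)+uN(u)$ and $N(qu)=N(u)-uC(u)$. It follows that $D_qX=1+X(u)X(qu)$ and $D_qY=X(u)Y(qu)$. Applying your Step~1 argument to $x_j\mapsto X(q^ju)$, $y_j\mapsto Y(q^ju)$ gives $D_q^nX$ and $D_q^nY$. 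Setting $u=0$ turns this evaluation into $\phi$, since $X(0)=x$ and $Y(0)=y$. Therefore $\phi(D^n(x_0))=(q;q)_n[u^n]X(u)$ and $\phi(D^n(y_0))=(q;q)_n[u^n]Y(u)$, which is the theorem.
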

These two identities are  \(q\)-analogues of Hoffman's formula \cite{Hoffman1995}, which is commonly written in the form:
\begin{align*}
\sum_{n\geq 0} A_n(x)\,\frac{u^n}{n!}  &= 
\frac{x + \tan(u)}{1 - x\,\tan(u)},\\[5pt]
\sum_{n \geq 0} B_n(x) \frac{u^n}{n!} &= \frac{1}{\cos u - x \sin u}.
\end{align*}

In Subsection \ref{subsec:application}, we present a grammatical derivation of Theorem \ref{thm:Foata-Hanaa} using the $q$-grammatical calculus developed in Section \ref{secqgram}. As shown in Subsection \ref{subsec:application}, this approach relies solely on the definition of the $q$-grammar $G_{\tan\cup \sec}$ given by \eqref{defi:gramartansec} and the evaluation map $\phi$ defined in Theorem \ref{thm:Foata-Hanaa}.

\subsection{Further examples}
By analogy with the example discussed in the preceding subsection, the following \(q\)-grammars are investigated in \cite{Foata-Han-2012} within the framework of \(q\)-trigonometric functions.

\begin{align*}
G_{\tan} &= \bigl( \{x\},\;  \{x_j \rightarrow q^j (1 + x_j x_{j+1})\},\;  \DIO \bigr); \\
G_{\tan'} &= \bigl(\{x\},\;  \{x_j \rightarrow q^j (1 + x_j x_{j+1})\},\;  \LPO \bigr); \\
G_{\sec} &= \bigl( \{x,y\},\;  \{x_j \rightarrow q^j (1 + x_j x_{j+1}), \; y_j\rightarrow q^j x_j y_{j+1}\},\; \DIO \bigr); \\
G_{\sec'} &= \bigl(\{x,y\},\; \{x_j \rightarrow q^j (1 + x_j x_{j+1}), \;  y_j\rightarrow q^j x_j y_{j+1}  \},\;  \LPO \bigr); \\
G_{\Sec} &= \bigl( \{x,y\},\;  \{x_j \rightarrow q^j (1 + x_j x_{j+1}), \; y_j\rightarrow q^j y_j x_{j+1}\},\; \DIO \bigr); \\
G_{\Sec'} &= \bigl(\{x,y\},\; \{x_j \rightarrow q^j (1 + x_j x_{j+1}), \;  y_j\rightarrow q^j y_j x_{j+1}  \},\;  \LPO \bigr). \\\end{align*}
The rules in the aforementioned $q$-grammars are motivated by the action of the $q$-derivative operator for the $q$-trigonometric functions $\tan_q, \sec_q,$ and $\Sec_q$, as described in Proposition \ref{th:Dtanq}. Specifically, we set
\[
x_j := \tan_q(q^j u), \quad
y_j := \sec_q(q^j u) \ \text{ or }\ 
y_j := \Sec_q(q^j u).
\]
We do not reproduce here the results concerning these $q$-grammars. The reader is referred to \cite{Foata-Han-2012} for a complete description and rigorous proofs of these results. We shall mention only one of them, which is related to Mahonian statistics.
Consider the $q$-grammar $G_{\tan'}$. Then the coefficient of the monomial $x_0 x_1 x_2 \cdots x_n$ in $D^n(x_0)$ is given by
\[
(1+q)(1+q+q^2)\cdots (1+q+q^2+\cdots+q^{n-1}).
\]

\subsection{\texorpdfstring{Proposed new $q$-grammars}{Proposed new q-grammars}}
In this paper, we first develop a method for computing the \(q\)-exponential generating functions \eqref{defi:gf} associated with the corresponding \(q\)-grammars, which we refer to as \(q\)-grammatical calculus. As an illustration of this method, we provide a grammatical derivation of $q$-binomial inversion formula (see Example \ref{exa:qbininv}). We also provide a grammatical derivation of Theorem \ref{thm:Foata-Hanaa} (see Subsection \ref{subsec:application}).    We then introduce several new classes of \(q\)-grammars. Although the related problems posed in Section \ref{sec:3.8} could be investigated, our primary focus at the present paper is to construct a suitable evaluation map \(\phi\) such that \(\phi(D^k(s))\) yields several classical combinatorial polynomials, including two \(q\)-analogs of the Eulerian polynomials, \(q\)-analog of the Roselle polynomials and two \(q\)-analogs of the Andr\'e  polynomials. It is worth noting that, in most of our examples, the combinatorial interpretation of \(\phi(D^k(s))\) relies on classical Mahonian statistics for permutations or trees, such as the {\it inversion number} $(\inv)$ and the {\it major index} $(\maj)$.
 See Section \ref{sec:qgrammars} and Section \ref{sec:qAndre} for related definitions. More precisely,

Subsection \ref{subsec:qmajdes}. $(\des, \maj)$ on permutations:  
\begin{itemize}
	\item $G_{\maj}=(\{x,y\}, \{  x_j \rightarrow q^j x_0 y_0, \  y_j \rightarrow q^j x_0 y_0\}    ,  \LPO)$,
	\item $\phi =  \{ x_j \rightarrow xq^j, \  y_j \rightarrow yq^j \}$.
\end{itemize}

\smallskip

Subsection \ref{subsec:qinvdes}. (des, inv) on permutations: 
    \begin{itemize}
	\item $G_{\inv}=(\{x,y\},  \{ x_j \rightarrow q^j y_j x_{j+1}, \  y_j \rightarrow q^j y_j x_{j+1}   \}, \AIO )$,
	\item $\phi =  \{ x_j \rightarrow x, \   y_j \rightarrow y\}$.
\end{itemize}

\smallskip

Subsection \ref{subsec:qroselle}. cycle $q$-Roselle polynomials: 
\begin{itemize}
    \item $G_{\cyc}=(\{x,y,z,e\},  R, \KSO)$, where
    \item $R = \{   x_j \rightarrow q^j y_j x_{j+1} , 
\  y_j \rightarrow q^j y_j x_{j+1} ,
\  z_j \rightarrow q^j y_j x_{j+1} ,
\  e_j \rightarrow tq^j e_j z_{j+1} 
  \} $,
	\item $\phi =  \{ x_j \rightarrow x, \   y_j \rightarrow y,    \   z_j \rightarrow z, \   e_j \rightarrow e \}$.
\end{itemize}

\smallskip

Subsection \ref{sec:AndrI}. (des, inv) on Andr\'e I permutations: 
\begin{itemize}
\item $G_{{\rm AndI}}=(\{x,y\},  \{   x_j \rightarrow q^j x_j y_{j+1} , \  y_j \rightarrow q^j x_j \}    , \AIO)$,
	\item $\phi =  \{ x_j \rightarrow t, \   y_j \rightarrow 1\}$.
\end{itemize}

\smallskip

Subsection \ref{sec:q-AndreII}. (des, inv) on Andr\'e II permutations: 
\begin{itemize}
    \item	$G_{\rm{AndII}}=(\{x,y\},  \{  x_j \rightarrow q^j x_j y_{j+1} , \  y_j \rightarrow q^{j+1} x_{j+1}  \}    , \AIO)$,
	\item $\phi =  \{ x_j \rightarrow t, \   y_j \rightarrow 1\}$.
\end{itemize}

\section{\texorpdfstring{$q$-grammatical calculus}{q-grammatical calculus}} \label{secqgram}

 This section aims to develop $q$-grammatical calculus. Analogous to the grammatical calculus presented in Appendix \ref{sec:survey}, we define the $q$-exponential generating function of $f \in \mathbb{E}$ associated with the $q$-derivative associated with a $q$-grammar. More precisely, let $ G=(S, R, \rho)$ be a $q$-grammar, and let $D$ denote the  $q$-derivative associated with  $G$. For any $f\in \mathbb{E}$, we introduce the following $q$-exponential generating function:
\begin{equation}\label{def:qexp}
    \Gen_q^{(G)}(f;u)=\sum_{n\geq 0}D^n(f)\frac{u^n}{(q;q)_n}.
\end{equation}

Here we choose ${(q;q)_n}$ instead of $n!$. This choice is consistent with $q$-exponential structure.

Let $\phi$ be an evaluation map, define 
    \begin{equation}\label{defi:gf} 
    \mathrm{gen}_q(f,u) = \phi\left(\Gen_q^{(G)}(f;u)\right)=\sum_{k\geq 0} \phi(D^k(f)) \frac{u^k}{(q;q)_k}.
    \end{equation}

The formal power series $\Gen_q^{(G)}(f;u)$ and $\mathrm{gen}_q(f,u)$ are also referred to as the Eulerian generating function; see Goldman and Rota \cite{Goldman-Rota-1970}. This may be viewed as a $q$-analogue of the exponential generating function associated with the grammar defined in \eqref{defi:gfgram}, as detailed in Appendix \ref{sec:survey}. The exponential generating function associated with a classical grammar plays a fundamental role in the grammatical calculus developed by Chen \cite{Chen-1993}. To establish $q$-grammatical calculus, we investigate the properties of the $q$-exponential generating functions \eqref{def:qexp} and \eqref{defi:gf}. It transpires that the $q$-exponential generating function exhibits richer structure, particularly with regard to the multiplicative property \eqref{gramma-multiple}.

This section is structured as follows. In Subsection \ref{subsec:qderivative}, we establish several properties of the  $q$-derivative associated with a $q$-grammar. Specifically, we derive the $q$-Leibniz formula for $q$-derivatives associated with a special class of $q$-grammars (termed  $q$-linear grammars). As an application, we provide a grammatical derivation of the $q$-binomial inversion formula. Subsection \ref{subsec:qEGF} is devoted to studying the $q$-exponential generating function \eqref{def:qexp} and \eqref{defi:gf} using the properties of the  $q$-derivative developed in the previous subsection. We establish the multiplicative property of the $q$-exponential generating function  \eqref{defi:gf}  under restricted evaluation maps and $q$-grammars, as summarized in Theorem \ref{thm:product}. In Subsection \ref{subsec:application}, we illustrate the use of $q$-grammatical calculus by deriving $q$-Hoffman's formula (Theorem \ref{thm:Foata-Hanaa}) as an application of  Theorem \ref{thm:product}.

For the convenience of reference, we collect the relevant formulas concerning  $q$-derivative and its $q$-exponential generating function in
Appendix~\ref{sec:qderivativeform}.

\subsection{\texorpdfstring{Properties of the  $q$-derivative}{Properties of the  q-derivative}} \label{subsec:qderivative}
 
Recall that $\mathbb{E}$ denotes group algebra $\mathbb{K}[q][F(\mathbb{S})]$ on $F(\mathbb{S})$, where $F(\mathbb{S})$ denotes the free group on $\mathbb{S}$. Since $D$ is a linear operator (see Definition \ref{defi:q-derivative}),  the following proposition is obvious:

\begin{Proposition} For $f,g\in\mathbb{E}$ and $c \in  \mathbb{K}[q]$, we have 
\begin{align*}
D(c)&=0,   \\[5pt]
D(cf)&=cD(f),  \\[5pt]
D(f+g)&=D(f)+D(g).  
\end{align*}
\end{Proposition}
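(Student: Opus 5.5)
The three identities follow from the definition of $D$ in Definition~\ref{defi:q-derivative} together with the fact that $\rho$ is linear. Throughout, $\rho$ is regarded as extended linearly to $\mathbb{E}$, as stated in the subsection on orders, and the up-arrow operator $\uparrow$ is also linear. The plan is to verify additivity first, then homogeneity, and finally $D(c)=0$ as a consequence of how $D$ acts on the identity element of the group algebra.

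\textbf{Additivity.} Write $f=\sum_w a_w w$ and $g=\sum_w b_w w$ as finite sums over reduced words in $F(\mathbb{S})$, so that $f+g=\sum_w(a_w+b_w)w$. The operator $D$ is defined on each word $w$ and extended $\mathbb{K}[q]$-linearly. Hence $D(f+g)=\sum_w(a_w+b_w)D(w)=D(f)+D(g)$.

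One point needs care. $D(w)$ is defined using a letter-by-letter expression $w=w_1\cdots w_n$, whereas elements of $F(\mathbb{S})$ are reduced words. If $D$ is evaluated on an unreduced representative, it must agree with its value on the reduced form. This is exactly what the well-definedness computations $D(x_ix_i^{-1})=D(x_i^{-1}x_i)=0$ ensure, combined with the twisted Leibniz structure of the formula. Concretely, the formula applied to the concatenation $uv$ equals $D(u)\,\uparrow\! v+u\,D(v)$ before $\rho$ is applied. Applying this to $u\,x_ix_i^{-1}\,v$ shows that the inserted pair contributes $u\cdot D(x_ix_i^{-1})\cdot\uparrow\! v=0$. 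The remaining contributions reduce to those of $uv$, because $\uparrow(x_ix_i^{-1})=x_{i+1}x_{i+1}^{-1}=e$.

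\textbf{Homogeneity.} For $c\in\mathbb{K}[q]$ we have $cf=\sum_w (ca_w)w$. Applying the linear extension gives $D(cf)=\sum_w ca_wD(w)=cD(f)$, using that $\rho$ commutes with scalar multiplication.

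\textbf{$D(c)=0$.} A scalar $c$ is the element $c\cdot e$, where $e$ is the empty word, i.e.\ $n=0$ in Definition~\ref{defi:q-derivative}. For $n=0$ the defining sum is empty, so $D(e)=0$, and by homogeneity $D(c)=cD(e)=0$. This is also consistent with the above: writing $e=x_ix_i^{-1}$ gives $0$ by the displayed well-definedness check.

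\textbf{Main obstacle.} The only real content is the consistency of $D$ with reduction in the free group, i.e.\ that $D$ is a well-defined map on $\mathbb{E}$. The paper has already asserted this, so it is the one step I would spell out via the concatenation identity. Everything else is immediate from linearity.
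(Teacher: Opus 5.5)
Your proof is correct and follows the paper's route: the paper calls the three identities obvious because $D$ is defined on words and extended linearly. Your remark that $D(e)=0$ because the defining sum is empty for $n=0$ is exactly the point that makes $D(c)=0$ hold. The well-definedness discussion is not needed for this statement, since $D$ is defined directly on the basis of reduced words of $F(\mathbb{S})$, but your concatenation argument is the right justification for applying the formula to unreduced products, as in the product rule~\eqref{eq:prod}.
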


Unlike the usual product rule for derivatives, the  $q$-derivative satisfies the following modified product rule: 

\begin{Proposition}\label{lem: product_formula}
For   $f,g\in\mathbb{E}$, we have:
\begin{align}\label{eq:prod}
D(fg)&=D(f)\uparrow g+fD(g).  
\end{align}
\end{Proposition}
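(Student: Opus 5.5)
The proof reduces, by bilinearity, to the case where $f$ and $g$ are single words, and then splits the sum in Definition~\ref{defi:q-derivative} at the boundary between $f$ and $g$. Both sides of \eqref{eq:prod} are $\mathbb{K}[q]$-bilinear in $(f,g)$: $D$ is linear, $\uparrow$ is linear, and multiplication in $\mathbb{E}$ is bilinear. So it suffices to take
$$f=w_1\cdots w_m,\qquad g=w_{m+1}\cdots w_{m+n},$$
with $w_i\in\mathbb{S}\cup\mathbb{S}^{-1}$. The words need not be reduced. This is harmless because the computations $D(x_ix_i^{-1})=D(x_i^{-1}x_i)=0$ given after Definition~\ref{defi:q-derivative} show that $D$ does not depend on the representative word. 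The product $fg$ is then represented by the concatenation $w_1\cdots w_{m+n}$, and the defining formula for $D$ applies to it directly.

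Expanding $D(w_1\cdots w_{m+n})$ by definition and splitting the sum at $j=m$ gives two groups of terms:
$$\sum_{j=1}^{m}\rho\Bigl(w_1\cdots w_{j-1}R(w_j)\uparrow(w_{j+1}\cdots w_m)\uparrow(g)\Bigr)+\sum_{j=m+1}^{m+n}\rho\Bigl(f\,w_{m+1}\cdots w_{j-1}R(w_j)\uparrow(w_{j+1}\cdots w_{m+n})\Bigr).$$
For the first group I use that $\uparrow$ is multiplicative on words, so $\uparrow(w_{j+1}\cdots w_{m+n})=\uparrow(w_{j+1}\cdots w_m)\,\uparrow g$. For the second group I note that the prefix $w_1\cdots w_m$ is exactly $f$ and carries no shift. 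Collecting terms, the first group becomes $\rho\bigl(\widetilde D(f)\uparrow g\bigr)$ and the second becomes $\rho\bigl(f\,\widetilde D(g)\bigr)$. Here $\widetilde D$ denotes the same sum as $D$ but without the outer $\rho$.

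The remaining step, and the one I expect to be the real point, is to remove the inner $\rho$'s. That is, I need
$$\rho\bigl(\rho(A)\,B\bigr)=\rho(AB)=\rho\bigl(A\,\rho(B)\bigr)\qquad\text{for all }A,B\in\mathbb{E},$$
so that the two groups equal $\rho\bigl(D(f)\uparrow g\bigr)$ and $\rho\bigl(f\,D(g)\bigr)$. For $\KSO$ this is trivial. For $\LPO$, $\AIO$ and $\DIO$, the order sorts the letters of a word stably according to a fixed total order on the indexed variables, with an inverse $s_i^{-1}$ placed with $s_i$. Such a sort depends only on the multiset of letters and their relative order within each class, so pre-sorting either factor does not change the final sorted word. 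Consequently, \eqref{eq:prod} is to be read with products interpreted after applying $\rho$, which is consistent with how $D$ is always used in the paper. Under that reading, the two groups are exactly $D(f)\uparrow g$ and $fD(g)$, which completes the proof.
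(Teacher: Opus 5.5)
Your argument is the paper's argument: reduce to single words by linearity, expand $D(w_1\cdots w_{m+n})$ from the definition, split the sum at the junction of $f$ and $g$, and use that $\uparrow$ is multiplicative on words. The one substantive difference is the order $\rho$. The paper's proof writes out $D(w_1\cdots w_n)$ without the outer $\rho$, so as written it only covers $\rho=\KSO$. This matches Appendix~\ref{sec:qderivativeform}, where the product rule is restated for $q$-linear grammars. Your absorption identity $\rho(\rho(A)B)=\rho(AB)=\rho(A\rho(B))$ is exactly what is needed to handle $\LPO$, $\AIO$ and $\DIO$.

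Your reinterpretation is not a matter of convention, and you should say so outright: for $\rho=\DIO$ the identity is literally false in $\mathbb{E}$. In $G_{\tan}$ with $f=g=x_0$ one gets $D(x_0x_0)=x_0+x_1+x_1x_1x_0+x_1x_0x_0$, while $D(x_0)\uparrow x_0+x_0D(x_0)=x_0+x_1+x_1x_0x_1+x_0x_1x_0$. These are different elements of $\mathbb{K}[q][F(\mathbb{S})]$, and they agree only after applying $\DIO$. So the correct general statement is $D(fg)=\rho\bigl(D(f)\uparrow g+fD(g)\bigr)$, which is the displayed identity when $\rho=\KSO$.

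Your stable-sorting justification of the absorption identity is fine. There is a cleaner one that also shows $\rho$ is well defined on reduced words. Each of $\LPO$, $\AIO$, $\DIO$ sends $w\in F(\mathbb{S})$ to $\prod_v v^{e_v(w)}$, the product taken over the variables $v$ in the fixed order, where $e_v(w)$ is the exponent sum of $v$ in $w$. This depends only on the image of $w$ in the abelianization, so the absorption identity is immediate.
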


\begin{proof}
Since $D$ is a linear operator, it is sufficient to show that \eqref{eq:prod} holds for $f, g \in F(\mathbb{S})$. Assume that  $f=w_1w_2\cdots w_n$ and $g=w_{n+1}w_{n+2}\cdots w_{n+m}$ where each $w_i\in \mathbb{S}\cup\mathbb{S}^{-1}$. Then
\begin{align*}
&D(f)\uparrow g+fD(g)\\&=
D(w_1w_2\cdots w_n)\uparrow (w_{n+1}w_{n+2}\cdots w_{n+m})
+w_1w_2\cdots w_nD(w_{n+1}\cdots w_{n+m})
\\&=\sum_{j=1}^n  \Bigl( w_1w_2\cdots w_{j-1}\, R(w_j)\, \uparrow\! \bigl(w_{j+1}\cdots w_n\bigr) \Bigr)\uparrow (w_{n+1}w_{n+2}\cdots w_{n+m})
\\&~~~~+
w_1w_2\cdots w_n
\sum_{j=1}^m \Bigl( w_{n+1}w_{n+2}\cdots w_{n+j-1}\, R(w_{n+j})\, \uparrow\! \bigl(w_{n+j+1}\cdots w_{n+m}\bigr) \Bigr)
\\&=
\sum_{j=1}^{m+n} \Bigl( w_1w_2\cdots w_{j-1}\, R(w_j)\, \uparrow\! \bigl(w_{j+1}\cdots w_{m+n}\bigr) \Bigr)
\\&=
D(w_1w_2\cdots w_{m+n})
\\&=
D(fg),
\end{align*}
as desired. This completes the proof.
\end{proof}

By induction, one easily derives the following result.
\begin{Proposition}\label{prop:f_product} For any $f_i\in\mathbb{E}~(1\leq i\leq n)$, we have 
$$
D(f_1f_2\cdots f_n) =  \sum_{j=1}^n  f_1f_2\cdots f_{j-1}\, D(f_j)\, \uparrow\! \bigl(f_{j+1}\cdots f_n\bigr). 
$$
 
\end{Proposition}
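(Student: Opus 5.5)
I will prove the formula by induction on $n$, using Proposition~\ref{lem: product_formula} as the only input. For $n=1$ the statement reads $D(f_1)=D(f_1)$ (empty products equal $1$), and for $n=2$ it is exactly \eqref{eq:prod}. So assume $n\geq 3$ and that the formula holds for products of $n-1$ factors.

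\textbf{Step 1.} I split the product as $f_1\cdot(f_2\cdots f_n)$ and apply \eqref{eq:prod} with $f=f_1$ and $g=f_2\cdots f_n$. This gives
\[
D(f_1f_2\cdots f_n)=D(f_1)\uparrow\!\bigl(f_2\cdots f_n\bigr)+f_1\,D(f_2\cdots f_n).
\]
The first summand is the $j=1$ term of the claimed sum.

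\textbf{Step 2.} I apply the induction hypothesis to $D(f_2\cdots f_n)$:
\[
D(f_2\cdots f_n)=\sum_{j=2}^{n} f_2\cdots f_{j-1}\,D(f_j)\,\uparrow\!\bigl(f_{j+1}\cdots f_n\bigr).
\]
Multiplying on the left by $f_1$ produces exactly the terms $j=2,\dots,n$ of the claimed sum. Adding Step 1 then completes the induction.

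Splitting on the left, as $f_1\cdot(f_2\cdots f_n)$, means the $\uparrow$ operator is only ever applied to the trailing block. This avoids needing any compatibility between $\uparrow$ and $D$. If I instead split as $(f_1\cdots f_{n-1})\,f_n$, I would need $\uparrow(ab)=\uparrow a\,\uparrow b$. That identity does hold, since $\uparrow$ is the algebra homomorphism induced by the index shift on $F(\mathbb{S})$, but it is not needed here.

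The only subtle point, which I expect to be the main obstacle if anything, is the order $\rho$. In Definition~\ref{defi:q-derivative}, $D$ applies $\rho$ to each summand, whereas the formula being proved has no $\rho$. This means Proposition~\ref{lem: product_formula} is really a statement about the unreordered derivative, or about orders such as $\KSO$. For a general $\rho$, one would also need $\rho$ to commute with left and right multiplication, which fails for non-identity orders. I will therefore simply take Proposition~\ref{lem: product_formula} as given, in the same sense the paper uses it. The induction uses nothing beyond that identity, linearity of $D$, and associativity of multiplication in $\mathbb{E}$, so whatever convention makes \eqref{eq:prod} valid also makes the $n$-fold formula valid.
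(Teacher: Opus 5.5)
Your induction (split off $f_1$, apply \eqref{eq:prod} once, then apply the inductive hypothesis to $f_2\cdots f_n$) is correct and is the same routine induction the paper means by ``By induction, one easily derives the following result.'' Your caveat about $\rho$ is also accurate: the paper's proof of \eqref{eq:prod} silently drops $\rho$, so the two-factor and $n$-factor rules hold literally only for $\rho=\KSO$ (Appendix~\ref{sec:qderivativeform} indeed lists the product rule under $q$-linear grammars), and for other orders only up to reordering.
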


The following proposition describes how to compute the $q$-derivative of the inverse of $f \in F(\mathbb{S})$.

\begin{Proposition}\label{lem:f_minus_one}
Let $f\in F(\mathbb{S})$, we have 
    \begin{align*}
D\left(f^{-1}\right)&=-f^{-1}{D(f)}\uparrow (f^{-1}).
\end{align*}
\end{Proposition}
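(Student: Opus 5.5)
Apply the product rule of Proposition~\ref{lem: product_formula} to the identity $f f^{-1} = 1$ in $F(\mathbb{S})$ and solve for $D(f^{-1})$. The derivative is well defined on $\mathbb{E}$, and $D$ of a constant is $0$ by the preceding proposition. Hence
\[
0 = D(1) = D\bigl(f\,f^{-1}\bigr) = D(f)\,\uparrow\!(f^{-1}) + f\,D(f^{-1}).
\]
Multiplying on the left by $f^{-1}$, which is legitimate since $f^{-1}\in F(\mathbb{S})\subset\mathbb{E}$ is a unit, gives
\[
D(f^{-1}) = -f^{-1}\,D(f)\,\uparrow\!(f^{-1}).
\]

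The one point needing care is that Proposition~\ref{lem: product_formula} applies to all of $\mathbb{E}$, including words containing inverse letters, and that $D(1)=0$ is consistent with $D$ being well defined on reduced words. Both are settled in the text: the well-definedness check $D(x_ix_i^{-1})=D(x_i^{-1}x_i)=0$ shows that $D$ respects free-group reduction. The proof of Proposition~\ref{lem: product_formula} concatenates the letter sequences of $f$ and $g$ without requiring the product to be reduced, and this is valid precisely because of that check. So the identity $D(ff^{-1})=D(1)=0$ holds.

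One subtlety concerns the order $\rho$. The definition of $D$ applies $\rho$ inside, and the product rule as stated already absorbs this. I treat Proposition~\ref{lem: product_formula} as given and do not revisit whether $\rho$ commutes with multiplication. If one wants to avoid this issue, there is an alternative route: induct on the length of $f = w_1\cdots w_n$. The base case is the defining relation $R(s_i^{-1}) = -s_i^{-1}R(s_i)s_{i+1}^{-1}$, with $\uparrow(s_i^{-1}) = s_{i+1}^{-1}$. The inductive step writes $f^{-1} = w_n^{-1}\cdots w_1^{-1}$, uses the multi-factor form in Proposition~\ref{prop:f_product}, and telescopes.

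The short argument via $ff^{-1}=1$ is the main line. The only step that needs attention is justifying $D(1)=0$ and applying the product rule to a non-reduced concatenation, and both are covered by the remarks above.
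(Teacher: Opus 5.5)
Your argument is correct, but it is not the paper's route.

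\textbf{The paper's route.} The paper never invokes the product rule. It writes $f=w_1\cdots w_n$ and $f^{-1}=w_n^{-1}\cdots w_1^{-1}$, then expands $f^{-1}D(f)\uparrow(f^{-1})$ using the letter-by-letter definition of $D$. In the $j$-th term, the prefix $w_1\cdots w_{j-1}$ cancels against the tail $w_{j-1}^{-1}\cdots w_1^{-1}$ of $f^{-1}$. Likewise $\uparrow(w_{j+1}\cdots w_n)$ cancels against $\uparrow(w_n^{-1}\cdots w_{j+1}^{-1})$. What remains is $w_n^{-1}\cdots w_{j+1}^{-1}\bigl(w_j^{-1}R(w_j)\uparrow(w_j^{-1})\bigr)\uparrow(w_{j-1}^{-1}\cdots w_1^{-1})$. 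Since $w_j^{-1}R(w_j)\uparrow(w_j^{-1})=-R(w_j^{-1})$, this is minus the term of $D(f^{-1})$ at the letter $w_j^{-1}$. That identity is the defining rule, or its rearrangement $R(s_i)=-s_iR(s_i^{-1})s_{i+1}$ when $w_j=s_i^{-1}$. This is essentially your ``alternative route,'' done as a direct term matching rather than an induction.

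\textbf{Comparison.} Your main line buys brevity and the conceptual point that the formula is forced by the product rule and $ff^{-1}=1$, exactly as for $(1/f)'$. The price is that it needs Proposition~\ref{lem: product_formula} for the non-reduced concatenation $w_1\cdots w_nw_n^{-1}\cdots w_1^{-1}$. In other words, it needs $D$ to be compatible with free reduction inside longer words. The paper's two-letter check is the core of this. Strictly, one also wants the local cancellation $A\bigl(R(s_i)s_{i+1}^{-1}+s_iR(s_i^{-1})\bigr)\uparrow B=0$ for a pair inserted between words $A$ and $B$, and its analogue for $s_i^{-1}s_i$. Both follow from the same defining rule. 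The paper's computation only ever applies $D$ to the reduced word $w_n^{-1}\cdots w_1^{-1}$, so it sidesteps this issue. It also makes visible that the chosen $R(s_i^{-1})$ is exactly the one-letter case of the proposition.

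\textbf{On $\rho$.} Your caveat about $\rho$ applies equally to the paper's proof, which also computes with $\rho=\KSO$. Under $\DIO$, for example with $f=y_0$ in $G_{\tan\cup\sec}$, the two sides agree only up to reordering of letters. So this point does not separate the two routes.
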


\begin{proof}
Suppose that $f=w_1w_2\cdots w_n$ where each $w_i\in \mathbb{S}\cup\mathbb{S}^{-1}$. Then $f^{-1}=w_n^{-1}w_{n-1}^{-1}\cdots w_1^{-1}$ and so 
\begin{align*}
&f^{-1}{D(f)}\uparrow (f^{-1})\\&=
f^{-1}
\sum_{j=1}^n  w_1w_2\cdots w_{j-1}\, R(w_j)\, \uparrow\! \bigl(w_{j+1}\cdots w_n\bigr) 
\uparrow (w_n^{-1}w_{n-1}^{-1}\cdots w_1^{-1})
\\&=
\sum_{j=1}^n  w_n^{-1}w_{n-1}^{-1}\cdots w_{j+1}^{-1}\, w_{j}^{-1}R(w_{j})\, \uparrow\! \bigl(w_{j}^{-1}w_{j-1}^{-1}\cdots w_1^{-1}\bigr)
\\&=-
\sum_{j=1}^n  w_n^{-1}w_{n-1}^{-1}\cdots w_{j+1}^{-1}\, R(w_{j}^{-1})\, \uparrow\! \bigl(w_{j-1}^{-1}\cdots w_1^{-1}\bigr)
\\&=-
D(w_n^{-1}w_{n-1}^{-1}\cdots w_1^{-1})
\\&=-
D(f^{-1})
,
\end{align*}
which completes the proof.
\end{proof}

Because the  $q$-derivative does not adhere to the classical product rule, the $q$-Leibniz formula ($q$-analogue of \eqref{leibniz}) is not satisfied for general $q$-grammars. We find that it holds for a special case of $q$-grammars, which we define as $q$-linear grammars.

\begin{Definition}[$q$-Linear grammar]
 A $q$-grammar $G=(S,R,\rho)$ is called $q$-linear if $\rho=\textbf{KSO}$, and for each master variable $s \in \mathbb{S}$ and any $i\geq 0$, we have
  \begin{align}\label{eq:rule}
      R(s_{i+1})= R(\uparrow s_{i})=q\uparrow R(s_{i}).
  \end{align} 
\end{Definition}

For example, the $q$-grammar $G_{\cyc}$ defined in Subsection  \ref{subsec:qroselle} is $q$-linear. 

For a $q$-linear grammar, we have the following consequence. 

\begin{Proposition} \label{q_linear_uparrow-prop} Let $G$ be a $q$-linear grammar and let $D$ be $q$-derivative associated with $G$. For $m,~n\geq 1$ and $f\in \mathbb{E}$, we have
\begin{equation}\label{q_linearaa}
D^n(\uparrow^m f)=q^{nm} \uparrow^{m} \left(D^n(f)\right).
\end{equation}
\end{Proposition}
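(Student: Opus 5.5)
The plan is to first prove the case $n=1$, namely
\[
D(\uparrow^m f)=q^{m}\uparrow^m D(f)\qquad (f\in\mathbb{E},\ m\ge 1),
\]
and then to obtain the general statement by induction on $n$. Since $D$ and $\uparrow^m$ are both linear, it suffices to take $f$ to be a word $w_1w_2\cdots w_k$ with each $w_j\in\mathbb{S}\cup\mathbb{S}^{-1}$. Because $\rho=\KSO$, Definition~\ref{defi:q-derivative} reads
\[
D(\uparrow^m w_1\cdots \uparrow^m w_k)=\sum_{j=1}^k \uparrow^m(w_1\cdots w_{j-1})\,R(\uparrow^m w_j)\,\uparrow\!\uparrow^m(w_{j+1}\cdots w_k).
\]
The operator $\uparrow^m$ is multiplicative on words and commutes with $\uparrow$. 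Hence the whole claim for $n=1$ reduces to the letter-level identity
\[
R(\uparrow^m w)=q^m\uparrow^m R(w)\qquad\text{for every } w\in\mathbb{S}\cup\mathbb{S}^{-1}.
\]
Once we have this, the factor $q^m$ comes out of each summand, and we recognise $q^m\uparrow^m D(w_1\cdots w_k)$.

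For the letter-level identity with $w=s_i\in\mathbb{S}$, we iterate the defining property \eqref{eq:rule} $m$ times: $R(s_{i+m})=q\uparrow R(s_{i+m-1})=\cdots=q^m\uparrow^m R(s_i)$. For an inverse letter $w=s_i^{-1}$, we use the extension $R(s_i^{-1})=-s_i^{-1}R(s_i)s_{i+1}^{-1}$. This gives
\[
R(s_{i+m}^{-1})=-s_{i+m}^{-1}R(s_{i+m})s_{i+m+1}^{-1}=-\uparrow^m(s_i^{-1})\,q^m\uparrow^m R(s_i)\,\uparrow^m(s_{i+1}^{-1})=q^m\uparrow^m R(s_i^{-1}),
\]
where the last step again uses that $\uparrow^m$ is multiplicative on $\mathbb{E}$. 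This is really an automorphism property of $\uparrow^m$ on the group algebra, which we will note explicitly. One also has to check that word reduction in $F(\mathbb{S})$ commutes with $\uparrow^m$. This holds because $\uparrow^m$ is induced by an injective map on the generators.

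For the induction, suppose $D^{n-1}(\uparrow^m f)=q^{(n-1)m}\uparrow^m D^{n-1}(f)$. Applying $D$ and using linearity together with the $n=1$ case applied to $g=D^{n-1}(f)$, we get
\[
D^n(\uparrow^m f)=q^{(n-1)m}D(\uparrow^m g)=q^{(n-1)m}q^m\uparrow^m D(g)=q^{nm}\uparrow^m D^n(f).
\]

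The main point requiring care is the inverse-letter case. There, the asymmetric index shift $s_{i+1}^{-1}$ in the definition of $R(s_i^{-1})$ must be compatible with $\uparrow^m$, and it is, since $\uparrow^m(s_{i+1}^{-1})=s_{i+m+1}^{-1}$. The rest is bookkeeping. Throughout, the hypothesis $\rho=\KSO$ is essential: a nontrivial order need not commute with $\uparrow^m$, although the specific orders $\LPO$, $\AIO$ and $\DIO$ in fact do.
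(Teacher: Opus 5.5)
Your proof is correct and is essentially the paper's argument: the paper also uses the product formula (Proposition~\ref{prop:f_product}) and the $q$-linear rule \eqref{eq:rule} to reduce to a check on single letters of $\mathbb{S}\cup\mathbb{S}^{-1}$, then inducts; the only difference is that it states the reduction to $m=n=1$, whereas you handle general $m$ directly by iterating \eqref{eq:rule}. Your explicit verification of the inverse-letter case $R(s_{i+m}^{-1})=q^m\uparrow^m R(s_i^{-1})$, and of the fact that $\uparrow^m$ preserves reduced words, fills in details the paper leaves implicit.
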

\begin{proof}
    Just need to prove the case $m=n=1$ and $f\in \mathbb{S}\cup\mathbb{S}^{-1}$, which is easy to verify by the definition of $q$-linear grammar and Proposition \ref{prop:f_product}.
\end{proof}

Using Proposition \ref{q_linear_uparrow-prop}, we derive the following $q$-Leibniz formula for the  $q$-derivative associated with a $q$-linear grammar. Recall that the $q$-binomial coefficient (also called the Gaussian polynomial) is defined by
\[
 {n + m \brack n}_q =
\begin{cases}
\dfrac{(1 - q^{n+m})(1 - q^{n+m-1}) \cdots (1 - q^{m+1})}{(1 - q^n)(1 - q^{n-1}) \cdots (1 - q)}, & \text{for } n, m \geq 0, \\[1em]
0, & \text{otherwise},
\end{cases}
\]
see Andrews \cite[Chapter~1]{Andrews-1976}. 

\begin{Proposition} \label{q_linear_prodrule-prop} Let $G$ be a $q$-linear grammar and let $D$ be  $q$-derivative associated with $G$.  For $n\geq 1$ and $f,g\in \mathbb{E}$, we have
\begin{equation}\label{q_linear_prodrule}
D^n(fg)=\sum_{k=0}^n {n \brack k}_q D^k(f) \uparrow^k \left(D^{(n-k)}(g)\right),
\end{equation}
\end{Proposition}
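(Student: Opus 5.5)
We prove \eqref{q_linear_prodrule} by induction on $n$, in the same way as the classical $q$-Leibniz formula for $D_q$. The base case $n=1$ is Proposition~\ref{lem: product_formula}, since $D(fg)=D(f)\uparrow g+fD(g)$. Here ${1\brack 0}_q={1\brack 1}_q=1$, and the $k=1$ term is $D(f)\uparrow^1 (D^0 g)$. Because $\rho=\KSO$ for a $q$-linear grammar, no reordering interferes, and all identities hold literally in $\mathbb{E}$.

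For the inductive step, assume the formula for $n$ and apply $D$ to each summand $D^k(f)\uparrow^k(D^{n-k}(g))$ using Proposition~\ref{lem: product_formula}:
\begin{equation*}
D\bigl(D^k(f)\uparrow^k D^{n-k}(g)\bigr)=D^{k+1}(f)\uparrow^{k+1}D^{n-k}(g)+D^k(f)\,D\bigl(\uparrow^k D^{n-k}(g)\bigr).
\end{equation*}
The key input is Proposition~\ref{q_linear_uparrow-prop} with $n=1$, $m=k$, which gives $D(\uparrow^k h)=q^k\uparrow^k D(h)$. The second term therefore equals $q^k D^k(f)\uparrow^k D^{n+1-k}(g)$. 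For $k=0$ this is trivially true with $\uparrow^0$ the identity, so only $m\ge1$ is needed, which is exactly what the proposition provides.

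Summing over $k$ and reindexing the first sum by $k\mapsto k-1$, the coefficient of $D^k(f)\uparrow^k D^{n+1-k}(g)$ becomes ${n\brack k-1}_q+q^k{n\brack k}_q$. This equals ${n+1\brack k}_q$ by the standard $q$-Pascal recurrence (Andrews, Chapter~1), with the boundary cases $k=0$ and $k=n+1$ handled by the convention that ${n\brack -1}_q={n\brack n+1}_q=0$. This completes the induction.

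The main point needing care is the orientation of the $q$-Pascal identity. We need the version ${n+1\brack k}_q={n\brack k-1}_q+q^k{n\brack k}_q$, not the companion version with $q^{n+1-k}$ on the other term. The factor $q^k$ arises exactly from commuting $D$ past $\uparrow^k$, so it must multiply ${n\brack k}_q$, and this matches the correct identity. A secondary check is that Proposition~\ref{q_linear_uparrow-prop} applies to arbitrary $h\in\mathbb{E}$, including words with inverses. That proposition is stated for all $f\in\mathbb{E}$, and linearity together with Propositions~\ref{prop:f_product} and~\ref{lem:f_minus_one} reduces it to single letters, so we may take it as given.
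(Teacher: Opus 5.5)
Your proposal is correct and follows essentially the same route as the paper. Both arguments use induction on $n$ with the base case from $D(fg)=D(f)\uparrow g+fD(g)$, then use Proposition~\ref{q_linear_uparrow-prop} to move $D$ past $\uparrow^k$, which produces the factor $q^k$, and finish with the $q$-Pascal identity ${n\brack k-1}_q+q^k{n\brack k}_q={n+1\brack k}_q$. Your side remarks are accurate and make explicit points the paper leaves implicit: which orientation of the $q$-Pascal rule is needed, that $\rho=\KSO$ is what makes the product rule hold literally, and that the $k=0$ term is trivial.
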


\begin{proof} We proceed by induction on $n$. The case $n=1$ follows from Proposition \ref{lem: product_formula}. Assume that the identity in \eqref{q_linear_prodrule} holds for some $n$, that is, 
\begin{equation}\label{q_linear_pf-prod}
D^{n}(fg)=\sum_{k=0}^{n} {n \brack k}_q D^k(f)\uparrow^k \left(D^{(n-k)}(g)\right).
\end{equation}
We now aim to show that the identity holds for $n+1$. To do this, we apply $q$-derivative $D$ to the both sides of \eqref{q_linear_pf-prod}:  
\begin{align*}
&D^{n+1}(fg)\\[5pt]
&=D\left(\sum_{k=0}^{n} {n \brack k}_q   D^k(f)\uparrow^k \left(D^{(n-k)} (g)\right)\right)\\[5pt]
&\overset{\eqref{q_linearaa}}{=}\sum_{k=0}^{n} {n \brack k}_q    D^{k+1} (f)\uparrow^{k+1} \left(D^{(n-k)} (g)\right) +\sum_{k=0}^{n} {n \brack k}_q    q^kD^k(f)\uparrow^{k} \left(D^{(n-k+1)} (g)\right) \\[5pt]
&=\sum_{k=1}^{n+1} {n \brack k-1}_q     D^{k} (f)  \uparrow^{k} 
 \left(D^{(n-k
+1)} (g) \right)+\sum_{k=0}^{n} {n \brack k}_q    q^kD^k(f)\uparrow^{k} \left(D^{(n-k+1)} (g)\right) 
\\[5pt]
&=D^{n+1} (f)\uparrow^{n+1} g+fD^{(n+1)} (g)\\[5pt]
&\quad \quad +\sum_{k=1}^{n} \left( {n \brack k-1}_q  +q^k {n \brack k}_q   \right)  D^{k} (f)\uparrow^{k} \left(D^{(n-k+1)} (g)\right) \\[5pt]
&=D^{n+1} (f)\uparrow^{n+1} g+fD^{(n+1)} (g)+ \sum_{k=1}^{n} {n+1 \brack  k}_q    D^{k} (f)\uparrow^{k} \left(D^{(n-k+1)} (g)\right) \\[5pt]
&=\sum_{k=0}^{n+1} {n+1 \brack  k}_q    D^{k} (f)\uparrow^{k} \left(D^{(n-k+1)} (g)\right), 
\end{align*}
as desired. This completes the proof. 
\end{proof}  

As an application of Proposition \ref{q_linear_prodrule-prop}, we provide a grammatical derivation of the following $q$-binomial inversion \cite[Corollary 3.38]{Aigner-1979}. 

\begin{Example}\label{exa:qbininv} Let $a_0, a_1, a_2,\ldots$ and $b_0,b_1,b_2,\ldots$ be two sequences. Then we have the following $q$-inversion pair: 
\begin{align}\label{q-inversion}
(i). \ a_n=\sum_{k=0}^n {n \brack k}_q  b_k  \quad  \Longleftrightarrow \quad 
(ii).\  b_n=\sum_{k=0}^n (-1)^{n-k}  {n \brack k}_q q^{n-k \choose 2} a_k .
\end{align}
\end{Example}
\begin{proof}
We consider the following $q$-linear grammar:  
\begin{equation}\label{def:grambinv}
G_{{\rm binv}} = \bigl(\{x,y\}, \{ x_j \mapsto q^{j}x_{j+1},\; y_j \mapsto q^{j}y_{j} \}, \KSO \bigr).
\end{equation}

Let $D$ be the $q$-derivative associated with $G_{{\rm binv}}$.
It is easy to check that for $n\geq 1$, 
\begin{align}\label{eq:inverfor1}
D^n(x_0)=q^{n\choose 2}x_n, \  D^n(y_0)=y_0.
\end{align}
Using Proposition \ref{lem:f_minus_one}, we have 
\[D(y_i^{-1})=-y_i^{-1}D(y_i)\uparrow (y_{i}^{-1})=-q^iy_i^{-1}y_iy_{i+1}^{-1}=-q^iy_{i+1}^{-1}.\] 
So, by induction,  we obtain that for $n\geq 0$, 
\begin{equation}\label{eq:inverfor2}
D^n(y_0^{-1})=(-1)^nq^{n\choose 2}y_n^{-1}.
\end{equation}
 
Since $G_{{\rm binv}}$ is a $q$-linear grammar, by Proposition \ref{q_linear_prodrule-prop}, we find that 
$$
D^n(x_0y_0)=\sum_{k=0}^n {n \brack k}_q D^k(x_0)\uparrow^k \left(D^{n-k}(y_0)\right)
=\sum_{k=0}^n {n \brack k}_q q^{k\choose 2} x_k y_k. 
$$
Denote $b_i$ by $q^{i\choose 2}x_iy_i$. Then, the first identity in \eqref{q-inversion} can be rewritten as 
\begin{equation}\label{pf-qinv}
D^n(x_0y_0)=a_n.
\end{equation}
Now, suppose it is true. From Proposition \ref{q_linear_prodrule-prop}, and using \eqref{eq:inverfor1} and \eqref{eq:inverfor2}, we deduce that   
\begin{align*}
b_n&=  y_n D^n(x_0)=y_nD^n(x_0y_0y^{-1}_0)\nonumber\\[5pt]
&=y_n\sum_{k=0}^n {n \brack k}_qD^k(x_0y_0)\uparrow^k \left(D^{n-k}(y^{-1}_0)\right) \nonumber \\[5pt]
& =y_n\sum_{k=0}^n {n \brack k}_q a_k(-1)^{n-k}q^{n-k \choose 2}y^{-1}_n \nonumber \\[5pt]
&=\sum_{k=0}^n {n \brack k}_q (-1)^{n-k} q^{n-k \choose 2}  a_k. 
\end{align*}
The converse can be proved similarly. 
\end{proof}

\subsection{\texorpdfstring{$q$-Exponential generating function}{q-Exponential generating function}} \label{subsec:qEGF} This section is devoted to studying the $q$-exponential generating functions of $D^n(f)$ and $\phi(D^n(f))$, making use of the $q$-derivative properties established in the previous section.

\begin{Proposition}\label{pro:q_gramma}Let $G=(S,R,\rho)$ be a $q$-grammar and let $D$ be  $q$-derivative associated with $G$. For  $f,g\in \mathbb{E}$, we have 
\begin{align} 
{\rm Gen}_q^{(G)}(f+g;u) &= {\rm Gen}_q^{(G)}(f;u)+{\rm Gen}_q^{(G)}(g;u), \label{q_gramma-add}\\[5pt]
D_q{\rm Gen}_q^{(G)}(f;u) &=
{\rm Gen}_q^{(G)}(D(f);u), \label{q_gramma-dif}
\end{align}
where $D_q$ is the real $q$-derivative operator defined in \eqref{defi:qderiv}.
\end{Proposition}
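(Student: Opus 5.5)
Both identities follow by comparing coefficients of $u^n$. We work in the ring $\mathbb{E}[[u]]$ of formal power series in $u$ with coefficients in $\mathbb{E}$. On this ring, $D_q$ acts on the variable $u$ alone: it is $\mathbb{E}$-linear and sends $u^n$ to
\[
D_q(u^n)=\frac{u^n-q^nu^n}{u}=(1-q^n)u^{n-1}
\]
for $n\ge 1$, and sends $u^0$ to $0$. Since $u$ is central and the coefficients are never shifted, nothing non-commutative enters here.

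For \eqref{q_gramma-add}, linearity of $D$ gives $D^n(f+g)=D^n(f)+D^n(g)$ for every $n\ge 0$; this follows by a one-line induction from $D(f+g)=D(f)+D(g)$. Multiplying by $u^n/(q;q)_n$ and summing over $n$ yields the claim.

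For \eqref{q_gramma-dif}, we apply $D_q$ termwise to $\Gen_q^{(G)}(f;u)=\sum_{n\ge0}D^n(f)\,u^n/(q;q)_n$. The $n=0$ term is killed. For $n\ge1$, the factor $1-q^n$ cancels against $(q;q)_n=(q;q)_{n-1}(1-q^n)$, giving
\[
D_q\,\Gen_q^{(G)}(f;u)=\sum_{n\ge1}D^n(f)\frac{u^{n-1}}{(q;q)_{n-1}}=\sum_{m\ge0}D^m\bigl(D(f)\bigr)\frac{u^m}{(q;q)_m},
\]
using $D^{m+1}(f)=D^m(D(f))$, which holds by the definition of the higher-order operator. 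The right-hand side is exactly $\Gen_q^{(G)}(D(f);u)$.

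The only point needing care is justifying the termwise application of $D_q$ to an infinite series with coefficients in the non-commutative algebra $\mathbb{E}$. This is immediate because $D_q$ is defined coefficientwise in $u$: the coefficient of $u^{n-1}$ in $D_q F$ depends only on the coefficient of $u^n$ in $F$, so no convergence issue arises. One should also note that this is precisely why the $q$-exponential normalization $(q;q)_n$ is chosen instead of $n!$. There is no genuine obstacle here.
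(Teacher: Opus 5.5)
Your proposal is correct and follows essentially the same route as the paper: linearity of $D$ gives the additivity identity, and applying $D_q$ termwise, cancelling $1-q^n$ against $(q;q)_n=(q;q)_{n-1}(1-q^n)$, and reindexing gives the second identity. Your remark that $D_q$ acts coefficientwise in $u$ (so the non-commutativity of $\mathbb{E}$ never enters) is a sound justification of the termwise step, which the paper leaves implicit.
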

\begin{proof} 
By the linearity of $q$-derivative associated with $q$-grammar, it is straightforward to verify \eqref{q_gramma-add}. 
By definition, we have
\begin{align*}
D_q {\rm Gen}_q^{(G)}(f;u) 
&= \frac{1}{u} \left(
\sum_{n\geq 0}D^n(f)\frac{u^n}{(q;q)_n}
-
\sum_{n\geq 0}D^n(f)\frac{(uq)^n}{(q;q)_n}
\right)\\
&= \frac{1}{u}  \sum_{n\geq 0}D^n(f)\frac{(1-q^n)u^n}{(q;q)_n}\\
&=  \sum_{n\geq 1}D^n(f)\frac{u^{n-1}}{(q;q)_{n-1}}\\
&=   \sum_{n\geq 0}D^{n+1}(f)\frac{u^n}{(q;q)_{n}}\\
&={\rm Gen}_q^{(G)}(D(f);u).
\end{align*}
\end{proof}

As said in Subsection \ref{subsec:qderivative}, the $q$-Leibniz formula does not hold for $q$-derivatives associated with general $q$-grammars, but we show that it is valid for $q$-linear grammars. This will lead to the multiplicative property of $q$-exponential generating functions \eqref{def:qexp}  associated with the $q$-linear grammar. We begin with the following proposition.

\begin{Proposition} Let $G$ be a $q$-linear grammar. For $m\geq 1$ and $f\in \mathbb{E}$,
$$
{\rm Gen}^{(G)}_q(\uparrow^m f;u) = \uparrow^m {\rm Gen}^{(G)}_q(f;uq^m).
$$
\end{Proposition}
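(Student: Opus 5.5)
This follows by expanding the generating function term by term and applying Proposition~\ref{q_linear_uparrow-prop} to each coefficient. By the definition \eqref{def:qexp},
\[
{\rm Gen}^{(G)}_q(\uparrow^m f;u)=\sum_{n\geq 0} D^n(\uparrow^m f)\,\frac{u^n}{(q;q)_n}.
\]
For $n\geq 1$, Proposition~\ref{q_linear_uparrow-prop} gives $D^n(\uparrow^m f)=q^{nm}\uparrow^m\bigl(D^n(f)\bigr)$. For $n=0$ the same identity holds trivially, since $D^0(\uparrow^m f)=\uparrow^m f=q^{0}\uparrow^m D^0(f)$. Substituting, we get
\[
\sum_{n\geq 0} q^{nm}\uparrow^m\bigl(D^n(f)\bigr)\frac{u^n}{(q;q)_n}
=\sum_{n\geq 0} \uparrow^m\bigl(D^n(f)\bigr)\frac{(uq^m)^n}{(q;q)_n}.
\]

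It remains to move $\uparrow^m$ outside the sum. The operator $\uparrow^m$ is $\mathbb{K}[q]$-linear on $\mathbb{E}$ and does not touch the formal variable $u$. We extend it coefficientwise to formal power series in $u$ with coefficients in $\mathbb{E}$, which is the natural meaning of $\uparrow^m$ applied to ${\rm Gen}_q^{(G)}$. The right-hand side above is then $\uparrow^m$ applied to $\sum_{n\ge0}D^n(f)\frac{(uq^m)^n}{(q;q)_n}={\rm Gen}^{(G)}_q(f;uq^m)$, which is the claimed identity.

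The only substantive input is Proposition~\ref{q_linear_uparrow-prop}, whose proof in the paper reduces to the case $m=n=1$ on letters. If that reduction needs more justification, I would supply it as follows.

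First, for a letter $s_i$ the identity $D(\uparrow s_i)=q\uparrow D(s_i)$ is exactly the $q$-linearity condition \eqref{eq:rule}, because $\rho=\KSO$. For an inverse letter, the defining formula $R(s_i^{-1})=-s_i^{-1}R(s_i)s_{i+1}^{-1}$ gives the same conclusion from \eqref{eq:rule}. Next, for a word $w_1\cdots w_k$, Proposition~\ref{prop:f_product} expresses $D(\uparrow(w_1\cdots w_k))$ as a sum of terms of the form $\uparrow w_1\cdots \uparrow w_{j-1}\,D(\uparrow w_j)\,\uparrow^2(w_{j+1}\cdots w_k)$. Each such term equals $q\uparrow\bigl(w_1\cdots w_{j-1}D(w_j)\uparrow(w_{j+1}\cdots w_k)\bigr)$, so the sum is $q\uparrow D(w_1\cdots w_k)$. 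Iterating in $m$ and $n$ gives the factor $q^{nm}$.

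I expect no real obstacle here. The one point needing care is the formal justification that $\uparrow^m$ commutes with the infinite sum in $u$, which holds because it acts coefficientwise.
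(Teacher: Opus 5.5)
Your proposal is correct and is essentially the paper's proof: expand ${\rm Gen}^{(G)}_q(\uparrow^m f;u)$, apply Proposition~\ref{q_linear_uparrow-prop} to each coefficient, absorb $q^{nm}$ into $(uq^m)^n$, and pull $\uparrow^m$ out of the sum coefficientwise. You are slightly more careful than the paper in checking the $n=0$ term separately, and your optional justification of Proposition~\ref{q_linear_uparrow-prop} (letters, then inverse letters, then words) is also sound.
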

\begin{proof} Using Proposition \ref{q_linear_uparrow-prop},  we have 
\begin{align*}
{\rm Gen}^{(G)}_q(\uparrow^m f;u)
&=\sum_{n\geq 0}D^n(\uparrow^m f)\frac{u^n}{(q;q)_n}\\
&=\sum_{n\geq 0}(q^{nm} \uparrow^m D^n(f))\frac{u^n}{(q;q)_n}\\
&=\sum_{n\geq 0} \uparrow^m D^n(f)\frac{(uq^m)^n}{(q;q)_n}
\\&=\uparrow^m {\rm Gen}^{(G)}_q(f;uq^m),
\end{align*}
as required. 
\end{proof}

The next proposition establishes the multiplicative property of $q$-exponential generating functions \eqref{def:qexp} associated with  $q$-linear grammars. 

\begin{Proposition}
Let $G$ be a $q$-linear grammar and   let $D$ be $q$-derivative associated with $G$. For $f,g\in \mathbb{E}$, we have 
    \begin{align}\label{eq:multprop}
{\rm Gen}_q^{(G)}(fg;u)= \sum_{k\geq 0} D^k(f) \frac{u^k}{(q; q)_k}  \uparrow^k
  {\rm Gen}_q^{(G)}(g;u).
\end{align}
\end{Proposition}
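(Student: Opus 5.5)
The plan is to expand the left-hand side with the $q$-Leibniz formula of Proposition \ref{q_linear_prodrule-prop} and then reindex the resulting double sum. By definition \eqref{def:qexp},
\[
{\rm Gen}_q^{(G)}(fg;u)=\sum_{n\geq 0}D^n(fg)\frac{u^n}{(q;q)_n},
\]
and since $G$ is $q$-linear, Proposition \ref{q_linear_prodrule-prop} gives, for every $n\ge 0$ (the case $n=0$ being trivial),
\[
D^n(fg)=\sum_{k=0}^n {n \brack k}_q D^k(f)\uparrow^k\bigl(D^{n-k}(g)\bigr).
\]

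Next I write ${n\brack k}_q/(q;q)_n = 1/\bigl((q;q)_k(q;q)_{n-k}\bigr)$ and set $m=n-k$. This turns the double sum over $0\le k\le n$ into a sum over independent indices $k,m\ge 0$:
\[
\sum_{k\geq 0}\sum_{m\geq 0} D^k(f)\frac{u^k}{(q;q)_k}\,\uparrow^k\bigl(D^m(g)\bigr)\frac{u^m}{(q;q)_m}.
\]
I then pull $D^k(f)u^k/(q;q)_k$ out of the inner sum. The operator $\uparrow^k$ is linear, acts only on the words, and does not touch $u$ or $q$. So the inner sum is $\uparrow^k$ applied to $\sum_m D^m(g)u^m/(q;q)_m$, which is $\uparrow^k{\rm Gen}_q^{(G)}(g;u)$. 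This gives \eqref{eq:multprop}.

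The main obstacle is justifying these manipulations with formal series in $u$ whose coefficients lie in the non-commutative algebra $\mathbb{E}$. Two points need checking:
\begin{itemize}
\item Interchanging the order of summation is legitimate because, for each fixed power $u^n$, only the finitely many pairs $(k,m)$ with $k+m=n$ contribute. The rearrangement is therefore a coefficientwise identity in $\mathbb{E}[[u]]$.
\item Extending $\uparrow^k$ coefficientwise to $\mathbb{E}[[u]]$ is legitimate, and it commutes with left multiplication by the scalar $u^k/(q;q)_k$.
\end{itemize}
Throughout I must keep the non-commutative order fixed, with the factor coming from $f$ on the left and the shifted factor coming from $g$ on the right, exactly as in Proposition \ref{q_linear_prodrule-prop}. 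Non-commutativity is not an issue beyond this, because no factors are ever swapped; only the scalars $u$ and $q$ move.
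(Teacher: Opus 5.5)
Your proposal is correct and matches the paper's proof: you expand $D^n(fg)$ with the $q$-Leibniz formula of Proposition~\ref{q_linear_prodrule-prop}, split ${n \brack k}_q/(q;q)_n$ into $1/\bigl((q;q)_k(q;q)_{n-k}\bigr)$, reindex with $m=n-k$, and pull $\uparrow^k$ out of the inner sum. Your remarks on why the rearrangement is legitimate coefficientwise in $\mathbb{E}[[u]]$, and on keeping the non-commuting factors in their left--right order, are correct details that the paper leaves implicit.
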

\begin{proof}
By Proposition \ref{q_linear_prodrule-prop}, we have
\begin{align*}
&{\rm Gen}_q^{(G)}(fg;u)= \sum_{n=0}^{+\infty} D^n(fg) \frac{u^n}{(q; q)_n}
\\&
= \sum_{n=0}^{+\infty} \left( \sum_{k=0}^n {n \brack  k}_q D^k(f) \uparrow^k \left(D^{n-k} (g) \right) \right) \frac{u^n}{(q; q)_n}
\\&
= \sum_{k=0}^{+\infty} D^k(f) \frac{u^k}{(q; q)_k} \sum_{n=k}^{+\infty} \uparrow^k \left(D^{n-k} (g) \right)\frac{u^{n-k}}{(q; q)_{n-k}}
\\&
= \sum_{k=0}^{+\infty} D^k(f) \frac{u^k}{(q; q)_k} \sum_{n=0}^{+\infty} \uparrow^k \left( D^n (g)\right) \frac{u^n}{(q; q)_n}
\\&
=\sum_{k\geq 0} D^k(f) \frac{u^k}{(q; q)_k}  \uparrow^k
  {\rm Gen}_q^{(G)}(g;u),
\end{align*}
as desired. 
\end{proof}

 We proceed to establish the multiplicative property of $q$-exponential generating functions \eqref{defi:gf}  for $q$-linear grammars under a restricted class of evaluation maps.

\begin{Definition}[Master-linear   evaluation]
  An evaluation map $\phi \colon  \mathbb{S} \rightarrow \mathbb{V}$ is called master-linear if $\phi (s_i)= \phi (s_j)$ for each master variable $s \in {S}$ and any $i,j\geq 0$. 
\end{Definition}

For illustration, the evaluation maps considered in Subsections \ref{subsec:qinvdes}, \ref{subsec:qroselle}, \ref{sec:AndrI} and \ref{sec:q-AndreII} are master-linear, while the evaluation map considered in Subsection \ref{subsec:qmajdes} fails to satisfy this condition.

By applying a master-linear evaluation $\phi$ to both sides of \eqref{eq:multprop}, we derive the following multiplicative property:

\begin{Proposition}\label{prop: q_linear_gramma-mul*}
    Let $G$ be a $q$-linear grammar and let $\phi$ be a master-linear evaluation. For $f,g\in \mathbb{E}$, we have
    \begin{align}\label{q_linear_gramma-mul*}
\phi\left({\rm Gen}_q^{(G)}(fg;u)\right) &= \phi\left({\rm Gen}_q^{(G)}(f;u)\right)\cdot \phi\left({\rm Gen}_q^{(G)}(g;u)\right). 
\end{align}
Furthermore, by induction, we obtain that for any $n>0$,  
\begin{align}\label{q_linear_gramma-mul2}
\phi\left({\rm Gen}_q^{(G)}(\prod_{k=1}^n f_k;u)\right) &= 
\prod_{k=1}^n\phi\left({\rm Gen}_q^{(G)}(f_k;u)\right). 
\end{align}
\end{Proposition}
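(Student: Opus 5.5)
The plan is to apply $\phi$ termwise to the identity \eqref{eq:multprop}. That identity holds for $q$-linear grammars and states
\[
{\rm Gen}_q^{(G)}(fg;u)= \sum_{k\geq 0} D^k(f) \frac{u^k}{(q; q)_k}\uparrow^k {\rm Gen}_q^{(G)}(g;u).
\]
Here $\phi$ is extended coefficientwise in $u$: it acts on $\mathbb{E}[[u]]$ by applying $\phi$ to each coefficient of $u^n$, with values in $\mathbb{V}[[u]]$. This extension is legitimate because every coefficient of $u^n$ on the right-hand side is a finite sum of products of elements of $\mathbb{E}$ and scalars in $\mathbb{K}[q]$. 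Indeed, the coefficient of $u^n$ is $\sum_{k=0}^n D^k(f)\uparrow^k D^{n-k}(g)/((q;q)_k(q;q)_{n-k})$. Since $\phi$ is a ring morphism $\mathbb{E}\to\mathbb{V}$ that is $\mathbb{K}[q]$-linear, we get
\[
\phi\bigl({\rm Gen}_q^{(G)}(fg;u)\bigr)=\sum_{k\ge0}\phi(D^k(f))\frac{u^k}{(q;q)_k}\,\phi\Bigl(\uparrow^k {\rm Gen}_q^{(G)}(g;u)\Bigr).
\]

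The key step is that a master-linear evaluation is invariant under the up-arrow: $\phi(\uparrow^k w)=\phi(w)$ for every word $w\in F(\mathbb{S})$, and hence for every expression by linearity. To see this, note that $\uparrow^k$ sends each letter $s_i^{\pm1}$ to $s_{i+k}^{\pm1}$, and $\phi(s_{i+k})=\phi(s_i)$ by master-linearity. Inverses are respected because $\phi(s_i^{-1})=\phi(s_i)^{-1}$, and $\phi$ is multiplicative into the commutative ring $\mathbb{V}$. One point to check is that $\uparrow^k$ is well defined on reduced words; it is, since it is the group homomorphism of $F(\mathbb{S})$ induced by the index shift on generators. Composing with $\phi$ then gives the same homomorphism as $\phi$, because the two agree on generators. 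Applying this coefficientwise yields $\phi(\uparrow^k {\rm Gen}_q^{(G)}(g;u))=\phi({\rm Gen}_q^{(G)}(g;u))$.

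This factor no longer depends on $k$, so it can be pulled out of the sum (using commutativity of $\mathbb{V}$). What remains is $\sum_k \phi(D^k(f))u^k/(q;q)_k=\phi({\rm Gen}_q^{(G)}(f;u))$, which proves \eqref{q_linear_gramma-mul*}.

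For \eqref{q_linear_gramma-mul2} I would induct on $n$. The case $n=1$ is trivial. For the inductive step, set $f=f_1\cdots f_{n-1}$ and $g=f_n$, both of which lie in $\mathbb{E}$. Then \eqref{q_linear_gramma-mul*} gives $\phi({\rm Gen}_q^{(G)}(f_1\cdots f_n;u))=\phi({\rm Gen}_q^{(G)}(f_1\cdots f_{n-1};u))\,\phi({\rm Gen}_q^{(G)}(f_n;u))$, and the induction hypothesis handles the first factor.

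The only real subtlety, and the step I would write most carefully, is justifying the coefficientwise application of $\phi$ to the formal power series and its interchange with the infinite sum over $k$. Because $D^k(f)u^k$ contributes only to degrees $\ge k$, each coefficient of $u^n$ involves only $k\le n$. The manipulation is therefore a finite rearrangement in each degree, so no convergence issues arise.
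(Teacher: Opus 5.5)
Your proposal is correct and follows the paper's own route: apply the master-linear $\phi$ to \eqref{eq:multprop}, use $\phi\circ\uparrow^k=\phi$ so that the second factor no longer depends on $k$, then induct for the $n$-fold product. You also spell out the invariance $\phi(\uparrow^k w)=\phi(w)$, which the paper leaves implicit.

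One minor slip: the scalars $1/\bigl((q;q)_k(q;q)_{n-k}\bigr)$ are not in $\mathbb{K}[q]$. So either let $\phi$ act $\mathbb{K}(q)$-linearly, or compare coefficients of $u^n/(q;q)_n$, where only the polynomial coefficients ${n \brack k}_q$ appear.
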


From Proposition \ref{prop: q_linear_gramma-mul*}, we see that reordering the variables in $f \in \mathbb{E}$  does not change the $q$-exponential generating function of  $\phi(D^n(f))$. This observation implies that the product formula  \eqref{q_linear_gramma-mul*}  is not limited solely to $q$-linear grammar. In fact, it holds for more general classes of $q$-grammar than  $q$-linear grammar when $\phi$ is a master-linear evaluation map, which yields the central result of our $q$-grammatical calculus.

 \begin{Theorem} \label{thm:product} Let $G=(S,R,\rho)$ be a $q$-grammar satisfying \eqref{eq:rule}, that is,  for each   $s_i \in \mathbb{S}$ and any $i\geq 0$, we have $
      R(s_{i+1})= R(\uparrow s_{i})=q\uparrow R(s_{i}). $ 
Let $\phi$ be a master-linear evaluation. For $f,g\in \mathbb{E}$, we have
\begin{align}
\phi\left({\rm Gen}_q^{(G)}(fg;u)\right) &= \phi\left({\rm Gen}_q^{(G)}(f;u)\right)\cdot \phi\left({\rm Gen}_q^{(G)}(g;u)\right).
\end{align}
\end{Theorem}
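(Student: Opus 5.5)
The idea is to reduce the theorem to the $q$-linear case, Proposition~\ref{prop: q_linear_gramma-mul*}. Let $G=(S,R,\rho)$ satisfy \eqref{eq:rule}, and let $G^\circ=(S,R,\KSO)$ be the grammar with the same rule but the identity order. Then $G^\circ$ is $q$-linear by definition. Write $D$ for the $q$-derivative of $G$ and $D^\circ$ for that of $G^\circ$. The key claim is
\begin{equation*}
\phi\bigl(D^n(f)\bigr)=\phi\bigl((D^\circ)^n(f)\bigr)\qquad\text{for all } n\ge 0,\ f\in\mathbb{E}.
\end{equation*}
Granting this, we get $\phi({\rm Gen}_q^{(G)}(h;u))=\phi({\rm Gen}_q^{(G^\circ)}(h;u))$ for every $h$. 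Applying this to $h=fg$, $h=f$ and $h=g$ and invoking \eqref{q_linear_gramma-mul*} for $G^\circ$ then gives the theorem.

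To prove the claim, introduce the commutative image map. Let $\pi\colon\mathbb{E}\to\mathbb{K}[q][\mathbb{S}^{\pm1}]$ be the natural algebra map into the Laurent polynomial ring in commuting variables. It is obtained by abelianizing the free group, so it is well defined on $\mathbb{E}$. Every order $\rho$ only permutes letters within each word, so $\pi\circ\rho=\pi$. Also, $\phi$ takes values in a commutative ring, so it factors as $\phi=\bar\phi\circ\pi$. It therefore suffices to prove $\pi(D^n f)=\pi((D^\circ)^n f)$, and we will do so by induction on $n$.

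The crucial step is to show that $\pi\circ D$ factors through $\pi$. That is, we want a linear operator $\bar D$ on the commutative Laurent ring with $\pi\circ D=\bar D\circ\pi$, and likewise for $D^\circ$ with the same $\bar D$. Since $\pi\circ\rho=\pi$, we have $\pi(D(w))=\pi(D^\circ(w))$ for each word $w$. So the real task is to show that $\pi(D^\circ(w))$ depends only on $\pi(w)$, i.e.\ is invariant under swapping two adjacent letters.

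By Proposition~\ref{lem: product_formula} (applied for $D^\circ$) and linearity, it is enough to compare $\pi(D^\circ(ab))$ with $\pi(D^\circ(ba))$ for letters $a,b\in\mathbb{S}\cup\mathbb{S}^{-1}$. We have $\pi(D^\circ(ab))=\pi(R(a))\,\pi(\uparrow b)+\pi(a)\,\pi(R(b))$ and $\pi(D^\circ(ba))=\pi(R(b))\,\pi(\uparrow a)+\pi(b)\,\pi(R(a))$. These do not agree in general. This is the expected main obstacle: the $\uparrow$ is attached to different letters in the two expressions.

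The plan to overcome it is to include the evaluation in the induction hypothesis. Because $\phi$ is master-linear, $\phi\circ\uparrow=\phi$. Because of \eqref{eq:rule}, $R(\uparrow s)=q\uparrow R(s)$, and hence $\phi(R(s_i))=q^i\phi(R(s_0))$. We would then prove by induction on $n$ the statement
\begin{equation*}
\pi\bigl(D^n(w)\bigr)=\pi\bigl(D^n(w')\bigr)\quad\text{whenever }\pi(w)=\pi(w'),
\end{equation*}
at least after applying $\phi$. The inductive step uses the $q$-product formula to express $D^n(ab)$ as $\sum_k {n\brack k}_q$-type terms, in the manner of Proposition~\ref{q_linear_prodrule-prop}, with \eqref{q_linearaa} applied to $G^\circ$ and then transported to $G$. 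After applying $\phi$, the $\uparrow^k$ disappear. The resulting expression, $\sum_k {n\brack k}_q\,\phi(D^kf)\,\phi(D^{n-k}g)$, is symmetric in $f$ and $g$ by ${n\brack k}_q={n\brack n-k}_q$.

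If this symmetric form of the induction turns out to be awkward, the fallback is as follows. First prove $\phi(D^n(w))=\phi((D^\circ)^n(w))$ directly by induction on $n$, expanding $D^{n}=D\circ D^{n-1}$ and using that $\rho$ commutes with $\phi$. Then apply Proposition~\ref{prop: q_linear_gramma-mul*} to $G^\circ$.
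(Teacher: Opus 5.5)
Your reduction is the same as the paper's. You replace $G$ by $G^\circ=(S,R,\KSO)$ (the paper's $\hat G$), prove $\phi(D^nf)=\phi((D^\circ)^nf)$, and then apply Proposition~\ref{prop: q_linear_gramma-mul*}. The proof of that key claim, however, has a genuine gap.

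\textbf{The fallback induction.} It does not close. From $\phi(D^{n-1}f)=\phi((D^\circ)^{n-1}f)$ and $\phi(D^nf)=\phi(D^\circ D^{n-1}f)$, you would need the implication $\phi(h_1)=\phi(h_2)\Rightarrow\phi(D^\circ h_1)=\phi(D^\circ h_2)$. This is false, because $\phi$ forgets indices while $D^\circ$ does not. For example, if $R(x_j)=q^jx_{j+1}$, then $\phi(x_0)=\phi(x_1)=x$, but $\phi(D^\circ x_0)=x\neq qx=\phi(D^\circ x_1)$.

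Nor can rearrangement invariance rescue this step. The expressions $D^{n-1}f$ and $(D^\circ)^{n-1}f$ are in general not rearrangements of each other. For $G_{\tan}$ one has $D^2(x_0)=(1+q)x_1+x_1^2x_0+qx_2x_1^2$, whereas $(D^\circ)^2(x_0)=qx_0+x_2+(1+q)x_0x_1x_2$. This is precisely the obstruction you found when comparing $\pi(D^\circ(ab))$ with $\pi(D^\circ(ba))$, propagated forward.

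\textbf{The main route.} It has the same hole. The symmetric sum $\sum_k{n\brack k}_q\phi(D^kf)\phi(D^{n-k}g)$ is available only for $D^\circ$. The $q$-Leibniz formula and \eqref{q_linearaa} are proved for $\KSO$, so nothing can be ``transported to $G$'' this way. Moreover, once you abandon the common operator $\bar D$, you never explain how reordering invariance of $\phi\circ(D^\circ)^n$ actually yields $\phi(D^nf)=\phi((D^\circ)^nf)$.

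\textbf{The missing idea.} Strengthen the induction so that it controls all further $D^\circ$-derivatives at once. Prove, for all $m,k\ge 0$,
\[
\phi\bigl((D^\circ)^kD^mf\bigr)=\phi\bigl((D^\circ)^{k+m}f\bigr).
\]
This is the paper's \eqref{q_linear_gramma-mul-3} read coefficientwise. For the step from $m$ to $m+1$:
\begin{itemize}
\item Write $D^{m+1}f=\rho(D^\circ D^mf)$.
\item Remove $\rho$, using that $\phi\circ(D^\circ)^k$ is invariant under reordering the letters of a word. This is the correct ingredient in your plan; the paper obtains it from Proposition~\ref{prop: q_linear_gramma-mul*}.
\item Apply the hypothesis at $(m,k+1)$.
\end{itemize}
Taking $k=0$ gives your key claim, and the rest of your argument then goes through.
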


Theorem \ref{thm:product}  is a consequence of Proposition \ref{prop: q_linear_gramma-mul*} together with the proposition below.

\begin{Proposition}\label{prop: q_linear_gramma-mul-2}
    Let $G=(S,R,\rho)$ be a $q$-grammar such that \eqref{eq:rule} holds and let $\hat{G}=(S,R,\KSO)$ be its corresponding $q$-linear grammar. Let 
   $\phi$ be a master-linear evaluation. For $f \in \mathbb{E}$, we have
    \begin{align}\label{q_linear_gramma-mul-2}
\phi\left({\rm Gen}_q^{(G)}(f;u)\right) &= 
\phi\left({\rm Gen}_q^{(\hat{G})}(f;u)\right). 
\end{align} 
\end{Proposition}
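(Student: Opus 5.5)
Since both generating functions are defined coefficientwise, it suffices to prove that for every $n\ge 0$ and every $f\in\mathbb{E}$,
\[
\phi\bigl(D_G^n(f)\bigr)=\phi\bigl(D_{\hat G}^n(f)\bigr),
\]
where $D_G$ and $D_{\hat G}$ denote the $q$-derivatives of $G$ and $\hat G$. Summing against $u^n/(q;q)_n$ then gives \eqref{q_linear_gramma-mul-2}.

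The key device is to pass to a commutative quotient on which both $\phi$ and the order $\rho$ become invisible. Let $\mathbb{A}$ be the commutative Laurent polynomial ring over $\mathbb{K}[q]$ in the variables $\mathbb{S}$, and let $\pi\colon\mathbb{E}\to\mathbb{A}$ be the abelianization map. Two facts are immediate.
\begin{itemize}
\item Since $\mathbb{V}$ is commutative, $\phi$ factors as $\phi=\bar\phi\circ\pi$ for a ring morphism $\bar\phi\colon\mathbb{A}\to\mathbb{V}$. (Master-linearity is not needed for this.)
\item Every order $\rho$ only permutes letters within words, so $\pi\circ\rho=\pi$.
\end{itemize}
Hence the claim follows once we show $\pi(D_G^n f)=\pi(D_{\hat G}^n f)$ for all $n$.

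For this I would show that there is a single operator $\bar D\colon\mathbb{A}\to\mathbb{A}$ with
\[
\pi\circ D_G=\bar D\circ\pi \quad\text{and}\quad \pi\circ D_{\hat G}=\bar D\circ\pi.
\]
Induction on $n$ then gives $\pi(D_G^n f)=\bar D^{\,n}\pi(f)=\pi(D_{\hat G}^n f)$.

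The natural candidate for $\bar D$ is read off from Definition~\ref{defi:q-derivative}. On a monomial $w_1\cdots w_m$ it is
\[
\bar D(w_1\cdots w_m)=\sum_{j}\pi(w_1\cdots w_{j-1})\,\pi(R(w_j))\,\uparrow\pi(w_{j+1}\cdots w_m),
\]
and applying $\pi$ to the definition shows that both $\pi\circ D_G$ and $\pi\circ D_{\hat G}$ equal this expression. The order is dropped by $\pi\circ\rho=\pi$.

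\textbf{Main obstacle: well-definedness of $\bar D$.} The right-hand side depends on an ordering of the letters, and this dependence persists in the abelianization, because $\uparrow$ acts only on letters to the right of position $j$. We must show it does not depend on that ordering, i.e.\ that $\pi(D_{\hat G}(w))$ is unchanged when two adjacent letters $w_k,w_{k+1}$ are swapped. The only terms affected are $j=k$ and $j=k+1$; all other terms either shift both letters or neither. So the check reduces to a two-letter identity. Writing $a=w_k$, $b=w_{k+1}$, it reads
\[
\pi\bigl(R(a)\uparrow b+a\,R(b)\bigr)=\pi\bigl(R(b)\uparrow a+b\,R(a)\bigr),
\]
and this generally fails. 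For example, $R(a)\uparrow b$ and $b\,R(a)$ differ by the index of $b$.

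So $\pi$ alone is too fine, and I would use master-linearity here, replacing $\pi$ by the coarser map $\pi'$ that sends each $s_i$ to a single commutative variable $s$. Then $\phi$ still factors through $\pi'$, precisely because $\phi$ is master-linear. Moreover $\pi'\circ\uparrow=\pi'$, since indices are forgotten. The swapped two-letter identity now holds: both sides become $R'(a)b+aR'(b)$, where $R'(s):=\pi'(R(s_0))$ up to powers of $q$.

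The remaining bookkeeping is the $q$-weights. Under \eqref{eq:rule}, $R(s_i)=q^i\uparrow^i R(s_0)$, so $\pi'(R(s_i))=q^i R'(s)$, and the powers $q^i$ depend on the indices that $\pi'$ forgets. I would therefore work on the ring $\mathbb{A}'$ of commutative polynomials in $\{s: s\in S\}$ with coefficients that remember the multiset of indices. Concretely, $\pi'$ is applied only at the end, after tracking $\pi$. One checks, by induction on $n$, that $\pi(D_G^n f)$ and $\pi(D_{\hat G}^n f)$ have the same image under $\pi'$, using \eqref{eq:rule} to show that swapping adjacent letters changes the $q$-exponents and the shifts compatibly. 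I expect this index and $q$-exponent accounting to be the step requiring the most care.
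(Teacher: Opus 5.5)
\textbf{There is a genuine gap.} Your reduction is sound as far as it goes. The evaluation $\phi$ factors through $\pi$ and through $\pi'$, you have $\pi\circ\rho=\pi$, and after forgetting indices one adjacent swap does commute with one application of $D$. But the argument stops exactly where the difficulty lies, and the mechanism you sketch for the last step cannot work.

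\emph{Index-forgetting quotient.} Equality of $\pi'$-images is not preserved by $D$, because $D$ reads the indices that $\pi'$ forgets. For example $\pi'(x_0)=\pi'(x_1)$, yet $\pi'(Dx_1)=q\,\pi'(Dx_0)$ by \eqref{eq:rule}. So no operator $\bar D$ exists on the index-forgetting ring. An induction whose hypothesis is ``$\pi'(D_G^nf)=\pi'(D_{\hat G}^nf)$'' cannot be pushed from $n$ to $n+1$, since $\pi'(A)$ does not determine $\pi'(DA)$.

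\emph{Index-remembering ring.} A ring that remembers the multiset of indices does not help either: there your swap identity fails, as you observed for $\pi$.

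\emph{Iterating the one-step swap.} This also fails, because $D_G^nf$ and $D_{\hat G}^nf$ are not rearrangements of each other. For $G_{\tan}$ one has $D_G^2(x_0)=(1+q)x_1+x_1^2x_0+qx_2x_1^2$, while the $\KSO$ version gives $qx_0+x_2+(1+q)x_0x_1x_2$. They agree only after indices are forgotten, and the next application of $D$ sees different $q$-exponents on the two sides.

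What you actually need is reorder-invariance at every depth: $\phi(D_{\hat G}^k(\rho h))=\phi(D_{\hat G}^k(h))$ for all $k\ge 0$. The ``index and $q$-exponent accounting'' you defer is precisely this statement, and it is not a local bookkeeping check.

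\textbf{Missing ingredient: the $q$-Leibniz formula for $\hat G$} (Proposition~\ref{q_linear_prodrule-prop}). Combined with $\phi\circ\uparrow^k=\phi$, which is where master-linearity enters, it gives $\phi(D_{\hat G}^n(gh))=\sum_k{n\brack k}_q\,\phi(D_{\hat G}^kg)\,\phi(D_{\hat G}^{n-k}h)$. This is symmetric in $g,h$.

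Equivalently, $\Phi:=\phi\circ\mathrm{Gen}_q^{(\hat G)}(\cdot\,;u)$ is multiplicative into the commutative ring $\mathbb{V}[[u]]$, hence $\Phi\circ\rho=\Phi$. Proposition~\ref{pro:q_gramma} also gives $\Phi\circ D_{\hat G}=D_q\circ\Phi$. Therefore $\Phi\circ D_G=\Phi\circ\rho\circ D_{\hat G}=D_q\circ\Phi$, so $\Phi(D_G^nf)=D_q^n\Phi(f)=\Phi(D_{\hat G}^nf)$. Taking the constant term in $u$ gives $\phi(D_G^nf)=\phi(D_{\hat G}^nf)$.

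This is exactly your ``single intertwining operator'' scheme, with $\Phi$ in place of a letter-level quotient and $D_q$ in place of $\bar D$. It is what the paper's induction on \eqref{q_linear_gramma-mul-3} carries out. Alternatively, you can keep an induction on $n$ if you quantify it over all $f$: then $\phi(D_G^{n+1}f)=\phi(D_G^{n}(\rho D_{\hat G}f))=\phi(D_{\hat G}^{n}(\rho D_{\hat G}f))=\phi(D_{\hat G}^{n+1}f)$, using the all-depth invariance at the last step.
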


\begin{proof}
Let $D_{G}$ and $D_{\hat{G}}$ be the $q$-derivatives  associated with $G$ and $\hat{G}$ respectively. By definition, we have
$$
D_{ G}=\rho \circ D_{\hat{G}}.
$$
   To prove \eqref{q_linear_gramma-mul-2}, it suffices  to show that
 for   $f \in \mathbb{E}$, 
    $$
\phi\left(D_{ G}^{n}(f)\right)= \phi\left(D_{\hat{G}}^{n}(f)\right)
$$
holds for $n\geq 0$.

We will prove a more general result:
\begin{align}\label{q_linear_gramma-mul-3}
\phi\left({\rm Gen}_q^{(\hat{G})}(D_{ G}^{n}(f);u)\right) &= 
\phi\left({\rm Gen}_q^{(\hat{G})}(D_{ \hat{G}}^{n}(f);u)\right). 
\end{align}

The case $n=0$ is trivial.
Suppose that \eqref{q_linear_gramma-mul-3} holds for some $n$. Then we consider the $n+1$ case.
First,  \eqref{q_linear_gramma-mul-3} implies 
\begin{align}\label{q_linear_gramma-mul-4}
\phi\left({\rm Gen}_q^{(\hat{G})}(D_{\hat{G}}\circ D_{ G}^{n}(f);u)\right) &= 
\phi\left({\rm Gen}_q^{(\hat{G})}(D_{\hat{G}}\circ D_{ \hat{G}}^{n}(f);u)\right)
\nonumber \\&=
\phi\left({\rm Gen}_q^{(\hat{G})}(D_{\hat{G}}^{n+1}(f);u)\right). 
\end{align}
On the other hand, by Proposition \ref{prop: q_linear_gramma-mul*}, we find that
$$
\phi\left({\rm Gen}_q^{(\hat{G})}(\rho \circ D_{ \hat{G}}\circ D_{ G}^{n}(f);u)\right) = 
\phi\left({\rm Gen}_q^{(\hat{G})}(D_{ \hat{G}}\circ D_{ G}^{n}(f);u)\right),
$$
which implies that
\begin{align}\label{q_linear_gramma-mul-5}
\phi\left({\rm Gen}_q^{(\hat{G})}( D_{ G}^{n+1}(f);u)\right) &= 
\phi\left({\rm Gen}_q^{(\hat{G})}(D_{\hat{G}}\circ D_{ G}^{n}(f);u)\right).
\end{align}
Putting \eqref{q_linear_gramma-mul-4} and \eqref{q_linear_gramma-mul-5} together, we obtain
\begin{align}\label{q_linear_gramma-mul-6}
\phi\left({\rm Gen}_q^{(\hat{G})}(D_{ G}^{n+1}(f);u)\right) &= 
\phi\left({\rm Gen}_q^{(\hat{G})}(D_{\hat{G}}^{n+1}(f);u)\right),
\end{align}
which completes the proof.
\end{proof}

\subsection{\texorpdfstring{A grammatical derivation of $q$-Hoffman's formula}{A grammatical derivation of q-Hoffman's formula}} \label{subsec:application}

Here we provide grammatical derivations of Theorem \ref{thm:Foata-Hanaa} based on the $q$-grammatical calculus developed in this section.

Let $D$ be $q$-derivative associated with $G_{\tan \cup \sec}$ defined in \eqref{defi:gramartansec}. Using Proposition \ref{lem:f_minus_one},  we have 
\begin{align*}
D(y_0^{-1})&=-y_0^{-1}D(y_0)\uparrow (y_0^{-1})\\
&=-y_0^{-1}(x_0y_1)y_1^{-1}\\
&=-y_0^{-1}x_0.
\end{align*}
Thus, by Proposition \ref{lem: product_formula}, we have 
\begin{align*}
D(y_0^{-1}x_0)&=D(y_0^{-1})x_1+y_0^{-1}D(x_0)\\
&=-y_0^{-1}x_0x_1+y_0^{-1}(1+x_0x_1)\\
&=y_0^{-1}. 
\end{align*}
By induction, we derive that  for $n\geq 0$, 
    \begin{align*} 
        D^{2n}(y_0^{-1}) =(-1)^ny_0^{-1}, \quad  D^{2n+1}(y_0^{-1}) =(-1)^{n+1}y_0^{-1}x_0
    \end{align*}  
 and 
    \begin{align*} 
        D^{2n}(y_0^{-1}x_0) =(-1)^ny_0^{-1}x_0, \quad    D^{2n+1}(y_0^{-1}x_0) =(-1)^{n}y_0^{-1}. 
    \end{align*} 
Consequently,  
 \begin{align}
{\rm Gen}_q(y_0^{-1};u)&=\sum_{n=0}^{+\infty}D^{n} (y_0^{-1}) \frac{u^{n}}{(q; q)_{n}} \nonumber \\[5pt]
&= \sum_{n\geq 0}(-1)^ny_0^{-1}\frac{u^{2n}}{(q;q)_{2n}}+\sum_{n\geq 0}(-1)^{n+1}y_0^{-1}x_0\frac{u^{2n+1}}{(q;q)_{2n+1}} \nonumber  \\[5pt]
&= y^{-1}_0(\cos_q(u) - x_0\,\sin_q(u)) \label{eq:qHoffpfa}
\end{align}
and 
\begin{align}
{\rm Gen}_q(y_0^{-1}x_0;u)&=\sum_{n\geq 0}D^{n} (y_0^{-1}x_0) \frac{u^{n}}{(q; q)_{n}} \nonumber \\[5pt]
&= \sum_{n\geq 0}(-1)^ny_0^{-1}x_0\frac{u^{2n}}{(q;q)_{2n}}+\sum_{n\geq 0}(-1)^{n}y_0^{-1}\frac{u^{2n+1}}{(q;q)_{2n+1}}\nonumber \\[5pt]
&= y^{-1}_0(x_0\,\cos_q(u) + \sin_q(u)). \label{eq:qHoffpfb}
\end{align}

\begin{proof}[Grammatical Proofs of Theorem \ref{thm:Foata-Hanaa}]
Let $\phi$ denote the evaluation map that sends $x_j$ to $x$ and $y_j$ to $y$. It is straightforward to verify that $\phi$ is a master-linear evaluation. Note that the grammar $G_{\tan \cup \sec}$ defined in \eqref{defi:gramartansec} is not a $q$-linear grammar, as the order $\rho=\DIO$  violates the defining conditions of $q$-linearity; however, this grammar still satisfies \eqref{eq:rule}.  Hence, by 
Theorem~\ref{thm:product}, we conclude that
\begin{align*}
1&=\phi({\rm Gen}_q(y_0  y_0^{-1};u)) = \phi({\rm Gen}_q(y_0;u))\phi({\rm Gen}_q(y^{-1}_0;u)). 
\end{align*}
It implies that 
\begin{align}
\phi({\rm Gen}_q(y_0;u))=&\sum_{k=0}^{+\infty} \phi(D^k(y_0)) \frac{u^k}{(q; q)_k} \nonumber \\[5pt]
&=
\frac{1}{\phi({\rm Gen}(y^{-1}_0;u))} \nonumber \\[5pt]
&\overset{\eqref{eq:qHoffpfa}}{=}\frac{y}{\cos_q(u) - x\,\sin_q(u)}, \label{eq:qHoffpfc}
\end{align}
which can be shown to be equivalent to \eqref{eq:gf:sec}. 

For \eqref{eq:gf:tan}, we find that 
\begin{align*}
\phi({\rm Gen}_q(x_0;u))&=\phi({\rm Gen}_q(y_0y_0^{-1}x_0;u))\\[5pt]
&= \phi({\rm Gen}_q(y_0;u))\phi({\rm Gen}_q(y_0^{-1}x_0;u))   
\\[5pt] 
& \overset{\eqref{eq:qHoffpfb}\eqref{eq:qHoffpfc}}{=}\frac{y}{\cos_q(u) - x\,\sin_q(u)}\times y^{-1}(x\,\cos_q(u) + \sin_q(u))\\[5pt]
&=\frac{x\cos_q(u)+\sin_q(u)}{\cos_q(u)-x\sin_q(u)}\\[5pt]
&=\frac{x+\tan_q(u)}{1-x\tan_q(u)}.  
\end{align*}
One can show that this is equivalent to \eqref{eq:gf:tan}. This completes the proof.  
\end{proof}

\section{\texorpdfstring{$q$-Grammars for  permutations}{q-Grammars for  permutations}} \label{sec:qgrammars}

In this section, we construct $q$-analogues of the two grammars established by Dumont \cite{Dumont-1996}, which address enumeration problems connected to the classical Eulerian polynomials defined on the set of permutations.

 Let $\mathfrak{S}_n$ denote the set of permutations on $[n] := \{1, 2, \dots, n\}$. For a permutation $\sigma = \sigma_1 \cdots \sigma_n \in \mathfrak{S}_n$, we assume that a zero is patched at the beginning and at the end, that is, $\sigma_0 = \sigma_{n+1} = 0$.  An index $i$ is a descent    if   $\sigma_i > \sigma_{i+1}$; otherwise, $i$ is called an ascent.  Let $\des(\sigma)$ count the number of descents of $\sigma$ and let $\asc(\sigma)$ count the number of ascents of $\sigma$.   For $n\ge1$, the bivariate Eulerian polynomials are defined by
\begin{equation}\label{defi:euler}
A_n(x,y)=\sum_{\sigma \in \mathfrak{S}_n} x^{\mathrm{asc}(\sigma)} y^{\mathrm{des}(\sigma)},
\end{equation}
and the exponential generating function of Eulerian polynomials is well known:
\begin{equation}\label{gf-Euler-a}
\sum_{n=0}^{\infty} A_n(x,y)\frac{u^n}{n!}
= \frac{x-y}{1-x^{-1}y\,e^{(x-y)u}}.
\end{equation}
 Dumont \cite{Dumont-1996} found the grammar \eqref{gramEuler} to generate the bivariate Eulerian polynomials $A_n(x,y)$ and 
Chen and Fu \cite{Chen-Fu-2017}  provided a grammatical derivation of \eqref{gf-Euler-a}, see Appendix \ref{sec:survey}.

For a permutation $\sigma = \sigma_1 \cdots \sigma_n \in \mathfrak{S}_n$, an index $1 \le i \le n$ is called an excedance if $\sigma_i > i$, or a drop if $\sigma_i < i$, or a fixed point if $\sigma_i = i$. Clearly, $n$ cannot be an excedance and $1$ cannot be a drop. The number of excedances, the number of drops and the number of fixed points of $\sigma$ are denoted by $\exc(\sigma)$, $\drop(\sigma)$ and $\fix(\sigma)$, respectively. A drop of a permutation is also called an anti-excedance.

The joint distribution of (exc, fix) was determined by Foata-Sch\"utzenberger \cite{Foata-Sch-1970}, see also Shin-Zeng \cite{Shin-Zeng-2012}. For $n \ge 1$, define
\[
F_n(x,z) = \sum_{\sigma \in \mathfrak{S}_n} x^{\exc(\sigma)} z^{\fix(\sigma)}
\]
and define $F_0(x,z) = 1$. Then
\begin{equation}\label{gf:rossel}
\sum_{n=0}^{\infty} F_n(x,z) \frac{u^n}{n!} = \frac{(1-x)e^{zu}}{e^{xu} - x e^u}. 
\end{equation}
Writing
\begin{equation*}\label{defi:Rossele}
F_n(x,y,z) = \sum_{\sigma \in \mathfrak{S}_n} x^{\exc(\sigma)} y^{\drop(\sigma)} z^{\fix(\sigma)}
\end{equation*}
and $F_0(x,y,z) = 1$, \eqref{gf:rossel} can be converted into the homogeneous form
\begin{equation*}\label{gf:birossel}
\sum_{n=0}^{\infty} F_n(x,y,z) \frac{u^n}{n!} = \frac{(y-x)e^{zu}}{y e^{xu} - x e^{yu}}.  
\end{equation*}

The cycle Roselle polynomials $F_n(x,y,z|\beta)$ are defined by 
\begin{equation}\label{defi:cycRossele}
F_n(x,y,z|\beta) = \sum_{\sigma \in \mathfrak{S}_n} x^{\exc(\sigma)} y^{\drop(\sigma)} z^{\fix(\sigma)}\beta^{{\cyc}(\sigma)}.
\end{equation}
Using the exponential formula, Ksavrelof-Zeng  \cite{Ksavrelof-Zeng-2003} found that 
\begin{equation}\label{gf:birosselcyc}
\sum_{n=0}^{\infty} F_n(x,y,z|\beta) \frac{u^n}{n!} = \left(\frac{(y-x)e^{zu}}{y e^{xu} - x e^{yu}}\right)^{\beta}. 
\end{equation}

Dumont \cite{Dumont-1996} showed that the grammar given in \eqref{gram-Rosselle} generates the polynomials  $F_n(x,y,z)$, which he referred to as  Roselle polynomials.  Chen and Fu \cite{Chen-Fu-2023a} provided a grammatical derivation of \eqref{gf:birossel}.   Ma-Ma-Yeh-Yeh \cite{MaMaYehYeh2024}  found the grammar to generate cycle Roselle polynomials $F_n(x,y,z|\beta)$.

This section is devoted to establishing $q$-grammars for $q$-analogues of Eulerian polynomials \eqref{defi:euler} and cycle Roselle polynomials \eqref{defi:cycRossele}.

Several $q$-analogs of Eulerian polynomials with combinatorial meanings have been studied in the literature (see \cite{Carlitz1975, Carlitz1954, Garsia1979,  Petersen2015, ShareshianWachs2007,ShareshianWachs2011}). In this section, we mainly provide $q$-grammars for $q$-maj-Eulerian polynomials and $q$-inv-Eulerian polynomials.   

Recall that the \textit{major index} $\maj(\sigma)$ of a permutation $\sigma \in \mathfrak{S}_n$ is the sum of all the descents of $\sigma$, i.e., 
\[
\maj(\sigma) := \sum_{i \colon \sigma_i > \sigma_{i+1}} i.
\]
The inversion statistic is defined by 
\[
\inv(\sigma)= \#\bigl\{ (i,j) \colon i < j,\,  \sigma_i > \sigma_j \bigr\}.
\]

The $q$-maj-Eulerian polynomials are defined by: 
\begin{equation}\label{defi:qEulermaj}
A^{ \maj}_n(q;x,y)=\sum_{\sigma \in \mathfrak{S}_n}q^{\operatorname{maj} (\sigma)} x^{\operatorname{asc} (\sigma)} y^{\des (\sigma)}.
\end{equation}

The $q$-inv-Eulerian polynomials are defined by: 
\begin{equation}\label{defi:qEulerinv}
A^{  \inv}_n(q;x,y)=\sum_{\sigma \in \mathfrak{S}_n}q^{\inv (\sigma)} x^{\operatorname{asc} (\sigma)} y^{\des (\sigma)}.
\end{equation}

To introduce the $q$-analogue of cycle Roselle polynomials, let us first interpret Roselle polynomials $F_n(x,y,z)$ based on the one-line representation of permutations.  Let $\sigma = \sigma_1 \cdots \sigma_n \in \mathfrak{S}_n$.  We first place a bar after each right-to-left minimum of $\sigma$ and place a bar at the beginning of $\sigma$. Recall that a right-to-left minimum of $\sigma$ is an element $\sigma_i$ such that $\sigma_i < \sigma_j$ for every $j > i$. For the permutation $\sigma = 5\,4\,1\,2\,7\,3\,6\,9\,10\,8$, we have
\[
\sigma =\mid 5\,4\,1 \mid 2 \mid 7\,3 \mid 6 \mid 9\,10\,8 \mid.
\]
If there is only one element $\sigma_j$ between two bars, then we call $\sigma_j$ an isolated element. The number of isolated elements of $\sigma$ is denoted by $\operatorname{isol}(\sigma)$. 

In order to single out ascents that are not isolated elements, we say that an index $0 \leq i \leq n-1$ of $\sigma$ is a non-isolated ascent if $i$ is an ascent and $\sigma_{i+1}$ is not isolated. The number of non-isolated ascents of $\sigma$ is denoted by $\operatorname{iasc}(\sigma)$. It is easy to check that 
\[
\operatorname{iasc}(\sigma)+\operatorname{isol}(\sigma)=\operatorname{asc}(\sigma).
\]

We now construct a bijection $\psi$ on $\mathfrak{S}_n$. Let $\sigma \in \mathfrak{S}_n$. 
Take the cycle decomposition of $\sigma$ such that cycles are written with their smallest element last and the cycles are written in increasing order of their smallest element. Then erasing the parentheses yields $\tau=\psi(\sigma)$. 

For example, let $\sigma= 5\,2\,7\,1\,4\,6\,3\,9\,10\,8$ and we adopt the following cycle form of $\sigma$: $(5\,4\,1)\, (2) \, (7\,3)\, (6) \, ( 9\,10\,8)$. Then $\tau=\psi(\sigma)=5\,4\,1\,2\,7\,3\,6\,9\,10\,8$. 

It is easy to check that 
\begin{align*}
\operatorname{drop}(\sigma)&=\des(\tau)-1,\\[2pt]
\operatorname{exc}(\sigma)&=\operatorname{iasc}(\tau),\\[2pt]
\operatorname{fix}(\sigma)&=\operatorname{isol}(\tau),\\[2pt]
\operatorname{cyc}(\sigma)&=\operatorname{RLmin}(\tau).
\end{align*}
Note that the map $\psi$  can be viewed as a variation of Foata's first fundamental transformation \cite{Foata-Sch-1970}. 

Hence, cycle Roselle polynomials \eqref{defi:cycRossele} can be interpreted as 
\begin{equation}\label{defi:Rossele-lin}
F_n(x,y,z|\beta) = \sum_{\sigma \in \mathfrak{S}_n} x^{\operatorname{iasc}(\sigma)} y^{\des(\sigma)-1} z^{\operatorname{isol}(\sigma)}\beta^{\operatorname{RLmin}(\sigma)}.
\end{equation}

Based on the definition \eqref{defi:Rossele-lin} of cycle Roselle polynomials, we consider the following  cycle $q$-inv-Roselle polynomials
\begin{align}\label{def:qRoselle}
F^{\inv}_n(q;x,y,z|\beta) 
&=\sum_{\sigma \in \mathfrak{S}_n} q^{\inv(\sigma)} x^{\operatorname{iasc}(\sigma)}z^{\operatorname{isol}(\sigma)} y^{\des(\sigma)-1} \beta^{\operatorname{RLmin}(\sigma)}.
\end{align}

The main objective of this section is to establish $q$-grammars for the $q$-maj-Eulerian polynomials \eqref{defi:qEulermaj},   the $q$-inv-Eulerian polynomials \eqref{defi:qEulerinv} and the $q$-inv-Roselle polynomials \eqref{def:qRoselle}.  Our proofs rely on the grammatical labeling technique introduced by Chen and Fu \cite{Chen-Fu-2017}. As outlined in Appendix \ref{sec:survey},   grammatical labeling exhibits how the substitution rules in context-free grammar arise in the construction of the combinatorial structures.  Furthermore, this technique carries over naturally to the $q$-grammar context.  

Harnessed by these $q$-grammars, we develop a $q$-calculus to derive the $q$-exponential generating functions for the $q$-inv-Eulerian polynomials \eqref{defi:qEulerinv} and the $q$-inv-Roselle polynomials \eqref{def:qRoselle} based on the framework developed in Section \ref{secqgram}. The resulting identities serve as the $q$-analogues of \eqref{gf-Euler-a} and \eqref{gf:birosselcyc}.   Setting $x=1$ in Theorem \ref{thm:qEuler}, we recover the $q$-analogue of the generating function of Eulerian polynomials due to Stanley  \cite{Stanley-1976}.

\begin{Theorem}\label{thm:qEuler} We have 
\begin{equation}
\sum_{n\geq 0} A^{  \inv}_n(q;x,y)\frac{u^n}{(q;q)_n}=\frac{x-y}{1-x^{-1}y e_q((x-y)u)},
\end{equation}
where $ e_q(u)$ is given by \eqref{defi:expfor}.  
\end{Theorem}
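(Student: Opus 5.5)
The plan is to work with the $q$-grammar
$G_{\inv}=(\{x,y\},\{x_j\rightarrow q^j y_jx_{j+1},\ y_j\rightarrow q^j y_jx_{j+1}\},\AIO)$ announced in Subsection~\ref{subsec:qinvdes}, together with the master-linear evaluation $\phi=\{x_j\mapsto x,\ y_j\mapsto y\}$. The proof splits into a combinatorial half and an analytic half.

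\textbf{Combinatorial half.} Show that $\phi(D^n(x_0))=A^{\inv}_n(q;x,y)$ for all $n\ge 0$; the case $n=1$ is $\phi(y_0x_1)=xy$. Use grammatical labeling in the style of Chen--Fu. Label each gap of $\sigma$ (with the patched zeros) by $x$ if it is an ascent and by $y$ if it is a descent, and attach an index to the letters. Then insert $n+1$ into the permutations of $\mathfrak S_n$. Inserting into an ascent or a descent position turns that label into one $y$ and one $x$ label, which is the rules $x\rightarrow yx$, $y\rightarrow yx$. The index bookkeeping (the $\uparrow$ on letters to the right and the factor $q^j$) must record the number of new inversions created, namely the number of entries to the right of the insertion point. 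The $\AIO$ reordering keeps the letters sorted by position, so the $j$-th letter sits at the right gap.

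The real work is to verify that the index of a letter equals its distance from the right end. Once that holds, $D$ produces each permutation exactly once with weight $q^{\inv}$. I expect this step to be the main obstacle.

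\textbf{Analytic half.} This uses only the calculus of Section~\ref{secqgram}. Both rules satisfy \eqref{eq:rule}, since $q^{j+1}y_{j+1}x_{j+2}=q\uparrow(q^jy_jx_{j+1})$, so Theorem~\ref{thm:product} applies. Write $F(u)=\phi(\Gen_q(x_0;u))$, which is the left-hand side, and $G(u)=\phi(\Gen_q(x_0^{-1};u))$. The steps are as follows.

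\begin{enumerate}
\item From $1=\phi(\Gen_q(x_0x_0^{-1};u))$ and Theorem~\ref{thm:product}, get $FG=1$.
\item Since $D(x_0)=D(y_0)$, we have $D(x_0-y_0)=0$. Hence $\phi(\Gen_q(y_0;u))=F-(x-y)$.
\item By Proposition~\ref{lem:f_minus_one}, $D(x_0^{-1})=-x_0^{-1}y_0x_1x_1^{-1}=-x_0^{-1}y_0$, up to the $\AIO$ reordering, which $\phi$ ignores.
\item Combining \eqref{q_gramma-dif} with Theorem~\ref{thm:product} gives
\[
D_qG=-G\bigl(F-(x-y)\bigr)=-1+(x-y)G .
\]
\end{enumerate}

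Put $c=x-y$. The relation in step 4 reads $G(qu)=(1-cu)G(u)+u$, with initial condition $G(0)=1/x$. This recursion determines the power series $G$ uniquely, coefficient by coefficient.

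Now use $e_q(qu)=(1-u)e_q(u)$, which follows from \eqref{defi:expfor}. It shows that $e_q(cu)$ solves the homogeneous equation, and the constant $1/c$ is a particular solution. Hence
\[
G(u)=\frac1{x-y}-\frac{y}{x(x-y)}\,e_q\bigl((x-y)u\bigr).
\]
Inverting, $F=1/G=(x-y)/\bigl(1-x^{-1}y\,e_q((x-y)u)\bigr)$, which is the claimed identity.
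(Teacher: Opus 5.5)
Your argument is correct. The combinatorial half is the paper's proof of Theorem~\ref{q-inv-mult}. There, the gap at position $i$ gets index $n+1-i$, which is the number of entries to its right, and the labels are read right to left. If you take this as the definition of the index, the step you expect to be the main obstacle is immediate. Inserting $n+1$ into a gap of index $j$ creates $j$ inversions and produces the two new labels $y_j$ and $x_{j+1}$. It leaves the gaps to its right unchanged and raises by one the index of each gap to its left. Those gaps are exactly the later letters of the word, which is what $\uparrow$ acts on.

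The analytic half takes a different route from the paper. The paper computes all iterated derivatives explicitly (Proposition~\ref{lem:xyminus}). From $D(x_i^{-1}y_i)=q^ix_i^{-1}y_i(x_{i+1}-y_{i+1})$ and $D(x_j-y_j)=0$ it gets $\phi(D^n(x_i^{-1}))=-q^{ni}x^{-1}y(x-y)^{n-1}$. It sums this series directly into $e_q$ (Corollary~\ref{coro:xyminus}) and then inverts using $x_ix_i^{-1}=1$.

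You use only the single derivative $D(x_0^{-1})=-x_0^{-1}y_0$. You combine \eqref{q_gramma-dif} with multiplicativity, applied both to $x_0x_0^{-1}$ and to $x_0^{-1}y_0$, to get the linear $q$-difference equation $G(qu)=(1-cu)G(u)+u$. You then solve it via $e_q(qu)=(1-u)e_q(u)$ and uniqueness of the recursion $(1-q^n)g_n=cg_{n-1}-\delta_{n,1}$.

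Your route is the $q$-analogue of solving a linear differential equation, and it never tracks indices through iterated derivatives. That is a genuine simplification. In $\mathbb{E}$ one actually has $D^n(x_i^{-1}y_i)=q^{ni}x_i^{-1}y_i\prod_{k=1}^{n}(x_{i+k}-y_{i+k})$, which collapses to the power $(x-y)^n$ only after applying $\phi$.

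The paper's route, in exchange, gives the explicit coefficients $\phi(D^n(x_i^{-1}))$ for every $n$ and every shift $i$. The general shift is reused for $z_1$ in the Roselle computation. Your argument also covers general $i$ verbatim, with $u$ replaced by $uq^i$.
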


\begin{Theorem}\label{thm:qRossel} We have 
\begin{align}
&\sum_{n\geq 0} F^{\inv}_n(q;x,y,z|\beta)\frac{u^n}{(q;q)_n}\nonumber \\[5pt]
&\quad = \prod_{k=0}^{+\infty}
    \frac{1-x^{-1}y e_q((x-y)uq^{k+1})}{1-\beta uq^k(z-y)-x^{-1}y(1+\beta uq^k(x-z)) e_q((x-y)uq^{k+1})}, 
\end{align}
where $ e_q(u)$ is defined in \eqref{defi:expfor}. 
\end{Theorem}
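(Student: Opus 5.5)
The plan is to combine a grammatical-labeling argument for $G_{\cyc}$ with the $q$-grammatical calculus of Section~\ref{secqgram}. The grammar $G_{\cyc}$ has order $\KSO$, and each rule satisfies \eqref{eq:rule}, since every right-hand side is $q^j$ times the $\uparrow^j$-shift of the $j=0$ rule. Hence $G_{\cyc}$ is $q$-linear, and the evaluation $\phi=\{x_j\mapsto x,\ y_j\mapsto y,\ z_j\mapsto z,\ e_j\mapsto e\}$ is master-linear. I take $t=\beta$ and $e=1$.

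\textbf{Step 1: combinatorial identification (the main obstacle).} I will show that
\[
\phi\bigl(D^n(e_0)\bigr)=e\,F^{\inv}_n(q;x,y,z|\beta),
\]
using the linear model \eqref{defi:Rossele-lin}. The labeling is built by inserting the letter $n+1$ into $\sigma\in\mathfrak S_n$, read with its bars after right-to-left minima, presumably inserting the new letter from the left in the style of the inv-Eulerian grammar $G_{\inv}$:
\begin{itemize}
\item an $e$ marks the final slot, where the new letter becomes a new isolated right-to-left minimum block; the rule $e_j\to\beta q^j e_j z_{j+1}$ accounts for one new cycle, weighted by $\beta$, and one new isolated element, labelled $z$;
\item $z$ labels an isolated element, and $x,y$ label non-isolated ascent and descent slots. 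Inserting into any of these slots yields $y_j x_{j+1}$, because an isolated element stops being isolated.
\end{itemize}
The index $j$ of a label will record the number of letters to its left in the current word; inserting there creates exactly $j$ new inversions, which the factor $q^j$ accounts for. The order $\KSO$ keeps the labels in positional order, and $\uparrow$ shifts the labels to the right of the insertion point. The delicate part is checking that in every case the produced word matches this labeling exactly, including the boundary slot carrying $e$.

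\textbf{Step 2: reduce $z_0$ to $x_0$.} We have $D(z_0)=y_0x_1=D(x_0)$, so $D(z_0-x_0)=0$ and therefore $\phi\,\Gen_q(z_0;u)=Z(u)=X(u)+z-x$, where $X(u)=\phi\,\Gen_q(x_0;u)$. The rules for $x,y$ are those of $G_{\inv}$. By Proposition~\ref{prop: q_linear_gramma-mul-2}, changing the order from $\AIO$ to $\KSO$ does not affect $\phi\,\Gen_q$. So from the proof of Theorem~\ref{thm:qEuler} I get
\[
X(v)=\frac{x-y}{1-w(v)},\qquad w(v)=x^{-1}y\,e_q\bigl((x-y)v\bigr).
\]
Consequently
\[
Z(v)=\frac{(z-y)+w(v)(x-z)}{1-w(v)}.
\]

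\textbf{Step 3: a $q$-difference equation for $E(u)=\phi\,\Gen_q(e_0;u)$.} By \eqref{q_gramma-dif}, $D_qE(u)=\beta\,\phi\,\Gen_q(e_0z_1;u)$. I then apply \eqref{eq:multprop} and the shift proposition $\Gen_q(\uparrow^m f;u)=\uparrow^m\Gen_q(f;uq^m)$; since $\phi$ is master-linear, the shifts disappear under $\phi$. This gives
\[
D_qE(u)=\beta E(u)Z(qu),\qquad\text{that is,}\qquad E(qu)=E(u)\bigl(1-\beta uZ(qu)\bigr).
\]
Iterating this relation and using $E(0)=e=1$, together with convergence in the $u$-adic topology, gives
\[
E(u)=\prod_{k\ge0}\bigl(1-\beta uq^kZ(uq^{k+1})\bigr)^{-1}.
\]
Substituting the formula for $Z$ from Step~2 gives each factor as
\[
\frac{1-w}{\,1-w-\beta uq^k(z-y)-\beta uq^k w(x-z)\,},\qquad w=w(uq^{k+1}).
\]
This is exactly the factor in Theorem~\ref{thm:qRossel}.
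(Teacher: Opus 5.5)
Your proposal is correct and is essentially the paper's proof. Step~1 is Theorem~\ref{q-Roselle-thm}, with the same labeling: a slot's index is the number of letters of $\sigma$ to its \emph{right}, equivalently the number of labels preceding it in the grammar word read from right to left, so inserting the maximum $n+1$ there creates exactly that many inversions. Steps~2--3 reproduce Theorem~\ref{q-inv-cycle-grammar}: $E(u)-E(uq)=\beta u\,E(u)\,\phi\bigl(\Gen_q(z_1;u)\bigr)$, with $\phi\bigl(\Gen_q(z_1;u)\bigr)$ obtained from the $G_{\inv}$ result via $D(z_j-x_j)=0$.

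One small correction concerns the final iteration. Each factor is only $1+O(u)$, so the convergence is not $u$-adic. Instead, the product and the limit $E(uq^N)\to E(0)$ converge $q$-adically, coefficientwise in $u$, because the $k$-th factor is $1+O(q^k)$.
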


The rest of this section is organized as follows. Subsection~\ref{subsec:qmajdes} presents a $q$-grammar for the $q$-maj-Eulerian polynomials \eqref{defi:qEulermaj}, as shown  in Theorem~\ref{q-maj-mult}; 
Subsection~\ref{subsec:qinvdes}  provides a $q$-grammar for the $q$-inv-Eulerian polynomials \eqref{defi:qEulerinv}, as described in  Theorem~\ref{q-inv-mult}; Based on Theorem~\ref{q-inv-mult}, Subsection~\ref{subsec:qcalinvdes} delivers  a grammatical derivation of Theorem \ref{thm:qEuler}. 
Subsection~\ref{subsec:qroselle}  introduces a  $q$-grammar for the $q$-inv-Roselle polynomials \eqref{def:qRoselle} (see Theorem~\ref{q-Roselle-thm}). Subsequently, relying on the result of Theorem~\ref{q-Roselle-thm},   Subsection~\ref{subsec:qcalroselle} derives  Theorem \ref{thm:qRossel} in a purely grammatical manner within the $q$-calculus framework.

\subsection{\texorpdfstring{$q$-Grammar for $q$-maj-Eulerian polynomials}{q-Grammar for q-maj-Eulerian polynomials}} \label{subsec:qmajdes}
We have the following consequence. 

\begin{Theorem}\label{q-maj-mult}
Let $G_{\maj}$ be the $q$-grammar defined by
\begin{equation}\label{def:grammaj}
G_{\maj} = \bigl(\{x,y\}, \{ x_j \mapsto q^j x_0 y_0,\; y_j \mapsto q^j x_0 y_0 \}, \LPO\bigr).
\end{equation}
Let $D$ be the $q$-derivative associated with $G_{\maj}$ and define the evaluation map $\phi$ by
$\phi(x_j)=xq^j$ and $\phi(y_j)=yq^j$. Then
\begin{equation*}
\phi\bigl(D^n(x_0)\bigr) = A_n^{\maj}(q;x,y).
\end{equation*}
\end{Theorem}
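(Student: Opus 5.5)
The plan is to avoid describing the individual words of $D^n(x_0)$ and to work only at the level of their $\phi$-images. We show that $\phi(D^n(x_0))$ and $A_n^{\maj}(q;x,y)$ satisfy the same recurrence in $n$, namely the classical insertion recurrence for $(\asc,\des,\maj)$.

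\textbf{Step 1 (effect of $D$ on one word).} Let $W$ be an $\LPO$-sorted word with $a$ letters from the $x$-family and $d$ letters from the $y$-family, all with positive exponents (no inverses arise from $x_0$). Then $\phi(W)=x^ay^dq^{s}$, where $s$ is the sum of the indices. By Definition~\ref{defi:q-derivative}, $D(W)$ is a sum of $a+d$ words, one for each letter $w_p$. The term for $w_p$ is obtained as follows:
\begin{itemize}
\item $w_p$ (of index $j$) is replaced by $q^jx_0y_0$;
\item every letter after $w_p$ has its index raised by $1$.
\end{itemize}
Since $\phi$ is multiplicative and commutative, the order $\rho=\LPO$ is irrelevant after evaluation. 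The factor $q^j$ exactly compensates for the removed $\phi(w_p)=xq^j$ (or $yq^j$), while $\phi(x_0y_0)=xy$. Hence the $\phi$-image of this term is $\phi(W)\cdot q^{k}\cdot y$ if $w_p$ is an $x$-letter, and $\phi(W)\cdot q^{k}\cdot x$ if it is a $y$-letter. Here $k$ is the number of letters after position $p$.

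Because $\LPO$ places all $x$-letters before all $y$-letters, we read off the values of $k$:
\begin{itemize}
\item the $y$-letters give $k=0,1,\dots,d-1$;
\item the $x$-letters give $k=d,d+1,\dots,a+d-1$.
\end{itemize}
Therefore
\[
\phi(D(W))=\phi(W)\Bigl(x\,[d]_q+y\,q^{d}[a]_q\Bigr),\qquad [m]_q=1+q+\cdots+q^{m-1}.
\]
This depends only on $\phi(W)$, since $a$ and $d$ are the $x$- and $y$-degrees. So there is a linear operator $T$ on $\mathbb{Z}[q][x,y]$, sending $x^ay^dq^s\mapsto x^ay^dq^s(x[d]_q+yq^d[a]_q)$, with $\phi\circ D=T\circ\phi$ on the relevant words. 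Consequently $\phi(D^n(x_0))=T^n(x)$. One routine check is needed: every word occurring in $D^n(x_0)$ has exactly $n+1$ letters, matching the $n+1$ gaps of a permutation of $[n]$.

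\textbf{Step 2 (combinatorial side).} Next we show $A_{n+1}^{\maj}=T(A_n^{\maj})$ using the standard insertion of $n+1$ into the $n+1$ gaps $0,\dots,n$ of $\sigma\in\mathfrak{S}_n$. With the zero padding, the ascent and descent gaps satisfy $\asc+\des=n+1$.
\begin{itemize}
\item Inserting $n+1$ into a descent gap replaces a descent by an ascent followed by a descent, so $\asc$ increases by $1$.
\item Inserting into an ascent gap increases $\des$ by $1$.
\end{itemize}
The classical fact (Gessel/MacMahon labeling) is that the resulting increments of $\maj$ are as follows: the $d=\des(\sigma)$ descent gaps, including the final gap, read right to left, give increments $0,1,\dots,d-1$; the ascent gaps, read left to right, give increments $d,\dots,n$. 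This yields exactly the factor $x[d]_q+yq^d[a]_q$.

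\textbf{Step 3 (conclusion).} With the base case $\phi(D^0(x_0))=x=A_0^{\maj}$ (the empty permutation has one ascent gap), or alternatively $A_1^{\maj}=xy$, induction gives the theorem.

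The main obstacle is Step 2: the $\maj$-increment labeling must be checked carefully with the paper's conventions, namely $\sigma_0=\sigma_{n+1}=0$, so that gap $n$ is always a descent and gap $0$ always an ascent, while $\maj$ sums only over $1\le i\le n-1$. I would first verify it directly for $n=2,3$ against $D^2(x_0)=x_0^2y_0+x_0y_0y_1$, whose image is $x^2y+qxy^2$. I would then prove the labeling by the usual case analysis on whether the descents to the right of the insertion point shift by one.
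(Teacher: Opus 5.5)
Your proposal is correct, and it takes a genuinely different route from the paper.

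\textbf{The paper's route.} The paper works at the level of words. It builds a recursive grammatical labeling of each $\sigma\in\mathfrak{S}_n$ by letters $x_i,y_i$, with extra powers of $q$ recorded under entries. It then asserts the noncommutative identity $D^n(x_0)=\sum_{\sigma\in\mathfrak{S}_n}\omega(\sigma)$, by matching the $i$-th summand of the $q$-product formula with the insertion of $n+1$ at the position carrying the $i$-th label. Only at the end does it apply $\phi$, using $\phi(\omega(\sigma))=q^{\maj\sigma}x^{\asc\sigma}y^{\des\sigma}$.

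\textbf{Your route.} You evaluate immediately. Three observations give $\phi\circ D=T\circ\phi$ with $T(x^ay^d)=x^ay^d\bigl(x[d]_q+yq^d[a]_q\bigr)$:
\begin{itemize}
\item the factor $q^j$ in $R(s_j)$ cancels the $q^j$ in $\phi(s_j)$;
\item each letter to the right of the differentiated one contributes one factor $q$ through $\uparrow$;
\item $\LPO$ places the $d$ letters $y$ last.
\end{itemize}
The theorem then reduces to the classical maj-insertion recurrence. You state its increment pattern correctly: descent gaps read from the right give $0,\dots,d-1$, and ascent gaps read from the left give $d,\dots,n$, with gap $n$ always a descent and gap $0$ always an ascent under the padding. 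You also correctly read $\maj$ as summing over $1\le i\le n-1$ only. The $n=1$ case $\phi(D(x_0))=xy$ forces this reading.

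\textbf{What each buys.} Your argument is shorter and makes every $q$-power explicit, where the paper only says the key identity is ``straightforward to verify.'' It also shows exactly why $\LPO$ and the evaluation $\phi(s_j)=sq^j$ are the right choices. The cost is that it says nothing about the individual words or coefficients of $D^n(x_0)$. The paper's labeling, if fully carried out, gives a combinatorial model for the whole noncommutative expansion, in the spirit of the problems in Subsection~\ref{sec:3.8}.

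One small point: your ``routine check'' that every word in $D^n(x_0)$ has $n+1$ letters is not needed, since $T$ is defined on all monomials $x^ay^d$.
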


\begin{proof}
Let $\sigma=\sigma_1\cdots\sigma_n\in\mathfrak{S}_n$. We first define the grammatical labeling of $\sigma$. Note that in the grammatical labeling of $\sigma$, descent positions are labeled by~$y$ and ascent positions are labeled by~$x$. The subscripts of the $x$-labels are nonincreasing from left to right, and the subscripts of the $y$-labels are nondecreasing from left to right.  The weight $\omega(\sigma)$ of $\sigma$ is defined as the product of the labels in its grammatical labeling, taken according to the Letter-Priority Order (LPO).

The grammatical labeling of $\sigma$ is defined recursively below.
Successively remove the entries $n,n-1,\dots,m+1$ from $\sigma$
to obtain the increasing sequence $\sigma^{(0)}=1,2,\dots,m$.
Adjoin  $0$ at both ends of $\sigma^{(0)}$.

First, for $1\le i\le m$, label the position immediately before $\sigma^{(0)}_i$ by $x_0$,
and label the position after $\sigma^{(0)}_m$ by $y_0$. That is,
\[
\sigma^{(0)}=
\begin{array}{ccccccccccccccccccc}
0 &_{x_0}& \sigma^{(0)}_1 &_{x_0}& \sigma^{(0)}_2 &_{x_0}&
\cdots & \sigma^{(0)}_m &_{y_0}& 0
\end{array}.
\]
In this case, the weight of 
$\sigma^{(0)}$ is 
\[
\omega(\sigma^{(0)})=x^m_0y_0.
\]

Let $\sigma^{(1)}$ be the permutation obtained from $\sigma$ by removing $n,n-1,\dots,m+2$,
and suppose that $m+1$ is inserted into $\sigma^{(0)}$ at position $p$ to yield $\sigma^{(1)}$.

There are two cases:

\noindent{\bf Case 1:} If $1\le p\le m$, then position $p$ of $\sigma^{(1)}$ is labeled $x_0$
and position $p+1$ is labeled $y_0$.
The subscripts of all labels before position $p$ are increased by~$1$,
and $y_0$ is changed to $y_1$. Explicitly,
\[
\sigma^{(1)}=
\begin{array}{ccccccccccccccccccc}
0 &_{x_1}& \sigma^{(0)}_1 &\cdots&_{x_1}& m+1 &_{y_0}&
\sigma^{(0)}_p &_{x_0}& \cdots& \sigma^{(0)}_m &_{y_1}& 0
\end{array}.
\]
In this case, the weight of 
$\sigma^{(1)}$ is 
\[
\omega(\sigma^{(1)})=x^{p}_{0}x^{m-p}_1y_0y_1.
\]

\noindent{\bf Case 2:} If $p=m+1$, then position $m+1$ of $\sigma^{(1)}$ is labeled $x_0$
and position $m+2$ is labeled $y_0$, while the subscripts of all remaining labels remain unchanged. That is,
\[
\sigma^{(1)}=
\begin{array}{ccccccccccccccccccc}
0 &_{x_0}& \sigma^{(0)}_1 &_{x_0}& \sigma^{(0)}_2 &_{x_0}&
\cdots& \sigma^{(0)}_m &_{x_0}& m+1 &_{y_0}& 0
\end{array}.
\]
In this case, the weight of 
$\sigma^{(1)}$ is 
\[
\omega(\sigma^{(1)})=x^{m+1}_{0}y_0.
\]

Now let $\sigma^{(l)}$ be the permutation obtained from $\sigma$
by removing $n,n-1,\dots,m+l+1$, where $0\le l<n-m$,
and assume that $\sigma^{(l)}$ has been labeled recursively. Assume that the weight of $\sigma^{(l)}$ is 
\[
\omega(\sigma^{(l)})=q^{s(\sigma^{(l)})}\omega_1(\sigma^{(l)})\cdots\omega_{m+l+1}(\sigma^{(l)}).
\]

We now define the grammatical labeling of $\sigma^{(l+1)}$. Note that $\sigma^{(l+1)}$ has $m+l+2$ positions to be labeled. As before, descent positions are labeled by~$y$ and ascent positions are labeled by~$x$. The subscripts are determined as follows. Suppose that $m+l+1$ is inserted into $\sigma^{(l)}$ at position~$p$ to form $\sigma^{(l+1)}$. Then
we consider two cases.

\noindent{\bf Case 1:} Suppose position $p$ in the labeling of $\sigma^{(l)}$ carries the label $x_i$, i.e., $\omega_j=x_i$ for some~$j$. Then the subscripts of all labels of $\sigma^{(l)}$ before position~$p$ are increased by~$1$, and the subscripts of all $y$-labels of $\sigma^{(l)}$ after position~$p$ are increased by~$1$. Collect all resulting labels together with $x_0$ and $y_0$, and rearrange them so that the $x$-subscripts are nonincreasing from left to right and the $y$-subscripts are nondecreasing from left to right. Label $q^i$ below $\sigma_p^{(l)}$.

\noindent{\bf Case 2:} Suppose position $p$ in the labeling of $\sigma^{(l)}$ carries the label $y_i$, i.e., $\omega_j=y_i$ for some~$j$. Then the subscripts of all $y$-labels of $\sigma^{(l)}$ after position~$p$ are increased by~$1$. Again, collect all resulting labels together with $x_0$ and $y_0$, and rearrange them so that the $x$-subscripts are nonincreasing from left to right and the $y$-subscripts are nondecreasing from left to right. Label $q^i$ below $\sigma_p^{(l)}$.

Below we illustrate the grammatical labeling for the permutation
$\sigma=8\,1\,6\,2\,3\,5\,7\,4\in\mathfrak{S}_8$:
\begin{align*}
 \sigma^{(0)}&=  \begin{array}{ccccccccccccccccccc}
   0&_{x_0}&1&_{x_0}&2&_{x_0}&3&_{x_0}&4&_{y_0}&0 
     \end{array}
 \\[5pt]
 \sigma^{(1)}&=  \begin{array}{ccccccccccccccccccc}
   0&_{x_1}&1&_{x_1}&2&_{x_1}&3&_{x_0}&\fbox{\color{blue} 5}&_{y_0}&4&_{y_1}&0 
     \end{array}  \\[5pt]
\sigma^{(2)}&=  \begin{array}{ccccccccccccccccccc}   0&_{x_2}&1&_{x_1}&\fbox{\color{blue}  6}&_{y_0}&{2}&_{x_0}&3&_{x_0}&5&_{y_1}&4&_{y_2}&0 \\[5pt]
     &&&&&&q&&&&&&&&&
     \end{array}  \\[5pt]
\sigma^{(3)}&=  \begin{array}{ccccccccccccccccccc}   0&_{x_2}&1&_{x_1}&6&_{y_0}&2&_{x_0}&3&_{x_0}&5&_{x_0}&\fbox{\color{blue}7}&_{y_0}&4&_{y_3}&0 \\[5pt]
     &&&&&&q&&&&&&&&q&
     \end{array}   \\[5pt]
\sigma^{(4)}&=  \begin{array}{ccccccccccccccccccc}   0&_{x_1}&\fbox{\color{blue}8}&_{y_0}&1&_{x_0}&6&_{y_1}&2&_{x_0}&3&_{x_0}&5&_{x_0}&7&_{y_1}&4&_{y_4}&0 \\[5pt]
     &&&&q^2&&&&q&&&&&&&&q&
     \end{array}       
\end{align*}

The weight $\omega(\sigma)$ of $\sigma$ is defined as the product of all labels, read from right to left. For the above example, we have
\[
\omega(\sigma)=q^4 x_0 x_0 x_0 x_0 x_1 y_0 y_1 y_1 y_4.
\]

Recall that the morphism map $\phi$ is defined by $\phi(x_j)=xq^j$ and $\phi(y_j)=yq^j$.
It is clear that 
\[
\phi\bigl(\omega(\sigma^{(0)})\bigr)
=q^{\maj(\sigma^{(0)})} x^{\asc(\sigma^{(0)})}y^{\des(\sigma^{(0)})}
\]
and 
\[
\phi\bigl(\omega(\sigma^{(1)})\bigr)
=q^{\maj(\sigma^{(1)})} x^{\asc(\sigma^{(1)})}y^{\des(\sigma^{(1)})}.
\]
By induction, we show that for all $0\le l\le n-m$,
\begin{equation}\label{relmaj:phi}
\phi\bigl(\omega(\sigma^{(l)})\bigr)
=q^{\maj(\sigma^{(l)})} x^{\asc(\sigma^{(l)})}y^{\des(\sigma^{(l)})}.
\end{equation}

With this grammatical labeling, we now prove Theorem~\ref{q-maj-mult}.
By \eqref{relmaj:phi}, it suffices to establish the identity:
for $n\ge1$,
\begin{equation}\label{equ-maj-equiv}
D^n(x_0)=\sum_{\sigma\in\mathfrak{S}_n}\omega(\sigma),
\end{equation}
where $D$ is the $q$-derivative associated with  the $q$-grammar \eqref{def:grammaj}.

Represent a permutation $\sigma=\sigma_1\cdots\sigma_n\in\mathfrak{S}_n$
by adjoining $0$ at both ends, creating $n+1$ positions between consecutive elements.
These positions are where we insert $n+1$ to generate permutations in $\mathfrak{S}_{n+1}$.
Assume that the weight of $\sigma$ is 
\[
\omega(\sigma)=q^{s(\sigma)}\omega_1(\sigma)\cdots\omega_{n+1}(\sigma).
\]

Let $\pi^{(i)}\in\mathfrak{S}_{n+1}$ be the permutation obtained
by inserting $n+1$ into $\sigma$ at the position with label $\omega_i(\sigma)$.
By the $q$-product formula,
\begin{equation}\label{qLeibnitz-maj}
D\bigl(\omega(\sigma)\bigr)
=\sum_{i=1}^{n+1} q^{s(\sigma)}\rho\Bigl(
\omega_1(\sigma)\cdots\omega_{i-1}(\sigma)\,
R(\omega_i(\sigma))\,
\uparrow\!\bigl(\omega_{i+1}(\sigma)\cdots\omega_{n+1}(\sigma)\bigr)
\Bigr).
\end{equation}
From the above grammatical labeling, it is straightforward to verify that
\begin{align*}\label{rel-inv}
&q^{s(\sigma)}\rho\Bigl(
\omega_1(\sigma)\cdots\omega_{i-1}(\sigma)\,
R(\omega_i(\sigma))\,
\uparrow\!\bigl(\omega_{i+1}(\sigma)\cdots\omega_{n+1}(\sigma)\bigr)
\Bigr)
\nonumber\\
&\quad = q^{s(\pi^{(i)})}\omega(\pi^{(i)}).
\end{align*}
Thus \eqref{qLeibnitz-maj} holds,
which implies that \eqref{equ-maj-equiv} is valid for $n+1$.
Theorem~\ref{q-maj-mult} now follows immediately from   \eqref{relmaj:phi} and \eqref{equ-maj-equiv}. \end{proof}

\subsection{\texorpdfstring{$q$-Grammar for  $q$-inv-Eulerian polynomials }{q-Grammar for  q-inv-Eulerian polynomials }} \label{subsec:qinvdes}

\begin{Theorem}\label{q-inv-mult}
Let $G_{\inv}$ be the $q$-grammar defined by
\begin{equation}\label{def:graminv}
G_{\inv} = \bigl(\{x,y\}, \{ x_j \rightarrow q^j y_j x_{j+1} , \,  y_j \rightarrow q^{j} y_{j}x_{j+1} \}, \AIO \bigr).
\end{equation}
Let $D$ be the $q$-derivative associated with $G_{\inv}$ and define the
evaluation map $\phi$ by $
\phi(x_j)=x$ and $\phi(y_j)=y$. 
Then
\begin{equation}\label{thm:q-maj}
\phi\bigl(D^n(x_0)\bigr) = A^{\inv }_n(q;x,y).
\end{equation} 
\end{Theorem}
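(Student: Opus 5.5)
The plan is to prove Theorem~\ref{q-inv-mult} with a grammatical labeling, in the spirit of the proof of Theorem~\ref{q-maj-mult}. Because of the order $\AIO$ and the shape of the rule $R$, the labeling here should be much more rigid than in the $\maj$ case. For $\sigma\in\mathfrak{S}_n$ with $\sigma_0=\sigma_{n+1}=0$ there are $n+1$ gaps, namely the gaps between $\sigma_k$ and $\sigma_{k+1}$ for $0\le k\le n$.

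\textbf{Labeling.} Label the gap after $\sigma_k$ by $x_{n-k}$ if $k$ is an ascent and by $y_{n-k}$ if $k$ is a descent. Thus the subscript of a gap records the number of entries of $\sigma$ lying to its right. All subscripts $0,1,\dots,n$ then occur exactly once. Define the weight $\omega(\sigma)$ to be $q^{\inv(\sigma)}$ times the $\AIO$-ordered product of the labels. Since the indices are distinct, $\AIO$ simply lists the gaps by increasing subscript, that is, from right to left. It follows at once that
\[
\phi(\omega(\sigma))=q^{\inv(\sigma)}x^{\asc(\sigma)}y^{\des(\sigma)}.
\]
For $n=1$ the weight is $y_0x_1=D(x_0)$, and $D^0(x_0)=x_0$ serves as the convention for the empty permutation. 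Hence it suffices to prove, by induction on $n$,
\[
D^n(x_0)=\sum_{\sigma\in\mathfrak{S}_n}\omega(\sigma).
\]

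\textbf{Induction step.} Every $\pi\in\mathfrak{S}_{n+1}$ arises uniquely by inserting $n+1$ into a gap of some $\sigma\in\mathfrak{S}_n$. Suppose this gap has label $w_j$ with subscript $j$, so that $j$ entries lie to its right. Three facts make the weights match.
\begin{itemize}
\item The insertion creates exactly $j$ new inversions, matching the factor $q^j$ in $R(x_j)=q^jy_jx_{j+1}$ and $R(y_j)=q^jy_jx_{j+1}$.
\item The old gap is replaced by two new gaps. The gap just after $n+1$ is a descent with $j$ entries to its right, so its label is $y_j$. The gap just before $n+1$ is an ascent with $j+1$ entries to its right, so its label is $x_{j+1}$. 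This is exactly $y_jx_{j+1}$ in $\AIO$ order, whatever the old letter was.
\item Gaps to the right of $n+1$ keep their letter and their subscript. Gaps to the left keep their letter, since their neighbours are unchanged, but gain one entry to their right, so their subscript rises by one.
\end{itemize}
In the $\AIO$-ordered word $w_0w_1\cdots w_n$, the letters after $w_j$ are precisely the gaps to the left of the insertion point. So the $j$-th summand of Definition~\ref{defi:q-derivative},
\[
w_0\cdots w_{j-1}\,R(w_j)\,\uparrow(w_{j+1}\cdots w_n),
\]
equals $\omega(\pi)/q^{\inv(\sigma)}$ letter by letter, already in $\AIO$ order. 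Applying $\rho=\AIO$ therefore changes nothing. Summing over $j$ and over $\sigma$ gives $D^{n+1}(x_0)=\sum_{\pi\in\mathfrak{S}_{n+1}}\omega(\pi)$, and applying $\phi$ yields \eqref{thm:q-maj}.

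\textbf{Main obstacle.} The delicate step is the bookkeeping that the up-arrow acts exactly on the gaps to the left of the inserted letter. This rests on the $\AIO$ order listing gaps from right to left, and on the subscripts being distinct, so that the product is a single well-defined word and no coincidences or reorderings of equal-index letters arise. The boundary conventions, namely the first gap being an ascent and the last gap being a descent, should be checked separately for $n=0,1$.
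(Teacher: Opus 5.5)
Your proposal is correct and is essentially the paper's proof: the same right-to-left indexed ascent/descent labeling, and the same insertion of $n+1$ into the gap with subscript $j$, which creates $j$ new inversions, produces the pair $y_jx_{j+1}$, and shifts the labels to the left by $\uparrow$. Treating the two rule cases together, and noting that $\AIO$ acts trivially because the subscripts are distinct, matches the paper's remark that here $\AIO$ is equivalent to $\KSO$.
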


\begin{proof}
Let $\sigma=\sigma_1\cdots \sigma_n \in \mathfrak{S}_n$. For $1\leq i\leq n+1$,  recall that the position $i$ is said to be the position immediately before $\sigma_i$, whereas the position $n+1$ is meant to be the position after $\sigma_n$. We patch   $0$ to $\sigma$ at both ends so that there are $n+1$ positions between two adjacent elements. For $1\leq i\leq n+1$, we label the position $i$ of $\sigma \in \mathfrak{S}_n$ as follows:
\begin{itemize}
 \item If $i$ is an ascent, then label it by $x_{n+1-i}$;
  \item If $i$ is a descent, then label it  by $y_{n+1-i}$.
\end{itemize}
Below shows the grammatical labeling of a permutation $\sigma \in \mathfrak{S}_8$.   
\begin{equation*}\label{exam}
\begin{aligned}
 \sigma= & 7\quad 1\quad 2\quad 8\quad 3\quad 6\quad 5\quad 4\\[5pt]
  &{\rightarrow} \ \begin{array}{ccccccccccccccccccc}
   0&x_8&7&y_7 &1&x_6&2&x_5&8&y_4 &3&x_3&6&y_2 &5&y_1 &4&y_0&0 
     \end{array}
\end{aligned}
\end{equation*}
The weight $\omega$ of $\sigma$ is defined to be the product of all the labels from right to left. For the example above, we see that the weight of $\sigma$  is 
\[\omega(\sigma)=y_0y_1y_2x_3y_4x_5x_6y_7x_8.\]

We observe that  
\begin{equation*} 
\phi\left(\sum_{\sigma \in \mathfrak{S}_{n}} q^{\inv (\sigma)}\omega(\sigma)\right)=A^{  \inv}_n(q;x,y).
\end{equation*}
Thus,  Theorem \ref{q-inv-mult} follows once we confirm the assertion: For $n\geq 1$,
\begin{equation}\label{equ-qEulerab}
D^n (x_0)=\sum_{\sigma \in \mathfrak{S}_{n}} q^{\inv (\sigma)}\omega(\sigma),
\end{equation}
where $D$  is $q$-derivative associated with  $q$-grammar \eqref{def:graminv}. 

We proceed by induction on $n$. For $n=1$, the statement is evident. Assume that this statement holds for $n$,  that is, the relation \eqref{equ-qEulerab} is valid for $n$. To demonstrate that it also holds for $n+1$,   it suffices to show, by \eqref{equ-qEulerab}, that
\begin{equation}\label{gram-inductaa1b}
D\left(\sum_{\sigma \in \mathfrak{S}_{n}}q^{\inv (\sigma)} \omega(\sigma)\right)=\sum_{\pi \in \mathfrak{S}_{n+1}}q^{\inv (\pi)}\omega(\pi).
\end{equation}

For a permutation $\sigma \in \mathfrak{S}_n$, we define the weight 
\[\omega(\sigma)=\omega_1(\sigma)\omega_2(\sigma) \cdots \omega_{n+1}(\sigma), \]
which is the product of the right-to-left labels in the grammatical labeling of $\sigma$. 

Similarly, for the permutation $\pi  \in \mathfrak{S}_{n+1}$,  the weight is given by 
\[\omega(\pi)=\omega_1(\pi)\omega_2(\pi) \cdots \omega_{n+2}(\pi), \]
which is the product of the right-to-left labels in the grammatical labeling of $\pi$. 

Note that in this grammar, the order AIO is equivalent to KSO (Keep Sequence Order), and so by the $q$-product  formula, we have 
\[D(\omega(\sigma))=\sum_{i=1}^{n+1}  \omega_1(\sigma)\cdots \omega_{i-1}(\sigma)\, R(\omega_i(\sigma))\, \uparrow\! \bigl(\omega_{i+1}(\sigma)\cdots \omega_{n+1}(\sigma)\bigr). \] 

Next, we represent a permutation $\sigma=\sigma_1\cdots \sigma_n$ in $\mathfrak{S}_n$ by patching $0$ at both ends, resulting in  $n+1$ positions between adjacent elements. These positions allow us to insert $n+1$ into $\sigma$ to generate $n+1$ permutations in $\mathfrak{S}_{n+1}$. Let $\pi^{(i)}$ be the permutation  in $\mathfrak{S}_{n+1}$ obtained  by inserting the element $n+1$ into  $\sigma$ at the position $n+1-i$, where $0\leq i\leq n$. It is easy to check that 
\[{\inv (\pi^{(i)})}=\inv (\sigma)+i.\]
To prove \eqref{gram-inductaa1b},  it is enough to show that 
\begin{align}\label{rel-inv}
&q^{\inv (\sigma)}\omega_1(\sigma)\cdots \omega_{i-1}(\sigma)\, R(\omega_i(\sigma))\, \uparrow\! \bigl(\omega_{i+1}(\sigma)\cdots \omega_{n+1}(\sigma)\bigr) \nonumber\\
&\quad =q^{\inv (\pi^{(i)})}\omega(\pi^{(i)})=q^{\inv (\sigma)+i}\omega_1(\pi^{(i)})\omega_2(\pi^{(i)})\cdots \omega_{n+2}(\pi^{(i)}).
\end{align}

We consider  two cases:

\noindent{\bf Case 1:} Suppose $\omega_i(\sigma) = x_{i}$ (i.e., position $n+1-i$ in the grammatical labeling of $\sigma$ is assigned the label $x_{i}$). For the labeling of $\pi^{(i)}$:
\begin{itemize}
\item Position $n+1-i$ is labeled $x_{i+1}$;

\item Position $n+2-i$ is labeled $y_{i}$; 

\item All labels of $\pi^{(i)}$ at positions after $n+2-i$ are identical to those of $\sigma$; 

\item  Each label of $\pi^{(i)}$ at positions before $n+1-i$ has a subscript that is exactly one greater than the subscript of the corresponding label in $\sigma$.  

Below shows the grammatical labeling of a permutation $\pi^{(i)}$ in this case.   

\end{itemize}
\begin{align*}
&\quad \quad \begin{array}{cccccccccc}
 \sigma=0 & _{x_{n}} &\sigma_1\cdots\ &\sigma_{n-i}&  _{x_{i}} & \sigma_{n-i+1} &  \cdots & \sigma_n & _{y_0} & 0
\end{array}\\
&\Rightarrow   \begin{array}{ccccccccccccc}
\pi^{(i)}= 0 & _{x_{n+1}} &\sigma_1\cdots &\sigma_{i-1}&  _{x_{i+1}} & n+1 & _{y_{i}} &\sigma_{n-i+1} &  \cdots  & \sigma_n & _{y_0} & 0
\end{array}
\end{align*}                                                                   
Since $R(x_i) = q^{i}y_{i}x_{i+1}$, the relation \eqref{rel-inv} follows immediately.

\noindent{\bf Case 2:} Suppose $\omega_i(\sigma) = y_{i}$ (i.e., position $n+1-i$ in the grammatical labeling of $\sigma$ is assigned the label $y_{i}$). Then $\pi^{(i)}$ has $x_{i+1}$ at position $n+1-i$, $y_{i}$ at position $n+2-i$ in its labeling. As in Case 1, labels of $\pi^{(i)}$ for all positions after $n+2-i$ coincide with those of  $\sigma$, and for each position before $n+1-i$, the subscript of the label of $\pi^{(i)}$ is one greater than the subscript of the corresponding label of $\sigma$. 
Below shows the grammatical labeling of a permutation $\pi^{(i)}$ in this case.   
\begin{align*}
&\quad \quad \begin{array}{cccccccccc}
 \sigma=0 & _{x_{n}} &\sigma_1\cdots\ &\sigma_{n-i}&  _{y_{i}} & \sigma_{n-i+1} &  \cdots & \sigma_n & _{y_0} & 0
\end{array}\\
&\Rightarrow   \begin{array}{ccccccccccccc}
\pi^{(i)}= 0 & _{x_{n+1}} &\sigma_1\cdots &\sigma_{i-1}&  _{x_{i+1}} & n+1 & _{y_{i}} &\sigma_{n-i+1} &  \cdots  & \sigma_n & _{y_0} & 0
\end{array}
\end{align*}    
 Given $R(y_i) = q^{i}y_{i}x_{i+1}$, the relation \eqref{rel-inv} is likewise valid for this case,  \eqref{gram-inductaa1b}  follows at once.
 
Summing the results from these two cases shows that the assertion \eqref{rel-inv} holds, and thus \eqref{gram-inductaa1b} is valid for $n+1$. Theorem  \ref{q-inv-mult} follows immediately from \eqref{gram-inductaa1b}. This completes the proof.  
\end{proof}

\subsection{\texorpdfstring{$q$-grammatical calculus for $q$-inv-Eulerian polynomials}{q-grammatical calculus for q-inv-Eulerian polynomials}} \label{subsec:qcalinvdes}

Based on the framework of $q$-grammatical calculus developed in Section \ref{secqgram}, this subsection uses Theorem \ref{q-inv-mult} to derive a grammatical proof of Theorem \ref{thm:qEuler}.

\begin{Proposition}\label{lem:xyminus} 
Let $G_{\inv}$ be the $q$-grammar defined in Theorem \ref{q-inv-mult}, and let $D$ be the  $q$-derivative associated with $G_{\inv}$.  
For $n\geq 1$ and $i\geq 0$, 
\begin{align}\label{Dqn-Xi-minus}
D^n(x_i^{-1}) &= -q^{ni}  x_i^{-1}y_i(x_{i+1}-y_{i+1})^{n-1},\\[5pt]
D^n(y_i^{-1}) &= -q^{\binom{n}{2}+ni} \prod_{k=1}^{n-1}(y_{i+k}-x_{i+k})x_{i+n}y^{-1}_{i+n}.   \label{Dqn-yi-minus}
\end{align}
\end{Proposition}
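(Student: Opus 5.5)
The plan is induction on $n$, built on one observation about $G_{\inv}$: the differences $x_j-y_j$ behave like constants. Since $R(x_j)=R(y_j)=q^j y_jx_{j+1}$, we get $D(x_j-y_j)=\AIO(q^jy_jx_{j+1}-q^jy_jx_{j+1})=0$ for every $j$. Combining this with Proposition~\ref{prop:f_product} gives the rule for any product $P$ of such differences:
\begin{equation*}
D(WP)=D(W)\uparrow P \qquad\text{and}\qquad D(PW)=P\,D(W).
\end{equation*}
The only effect of $P$ is therefore that it gets shifted by $\uparrow$ when it sits to the right of the differentiated factor. All words produced along the way have indices weakly increasing from left to right, with $x_j$ before $y_j$, so $\AIO$ acts trivially on them. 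I will check this at each step and may then compute as if $\rho=\KSO$.

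\emph{Second formula.} By Proposition~\ref{lem:f_minus_one},
\begin{equation*}
D(y_i^{-1})=-y_i^{-1}\,q^iy_ix_{i+1}\,y_{i+1}^{-1}=-q^ix_{i+1}y_{i+1}^{-1},
\end{equation*}
which is the case $n=1$. By Propositions~\ref{lem: product_formula} and~\ref{lem:f_minus_one},
\begin{equation*}
D(x_jy_j^{-1})=q^jy_jx_{j+1}y_{j+1}^{-1}-q^jx_jx_{j+1}y_{j+1}^{-1}=q^j(y_j-x_j)\,x_{j+1}y_{j+1}^{-1}.
\end{equation*}
For the inductive step, write $D^n(y_i^{-1})=-q^{\binom n2+ni}\,Q_n\,x_{i+n}y_{i+n}^{-1}$ with $Q_n=\prod_{k=1}^{n-1}(y_{i+k}-x_{i+k})$. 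Since $Q_n$ stands on the left and $D(Q_n)=0$, applying $D$ gives
\begin{equation*}
D^{n+1}(y_i^{-1})=-q^{\binom n2+ni}\,Q_n\,q^{i+n}(y_{i+n}-x_{i+n})\,x_{i+n+1}y_{i+n+1}^{-1}.
\end{equation*}
The exponent becomes $\binom{n+1}2+(n+1)i$, as required. The indices are increasing, so $\AIO$ does nothing.

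\emph{First formula.} The base case is
\begin{equation*}
D(x_i^{-1})=-x_i^{-1}q^iy_ix_{i+1}x_{i+1}^{-1}=-q^ix_i^{-1}y_i.
\end{equation*}
The key computation is
\begin{equation*}
D(x_i^{-1}y_i)=-q^ix_i^{-1}y_iy_{i+1}+q^ix_i^{-1}y_ix_{i+1}=q^ix_i^{-1}y_i(x_{i+1}-y_{i+1}).
\end{equation*}
Write $D^n(x_i^{-1})=-q^{ni}x_i^{-1}y_i\,P$, where $P$ is the product of $n-1$ difference factors. Then
\begin{equation*}
D^{n+1}(x_i^{-1})=-q^{ni}D(x_i^{-1}y_i)\uparrow P=-q^{(n+1)i}x_i^{-1}y_i(x_{i+1}-y_{i+1})\uparrow P.
\end{equation*}
This recursion produces the exponent $q^{ni}$ and the factor $x_i^{-1}y_i$ exactly as stated, together with $n-1$ factors of the form $(x-y)$. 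Each new factor enters at index $i+1$, while the earlier ones are shifted up.

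\emph{Main obstacle.} The hard step is the exact shape of the $(n-1)$-fold factor. The recursion literally yields $\prod_{k=1}^{n-1}(x_{i+k}-y_{i+k})$, and this coincides with the displayed power $(x_{i+1}-y_{i+1})^{n-1}$ only for $n\le 2$. For $n=3$, for example, the term $q^{3i}x_i^{-1}y_iy_{i+1}x_{i+2}$ appears and is not cancelled. So I do not expect the power to hold letter for letter as an identity in $\mathbb E$ for $n\ge3$. What the argument does establish is:
\begin{itemize}
\item the statement in the form stated whenever factors of the same master letter are identified regardless of index, in particular after applying the master-linear evaluation $\phi$ of Theorem~\ref{q-inv-mult};
\item equality of the images under $\phi$ of the $\Gen_q$ series of the two sides, which holds by Proposition~\ref{prop: q_linear_gramma-mul-2} and Theorem~\ref{thm:product}.
\end{itemize}
This is the only form in which the formula feeds into the proof of Theorem~\ref{thm:qEuler}. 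The second formula, by contrast, holds verbatim in $\mathbb E$.
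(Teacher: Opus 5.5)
Your proof of the second formula is the paper's own argument, and it is correct as stated. The difference factors stand to the left of the block $x_{i+n}y_{i+n}^{-1}$ being differentiated, so $\uparrow$ never touches them.

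Your objection to the first formula is also correct, and the error is in the paper, not in your argument. The paper's intermediate claim $D^n(x_i^{-1}y_i)=q^{ni}x_i^{-1}y_i(x_{i+1}-y_{i+1})^n$ overlooks that the difference factors sit to the right of $x_i^{-1}y_i$. They are therefore shifted by $\uparrow$ at every step, and the true identities are $D^n(x_i^{-1}y_i)=q^{ni}x_i^{-1}y_i\prod_{k=1}^{n}(x_{i+k}-y_{i+k})$ and $D^n(x_i^{-1})=-q^{ni}x_i^{-1}y_i\prod_{k=1}^{n-1}(x_{i+k}-y_{i+k})$. A direct computation at $i=0$ confirms this: $D^3(x_0^{-1})=-x_0^{-1}y_0(x_1-y_1)(x_2-y_2)$. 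This contains the word $x_0^{-1}y_0y_1x_2$ with coefficient $1$, whereas $-x_0^{-1}y_0(x_1-y_1)^2$ has no letter of index $2$. So the displayed power fails in $\mathbb{E}$ for every $n\ge 3$.

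Corollary~\ref{coro:xyminus} and Theorem~\ref{q-inv-mult-grammar} use the formula only through $\phi$, which sends each $x_{i+k}-y_{i+k}$ to $x-y$. Nothing downstream in the paper is affected.

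One small correction to your second bullet: you do not need Theorem~\ref{thm:product} or Proposition~\ref{prop: q_linear_gramma-mul-2}. Equality of the $\phi$-images follows term by term from the corrected identity, and so does equality of the $\phi$-images of the generating series.
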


\begin{proof} Observe that 
\begin{equation}
D\left(x_i^{-1}\right)=-x_i^{-1}R(x_i)\uparrow (x_i^{-1})
=-q^ix_i^{-1}y_{i}x_{i+1}x_{i+1}^{-1}=-q^ix_i^{-1}y_i, \label{eq:Dqn-xi-minusaa}
\end{equation}
and 
\begin{equation}
D\left(y_i^{-1}\right)=-y_i^{-1}R(y_i)\uparrow (y_i^{-1})
=-q^iy_i^{-1}y_{i}x_{i+1}y_{i+1}^{-1}=-q^ix_{i+1}y_{i+1}^{-1}.  \label{eq:Dqn-yi-minusbb}
\end{equation}
Moreover, 
\begin{align*} 
D\left(x_i^{-1}y_i\right)&=\operatorname{\AIO} \bigl(R(x_i^{-1})y_{i+1}+x_i^{-1}R(y_i)\bigr) \nonumber \\[5pt]
&=-q^{i}x_{i}^{-1}y_{i}y_{i+1}+q^{i} x_{i}^{-1}y_{i}x_{i+1} \nonumber \\[5pt]
&=q^{i}x_{i}^{-1}y_i(x_{i+1}-y_{i+1})
\end{align*}
and 
\begin{align*} 
D\left(x_{i+1}y_{i+1}^{-1}\right)&=\operatorname{\AIO}\bigl(R(x_{i+1})y_{i+2}^{-1}+x_{i+1}R(y_{i+1}^{-1})\bigr) \nonumber \\[5pt]
&=q^{i+1}y_{i+1}x_{i+2}y_{i+2}^{-1}-q^{i+1} x_{i+1}x_{i+2}y^{-1}_{i+2} \nonumber \\[5pt]
&=q^{i+1}(y_{i+1}-x_{i+1})x_{i+2}y_{i+2}^{-1}.
\end{align*}
Since $D(x_i-y_i)=0$ for all $i\ge 0$ in the $q$-grammar defined by \eqref{def:graminv}, by induction, we derive that for $n\geq 0$, 
\begin{align} 
D^n\left(x_i^{-1}y_i\right)&=q^{ni}x_{i}^{-1}y_i(x_{i+1}-y_{i+1})^{n},\\[5pt]
D^n\left(x_{i+1}y_{i+1}^{-1}\right)&=q^{ni+{n+1\choose 2}}\prod_{k=1}^{n}(y_{i+k}-x_{i+k})x_{i+n+1}y_{i+n+1}^{-1}.
\end{align}
Combining these identities with \eqref{eq:Dqn-xi-minusaa} and \eqref{eq:Dqn-yi-minusbb} immediately establishes the identities \eqref{Dqn-Xi-minus} and \eqref{Dqn-yi-minus}, respectively.  
\end{proof}

\begin{Corollary}\label{coro:xyminus} 
Let $G_{\inv}$ be the $q$-grammar given in Theorem \ref{q-inv-mult} with $\phi$ the corresponding evaluation map. Then for $i\geq 0$, 
\begin{equation}
\phi({\rm Gen}^{(G_{\inv})}_q(x^{-1}_i;u))=\frac{1-x^{-1}y e_q((x-y)uq^i)}{x-y}
\end{equation}
and 
\begin{equation}
\phi({\rm Gen}^{(G_{\inv})}_q(y^{-1}_i;u))=\frac{1-xy^{-1} E_q((y-x)uq^i)}{y-x},
\end{equation}
where $ e_q(u)$ and $ E_q(u)$ are as defined in \eqref{defi:expfor} and \eqref{defi:Expfor}, respectively.
\end{Corollary}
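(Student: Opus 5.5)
The plan is to substitute the closed forms of Proposition~\ref{lem:xyminus} directly into the definition \eqref{defi:gf} and sum the resulting series. Since $\phi$ sends every $x_j$ to $x$ and every $y_j$ to $y$, each term is a scalar multiple of a power of $x-y$.

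For $x_i^{-1}$, the $n=0$ term is $x^{-1}$. For $n\ge1$, \eqref{Dqn-Xi-minus} gives
\[
\phi\bigl(D^n(x_i^{-1})\bigr)=-q^{ni}x^{-1}y(x-y)^{n-1}.
\]
Multiplying by $(x-y)$ turns the sum into
\[
(x-y)\,\phi({\rm Gen}_q(x_i^{-1};u))=x^{-1}(x-y)-x^{-1}y\sum_{n\ge1}\frac{((x-y)uq^i)^n}{(q;q)_n}.
\]
The series equals $e_q((x-y)uq^i)-1$ by \eqref{defi:expfor}. Collecting terms gives $x^{-1}(x-y)+x^{-1}y-x^{-1}ye_q(\cdot)=1-x^{-1}ye_q((x-y)uq^i)$, which is the first identity after dividing by $x-y$.

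For $y_i^{-1}$, the $n=0$ term is $y^{-1}$. For $n\ge1$, \eqref{Dqn-yi-minus} gives
\[
\phi\bigl(D^n(y_i^{-1})\bigr)=-q^{\binom n2+ni}(y-x)^{n-1}xy^{-1}.
\]
Multiplying by $(y-x)$ gives
\[
(y-x)\,\phi({\rm Gen}_q(y_i^{-1};u))=y^{-1}(y-x)-xy^{-1}\sum_{n\ge1}q^{\binom n2}\frac{((y-x)uq^i)^n}{(q;q)_n}.
\]
By \eqref{defi:Expfor} the series equals $E_q((y-x)uq^i)-1$. Then $y^{-1}(y-x)+xy^{-1}=1$, which leaves $1-xy^{-1}E_q((y-x)uq^i)$, as required.

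The only care points are bookkeeping. The $n=0$ term has to be treated separately because the formulas in Proposition~\ref{lem:xyminus} hold only for $n\ge1$. The sign convention also matters: the $x$-case uses $(x-y)^{n-1}$ while the $y$-case uses $(y-x)^{n-1}$. Finally, the factor $q^{\binom n2}$ is what switches the series from $e_q$ to $E_q$. Division by $x-y$ is understood formally, or equivalently the identity is read after multiplying through by $x-y$. No genuine obstacle is expected; the main risk is sign errors in matching the constant terms.
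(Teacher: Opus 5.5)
Your proposal is correct and is essentially the paper's proof. Both substitute the closed forms of Proposition~\ref{lem:xyminus} into the series, treat the $n=0$ term separately, and recognize the remaining sums as $e_q((x-y)uq^i)-1$ and $E_q((y-x)uq^i)-1$; the only cosmetic difference is that you multiply through by $x-y$ (resp.\ $y-x$) before summing, whereas the paper factors it out afterwards.
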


\begin{proof}
Applying \eqref{Dqn-Xi-minus}, we deduce that 
\begin{align*}
\phi({\rm Gen}^{(G_{\inv})}_q(x_i^{-1};u))&=\sum_{n=0}^{+\infty} \phi(D^n(x_i^{-1})) \frac{u^n}{(q; q)_n} \nonumber \\[5pt]
&=x^{-1}-\sum_{n=1}^{+\infty} x^{-1}y (x-y)^{n-1} q^{ni} \frac{u^n}{(q; q)_n} \nonumber \\[5pt]
&=x^{-1}-\frac{x^{-1}y}{x-y}\left(e_q((x-y)uq^i)-1\right) \nonumber \\[5pt]
&=\frac{1-x^{-1}y e_q((x-y)uq^i)}{x-y}.
\end{align*}

Similarly, using \eqref{Dqn-Xi-minus}, we obtain 
\begin{align*}
\phi({\rm Gen}^{(G_{\inv})}_q(y_i^{-1};u))&=\sum_{n=0}^{+\infty} \phi(D^n(y_i^{-1})) \frac{u^n}{(q; q)_n} \nonumber \\[5pt]
&=y^{-1}-\sum_{n=1}^{+\infty} q^{{n\choose 2}+ni} xy^{-1} (y-x)^{n-1}  \frac{u^n}{(q; q)_n} \nonumber \\[5pt]
&=y^{-1}-\frac{xy^{-1}}{y-x}\left(E_q((y-x)uq^i)-1\right) \nonumber \\[5pt]
&=\frac{1-xy^{-1} E_q((y-x)uq^i)}{y-x}. 
\end{align*}
\end{proof}

\begin{Theorem} \label{q-inv-mult-grammar} Let $G_{\inv}$ be the $q$-grammar defined in Theorem \ref{q-inv-mult} and let $\phi$ be  the evaluation map as defined therein. Then 
\begin{equation}\label{eq:q-inv-mult-grammar}
\phi({\rm Gen}^{(G_{\inv})}_q(x_i;u))=\frac{x-y}{1-x^{-1}y e_q((x-y)uq^i)}.
\end{equation}

\end{Theorem}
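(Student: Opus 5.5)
The plan is to use the multiplicative property of Theorem~\ref{thm:product}, exactly as in the grammatical proof of the $q$-Hoffman formula. First I check its hypotheses for $G_{\inv}$. The rules $x_j\mapsto q^j y_jx_{j+1}$ and $y_j\mapsto q^jy_jx_{j+1}$ satisfy $R(s_{i+1})=q\uparrow R(s_i)$, so condition \eqref{eq:rule} holds. The evaluation $\phi=\{x_j\mapsto x,\ y_j\mapsto y\}$ is master-linear. Theorem~\ref{thm:product} therefore applies, even though the order is $\AIO$ rather than $\KSO$.

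Next, write $1=x_ix_i^{-1}$ in the group algebra $\mathbb{E}$. Since $D(1)=0$, we have $\mathrm{Gen}_q^{(G_{\inv})}(1;u)=1$. Theorem~\ref{thm:product} with $f=x_i$ and $g=x_i^{-1}$ then gives
\[
1=\phi\bigl(\mathrm{Gen}_q^{(G_{\inv})}(x_i;u)\bigr)\cdot\phi\bigl(\mathrm{Gen}_q^{(G_{\inv})}(x_i^{-1};u)\bigr).
\]
The second factor is computed in closed form by Corollary~\ref{coro:xyminus}. It equals $\bigl(1-x^{-1}ye_q((x-y)uq^i)\bigr)/(x-y)$, a formal power series in $u$ with constant term $x^{-1}$. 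This series is invertible in $\mathbb{V}[[u]]$ for a ring $\mathbb{V}$ containing $x^{-1}$, for instance $\mathbb{Z}[q][x,x^{-1},y][[u]]$. Inverting it yields \eqref{eq:q-inv-mult-grammar} directly. No issue arises from dividing by $x-y$: the expression in Corollary~\ref{coro:xyminus} is really $x^{-1}-x^{-1}y\sum_{n\ge1}(x-y)^{n-1}q^{ni}u^n/(q;q)_n$, a genuine series. So the final identity is to be read as the reciprocal of that series, and it agrees with the displayed closed form whenever $x\neq y$ formally.

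The only subtle point is the legitimacy of using $x_i^{-1}$. Theorem~\ref{thm:product} is stated for arbitrary $f,g\in\mathbb{E}$, where $\mathbb{E}$ is the group algebra of the free group. The rule is extended to inverses by $R(s_i^{-1})=-s_i^{-1}R(s_i)s_{i+1}^{-1}$, and $\phi$ respects inverses. So the argument is fully covered by earlier results. I expect no real obstacle beyond confirming that Proposition~\ref{prop: q_linear_gramma-mul-2} applies to $x_i x_i^{-1}$, which it does since it holds for any $f\in\mathbb{E}$.

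As a sanity check, setting $i=0$ and comparing with Theorem~\ref{q-inv-mult} gives Theorem~\ref{thm:qEuler}.
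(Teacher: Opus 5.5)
Your proposal is correct and follows essentially the same route as the paper: you check that $G_{\inv}$ satisfies \eqref{eq:rule} and that $\phi$ is master-linear, apply Theorem~\ref{thm:product} to $x_ix_i^{-1}=1$, and invert the series from Corollary~\ref{coro:xyminus}. Your extra remarks are added rigor that the paper leaves implicit: the series is invertible because its constant term is $x^{-1}$, and the division by $x-y$ should be read as a formal series.
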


 \begin{proof}
It is straightforward to check that $\phi$ in Theorem \ref{q-inv-mult} is a master-linear evaluation and the grammar $G_{\inv}$ defined in \eqref{def:graminv} satisfies  \eqref{eq:rule}. Consequently, by Theorem \ref{thm:product}, we deduce that
\begin{align*}
1&=\phi({\rm Gen}^{(G_{\inv})}_q(x_i  x_i^{-1};u)) = \phi({\rm Gen}^{(G_{\inv})}_q(x_i;u))\phi({\rm Gen}^{(G_{\inv})}_q(x_i^{-1};u)). 
\end{align*}
Hence, applying Corollary \ref{coro:xyminus}, we arrive at \eqref{eq:q-inv-mult-grammar}. 
\end{proof}

By combining  Theorem \ref{q-inv-mult} and Theorem \ref{q-inv-mult-grammar}, we recover Theorem \ref{thm:qEuler}. 

\subsection{\texorpdfstring{$q$-Grammar for cycle $q$-Roselle polynomials}{q-Grammar for cycle q-Roselle polynomials}} \label{subsec:qroselle}

\begin{Theorem} \label{q-Roselle-thm}  Let $G_{\cyc}=(\{x,y,z,e\},  R, \KSO)$ be the $q$-grammar, where
 \[R = \{   x_j \rightarrow q^j y_j x_{j+1} , 
\  y_j \rightarrow q^j y_j x_{j+1} ,
\  z_j \rightarrow q^j y_j x_{j+1} ,
\  e_j \rightarrow \beta q^j e_j z_{j+1} 
  \} .\] 
Let $D$ be the $q$-derivative associated with $G_{\cyc}$ and define the evaluation map $\phi$ by 
$\phi(x_j)=x$, $\phi(y_j)=y$,  $\phi(z_j)=z$ and $\phi(e_j)=e.$ Then  
$$
	\phi(D^n(e_0)) =  e F^{\inv}_n(q;x,y,z|\beta).
$$
\end{Theorem}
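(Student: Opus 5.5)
We prove the statement by a grammatical labeling argument modelled on the proof of Theorem~\ref{q-inv-mult}. The only change is that the rightmost position now carries the label $e$, and isolated elements carry the label $z$. Since $G_{\cyc}$ uses $\KSO$, the $q$-derivative of a word is given directly by Proposition~\ref{prop:f_product}, with no reordering.

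\textbf{The labeling.} Let $\sigma=\sigma_1\cdots\sigma_n\in\mathfrak{S}_n$ with $\sigma_0=\sigma_{n+1}=0$, and put the bars after the right-to-left minima as in \eqref{defi:Rossele-lin}. For $1\le i\le n$ we label the position immediately before $\sigma_i$ as follows:
\begin{itemize}
\item by $z_{n+1-i}$ if $\sigma_i$ is isolated (such a position is always an ascent);
\item otherwise by $x_{n+1-i}$ if $i$ is an ascent, and by $y_{n+1-i}$ if $i$ is a descent.
\end{itemize}
The final position, after $\sigma_n$, is labeled $e_0$. The weight $\omega(\sigma)$ is the product of the labels read from right to left, so $\omega(\sigma)=e_0\,\omega_2(\sigma)\cdots\omega_{n+1}(\sigma)$. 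The position after $\sigma_n$ is always a descent and is not recorded by $y$. Hence the $y$-labels count $\des(\sigma)-1$, the $x$-labels count $\operatorname{iasc}(\sigma)$, and the $z$-labels count $\operatorname{isol}(\sigma)$.

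\textbf{Target identity.} We will show by induction on $n$ that
\[
D^n(e_0)=\sum_{\sigma\in\mathfrak{S}_n} q^{\inv(\sigma)}\beta^{\operatorname{RLmin}(\sigma)}\omega(\sigma).
\]
Applying $\phi$ to this identity gives $e\,F^{\inv}_n(q;x,y,z|\beta)$ by \eqref{def:qRoselle}. The case $n=0$ is trivial.

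\textbf{Induction step.} Write $\omega(\sigma)=\omega_1\cdots\omega_{n+1}$. Let $\pi^{(i)}$ be obtained by inserting $n+1$ into the position labeled $\omega_{i+1}$, that is, the position carrying subscript $i$. Then $\inv(\pi^{(i)})=\inv(\sigma)+i$, and every label strictly to the left of the insertion point has its subscript raised by one, which is exactly the action of $\uparrow$. By Proposition~\ref{prop:f_product} it suffices to check, label by label, that
\[
q^{\inv(\sigma)}\beta^{\operatorname{RLmin}(\sigma)}\,\omega_1\cdots\omega_i\,R(\omega_{i+1})\uparrow(\omega_{i+2}\cdots\omega_{n+1})
\]
equals the corresponding weight of $\pi^{(i)}$. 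There are three cases.
\begin{itemize}
\item \emph{Label $e_0$ ($i=0$).} Here $n+1$ is appended at the end. It becomes a new right-to-left minimum and forms a new isolated block, contributing the factor $\beta$ and the label $z_1$. The final position keeps the label $e_0$. This matches $R(e_0)=\beta e_0 z_1$.
\item \emph{Label $x_i$ or $y_i$.} Here $n+1$ is inserted inside a block. The new position before $n+1$ is an ascent, labeled $x_{i+1}$, and the new position after $n+1$ is a descent, labeled $y_i$. No right-to-left minimum or isolated element changes. This matches $R=q^iy_ix_{i+1}$, read right to left.
\item \emph{Label $z_i$.} Here $n+1$ is inserted just before an isolated element $a$. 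Then $n+1$ joins $a$'s block, so $a$ is no longer isolated, and the two new positions are again labeled $y_i$ and $x_{i+1}$. This matches $R(z_i)=q^iy_ix_{i+1}$.
\end{itemize}

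\textbf{Main obstacle.} The delicate point is the second and third cases: we must verify that the bar structure is unaffected except at the insertion point. Since $n+1$ is never a right-to-left minimum unless it is placed last, the set of right-to-left minima is unchanged. We must also verify that no other element changes its isolated status. This holds because inserting $n+1$ inside an existing block, or just before $a$, does not merge or split any other block, so only $a$'s status can change.
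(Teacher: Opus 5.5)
Your proposal is correct and follows essentially the same route as the paper. It uses the identical grammatical labeling ($z$ before isolated elements, $x$ at non-isolated ascents, $y$ at the descents other than the final one, $e_0$ at the end, read right to left) and the same induction that inserts $n+1$ into the gap carrying subscript $i$, so that $\inv$ rises by $i$ and the labels to the left are shifted by $\uparrow$. The only differences are that you merge the paper's separate $x_i$ and $y_i$ cases, and that you spell out why a non-final insertion leaves the right-to-left minima and all other blocks unchanged, which the paper leaves implicit.
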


\proof Let $\sigma=\sigma_1\cdots \sigma_n \in \mathfrak{S}_n$. For $1\leq i\leq n+1$,  recall that the position $i$ is said to be the position immediately before $\sigma_i$, whereas the position $n+1$ is meant to be the position after $\sigma_n$. We patch   $0$ to $\sigma$ at both ends so that there are $n+1$ positions between two adjacent elements. For $1\leq i\leq n+1$, we label the position $i$ of $\sigma \in \mathfrak{S}_n$ as follows:
\begin{itemize}
 \item If $i$ is a descent, then label it by $y_{n+1-i}$;
  \item If $\sigma_i$ is an isolated element, then label $i$  by $z_{n+1-i}$;  
   \item If $i$ is a non-isolated ascent, then label it  by $x_{n+1-i}$;  

   \item If $i=n+1$, then label it by $e_0$. 
\end{itemize}
Below  shows the grammatical labeling of a permutation $\sigma= 5\, 4\,1\, 2\,7\, 3\,6\, 9\,  8 \in \mathfrak{S}_9$.   
\begin{equation*}\label{exama}
\begin{array}{rrrrrrrrrrrrrrrr}
   0&y_9\,5&x_8 \,4&x_7\,1&|&z_6\,2&|&y_5 \,7&x_4\,3&|&z_3\,6&|&y_2 \,9&x_1\,8&e_0&0 
     \end{array}
\end{equation*}
The weight $\omega$ of $\sigma$ is defined to be the product of all these labels taken from right to left. For the example above, its weight is given by 
\[\omega(\sigma)=e_0\, x_1\, y_2\, z_3\, x_4\, y_5\, z_6\, x_7\, x_8\, y_9.\]
Note that 
\begin{equation*} 
\phi\left(\sum_{\sigma \in \mathfrak{S}_{n}}q^{\inv(\sigma)} \beta^{\RLmin(\sigma)}\omega(\sigma)\right)=eF^{\inv}_n(q;x,y,z|\beta),
\end{equation*}
and consequently, Theorem \ref{q-Roselle-thm}  follows immediately upon verifying the following assertion:  For $n\geq 1$,
\begin{equation}\label{equ-qEulera}
D^n (e_0)=\sum_{\sigma \in \mathfrak{S}_{n}}q^{\inv(\sigma)}\beta^{\RLmin(\sigma)}\omega(\sigma).
\end{equation}

We proceed by induction on $n$. For $n=1$, the statement is evident. Assume that \eqref{equ-qEulera} holds for $n$. To demonstrate that it also holds for $n+1$, it suffices to show, by \eqref{equ-qEulera}, that
\begin{equation}\label{gram-inductaa1}
D\left(\sum_{\sigma \in \mathfrak{S}_{n}}q^{\inv(\sigma)}\beta^{\RLmin(\sigma)}\omega(\sigma)\right)=\sum_{\pi \in \mathfrak{S}_{n+1}}q^{\inv(\pi)}\beta^{\RLmin(\pi)}\omega(\pi).
\end{equation}
 
For a permutation $\sigma \in \mathfrak{S}_n$, we define the weight 
\[\omega(\sigma)=\omega_1(\sigma)\omega_2(\sigma) \cdots \omega_{n+1}(\sigma), \]
which is the product of the right-to-left labels in the grammatical labeling of $\sigma$. 

Similarly, for the permutation $\pi  \in \mathfrak{S}_{n+1}$,  the weight is given by 
\[\omega(\pi)=\omega_1(\pi)\omega_2(\pi) \cdots \omega_{n+2}(\pi), \]
which is the product of the right-to-left labels in the grammatical labeling of $\pi$. 

By the $q$-product formula and note that $\rho$  is $\KSO$ (Keep Sequence Order), we have 
\[D(\omega(\sigma))=\sum_{i=1}^{n+1}  \omega_1(\sigma)\cdots \omega_{i-1}(\sigma)\, R(\omega_i(\sigma))\, \uparrow\! \bigl(\omega_{i+1}(\sigma)\cdots \omega_{n+1}(\sigma)\bigr). \]

Next,  we represent a permutation $\sigma=\sigma_1\cdots \sigma_n$ in $\mathfrak{S}_n$ by patching   $0$  at both ends, resulting in  $n+1$ positions between adjacent elements.  These positions allow us to insert   $n+1$ into $\sigma$ to generate $n+1$ permutations in $\mathfrak{S}_{n+1}$. Let $\pi^{(i)}$ be permutation  in $\mathfrak{S}_{n+1}$ obtained  by inserting the element $n+1$ into  $\sigma$ at the position $n+1-i$, where $0\leq i\leq n$. To prove \eqref{gram-inductaa1},  it is enough to show that 
\begin{align}\label{pfbb}
&\beta^{\operatorname{RLmin}(\sigma)}q^{\inv (\sigma)}\omega_1(\sigma)\cdots \omega_{i-1}(\sigma)\, R(\omega_i(\sigma))\, \uparrow\! \bigl(\omega_{i+1}(\sigma)\cdots \omega_{n+1}(\sigma)\bigr)\nonumber \\[5pt]
&=\beta^{\operatorname{RLmin}(\pi^{(i)})}q^{\inv (\pi^{(i)})}\omega_1(\pi^{(i)})\omega_2(\pi^{(i)})\cdots \omega_{n+2}(\pi^{(i)}).
\end{align}

We consider  the following four cases:

\noindent{\bf Case 1:}  Suppose $\omega_i(\sigma) = x_{i}$ (i.e., position $n+1-i$ in the grammatical labeling of $\sigma$ is assigned the label $x_{i}$). Then in the labeling of $\pi^{(i)}$, position $n+1-i$ is labeled $x_{i+1}$ and position $n+2-i$ is labeled $y_{i}$; labels of $\pi^{(i)}$ after position $n+2-i$ coincide with those of  $\sigma$,  and each label of $\pi^{(i)}$ before position $n+1-i$ has a subscript one greater than the corresponding subscript in the labeling of $\sigma$. Furthermore, 
\begin{equation}\label{rose-rel}
\operatorname{RLmin}(\pi^{(i)}) = \operatorname{RLmin}(\sigma)\quad  {\text and} \quad  \inv(\pi^{(i)}) = \inv(\sigma) + i.
\end{equation}
Since $R(x_{i}) = q^{i}y_{i}x_{i+1}$, relation \eqref{pfbb} follows immediately.

\noindent{\bf Case 2:}  Suppose $\omega_i(\sigma) = y_{i}$ (i.e., position $n+1-i$ in $\sigma$'s grammatical labeling is $y_{i}$). Then $\pi^{(i)}$ has $x_{i+1}$ at position $n+1-i$, $y_{i}$ at position $n+2-i$ in its labeling. As in Case 1, labels of $\pi^{(i)}$ for all positions after $n+2-i$ coincide with those of  $\sigma$,  and for each position before $n+1-i$,   the subscript of the label of    $\pi^{(i)}$ is one greater than the subscript of the corresponding label of $\sigma$. Given $
 R(y_{i}) = q^{i}y_{i}x_{i+1}$
 and relation \eqref{rose-rel} is likewise valid for this case,  \eqref{pfbb}  follows at once.

\noindent{\bf Case 3:} If $\omega_i(\sigma)=z_{i}$ (position $n+1-i$ in $\sigma$'s grammatical labeling is $z_{i}$),  the labeling of $\pi^{(i)}$ satisfies $x_{i+1}$ at  $n+1-i$, $y_{i}$ at  $n+2-i$,  labels coincide with $\sigma$ for position $>n+2-i$, and a subscript increment of 1 for labels at position $<n+1-i$ (as in Case 1). Since $R(z_{i}) = q^{i}y_{i}x_{i+1}$ and   \eqref{rose-rel} is also valid here, \eqref{pfbb} is an immediate consequence.

\noindent{\bf Case 4:} If position $n+1-i$ in the labeling of $\sigma$ is assigned $e_0$, then $i=0$ and $\omega_{1}(\sigma)=e_0$. In the labeling of $\pi^{(n+1)}$, position $n+1$ is labeled $z_1$ and position $n+2$ is labeled $e_0$; for every position preceding $n+1$, the subscript of the label of $\pi^{(0)}$ is one greater than the subscript of the corresponding label of $\sigma$.  We have $\RLmin(\pi^{(0)}) = \RLmin(\sigma)+1$ and $\inv(\pi^{(0)}) = \inv(\sigma)$. Together with $R(e_0)=\beta e_0z_1$, relation \eqref{pfbb} follows.

Summing the four cases proves \eqref{pfbb}, so \eqref{equ-qEulera} holds for 
$n+1$, and Theorem \ref{q-Roselle-thm} follows accordingly.   \qed

\subsection{\texorpdfstring{$q$-grammatical calculus for cycle $q$-Roselle polynomials}{q-grammatical calculus for cycle q-Roselle polynomials} } \label{subsec:qcalroselle}  This subsection is devoted to providing a grammatical proof of Theorem \ref{thm:qRossel}. By  Theorem \ref{q-Roselle-thm}, it suffices to establish the following consequence.    

\begin{Theorem} \label{q-inv-cycle-grammar} Let $G_{\cyc}$ be the $q$-grammar defined in Theorem \ref{q-Roselle-thm} and let $\phi$ be  the evaluation map as defined therein. Then 
\begin{align*}
    \phi({\rm Gen}_q^{(G_{\cyc})}(e_0 ;u))=e\prod_{k=0}^{+\infty}
    \frac{1}{1-\beta uq^k\phi({\rm Gen}_q^{(G_{\cyc})}(z_1 ;uq^k))},
\end{align*}
where 
\begin{align}
\phi({\rm Gen}_q^{(G_{\cyc})}(z_1 ;uq^k))=\frac{z-y+x^{-1}y(x-z)e_q((x-y)uq^{k+1})}{1-x^{-1}y e_q((x-y)uq^{k+1})}. 
\end{align}

\end{Theorem}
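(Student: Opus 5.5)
The plan is to derive a $q$-difference equation for $F(u):=\phi\bigl({\rm Gen}_q^{(G_{\cyc})}(e_0;u)\bigr)$ and solve it by iteration. The order is $\KSO$, and each rule satisfies $R(s_{j+1})=q\uparrow R(s_j)$, since every right-hand side has the form $q^j\cdot(\text{word with indices shifted by }j)$. So $G_{\cyc}$ is a $q$-linear grammar, and $\phi$ is master-linear. Hence Proposition~\ref{prop: q_linear_gramma-mul*} (or Theorem~\ref{thm:product}) applies throughout.

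\textbf{Step 1 (the product).} Since $\phi$ is $\mathbb{K}[q]$-linear and acts coefficientwise in $u$, it commutes with the real $q$-derivative $D_q$. By \eqref{q_gramma-dif} and $D(e_0)=\beta e_0z_1$,
\[
D_qF(u)=\phi\bigl({\rm Gen}_q(\beta e_0z_1;u)\bigr)=\beta\,F(u)\,Z(u),\qquad Z(u):=\phi\bigl({\rm Gen}_q(z_1;u)\bigr),
\]
where the factorization uses the multiplicative property \eqref{q_linear_gramma-mul*}. By the definition \eqref{defi:qderiv} of $D_q$, this reads $F(u)-F(uq)=\beta uF(u)Z(u)$, that is,
\[
F(u)=\frac{F(uq)}{1-\beta uZ(u)}.
\]
Iterating $N$ times gives $F(u)=F(uq^N)\prod_{k=0}^{N-1}\bigl(1-\beta uq^kZ(uq^k)\bigr)^{-1}$. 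As $N\to\infty$ we have $F(uq^N)\to F(0)=\phi(e_0)=e$ in the $u$-adic topology. Moreover, the $k$-th factor is $1+O(uq^k)$, so the product converges formally in $\mathbb{K}[[q]][[u]]$. This yields the stated infinite product.

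\textbf{Step 2 (the function $Z$).} The key observation is that $D(z_1)=qy_1x_2=D(x_1)$. Hence $D^n(z_1)=D^n(x_1)$ for all $n\ge1$, and so
\[
Z(u)=z-x+\phi\bigl({\rm Gen}_q(x_1;u)\bigr).
\]
The rules for $x$ and $y$ in $G_{\cyc}$ are exactly those of $G_{\inv}$; only the order differs ($\KSO$ versus $\AIO$). By Proposition~\ref{prop: q_linear_gramma-mul-2}, these give the same $\phi$-generating functions. Theorem~\ref{q-inv-mult-grammar} with $i=1$ therefore gives $\phi({\rm Gen}_q(x_1;u))=(x-y)/(1-A)$, where $A=x^{-1}ye_q((x-y)uq)$. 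Then
\[
Z(u)=\frac{(z-x)(1-A)+x-y}{1-A}=\frac{z-y+(x-z)A}{1-A}.
\]
This is the displayed formula, and replacing $u$ by $uq^k$ gives it at $uq^k$.

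The main point needing care is Step 1. One must check two things: that $\phi$ genuinely commutes with $D_q$ (it does, because $\phi$ sends all indices of a master variable to the same value and so does not involve $u$), and that the telescoping infinite product is justified as a formal limit. Everything else reduces to earlier results.
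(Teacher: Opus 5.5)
Your proposal is correct and follows the paper's proof step for step. It derives the $q$-difference equation $F(u)-F(uq)=\beta u\,F(u)Z(u)$ from \eqref{q_gramma-dif} and the multiplicativity of Theorem~\ref{thm:product}, iterates it into the infinite product, and reduces $Z$ to Theorem~\ref{q-inv-mult-grammar} via $D(z_1)=D(x_1)$. You also usefully make explicit, through Proposition~\ref{prop: q_linear_gramma-mul-2}, why $G_{\cyc}$ and $G_{\inv}$ give the same evaluated series for $x_1$, which the paper only asserts.

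One small correction: the limit $F(uq^N)\to e$ holds $q$-adically, coefficientwise in $u$, since the coefficient of $u^n$ with $n\ge 1$ carries the factor $q^{nN}$. It does not hold in the $u$-adic topology, where $F(uq^N)-e$ always has $u$-valuation $1$.
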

  
\begin{proof}

Using Proposition \ref{pro:q_gramma}, we have 
\begin{align*}
D_q{\rm Gen}_q^{(G_{\cyc})}(e_0;u) &=
{\rm Gen}_q^{(G_{\cyc})}(D(e_0);u).
\end{align*}
By definition, we observe that
\[{\rm Gen}_q^{(G_{\cyc})}(D(e_0);u)={\rm Gen}_q^{(G_{\cyc})}(\beta e_0z_1;u)=\beta{\rm Gen}_q^{(G_{\cyc})}(e_0z_1;u)\]
and 
\[D_q{\rm Gen}_q^{(G_{\cyc})}(e_0;u)=\frac{{\rm Gen}_q^{(G_{\cyc})}(e_0 ;u)-{\rm Gen}_q^{(G_{\cyc})}(e_0 ;uq)}{u}.\]
Therefore, 
\begin{equation*}
{\rm Gen}_q^{(G_{\cyc})}(e_0 ;u)-{\rm Gen}_q^{(G_{\cyc})}(e_0 ;uq)
=\beta u{\rm Gen}_q^{(G_{\cyc})}(e_0z_1 ;u).
\end{equation*}
Applying the evaluation map $\phi$ to both sides, we obtain 
\begin{equation}\label{pf:gfcycle}
\phi({\rm Gen}_q^{(G_{\cyc})}(e_0 ;u))-\phi({\rm Gen}_q^{(G_{\cyc})}(e_0 ;uq))
=\beta u\phi({\rm Gen}_q^{(G_{\cyc})}(e_0z_1 ;u)).
\end{equation}
Note that  $\phi$ in Theorem \ref{q-Roselle-thm} is a master-linear evaluation and the grammar $G_{\cyc}$ defined in~Theorem \ref{q-Roselle-thm} satisfies condition~\eqref{eq:rule}.
By Theorem~\ref{thm:product}, we obtain the multiplicative identity: 
\begin{align}\label{pf:gfcycleb}
\phi({\rm Gen}_q^{(G_{\cyc})}(e_0z_1 ;u))=\phi({\rm Gen}_q^{(G_{\cyc})}(e_0;u))\phi({\rm Gen}_q^{(G_{\cyc})}(z_1;u)).
\end{align}
Substituting \eqref{pf:gfcycleb} into \eqref{pf:gfcycle} yields  
\begin{align*}
    \phi({\rm Gen}_q^{(G_{\cyc})}(e_0 ;u))
    &=\phi({\rm Gen}_q^{(G_{\cyc})}(e_0 ;uq))\cdot\frac{1}{1-\beta u\phi({\rm Gen}_q^{(G_{\cyc})}(z_1 ;u))}
    \\&=
       e\prod_{k=0}^{+\infty}
    \frac{1}{1-\beta uq^k\phi({\rm Gen}^{(G_{\cyc})}_q(z_1 ;uq^k))}.
\end{align*}
From Theorem \ref{q-inv-mult-grammar}, we have 
\begin{align}
\phi({\rm Gen}^{(G_{\cyc})}_q(z_1 ;uq^k))&=\phi({\rm Gen}^{(G_{\inv})}_q(x_1 ;uq^k))-x+z \nonumber \\[5pt]
&=\frac{x-y}{1-x^{-1}y e_q((x-y)uq^{k+1})}-x+z \nonumber  \\[5pt]
&=\frac{z-y+x^{-1}y(x-z)e_q((x-y)uq^{k+1})}{1-x^{-1}y e_q((x-y)uq^{k+1})}. 
\end{align}
This completes the proof. 
\end{proof}

\section{\texorpdfstring{$q$-Grammar for Andr\'e permutations and their generalizations}{q-Grammar for Andr\'e permutations and their generalizations}} \label{sec:qAndre}

This section aims to provide $q$-analogues of the grammar established by Dumont  \cite{Dumont-1996} for Andr\'e polynomials. Recall that the Andr\'e polynomials are defined in terms of 0-1-2 increasing trees.
An increasing tree on $[n]:=\{1,2,\ldots, n\}$ is a rooted tree with vertex set $[n]$ in which the
labels of the vertices are increasing along any path from the root. Note that $1$ is the
root. A 0-1-2 increasing tree is an increasing tree in which the degree of any vertex is
at most two. The degree of a vertex in a rooted tree is meant to be the number of its
children. Given a 0-1-2 increasing tree $T$, let $l(T)$ denote the number of leaves of $T$,
and let $u(T)$ denote the number of vertices of $T$ with degree $1$. The Andr\'e polynomial
$E_n(x,y)$ is defined by
\[
E_n(x,y) = \sum_{T} x^{l(T)} y^{u(T)},
\]
where the sum ranges over 0-1-2 increasing trees on $[n]$.

Setting $x = y = 1$, $E_n(x,y)$ reduces to the $n$-th Euler number $E_n$, which counts
0-1-2 increasing trees on $[n]$ as well as alternating permutations on $[n]$ and Andr\'e permutations on $[n]$, see \cite{FoataSchutzenberger1973Nombres, kuznetso1994increasing, stanley2010survey}.

Dumont \cite{Dumont-1996} found the grammar \eqref{gram-Andretree}  to generate the Andr\'e polynomials $E_n(x,y)$ and 
Chen and Fu \cite{Chen-Fu-2017}  provided a grammatical derivation of the generating function formula for $E_n(x,y)$, see Appendix \ref{sec:survey} for details. 

In this section, we'll provide $q$-analogues of the grammar \eqref{gram-Andretree}. It is natural to define $q$-Andr\'e polynomials via Andr\'e permutations, incorporating statistics for descents and inversions.

Andr\'e permutations were first introduced by Foata and Sch\"utzenberger and further studied by Strehl \cite{Str74} and Foata and Strehl \cite{FSt74, FSt76}.   For clarity, we will work with permutations of length $n$ for which each permutation is a sequence of $n$ distinct integers not necessarily from 1 to $n$.  The empty word $e$ and any single-letter word are defined as both {\it Andr\'e I permutations} and {\it Andr\'e II permutations}.
For a permutation $\sigma=\sigma_1\sigma_2\cdots \sigma_n$ ($n\geq 2$) of length $n$, we decompose it as $\sigma=\tau\,\min(\sigma)\,\tau'$. Here $\sigma$ is the concatenation of a left factor~$\tau$, followed by the minimum letter  $\min(\sigma)$, and a right factor $\tau'$.  Then, $\sigma$ is called an {\it Andr\'e I permutation} (resp. {\it Andr\'e II permutation}) if both $\tau$ and $\tau'$ are   Andr\'e I permutations (resp. Andr\'e II permutations), and the maximum letter of the subword $\tau\tau'$ lies in $\tau'$ (resp. the minimum letter of $\tau\tau'$ lies in $\tau'$).
We denote by $\AndI_n$ the set of all Andr\'e I permutations of $[n]:=\{1,2,\ldots, n\}$, and by $\AndII_n$ the set of all Andr\'e II permutations of $[n]$. This inductive definition immediately establishes a connection to the Euler numbers, as it can be shown that the number of Andr\'e I permutations and Andr\'e II permutations on the set $[n]$ are equal, i.e., $E_n=|\AndI_n| = |\AndII_n|$. 

\smallskip
Andr\'e I permutations for $n\leq 5$ are listed below:

$n=1$:\quad 1;\qquad $n=2$:\quad 12;\qquad
$n=3$:\quad 123, 213;

$n=4$:\quad 1234, 1324, 2314, 2134, 3124;

$n=5$:\quad 12345, 12435, 13425, 23415, 13245, 14235, 34125,
24135,\hfil\break
\indent\hphantom{$n=5$:\quad}23145, 21345, 41235, 31245, 21435, 32415, 41325, 31425.

\smallskip
Andr\'e II permutations for $n\leq 5$ are listed below:

$n=1$:\quad 1;\qquad $n=2$:\quad 12;\qquad
$n=3$:\quad 123, 312;

$n=4$:\quad 1234, 1423, 3412, 4123, 3124;

$n=5$:\quad 12345, 12534, 14523, 34512, 15234, 14235, 34125,
45123,\hfil\break
\indent\hphantom{$n=5$:\quad}35124, 51234, 41235, 31245, 51423, 53412, 41523, 31524.

We introduce the following polynomials defined on Andr\'e permutations, which incorporate statistics for descents and inversions.
\begin{align}
F^{I}_n(q;t) &= \sum_{\sigma \in \AndI_n} t^{\des(\sigma)} q^{\inv{(\sigma)}},\\[5pt]
F^{I\!I}_n(q;t)  &= \sum_{\sigma \in \AndII_n} t^{\des(\sigma)} q^{\inv{(\sigma)}},
\end{align}
where $\inv{(\sigma)}$ counts the number of inversions of $\sigma$ and $\des(\sigma)$ counts the number of descents of $\sigma$; see the beginning of Section \ref{sec:qgrammars} for their definitions. 

We have
\begin{align*}
F^{I}_1(q;t)  &=F^{I}_2(q;t) =t,\\[3pt]
F^{I}_3(q;t)  &=t + qt^2,\\[3pt]
F^{I}_4(q;t)  &=t + (2q + 2q^2 )t^2,\\[3pt]
F^{I}_5(q;t)  &=t + (3q+4q^2+3q^3+q^4) t^2 + (q^2+q^3+2q^4) t^3,
\end{align*}

and
\begin{align*}
F^{I\!I}_1(q;t)  &=F^{I\!I}_2(q;t)  =t,\\[3pt]
F^{I\!I}_3(q;t)  &=t + q^2t^2,\\[3pt]
F^{I\!I}_4(q;t)  &=t +  (2q^2+q^3+q^4)t^2,\\[3pt]
F^{I\!I}_5(q;t)  &=t+(2q^6 + q^5 + 3q^4 + 2q^3 + 3q^2)t^2 + (q^8 + q^6 + q^5 + q^4) t^3. 
\end{align*}

It is well known that there exists a bijection $\Psi$ between the set of permutations on $[n]$ and the set of increasing binary trees on $[n]$, see \cite[Chapter 1]{Stanley-EC1-2012}. Recall that an increasing binary tree is an increasing tree in which each vertex has at most two children, and the two children are distinguished as left and right (i.e., the tree is ordered). 

\begin{Definition}[The map $\Psi$] \label{defi:mappsi}
Let $\pi=\pi_1\pi_2\cdots \pi_n$  be a sequence of $n$ distinct letters not necessarily from $1$ to $n$. Define a binary tree $T $ as follows. If $\pi = \emptyset$, then $T  = \emptyset$. If $\pi \neq \emptyset$, then let $i$ be the least letter of $\pi$. Thus $\pi$ can be factored uniquely in the form $\pi = \sigma i \tau$. Now let $i$ be the root of $T $, and let $T_\sigma$ and $T_{\tau}$ be the left and right subtrees obtained by removing $i$ (see 
Figure~\ref{fig:inddef}). This yields an inductive definition of $T $.   

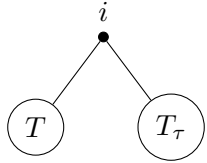
\begin{figure}[ht]
    \centering
    \begin{tikzpicture}[
    scale=0.6,
    vertex/.style={shape=circle, draw, inner sep=1.3pt, fill=black},
    subtree/.style={shape=circle, draw},
    sibling distance=1cm,
    level distance=20mm,
    auto
    ]
    \node[vertex, label=above:$i$] (i) at (0,0) {};
    \node[subtree] at (-1.5,-2) (left) {$T_\sigma$};
    \node[subtree] at (1.5,-2) (right) {$T_{\tau}$};
    \draw (i) -- (left);
    \draw (i) -- (right);
    \end{tikzpicture}
    \caption{An inductive definition of $T$.} \label{fig:inddef}
\end{figure}

\end{Definition}

As noted by Stanley \cite[Chapter 1]{Stanley-EC1-2012}, numerous permutation statistics, such as descents, double descents, peaks, and valleys, correspond to statistics on increasing binary trees under the bijection $\Psi$.  We find that the inversion number of permutations corresponds to the following inversion statistic defined on increasing binary trees under the bijection $\Psi$.  It should be pointed out that this inversion statistic on increasing binary trees is different from the notion of inversion in rooted labeled trees (see, e.g.,  Mallows and Riordan \cite{MallowsRiordan1968} and Gessel \cite{GesselSaganYeh1995}).

 \begin{Definition}\label{defi-inv-3Andre} Let $T$ be an increasing binary tree on the set $[n]$. An inversion in $T$ is a pair of vertices $(i,j)$ where $i>j$, and either:
\begin{itemize}
\item[(1)] $j$ lies to the right of the path from the root {\rm (}labeled $1${\rm )} to $i$, or

\item[(2)]  $j$ is on the path from the root {\rm (}labeled $1${\rm )} to $i$, and the left child of $j$ is contained in this path.
\end{itemize}
\end{Definition}

    Let $\inv _i(T)$ denote the number of vertices $j$ of $T$ such that $(i,j)$ is an inversion of $T$, and the inversion number of $T$ is defined by 
    \begin{equation}\label{invAndreiia}
    \inv(T)= \sum_{i=1}^n \inv _i(T). 
    \end{equation}

For example, the tree $T_1$ given in Figure \ref{figAnd-ary} has two inversions: $(2,1)$ and $(3,1)$, so $\inv(T_1)=2$.
In contrast, the tree $T_2$ given in Figure \ref{figAnd-ary} has four inversions: $(3,1)$, $(3,2)$, $(4,1)$, and $(4,2)$, so $\inv(T_2)=4$.

Applying the bijection $\Psi$ to the trees $T_1$ and $T_2$ in Figure \ref{figAnd-ary}, we obtain 
\[\Psi(T_1)=2314 \quad  \text{and} \quad \Psi(T_2)=3412.\]
It is straightforward to verify that $\inv(2314)=2$ and $\inv(3412)=4$, which is consistent with the tree inversion statistics.

As observed by Foata and the first author \cite{FH01}, when restricted to Andr\'e  permutations, the bijection $\Psi$ induces a bijection sending Andr\'e permutations to special cases of increasing binary trees, which we refer to as Andr\'e trees.

\begin{Definition}
An Andr\'e I tree is an increasing binary tree satisfying the maxima of the sibling subtrees are in increasing order {\rm (}by convention, maximum of empty tree is $0${\rm )}, whereas, an Andr\'e II tree is an increasing binary tree satisfying the minima of $k$ sibling subtrees are in decreasing order {\rm (}by convention,  minimum of empty tree is $+\infty${\rm )}.
 
\end{Definition}

The sets of Andr\'e I trees on $[n]$ and Andr\'e II trees on $[n]$ are denoted by $\mathcal{T}_n^{I}$ and $\mathcal{T}_n^{I\!I}$, respectively.   

It is easy to see that Andr\'e I trees can be derived from  0-1-2-increasing trees by requiring the maxima of the two sibling subtrees to be in increasing order,  whereas Andr\'e II trees can be derived from  0-1-2-increasing trees by requiring the minima of the two sibling subtrees to be in decreasing order. With the conventions that the maximum of an empty tree is $0$ and the minimum of an empty tree is $+\infty$. It follows that 
\[E_n=|\mathcal{T}_n^{I}|=|\mathcal{T}_n^{I\!I}|.\]

Using the bijection $\Psi$, it is straightforward to verify that Andr\'e I trees on $[n]$ are in bijection with Andr\'e I permutations on $[n]$, and Andr\'e II trees on $[n]$ are in bijection with Andr\'e II permutations on $[n]$. Moreover, the descent number of an Andr\'e permutation is determined by the number of leaves of the corresponding Andr\'e tree.

\begin{figure}
\centering
\begin{subfigure}[t]{0.3\textwidth}
\centering
\subcaption*{Andr\'e I tree $T_1$}
\begin{tikzpicture}
[vertex/.style={shape=circle, draw, inner sep=1.5pt, fill=black},
subtree/.style={shape=ellipse, draw,minimum width=7mm, minimum height=.5cm},
every fit/.style={ellipse,draw,inner sep=-2pt},
sibling distance=7mm,level distance=7mm,
leaf/.style={label={[name=#1]below:$ $}},auto]
\node [vertex, label=0:{1}]{}
child { node [vertex, label=0:{2}]{}
child { {}    edge from parent [white] }
child { {}   edge from parent [white] }
child { node [vertex, label=0:{3}]{}
child { {}    edge from parent [white] }
child { {}   edge from parent [white] }
child { {}  edge from parent [white] } } }
child { {} edge from parent [white] }  
child { node [vertex, label=0:{4}]{}   
child { {}    edge from parent [white] }
child { {}   edge from parent [white] }
child { {}  edge from parent [white] } };
\end{tikzpicture}
\end{subfigure}
 \begin{subfigure}
     [t]{0.3\textwidth}
     \centering
     \subcaption*{Andr\'e II tree $T_2$}
\begin{tikzpicture}
[vertex/.style={shape=circle, draw, inner sep=1.5pt, fill=black},
subtree/.style={shape=ellipse, draw,minimum width=7mm, minimum height=.5cm},
every fit/.style={ellipse,draw,inner sep=-2pt},
sibling distance=7mm,level distance=7mm,
leaf/.style={label={[name=#1]below:$ $}},auto]
\node [vertex, label=0:{1}]{}
child { node [vertex, label=0:{3}]{}  
child { {}    edge from parent [white] } child { {}   edge from parent [white] } child { node [vertex, label=0:{4}]{}  
child { {}    edge from parent [white] } child { {}   edge from parent [white] } child { {}  edge from parent [white] } } }
child { {} edge from parent [white] }
child { node [vertex, label=0:{2}]{}  
child { {}    edge from parent [white] } child { {}   edge from parent [white] } child { {}  edge from parent [white] } };
\end{tikzpicture}
 \end{subfigure}
 \caption{ Two Andr\'e  trees  on $[4]$.} 
\label{figAnd-ary}
\end{figure}

\begin{Proposition}\label{Andreperincr} The bijection $\Psi$ defined in Definition \ref{defi:mappsi} is a bijection between the set $\mathcal{T}^I_n$ of Andr\'e  I trees on $[n]$ {\rm (}resp. the set $\mathcal{T}^{I\!I}_n$ of Andr\'e  II trees on $[n]${\rm )} and the set ${\rm And}^{I}_n$ of Andr\'e  I permutations on $[n]$ {\rm (}resp. the set ${\rm And}^{I\!I}_n$ of Andr\'e  II permutations on $[n]$ {\rm )}. Moreover, for any $T\in \mathcal{T}^I_n$ {\rm (}resp. $T\in \mathcal{T}^{I\!I}_n$ {\rm )}
, if $\pi=\Psi(T)$, then 
    \[l(T)=\des(\pi) \quad \text{and} \quad \inv(T)=\inv(\pi).\]
\end{Proposition}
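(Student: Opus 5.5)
The plan is to argue by induction on $n$ along the recursive factorization $\pi=\sigma\, i\, \tau$ with $i=\min(\pi)$. This factorization drives both the definition of Andr\'e permutations and the definition of $\Psi$.

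\textbf{Bijection.} Under $\Psi$, the root of $T=\Psi^{-1}(\pi)$ is $i$, the left subtree is $T_\sigma$ and the right subtree is $T_\tau$. The Andr\'e I condition on $\pi$ says that the maximum of $\sigma\tau$ lies in $\tau$, i.e.\ $\max T_\sigma<\max T_\tau$, with $\max\emptyset=0$. This is exactly the sibling condition at the root, in the increasing order required. Likewise the Andr\'e II condition (the minimum of $\sigma\tau$ lies in $\tau$) is $\min T_\sigma>\min T_\tau$, with $\min\emptyset=+\infty$. The remaining requirements, that $\sigma,\tau$ are Andr\'e and that the subtrees are Andr\'e trees, correspond by induction. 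Since $\Psi$ is already a bijection between permutations and increasing binary trees, it restricts to the claimed bijections.

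\textbf{Descents versus leaves.} Use the convention $\pi_{n+1}=0$. In $\Psi(T)$, the letter $v$ is followed by a smaller letter, i.e.\ position of $v$ is a descent, iff the right subtree of $v$ is empty. Indeed, if the right subtree of $v$ is nonempty, the next letter is its minimum, which exceeds $v$. If it is empty, the next letter is an ancestor of $v$ or the patched $0$, hence smaller.

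Thus $\des(\pi)$ counts vertices with empty right subtree. Such vertices are leaves or vertices having only a left child. The key observation is that in an Andr\'e tree no vertex has only a left child. In the Andr\'e I case, a nonempty left subtree has maximum $>0=\max\emptyset$, violating increasing maxima. In the Andr\'e II case, its minimum is $<+\infty=\min\emptyset$, violating decreasing minima. Hence $\des(\pi)=l(T)$.

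\textbf{Inversions.} This is the step needing the most care, since the tree definition must be matched to the factorization. For $\pi=\sigma i\tau$, an inversion of $\pi$ is of one of four kinds: a pair inside $\sigma$; a pair inside $\tau$; a pair $(a,i)$ with $a\in\sigma$ (all letters of $\sigma$ exceed $i$), contributing $|\sigma|$; or a pair $a\in\sigma$, $b\in\tau$ with $a>b$. On the tree side I will classify pairs $(a,j)$ with $a>j$ by the positions of $a$ and $j$:
\begin{itemize}
\item $j$ the root and $a\in T_\sigma$: here $j$ is on the path to $a$ and its left child lies on that path, so this is an inversion by rule (2). It contributes $|\sigma|$.
\item $j$ the root and $a\in T_\tau$: the root's right child lies on the path, so this is not an inversion.
\item $a\in T_\sigma$, $j\in T_\tau$: $j$ lies to the right of the path to $a$, so this is an inversion by rule (1).
\item $a\in T_\tau$, $j\in T_\sigma$: $j$ lies to the left of the path, so this is not an inversion.
\item $a,j$ in the same subtree: the path from the root enters that subtree through the same edge, so the relevant ``right of path'' and ``left child on path'' conditions are unchanged from those in the subtree. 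Induction applies.
\end{itemize}
Summing, $\inv(T)=\inv(T_\sigma)+\inv(T_\tau)+|\sigma|+\#\{(a,b)\in\sigma\times\tau: a>b\}=\inv(\pi)$.

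The only delicate points are fixing precise meanings of ``right of the path'' for vertices in different subtrees, and checking that the boundary conventions ($\max\emptyset=0$, $\min\emptyset=+\infty$, $\pi_{n+1}=0$) rule out left-only vertices.
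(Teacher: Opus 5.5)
Your argument is correct and is the induction along the factorization $\pi=\sigma\,i\,\tau$ that the paper has in mind; the paper gives no written proof, only the remark that the correspondence is straightforward to verify via $\Psi$. One small slip: the letter following $v$ in $\Psi(T)$ is the leftmost vertex of the right subtree of $v$, not its minimum, since the root of that subtree may have a left child. It still exceeds $v$ because every vertex of that subtree does, so your descent characterization stands.
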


Using Proposition \ref{Andreperincr}, we are now able to express the polynomials $F^{I}_n(q;t)$ 
 and $F^{I\!I}_n(q;t)$ in terms of Andr\'e trees, which we refer to $q$-Andr\'e  polynomials.  
\begin{align} \label{eq:AndreIO}
E^{I}_n(q;x,y) &= \sum_{\sigma \in \mathcal{T}^I_n} x^{l(T)} y^{u(T)} q^{\inv{(T)}},\\
E^{I\!I}_n(q;x,y) &= \sum_{\sigma \in \mathcal{T}^{I\!I}_n} x^{l(T)} y^{u(T)} q^{\inv{(T)}}, \label{eq:AndreIIO}
\end{align}
where $l(T)$, $u(T)$ and $\inv(T)$ denote the number of leaves, the number of vertices of degree 1, and the inversion number of $T$, respectively.

From Proposition \ref{Andreperincr}, we see that upon setting $y=1$ and $x=t$, the polynomials $E^{I}_n(q;x,y)$ and $E^{I\!I}_n(q;x,y)$ reduce to $F^{I}_n(q;t) $ and $F^{I\!I}_n(q;t) $ respectively.

The remainder of this section is structured as follows. Subsection~\ref{sec:AndrI} presents the $q$-grammar for the $q$-Andr\'e I polynomials \eqref{eq:AndreIO} (see Theorem~\ref{thm:q-AndreI}); Subsection~\ref{sec:q-AndreII} is devoted to the $q$-grammar for the $q$-Andr\'e II polynomials \eqref{eq:AndreIIO} (see Theorem~\ref{thm:q-AndreII}).  We remark that this construction extends naturally to yield the $q$-grammar for $q$-analogues of $k$-Andr\'e trees. These $q$-analogues are obtained by generalizing the inversion statistic from increasing binary trees to the setting of increasing $k$-ary trees.

\subsection{\texorpdfstring{$q$-Grammar for $q$-Andr\'e I polynomials }{q-Grammar for q-Andr\'e I polynomials }} \label{sec:AndrI}

\begin{Theorem}\label{thm:q-AndreI}
Let $G_{{\rm AndI}}$ be the $q$-grammar defined by
\begin{equation}\label{def:AndreIGRA}
G_{{\rm AndI}}=(\{x,y\},  \{   x_j \rightarrow q^j x_j y_{j+1} , \  y_j \rightarrow q^j x_j \}    , \AIO).
\end{equation}
Let $D$ be the $q$-derivative associated with $G_{{\rm AndI}}$ and define the
evaluation map $\phi$ by $\phi(x_j)=x$ and $\phi(y_j)=y.$ 
Then
\begin{equation}\label{eq:AndreI}
\phi\bigl(D^n(x_0)\bigr) =E^{I}_{n+1}(q;x,y).
\end{equation} 
\end{Theorem}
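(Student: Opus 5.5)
The plan is to follow the grammatical-labeling approach of Theorems~\ref{q-inv-mult} and \ref{q-Roselle-thm}, with one simplification up front. The rule of $G_{\rm AndI}$ satisfies \eqref{eq:rule}: $R(x_{j+1})=q^{j+1}x_{j+1}y_{j+2}=q\uparrow R(x_j)$ and $R(y_{j+1})=q^{j+1}x_{j+1}=q\uparrow R(y_j)$. The evaluation $\phi$ is master-linear. So Proposition~\ref{prop: q_linear_gramma-mul-2}, read coefficientwise in $u$, gives $\phi(D^n_{G_{\rm AndI}}(x_0))=\phi(D^n_{\hat G}(x_0))$, where $\hat G$ is the $q$-linear grammar with order $\KSO$. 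I will therefore work with $\KSO$ and need not track the $\AIO$ reordering at all.

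\textbf{Labeling.} For an Andr\'e I tree $T$ on $[n+1]$, I attach one label to each vertex that can still receive the next vertex $n+2$: an $x$-label to each leaf and a $y$-label to each vertex of degree $1$. Vertices of degree $2$ get no label. Since $\phi(x_j)=x$ and $\phi(y_j)=y$, we get $\phi(\omega(T))=x^{l(T)}y^{u(T)}$ for the product $\omega(T)$ of the labels. The subscript of the label at a vertex $v$ is defined to be the increase in $\inv$ caused by attaching $n+2$ at $v$, namely $\inv_{n+2}$ of the enlarged tree in the sense of Definition~\ref{defi-inv-3Andre}. The Andr\'e I condition forces where $n+2$ goes: its subtree has the largest maximum, so it becomes the right child, and for a degree-$1$ vertex the existing child is moved to the left. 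The labels are listed in the order of the corresponding vertices in the permutation $\Psi(T)$, read right to left as in Theorem~\ref{q-inv-mult}.

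The key claim is
\[
D^{n}(x_0)=\sum_{T\in\mathcal T^{I}_{n+1}}q^{\inv(T)}\omega(T),
\]
proved by induction on $n$; the case $n=0$ is the single-vertex tree $\omega=x_0$. Every $T'\in\mathcal T^I_{n+2}$ arises uniquely by deleting its maximum $n+2$, which is a leaf. The induction step therefore reduces to the term-by-term identity already used in \eqref{rel-inv}: the $i$-th summand of the product formula (Proposition~\ref{prop:f_product} with $\KSO$) must equal $q^{\inv(T')}\omega(T')$, where $T'$ is $T$ with $n+2$ attached at the $i$-th labeled vertex.

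\textbf{Two cases.} If $n+2$ is attached to a leaf $v$ labeled $x_j$, the rule $x_j\mapsto q^jx_jy_{j+1}$ must match three facts: $\inv$ increases by $j$; $v$ becomes degree $1$ with label $y_{j+1}$; and the new leaf $n+2$ gets $x_j$. If $n+2$ is attached to a degree-$1$ vertex labeled $y_j$, the rule $y_j\mapsto q^jx_j$ must match: $v$ becomes degree $2$ and loses its label, and the new leaf gets $x_j$. In both cases I also need the labels of vertices listed after $v$ to have subscripts increased by exactly $1$, matching $\uparrow$, and the labels before $v$ to be unchanged. Concretely, $n+2$ lands to the left of, or on the relevant part of the path to, exactly those later vertices, in the sense of Definition~\ref{defi-inv-3Andre}.

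\textbf{Main obstacle.} The hard step is this subscript bookkeeping, together with choosing the left/right convention for the listing. The relocation of the old child to the left in the degree-$1$ case changes which vertices lie to the right of various paths, and the claimed values $j$ and $j+1$ must be checked carefully. I would first verify everything on the permutation side via $\Psi$ and Proposition~\ref{Andreperincr}. There, attaching $n+2$ corresponds to inserting the largest letter into a slot of the Andr\'e I permutation, and the inversion increment is the number of letters to the right of that slot, exactly as in Theorem~\ref{q-inv-mult}. I would then check the small cases $n\le 3$ against the listed $F^I_n(q;t)$ before writing the general argument.
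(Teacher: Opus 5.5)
Your switch to $\KSO$ is legitimate; the paper itself notes that $\AIO$ and $\KSO$ act identically on the words that occur here. Your labeling by leaves and degree-one vertices and the term-by-term matching against the product formula also follow the paper.

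\textbf{The gap.} The step you treat as automatic does not hold. Attaching the new maximum as a leaf at a labeled vertex $v$ (as right child, pushing an existing child to the left) does not preserve the Andr\'e I condition. Conversely, deleting the maximum of an Andr\'e I tree does not give an Andr\'e I tree.

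Whenever $v$ lies in the left subtree of a branching ancestor $w$, the new leaf makes the maximum of the left subtree of $w$ exceed that of the right subtree. Concretely, take the tree of $213$: root $1$, left child $2$, right child $3$.
\begin{itemize}
\item Attaching $4$ at the leaf $2$ gives the tree of $2413\notin\AndI_4$.
\item $3124\in\AndI_4$ is never produced. Deleting $4$ from its tree leaves root $1$ with left child $3$ and right child $2$, which violates the condition.
\item The trees you generate from $\mathcal T^I_3$ therefore give $t+(2q+q^2+q^3)t^2$ instead of $F^I_4(q;t)=t+(2q+2q^2)t^2$.
\end{itemize}

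The permutation-side check fails for the same reason. By the recursive definition, the maximum letter of an Andr\'e I permutation is always its last letter. So the only admissible slot for a new maximum is the last one, and the mechanism of Theorem~\ref{q-inv-mult} (increment equals the number of letters to the right of the slot) cannot be transported.

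\textbf{The missing idea.} You need the paper's insertion $\phi_v$, which stays inside $\mathcal T^I$.
\begin{itemize}
\item Along the path $L_v$ from $1$ to $v$, let $a_1>\cdots>a_k$ be the maxima of the nonempty right subtrees hanging off the path.
\item Rename them cyclically: $a_1\mapsto n+1$ and $a_i\mapsto a_{i-1}$.
\item Attach the freed label $a_k$ (or $n+1$ if $k=0$) as the right child of $v$ if $v$ is a leaf, and as the left child if $v$ has one child.
\end{itemize}
This map is a bijection onto Andr\'e I trees with one more vertex. It raises $\inv$ by $\Delta_I(v)=\sum_{v_i\in N_v}n_r(v_i)$, and this is the correct subscript for the label at $v$.

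With this insertion, the bookkeeping you flagged as the main obstacle is exactly Proposition~\ref{Andreperpropb}:
\begin{itemize}
\item Vertices to the right of the path keep their subscripts; the vertices of $M(L_v)$ are only renamed, not moved.
\item Vertices on or to the left of the path gain $1$, which matches $\uparrow$ on the suffix.
\item The new leaf receives $x_{\Delta_I(v)}$.
\item The vertex $v$ becomes $y_{\Delta_I(v)+1}$ if it was a leaf, and becomes unlabeled if it had one child.
\end{itemize}
These are exactly the rules $x_j\mapsto q^jx_jy_{j+1}$ and $y_j\mapsto q^jx_j$. The real difficulty is choosing an insertion that preserves the Andr\'e I property, not the subscript tracking.
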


We are ready to prove Theorem \ref{thm:q-AndreI}  by using the grammatical labeling. To do so, we first introduce several statistics on Andr\'e I trees.

For $T \in \mathcal{T}^{I}_n$ and $v\in T$, let $L_v$   be the set of vertices on the path from the root 1 to $v$, say $L_v=\{v_0:=1<v_1<\cdots <v_{m-1}<v_{m}:=v\}$.  Let $N_v$ denote the set of vertices in $L_v$ whose right child is not contained in  $L_v$. For $v_i \in N_v$,  let $T^r_{v_i}$ denote the subtree rooted at the right child of $v_i$ and let  $n_r(v_i)$ be the number of  vertices in $T^r_{v_i}$ with the convention that $n_r(v_i)=0$ if $T^r_{v_i}=\emptyset$.  We define the following   statistic for $v\in T$: 
    \begin{align*}
   \Delta^{T}_I(v)&=\sum_{v_i \in N_v} n_{r}(v_i).
    \end{align*}
    When the context of the tree 
    $T$  is clear, we simplify the notation to  $\Delta_I(v)$.  

\begin{figure}
    \centering
    \begin{tikzpicture}
    [scale = 0.7, vertex/.style={shape=circle, draw, inner sep=1.5pt, fill=black},
    subtree/.style={shape=ellipse, draw,minimum width=1.5cm, minimum height=.5cm},
    every fit/.style={ellipse,draw,inner sep=-2pt},
    sibling distance=4cm,level distance=12mm,
    leaf/.style={label={[name=#1]below:$ $}},auto]
    
    \node[vertex, name=1, label=30:{$1$}]{}[grow=down]
    child {node [vertex, name=3, label=-180:{$3$}]{}[level distance=10mm, sibling distance = 25mm]
    child {node [vertex, label=-90:{$8$}]{}}
    child {node [vertex, label=-90:{$10$}]{}}
    edge from parent [solid] }
    child{node[vertex, name=2, label=0:{$2$}]{}[level distance=10mm, sibling distance = 25mm]
    child {node [vertex, label=-90:{$9$}]{}}
    child {node [vertex, name=4, label=0:{$4$}]{}
    [level distance=10mm, sibling distance = 30mm]
    child {node [vertex, name=5, label=180:{$5$}]{}
    [level distance=10mm, sibling distance = 15mm]
    child {node [vertex, name=6, label=180:{$6$}]{}
    child { {}edge from parent [solid,white]}
    child {node [vertex, label=-90:{$12$}]{}}
    }
    child {node [vertex, label=0:{$13$}]{}
    [level distance=10mm, sibling distance = 12mm]
    child {node {} edge from parent [white]}
    child {node [vertex, label=0:{$14$}]{}
    child {node [vertex, label=-90:{$15$}]{}}
    child {node [vertex, label=-90:{$17$}]{}}}}}
    child {node [vertex, label=0:{$7$}]{}
    [level distance=10mm, sibling distance = 15mm]
    child {node [vertex, label=0:{$11$}]{}
    child {{} edge from parent [white]}
    child {node [vertex, label=0:{$16$}]{}
    }}
    child {node [vertex, label=0:{$18$}]{}}
    edge from parent [solid] }}};
    
    \draw[red, thick] (1) -- (2);
    \draw[red, thick] (2) -- (4);
    \draw[red, thick] (4) -- (5);
    \draw[red, thick] (5) -- (6);

    \draw[red,thick] (6) circle (5pt);
    \end{tikzpicture}
    \caption{An Andr\'e I tree $T$ on $[18]$.} \label{orintre-ex}
\end{figure}

From Fig.~\ref{orintre-ex}, we see that 
$L_6=\{1,2,4,5,6\}$, where $T^r_1=T^r_2=\emptyset$ and $T^r_4, T^r_5$ and $T^r_6$ are given in Fig. \ref{subtreeAnd}. Hence, 
\[N_v=\{4,5,6\} \quad \text{and} \quad  \Delta_I(6)=n_r(4)+n_r(5)+n_r(6)=9.\]

\begin{figure}
    \centering    
 \begin{tikzpicture}
 [vertex/.style={shape=circle, draw, inner sep=1.5pt, fill=black},
 subtree/.style={shape=ellipse, draw,minimum width=1.5cm, minimum height=.5cm},
 every fit/.style={ellipse,draw,inner sep=-2pt},
sibling distance=4cm,level distance=12mm,
 leaf/.style={label={[name=#1]below:$ $}},auto]
 
\node [vertex, label=0:{$7$}]{}
 [level distance=10mm, sibling distance = 15mm]
 child {node [vertex, label=180:{$11$}]{}
 child {{} edge from parent [white]}
 child {node [vertex, label=-90:{$16$}]{}
 }}
 child {node [vertex, label=0:{$18$}]{}};

 \draw(0,-3.5) node{$T_4^r$};
\end{tikzpicture}
 \quad \quad  \quad \quad 
 \begin{tikzpicture}
 [vertex/.style={shape=circle, draw, inner sep=1.5pt, fill=black},
 subtree/.style={shape=ellipse, draw,minimum width=1.5cm, minimum height=.5cm},
 every fit/.style={ellipse,draw,inner sep=-2pt},
sibling distance=4cm,level distance=12mm,
 leaf/.style={label={[name=#1]below:$ $}},auto]
 
\node [vertex, label=0:{$13$}]{}
  [level distance=10mm, sibling distance = 12mm]
 child {node {} edge from parent [white]}
 child {node [vertex, label=0:{$14$}]{}
 child {node [vertex, label=-90:{$15$}]{}}
 child {node [vertex, label=-90:{$17$}]{}}};
\draw(0.5,-3.5) node{$T_5^r$};
\end{tikzpicture}
 \quad \quad  \quad \quad 
\begin{tikzpicture}
    \draw [fill=black] (0,-1)circle(2pt);
    \draw (0.1,-1) node[right]{$12$};
    \draw(0.2,-3.5) node{$T_6^r$};
\end{tikzpicture}
 \caption{Subtrees rooted at the right children of $4,5,6$.} \label{subtreeAnd}
    \end{figure}

From the definition of $\Delta_I$, it is not difficult to show that 

\begin{Proposition}\label{Andreperprop} Let  $T$ be an  Andr\'e I tree   in $ \mathcal{T}^I_n$ and let $u, v $ be two vertices in $T$  each having  at most one child. If  $u$  lies on or to the left of the path from the root 1 to $v$, then $\Delta_I(u)> \Delta_I(v)$; otherwise $\Delta_I(u)<\Delta_I(v)$. 
\end{Proposition}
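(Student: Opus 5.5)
The plan is to compare the two root-to-vertex paths $L_u$ and $L_v$ directly. Both vertices have at most one child, and we use one structural fact about Andr\'e I trees throughout. If a vertex has exactly one child, then one of its two sibling subtrees is empty, with maximum $0$. The increasing-maxima condition then forces the empty subtree to be the left one, so \emph{every child of a vertex of degree one is a right child}. In particular, any vertex $x$ with at most one child has $x\in N_x$, with $n_r(x)=|T^r_x|\ge 0$. We assume $u\neq v$, since otherwise the statement is vacuous.

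\textbf{Case A: $u$ is a proper ancestor of $v$.} Vertices of $L_u\setminus\{u\}$ lie on $L_v$ and their membership in $N$ is the same for both paths, so they contribute equally to $\Delta_I(u)$ and $\Delta_I(v)$. The vertex $u$ has a single child lying on $L_v$, and by the structural fact this is its right child. Hence $u\notin N_v$, while $u\in N_u$ contributes $n_r(u)=|T^r_u|$. All remaining elements of $N_v$ lie strictly below $u$ on $L_v$. Their right subtrees are pairwise disjoint, contained in $T^r_u$, and do not contain $v$ (each hangs off the path to $v$; this includes $T^r_v$, which consists of descendants of $v$). Therefore $|T^r_u|\ge 1+\sum n_r(\cdot)$ over these, which gives $\Delta_I(u)>\Delta_I(v)$.

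\textbf{Case B: $u$ lies strictly to the left of the path to $v$.} Let $w$ be the last common vertex of $L_u$ and $L_v$. Then $u$ lies in the left subtree of $w$ and $v$ in the right subtree; note $v\neq w$, because $v$ has no left child. The vertices above $w$ contribute equally. At $w$ we have $w\in N_u$, since its right child is off $L_u$, contributing $|T^r_w|$; on the other hand $w\notin N_v$. The remaining contributions to $\Delta_I(u)$, from vertices below $w$ on $L_u$, are $\ge 0$. The remaining contributions to $\Delta_I(v)$ come from disjoint right subtrees inside $T^r_w$ that avoid $v\in T^r_w$, exactly as in Case A. Hence $\Delta_I(u)\ge c+|T^r_w|>c+(\text{rest of }\Delta_I(v))=\Delta_I(v)$, where $c$ is the common contribution from above $w$.

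\textbf{The ``otherwise'' case.} If $u$ is neither on nor to the left of the path to $v$, then either $u$ is a proper descendant of $v$, or $u$ lies to the right of that path. In both situations $v$ lies on or to the left of the path to $u$, so Cases A and B with the roles swapped give $\Delta_I(v)>\Delta_I(u)$.

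The main point needing care is the disjointness and containment bookkeeping, namely that the right subtrees hanging off $L_v$ below the branching vertex all sit inside one right subtree and miss $v$. This bookkeeping, together with the observation that single children are right children, is what makes the inequality strict.
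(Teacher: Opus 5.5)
Your proof is correct and is exactly the direct check from the definition of $\Delta_I$ that the paper leaves to the reader; the paper gives no argument beyond ``it is not difficult to show''. The key input is that a single child in an Andr\'e I tree is always a right child. One small correction: the case $u=v$ is not vacuous but must be excluded, since ``on the path'' would then assert $\Delta_I(v)>\Delta_I(v)$; the proposition implicitly takes $u\neq v$.
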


 \noindent{ \bf An insertion algorithm for Andr\'e I trees:} 
Let $T$ be an Andr\'e I tree.  Suppose $T$ has  $m$ vertices with at most one child. We 
present an insertion algorithm to generate $m$ Andr\'e I trees on $[n+1]$ by inserting $n+1$ into $T$.

Let $v$ be the leaf or the vertex with exactly one child in $T$. We define $\phi_v$ that transforms $T$ into an Andr\'e I tree on $[n+1]$ via operations on $v$.

Recall that $L_{v}$ denotes the set of vertices on the path from the root 1 to~$v$: 
\[L_{v}=\{v_0:=1<v_1<\cdots <v_j:=v \}.\]
Let $v^r_{1},\ldots, v^r_{k}$ be  the $k$ vertices in $L_v$ whose right children are not in $L_v$, that is, 
\[N_v=\{v^r_{1},\ldots, v^r_{k}\}.\]
Denote by  $T^r_{v^r_i}$ the subtree rooted at the right child of $v^r_i$ and let 
$a_i$ be  largest vertex in the subtree $T^r_{v^r_i}$. Define
\[M(L_v):=\{a_1>a_2>\cdots>a_k\}.\]
Relabel the elements of $M(L_v)$ according to the permutation 
\[\left( \begin{array} {ccccccc}
a_1&a_2 &a_3&\cdots & a_k\\
n+1&a_1&a_2 &\cdots & a_{k-1}
\end{array} 
\right).
\]
If $v$ is a leaf of $T$, then assign $a_k$ as the right child of $v$ (see Fig. \ref{inserAndrea}); If $v$ has one child of $T$, then assign $a_k$ as the left child of $v$ (see Fig. \ref{inserAndreb}). This yields an Andr\'e I tree  $\widetilde{T}:=\phi^{I}_v(T)$. It can be readily seen that the insertion algorithm is reversible.

\begin{figure}
    \centering
    \begin{subfigure}[t]{0.4\textwidth}
    \begin{tikzpicture}
    [scale = 0.5, vertex/.style={shape=circle, draw, inner sep=1.3pt, fill=black},
    subtree/.style={shape=ellipse, draw,minimum width=1.5cm, minimum height=.5cm},
    every fit/.style={ellipse,draw,inner sep=-2pt},
    sibling distance=4cm,level distance=12mm,
    leaf/.style={label={[name=#1]below:$ $}},auto]
    
    \node[vertex, name=1, label=30:{$1$}]{}[grow=down]
    child {node [vertex, name=3, label=-180:{$3$}]{}[level distance=10mm, sibling distance = 25mm]
    child {node [vertex, label=-90:{$8$}]{}}
    child {node [vertex, label=-90:{$10$}]{}}
    edge from parent [solid] }
    child{node[vertex, name=2, label=0:{$2$}]{}[level distance=10mm, sibling distance = 25mm]
    child {node [vertex, label=-90:{$9$}]{}}
    child {node [vertex, name=4, label=0:{$4$}]{}
    [level distance=10mm, sibling distance = 30mm]
    child {node [vertex, name=5, label=180:{$5$}]{}
    [level distance=10mm, sibling distance = 15mm]
    child {node [vertex, name=6, label=180:{$6$}]{}}
    child {node [vertex, label=0:{$12$}]{}
    [level distance=10mm, sibling distance = 12mm]
    child {node {} edge from parent [white]}
    child {node [vertex, label=0:{$13$}]{}
    child {node [vertex, label=-90:{$14$}]{}}
    child {node [vertex, name = 16, label=-90:{$16$}]{}}}}}
    child {node [vertex, label=0:{$7$}]{}
    [level distance=10mm, sibling distance = 15mm]
    child {node [vertex, label=0:{$11$}]{}
    child {{} edge from parent [white]}
    child {node [vertex, label=0:{$15$}]{}
    }}
    child {node [vertex, name=17, label=0:{$17$}]{}}
    edge from parent [solid] }}};
    
    \draw[red, thick] (1) -- (2);
    \draw[red, thick] (2) -- (4);
    \draw[red, thick] (4) -- (5);
    \draw[red, thick] (5) -- (6);

    \draw[red,thick] (6) circle (6pt);
    \draw[blue,thick]  (17) circle (6pt) (16) circle (6pt);
    
\draw[double distance=3pt, 
      <->, 
      >={Stealth[open, length=9pt, width=10pt]},
      line width=1pt] 
      (6.5,-3) -- (9,-3);
\end{tikzpicture}
 \end{subfigure}  \qquad\qquad 
 \begin{subfigure}[t]{0.4 \textwidth}
    \begin{tikzpicture}
    [scale = 0.5, vertex/.style={shape=circle, draw, inner sep=1.3pt, fill=black},
    subtree/.style={shape=ellipse, draw,minimum width=1.5cm, minimum height=.5cm},
    every fit/.style={ellipse,draw,inner sep=-2pt},
    sibling distance=4cm,level distance=12mm,
    leaf/.style={label={[name=#1]below:$ $}},auto]
    
    \node[vertex, name=1, label=30:{$1$}]{}[grow=down]
    child {node [vertex, name=3, label=-180:{$3$}]{}[level distance=10mm, sibling distance = 25mm]
    child {node [vertex, label=-90:{$8$}]{}}
    child {node [vertex, label=-90:{$10$}]{}}
    edge from parent [solid] }
    child{node[vertex, name=2, label=0:{$2$}]{}[level distance=10mm, sibling distance = 25mm]
    child {node [vertex, label=-90:{$9$}]{}}
    child {node [vertex, name=4, label=0:{$4$}]{}
    [level distance=10mm, sibling distance = 30mm]
    child {node [vertex, name=5, label=180:{$5$}]{}
    [level distance=10mm, sibling distance = 15mm]
    child {node [vertex, name=6, label=180:{$6$}]{}
    child { {} edge from parent [solid,white]}
    child {node [vertex, name=16, label=-90:{$16$}]{}}}
    child {node [vertex, label=0:{$12$}]{}
    [level distance=10mm, sibling distance = 12mm]
    child {node {} edge from parent [white]}
    child {node [vertex, label=0:{$13$}]{}
    child {node [vertex, label=-90:{$14$}]{}}
    child {node [vertex, name = 17, label=-90:{$17$}]{}}}}}
    child {node [vertex, label=0:{$7$}]{}
    [level distance=10mm, sibling distance = 15mm]
    child {node [vertex, label=0:{$11$}]{}
    child {{} edge from parent [white]}
    child {node [vertex, label=0:{$15$}]{}
    }}
    child {node [vertex, name=18, label=0:{$18$}]{}}
    edge from parent [solid] }}};
    
    \draw[red, thick] (1) -- (2);
    \draw[red, thick] (2) -- (4);
    \draw[red, thick] (4) -- (5);
    \draw[red, thick] (5) -- (6);
    \draw[red, thick] (6) circle(6pt);
    \draw[blue, thick] (16) circle(6pt) (17) circle(6pt) (18) circle(6pt);
    \end{tikzpicture}
    \end{subfigure}
     \caption{The bijection $\phi_6$, where  $M(L_6)=\{17,16\}$.} \label{inserAndrea}
\end{figure}

\begin{figure}
    \centering
 \begin{subfigure}[t]{0.4 \textwidth}
    \begin{tikzpicture}
    [scale = 0.5, vertex/.style={shape=circle, draw, inner sep=1.3pt, fill=black},
    subtree/.style={shape=ellipse, draw,minimum width=1.5cm, minimum height=.5cm},
    every fit/.style={ellipse,draw,inner sep=-2pt},
    sibling distance=4cm,level distance=12mm,
    leaf/.style={label={[name=#1]below:$ $}},auto]
    
    \node[vertex, name=1, label=30:{$1$}]{}[grow=down]
    child {node [vertex, name=3, label=-180:{$3$}]{}[level distance=10mm, sibling distance = 25mm]
    child {node [vertex, label=-90:{$8$}]{}}
    child {node [vertex, label=-90:{$10$}]{}}
    edge from parent [solid] }
    child{node[vertex, name=2, label=0:{$2$}]{}[level distance=10mm, sibling distance = 25mm]
    child {node [vertex, label=-90:{$9$}]{}}
    child {node [vertex, name=4, label=0:{$4$}]{}
    [level distance=10mm, sibling distance = 30mm]
    child {node [vertex, name=5, label=180:{$5$}]{}
    [level distance=10mm, sibling distance = 15mm]
    child {node [vertex, name=6, label=180:{$6$}]{}
    child { {} edge from parent [white]}
    child {node [vertex, name=12, label=-90:{$12$}]{}}}
    child {node [vertex, label=0:{$13$}]{}
    [level distance=10mm, sibling distance = 12mm]
    child {node {} edge from parent [white]}
    child {node [vertex, label=0:{$14$}]{}
    child {node [vertex, label=-90:{$15$}]{}}
    child {node [vertex, name = 17, label=-90:{$17$}]{}}}}}
    child {node [vertex, label=0:{$7$}]{}
    [level distance=10mm, sibling distance = 15mm]
    child {node [vertex, label=0:{$11$}]{}
    child {{} edge from parent [white]}
    child {node [vertex, label=0:{$16$}]{}
    }}
    child {node [vertex, name=18, label=0:{$18$}]{}}
    edge from parent [solid] }}};
    
    \draw[red, thick] (1) -- (2);
    \draw[red, thick] (2) -- (4);
    \draw[red, thick] (4) -- (5);
    \draw[red, thick] (5) -- (6);
    \draw[red, thick] (6) circle(6pt);
    \draw[blue, thick] (12) circle(6pt) (17) circle(6pt) (18) circle(6pt) ;
    \draw[double distance=3pt, 
      <->, 
      >={Stealth[open, length=9pt, width=10pt]},
      line width=1pt] 
      (6.5,-3) -- (9,-3);
    \end{tikzpicture}
    \end{subfigure}\qquad\qquad 
     \begin{subfigure}[t]{0.4\textwidth}
    \begin{tikzpicture}
    [scale = 0.5, vertex/.style={shape=circle, draw, inner sep=1.3pt, fill=black},
    subtree/.style={shape=ellipse, draw,minimum width=1.5cm, minimum height=.5cm},
    every fit/.style={ellipse,draw,inner sep=-2pt},
    sibling distance=4cm,level distance=12mm,
    leaf/.style={label={[name=#1]below:$ $}},auto]
    
    \node[vertex, name=1, label=30:{$1$}]{}[grow=down]
    child {node [vertex, name=3, label=-180:{$3$}]{}[level distance=10mm, sibling distance = 25mm]
    child {node [vertex, label=-90:{$8$}]{}}
    child {node [vertex, label=-90:{$10$}]{}}
    edge from parent [solid] }
    child{node[vertex, name=2, label=0:{$2$}]{}[level distance=10mm, sibling distance = 25mm]
    child {node [vertex, label=-90:{$9$}]{}}
    child {node [vertex, name=4, label=0:{$4$}]{}
    [level distance=10mm, sibling distance = 30mm]
    child {node [vertex, name=5, label=180:{$5$}]{}
    [level distance=10mm, sibling distance = 15mm]
    child {node [vertex, name=6, label=180:{$6$}]{}
    child { node [vertex, name=12,label=180:{$12$}]{}}
    child {node [vertex, name=17, label=-90:{$17$}]{}}}
    child {node [vertex, label=0:{$13$}]{}
    [level distance=10mm, sibling distance = 12mm]
    child {node {} edge from parent [white]}
    child {node [vertex, label=0:{$14$}]{}
    child {node [vertex, label=-90:{$15$}]{}}
    child {node [vertex, name = 18, label=-90:{$18$}]{}}}}}
    child {node [vertex, label=0:{$7$}]{}
    [level distance=10mm, sibling distance = 15mm]
    child {node [vertex, label=0:{$11$}]{}
    child {{} edge from parent [white]}
    child {node [vertex, label=0:{$16$}]{}
    }}
    child {node [vertex, name=19, label=0:{$19$}]{}}
    edge from parent [solid] }}};
    
    \draw[red, thick] (1) -- (2);
    \draw[red, thick] (2) -- (4);
    \draw[red, thick] (4) -- (5);
    \draw[red, thick] (5) -- (6);

    \draw[red,thick] (6) circle (6pt);
    \draw[blue,thick] (17) circle (6pt) (12) circle (6pt) (19) circle (6pt) (18) circle (6pt);
    

\end{tikzpicture}
 \end{subfigure}   
 \caption{The bijection $\phi_6$, where $M(L_6)=\{18,17,12\}$.}  \label{inserAndreb}
\end{figure}

From the construction of the insertion algorithm, it is not difficult to show the following proposition. 
 
\begin{Proposition}\label{Andreperpropb}
Let $T$ be an Andr\'e I tree on $[n]$ and let $v$ be a vertex in $T$ with at most one child.
Suppose that $\widetilde{T}=\phi_v(T)$. Then
\begin{equation}\label{insrelfa}
\inv(\widetilde{T}) - \inv(T) = \Delta^{T}_I(v).
\end{equation}
Moreover, for any vertex $u$ in $T$ with at most one child, we have the following results.
\begin{itemize}
\item[(a)] If $u$ lies on or to the left of the path from root $1$ to $v$, then
\begin{equation*}
\Delta^{\widetilde{T}}_I(u) = \Delta^{T}_I(u) + 1.
\end{equation*}

\item[(b)] If $u$ lies to the right of the path from root $1$ to $v$ and $u \not\in M(L_v)$, then
\begin{equation*}
\Delta^{\widetilde{T}}_I(u) = \Delta^{T}_I(u).
\end{equation*}

\item[(c)] If $u\in M(L_v):=\{a_1>a_2>\cdots>a_k\}$, then
\begin{align*}
\Delta^{\widetilde{T}}_I(a_k) = \Delta^{T}_I(v), \quad
\Delta^{\widetilde{T}}_I(n+1) = \Delta^{T}_I(a_1) = 0,
\end{align*}
and for $1\leq i\leq k-1$,
\begin{equation*}
\Delta^{\widetilde{T}}_I(a_i) = \Delta^{T}_I(a_{i+1}).
\end{equation*}

\item[(d)] If $v$ is a leaf of $T$, then
\begin{equation*}
\Delta^{\widetilde{T}}_I(v) = \Delta^{T}_I(v) + 1.
\end{equation*}
\end{itemize}
\end{Proposition}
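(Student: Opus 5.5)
The plan is to work directly with Definition~\ref{defi-inv-3Andre} and compare inversion pairs of $T$ and $\widetilde T=\phi_v(T)$ one pair at a time. The first step is to reinterpret $\Delta^T_I(v)$ geometrically. A vertex $j$ lies to the right of the path $L_v$ exactly when it belongs to $T^r_{v_i}$ for some $v_i\in N_v$; since $v$ has at most one child, $v$ itself belongs to $N_v$ whenever it has a right child. Hence $\Delta^T_I(v)$ is the number of vertices strictly to the right of $L_v$.

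With this description, the vertex placed below $v$ in $\widetilde T$ (a right child if $v$ is a leaf, a left child otherwise) lies at a position whose set of ``right'' vertices has size $\Delta^T_I(v)$. I will also count the path vertices whose left child lies on the extended path, since these give inversions of type~(2).

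The second step handles the relabeling $a_1\mapsto n+1$, $a_i\mapsto a_{i-1}$, where the new vertex receives the label $a_k$. Shape-wise, $\widetilde T$ is $T$ plus one new node, with labels permuted along the chain of positions $p(a_1),\dots,p(a_k)$ and the new position. I will split the pairs $(i,j)$ into three classes:
\begin{itemize}
\item pairs disjoint from $M(L_v)\cup\{n+1\}$, which are unchanged;
\item pairs with exactly one member in the chain;
\item pairs with both members in the chain.
\end{itemize}
For the chain itself, the positions $p(a_1),\dots,p(a_k)$ lie in right subtrees hanging off $L_v$ in order from bottom to top, i.e.\ from right to left geometrically. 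Using Proposition~\ref{Andreperprop}, their relative inversion status is determined, and the cyclic shift of labels preserves the number of inversions among them.

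The key lemma, which I expect to be the main obstacle, concerns a vertex $w$ outside the chain. The number of chain positions $p$ for which $\{w,\text{label}(p)\}$ is an inversion should change by exactly one, and only when $w$ lies to the right of $L_v$. Summing over $w$ then gives exactly $\Delta^T_I(v)$.

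To prove this lemma I would use the Andr\'e~I condition that sibling subtree maxima increase. It implies that any label strictly between $a_{i+1}$ and $a_i$ lies inside $T^r_{v^r_i}$ or in a left sibling subtree hanging from the segment of the path between $v^r_{i}$ and $v^r_{i+1}$. Its comparison with position $p(a_i)$ versus $p(a_{i+1})$ then switches in a controlled way, and the count telescopes along the chain.

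If the tree-level bookkeeping becomes unwieldy, my fallback is to transport everything through $\Psi$ using Proposition~\ref{Andreperincr}. There $\phi_v$ becomes a cyclic rotation of the letters $a_1,\dots,a_k,n+1$ in the word $\Psi(T)$, and the inversion change can be read off in terms of the letters sitting between consecutive occurrences of the $a_i$.

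For parts (a)--(d), I will recompute $\Delta_I$ via the same geometric description: $\Delta^{\widetilde T}_I(u)$ counts the vertices of $\widetilde T$ to the right of $L_u$.
\begin{itemize}
\item (a) If $u$ is on or left of $L_v$, the right region of $u$ gains exactly the new node, giving $+1$; the relabeling does not change sizes, since sizes depend only on shape.
\item (b) If $u$ is right of $L_v$ and $u\notin M(L_v)$, the new node lies to the left of $L_u$, so $\Delta$ is unchanged.
\item (c) The label $a_k$ now sits at the new node, whose $\Delta$ equals $\Delta^T_I(v)$ by the first step. Each $a_i$ with $i<k$ moves into the former position of $a_{i+1}$, whose region is unchanged by case~(b). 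The label $n+1$ occupies the old position of $a_1$, which has $\Delta=0$ because $a_1$ is the rightmost of the chain.
\item (d) If $v$ is a leaf, its new right child is counted once more.
\end{itemize}
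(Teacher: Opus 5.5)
Your pair decomposition is sound, and so is your treatment of (a)--(d) via ``$\Delta_I(u)$ counts the vertices to the right of $L_u$, which depends only on shape''. For $\Delta^T_I(a_1)=0$, though, the reason is not that $a_1$ is the rightmost chain element. It is that $a_1=n$ ends the rightmost branch: every vertex of $L_v$ above $v^r_1$ has its right child on $L_v$, and in an Andr\'e~I tree the maximum of a subtree is reached by following right children.

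The genuine gap is in \eqref{insrelfa}: your key lemma is false as stated. By your own step one, the vertices to the right of $L_v$ are the $\Delta^T_I(v)$ vertices of the hanging right subtrees, and these include the $k$ chain vertices $a_1,\dots,a_k$. So summing the lemma over non-chain $w$ gives only $\Delta^T_I(v)-k$.

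The missing $k$ comes from vertices on the path, which your lemma says are unaffected. Take each vertex of $L_v$ at which the path to the new node turns left: the $v^r_i\neq v$, plus $v$ itself when it has a child, exactly $k$ vertices in all. Each keeps its type-(2) inversions with the chain positions in its left subtree, since the shifted labels are still larger than it. Each also gains the type-(2) inversion with the new node labelled $a_k$.

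For instance, let $T$ have root $1$, left child $2$, right child $3$, and take $v=2$. Then $\widetilde T$ is the tree $T_1$ of Figure~\ref{figAnd-ary}, and $\inv$ goes from $\inv(213)=1$ to $\inv(2314)=2$, matching $\Delta^T_I(2)=1$. Yet no non-chain vertex lies to the right of $L_2$; the new inversion $(3,1)$ involves the root.

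The structural fact you planned to use is also false. In the left tree of Figure~\ref{inserAndreb} ($v=6$, $a_1=18$, $a_2=17$, $a_3=12$), the label $16$ lies strictly between $a_3$ and $a_2$ but sits in $T^r_4=T^r_{v^r_1}$.

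What works is your fallback, made precise; the paper itself gives no argument beyond ``it is not difficult to show''.
\begin{itemize}
\item Read everything in in-order, i.e.\ in $\Psi(T)$. Before insertion the chain positions carry $a_k<\dots<a_1$ from left to right. Afterwards, with the new position first, they carry $a_k<a_{k-1}<\dots<a_1<n+1$. So there are no chain-internal inversions either way.
\item For a non-chain $w$, let $L(w)$ be the number of chain positions before $w$ and $c(w)$ the number of chain labels below $w$. Then the number of inversions between $w$ and the chain is $|L(w)-c(w)|$.
\item The added label $n+1$ leaves $c(w)$ unchanged. The new position raises $L(w)$ by one exactly for the $w$ lying after it.
\item For each such $w$ you have $L(w)\ge c(w)$. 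If $w\in T^r_{v^r_i}$, then $L(w)=k-i$ while $w<a_1,\dots,a_i$. If $w$ is one of the path vertices above, then $c(w)=0$.
\end{itemize}
Hence every non-chain vertex after the insertion point contributes exactly $+1$ and all others contribute $0$. Those vertices are the $\Delta^T_I(v)-k$ non-chain vertices of the hanging right subtrees together with the $k$ path vertices, for a total of $\Delta^T_I(v)$.
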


We are ready to prove Theorem \ref{thm:q-AndreI} by using the grammatical labeling.  

Let $T$ be the Andr\'e I trees on $[n]$. For $1\leq i\leq n$, we label the vertex $i$ in $T$ as follows: 
\begin{itemize}
\item If $i$ is a leaf and $\Delta_I(i)=k$, then label it by $x_k$; 
\item If $i$ has only one child and $\Delta_I(i)=k$, then label it by $y_k$; 
\item If $i$ has two children, then it is left unlabeled.  
\end{itemize}

\begin{figure}
    \centering
 \begin{tikzpicture}
 [vertex/.style={shape=circle, draw, inner sep=1.5pt, fill=black},
 subtree/.style={shape=ellipse, draw,minimum width=1.5cm, minimum height=.5cm},
 every fit/.style={ellipse,draw,inner sep=-2pt},
sibling distance=4cm,level distance=12mm,
 leaf/.style={label={[name=#1]below:$ $}},auto]
 
\node[vertex,label=30:{$1$}]{}[grow=down]
  child {node [vertex, label=-180:{$3$}]{}[level distance=10mm, sibling distance = 25mm]
  child {node [vertex, label=-90:{$8(x_{15})$}]{}}
  child {node [vertex, label=-90:{$10(x_{14})$}]{}}
 edge from parent [solid] }
 child{node[vertex, label=0:{$2$}]{}[level distance=10mm, sibling distance = 25mm]
 child {node [vertex, label=-90:{$9(x_{12})$}]{}}
 child {node [vertex, label=0:{$4$}]{}
 [level distance=10mm, sibling distance = 30mm]
 child {node [vertex, label=180:{$5$}]{}
 [level distance=10mm, sibling distance = 15mm]
 child {node [vertex, label=180:{$6(y_9)$}]{}
  child { {}edge from parent [solid,white]}
 child {node [vertex, label=-90:{$12(x_8)$}]{}}
 }
 child {node [vertex, label=0:{$13(y_7)$}]{}
  [level distance=10mm, sibling distance = 12mm]
 child {node {} edge from parent [white]}
 child {node [vertex, label=0:{$14$}]{}
 child {node [vertex, label=-90:{$15(x_5)$}]{}}
 child {node [vertex, label=-90:{$17(x_4)$}]{}}}}}
 child {node [vertex, label=0:{$7$}]{}
 [level distance=10mm, sibling distance = 15mm]
 child {node [vertex, label=0:{$11(y_2)$}]{}
 child {{} edge from parent [white]}
 child {node [vertex, label=0:{$16(x_1)$}]{}
 }}
 child {node [vertex, label=0:{$18(x_0)$}]{}}
 edge from parent [solid] }}};
\end{tikzpicture}
\caption{The labeling of an Andr\'e I trees $T$ on [18].} \label{labelAndre}
    \end{figure}

The weight $\omega$ of $T \in \mathcal{T}^I_{n}$ is defined to be the product of all labels of $T$ arranged in $\AIO$ order (that is, $x_0,y_0,x_1,y_1,\ldots$).
 
For example, Fig. \ref{labelAndre}  shows the grammatical labeling of the Andr\'e I trees $T$ on $[18]$.   We see that the weight of $T$  is 
\[\omega(T)=x_0x_1y_2x_4x_5y_7x_8y_9x_{12}x_{14}x_{15}.\]

To prove Theorem  \ref{thm:q-AndreI}, we aim to show that   the following assertion: For $n\geq 0$,
\begin{equation}\label{equ-Andre}
D^n (x_0)=\sum_{T \in \mathcal{T}^I_{n}}   q^{\inv(T)}\omega(T),
\end{equation}
where $D$ is $q$-derivative  associated with $q$-grammar \eqref{def:AndreIGRA}. 

\begin{proof}[Proof of Theorem   \ref{thm:q-AndreI}] We proceed by induction on $n$. For $n=0$, the statement is evident. Assume that this statement holds for $n$,  that is, the relation \eqref{equ-Andre} is valid for $n-1$. To demonstrate that it also holds for $n$,   it suffices to show, by \eqref{equ-Andre}, that
\begin{equation}\label{gram-anre-inva}
D\left(\sum_{T \in \mathcal{T}^I_{n}}   q^{\inv(T)}\omega(T)\right)=\sum_{\widetilde{T} \in \mathcal{T}^I_{n+1}}   q^{\inv(\widetilde{T})}\omega(\widetilde{T}).
\end{equation}
Suppose $T$ contains $m$ vertices with at most one child and the weight of $T$ is given by  
\[\omega(T)= \omega_1(T)\omega_2(T) \cdots \omega_m(T).\] 

By the $q$-product formula, we have  
\begin{equation}\label{qLeibnitz-AndreI}
D\bigl(\omega(T)\bigr)
=\sum_{i=1}^{m} \AIO\Bigl(
\omega_1(T)\cdots\omega_{i-1}(T)\,
R(\omega_i(T))\,
\uparrow\!\bigl(\omega_{i+1}(T)\cdots\omega_{m}(T)\bigr)
\Bigr). 
\end{equation}
 Let $v_i$ be the vertex labeled by $\omega_i(T)$ in $T$, and let $\widetilde{T}^{(i)}=\phi_{v_i}(T)$.  To prove \eqref{gram-anre-inva}, it is enough to show that 
\begin{align}\label{pfbb-maj-a}
& q^{\inv  (T)}\AIO\Bigl(
\omega_1(T)\cdots\omega_{i-1}(T)\,
R(\omega_i(T))\,
\uparrow\!\bigl(\omega_{i+1}(T)\cdots\omega_{m}(T)\bigr)
\Bigr)\nonumber \\
&\quad =q^{\inv  (\widetilde{T}^{(i)})}\omega(\widetilde{T}^{(i)}).
\end{align}

Since the letters of $\omega(T)$ are ordered as  $x_0,y_0,x_1,y_1,\ldots$, by  
Proposition~\ref{Andreperprop}, vertices labeled  $\omega_1(T), \ldots, \omega_{i-1}(T)$ lie to the right of the path from 1 to $v_i$ in $T$, while the vertices labeled $\omega_{i+1}(T), \ldots, \omega_{m}(T)$ lie either on this path or to its left. 
We consider two cases:

\begin{itemize}
\item[Case 1:]  If $\omega_i(T)=x_k$, then $v_i$ is a leaf of $T$ with $\Delta_I(v_i)=k$. By Proposition \ref{Andreperpropb}, we see that \eqref{pfbb-maj-a} holds since $D(x_k)=q^k x_k y_{k+1}.$

\item[Case 2:]  If $\omega_i(T)=y_k$, then $v_i$ is a vertex of $T$ with exactly one child and $\Delta_I(v_i)=k$.  In light of  Proposition \ref{Andreperpropb},   we find that  \eqref{pfbb-maj-a} is satisfied since $D(y_k)=q^k x_k.$

\end{itemize}
 
Summing the results from these two cases shows that the assertion \eqref{gram-anre-inva} holds, and thus \eqref{equ-Andre} is valid for $n+1$. This completes the proof of Theorem~\ref{thm:q-AndreI}. 
\end{proof}

Next, we will provide a grammatical derivation of the following identity for $E^{I}_{n+1}(q;x,y)$, which specializes to the relation for $E(x,y)$ established by Chen and Fu \cite{Chen-Fu-2017}. 

\begin{Theorem} \label{thm:AndrIrec} Set $E^{I}_{0}(q;x,y)=x$.  For $n\geq 1$, 
\begin{align} \label{eq:AndrIrec}
&E^{I}_{n+1}(q;x,y) \nonumber \\[5pt]
&\quad =yE^{I}_{n}(q;x,y) +\sum_{k=0}^{n-2} q^{n-k-1} {n-1 \brack k}_q  E^{I}_{k+1}(q;x,y)E^{I}_{n-k-1}(q;x,y).
\end{align}
\end{Theorem}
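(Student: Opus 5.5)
The plan is to run the whole computation inside the $q$-grammatical calculus of Section~\ref{secqgram}, using only the rules of $G_{\rm AndI}$ and Theorem~\ref{thm:q-AndreI}. By Theorem~\ref{thm:q-AndreI}, $\phi(D^{m}(x_0))=E^{I}_{m+1}(q;x,y)$ for all $m\ge 0$, so the left side of \eqref{eq:AndrIrec} is $\phi(D^{n}(x_0))$.

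First I reduce to the $q$-linear companion. Each rule $x_j\mapsto q^jx_jy_{j+1}$ and $y_j\mapsto q^jx_j$ satisfies \eqref{eq:rule}, and $\phi$ is master-linear. Let $\hat G=(\{x,y\},R,\KSO)$. The proof of Proposition~\ref{prop: q_linear_gramma-mul-2} gives, coefficientwise in $u$, the equality $\phi(D_G^m(f))=\phi(D_{\hat G}^m(f))$ for every $f$ and $m$. So I may compute with $D=D_{\hat G}$, which is $q$-linear. Then
\[
D^{n}(x_0)=D^{n-1}(D(x_0))=D^{n-1}(x_0y_1).
\]
Applying the $q$-Leibniz formula, Proposition~\ref{q_linear_prodrule-prop}, gives
\[
D^{n-1}(x_0y_1)=\sum_{k=0}^{n-1}{n-1 \brack k}_q D^k(x_0)\uparrow^k\bigl(D^{n-1-k}(y_1)\bigr).
\]
Since $\phi$ is master-linear and multiplicative, it sends each summand to ${n-1\brack k}_q\,\phi(D^k(x_0))\,\phi(D^{n-1-k}(y_1))$; the shift $\uparrow^k$ is invisible after evaluation.

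Next I evaluate the $y_1$ factor. Write $y_1=\uparrow y_0$ and apply Proposition~\ref{q_linear_uparrow-prop}: $D^{m}(y_1)=q^{m}\uparrow D^m(y_0)$. For $m\ge1$, the rule $D(y_0)=x_0$ gives $D^m(y_0)=D^{m-1}(x_0)$. Hence
\[
\phi(D^{m}(y_1))=q^{m}E^{I}_{m}(q;x,y)\quad (m\ge 1),\qquad \phi(D^0(y_1))=y .
\]
Substituting into the sum:
\begin{itemize}
\item the term $k=n-1$ contributes $y\,E^{I}_{n}$;
\item each term $0\le k\le n-2$ contributes $q^{n-1-k}{n-1\brack k}_qE^{I}_{k+1}E^{I}_{n-k-1}$.
\end{itemize}
Together these are exactly \eqref{eq:AndrIrec}. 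For $n=1$ the sum is empty, and the identity reads $E^I_2=yE^I_1$ with $E^I_1=\phi(D^0x_0)=x$.

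The main point needing care is the first step, the passage from the $\AIO$ grammar to its $\KSO$ version. Proposition~\ref{prop: q_linear_gramma-mul-2} is stated at the level of generating functions. I will note that its inductive proof, or simply comparing coefficients of $u^m/(q;q)_m$, yields the termwise statement $\phi(D_G^m f)=\phi(D_{\hat G}^m f)$. That equality is exactly what is used above. Everything after that is a direct application of the $q$-linear tools already proved.
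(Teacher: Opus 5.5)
Your proof is correct and follows essentially the paper's argument: expand $D^{n-1}(x_0y_1)$ by the $q$-product rule, evaluate $\phi(D^m(y_1))=q^mE^{I}_{m}(q;x,y)$ via Proposition~\ref{q_linear_uparrow-prop} and $D(y_0)=x_0$, and substitute Theorem~\ref{thm:q-AndreI}. The differences are cosmetic: you use the $q$-Leibniz formula (Proposition~\ref{q_linear_prodrule-prop}) where the paper uses Theorem~\ref{thm:product}, and you justify replacing $\AIO$ by $\KSO$ through the termwise form of Proposition~\ref{prop: q_linear_gramma-mul-2}, which is cleaner than the paper's bare remark that $\AIO$ acts as $\KSO$ for $G_{\rm AndI}$.
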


\begin{proof}
Observe that the map $\phi$ from Theorem \ref{thm:q-AndreI} is a master-linear evaluation, and the grammar $G_{{\rm AndI}}$ defined in \eqref{def:AndreIGRA} satisfies condition \eqref{eq:rule}. In fact, from the proof of Theorem \ref{thm:q-AndreI},   the order $\AIO$ in the grammar $G_{{\rm AndI}}$ is equivalent to the order $\KSO$. This implies that   $G_{{\rm AndI}}$ is $q$-linear grammar. 

Applying Theorem \ref{thm:product}, we immediately obtain the following multiplicative identity:
\begin{align*}
\phi\big(\mathrm{Gen}_q^{(G_{{\rm AndI}})}(x_0y_1;u)\big)
=\phi\big(\mathrm{Gen}_q^{(G_{{\rm AndI}})}(x_0;u)\big)\phi\big(\mathrm{Gen}_q^{(G_{{\rm AndI}})}(y_1;u)\big).
\end{align*}
This implies
\begin{align}\label{pf:AndreIrecc}
\phi\big(D^{n}(x_0y_1)\big)
=\sum_{k=0}^n {n\brack k}_q \phi\big(D^k(x_0)\big) \phi\big(D^{n-k}(y_1)\big),
\end{align}
where $D$ is the $q$-derivative associated to the grammar $G_{{\rm AndI}}$.

By Theorem \ref{thm:q-AndreI}, for all integers $k\geq 0$, we have
\begin{equation}\label{pf:AndreIreca}
\phi\big(D^k(x_0)\big) =E^I_{k+1}(q;x,y).
\end{equation}

Since  the grammar $G_{{\rm AndI}}$ is $q$-linear, by Proposition \ref{q_linear_uparrow-prop}, we derive  that  for   $k\geq 1$,
\begin{align}\label{pf:AndreIrecb}
\phi\big(D^k(y_1)\big)&=\phi\big(D^{k-1}(qx_1)\big) \nonumber \\
&=q\,\phi\big(D^{k-1}(\uparrow x_0)\big)\nonumber  \\
&=q^{k} \phi\big(\uparrow D^{k-1}(x_0) \big)\nonumber  \\
&=q^{k}E^{I}_{k}(q;x,y).
\end{align}
Substituting \eqref{pf:AndreIreca} and \eqref{pf:AndreIrecb} into \eqref{pf:AndreIrecc} yields
\begin{align}\label{pf:AndreIrece}
\phi\big(D^{n}(x_0y_1)\big)
&=y\,E^I_{n+1}(q;x,y) \nonumber \\
&\quad+\sum_{k=0}^{n-1} q^{n-k} {n\brack k}_q
E^I_{k+1}(q;x,y)\, E^{I}_{n-k}(q;x,y).
\end{align}
On the other hand, another application of Theorem \ref{thm:q-AndreI} shows that for $n\geq 1$,
\begin{equation}\label{pf:AndreIrecd}
\phi\big(D^{n}(x_0)\big)
=\phi\big(D^{n-1}(x_0y_1)\big)
=E^I_{n+1}(q;x,y).
\end{equation}
Combining relations \eqref{pf:AndreIrece} and \eqref{pf:AndreIrecd}, we finally establish the recurrence \eqref{eq:AndrIrec}. This completes the proof. 
\end{proof}

 Finally, we remark that the $q$-grammar  $G_{{\rm AndI}}$ defined in \eqref{def:AndreIGRA} contains several well-known integer sequences, as stated next. 

Let $D$ be the $q$-derivative associated with  $G_{{\rm AndI}}$ given by \eqref{def:AndreIGRA} and define 
\begin{equation}\label{eq:canonicalexp}
D^{\,n} (x_0)=\sum_{w \in F(\mathbb{S})} a_w\, w.
\end{equation}
It is easy to see that the word $w$ in \eqref{eq:canonicalexp} is strictly characterized by a set of indices $\mathcal{I} \subseteq \{1, 2, \dots, n\}$  such that:
\begin{equation}\label{eq:canonicalexpaa}
    w = x_0 \prod_{i \in \mathcal{I}} v_i,
\end{equation}
where the products are taken in increasing order of indices and $v_i=x_i$ or $y_i$.  For a given word $w$ in \eqref{eq:canonicalexpaa}, we define a step sequence $S = (s_1, s_2, \dots, s_n)$ of length $n$ as follows:
\begin{itemize}
    \item $s_i = U$ (Up step, $(1,1)$) if $i \in \mathcal{I}$ and $v_i = x_i$;
    \item $s_i = L$ (Level step, $(1,0)$) if $i \in \mathcal{I}$ and $v_i = y_i$;
    \item $s_i = D^*$ (Down step, $(1,-1)$) if $i \notin \mathcal{I}$.
\end{itemize}

\begin{Proposition}\label{prop:Motzkin}
A word $w$ is a term in \eqref{eq:canonicalexp} if and only if its associated step sequence $S$ is a Motzkin path of length $n$ (i.e., the path stays non-negative and ends at height 0). Consequently, the number of distinct terms in $D^n(x_0)$ is the $n$-th Motzkin number $M_n$, that is,
\[\Omega(D^n(x_0))=M_n.\] 
\end{Proposition}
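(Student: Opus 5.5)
The plan is induction on $n$. I will show that the terms of $D^n(x_0)$ are exactly the words $x_0\prod_{i\in\mathcal I}v_i$ whose step sequences are Motzkin paths of length $n$. Along the way I will establish two structural facts: every term has this shape, with $x_0$ present and at most one letter of each index $i\ge 1$; and no cancellation ever occurs.

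\emph{No cancellation.} Every coefficient produced by the rules $x_j\mapsto q^jx_jy_{j+1}$ and $y_j\mapsto q^jx_j$ is a monomial $q^j$ with coefficient $+1$, and $\AIO$ only reorders letters. Hence every word produced at step $n$ appears with a nonzero coefficient in $\mathbb{N}[q]$. So a word is a term of $D^n(x_0)$ iff it is the image of some term of $D^{n-1}(x_0)$ under one of the summands in Definition~\ref{defi:q-derivative}. Since $\AIO$ sorts the letters and indices are distinct, a word is determined by its set of letters.

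\emph{$D$ as a path operation.} Fix a term $w=x_0\prod_{i\in\mathcal I}v_i$ with step sequence $S=(s_1,\dots,s_n)$. Applying the rule to the letter of index $i$ sends every letter of larger index up by one, which means inserting a new step at position $i+1$ and shifting the later steps. There are two cases.
\begin{itemize}
\item If the letter is $x_i$ (this includes $x_0$, i.e.\ $i=0$), it becomes $x_iy_{i+1}$. The new step sequence is $S$ with an $L$ inserted right after position $i$, where $s_i=U$ or $i=0$.
\item If the letter is $y_i$, it becomes $x_i$ with no letter of index $i+1$. The new step sequence is $S$ with the step $s_i=L$ replaced by the factor $UD^*$.
\end{itemize}
Both operations map a Motzkin path of length $n$ to a Motzkin path of length $n+1$: inserting a level step changes no heights, and replacing $L$ by $UD^*$ adds a peak. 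The new word again has distinct indices and contains $x_0$. The base case $D(x_0)=x_1\cdots$ (more precisely $D(x_0)=x_0y_1$) gives the path $L$, which starts the induction. So every term corresponds to a Motzkin path.

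\emph{Surjectivity, the main obstacle.} I must show that every Motzkin path $P$ of length $n+1$ arises from some Motzkin path of length $n$ by one of these two moves. I would first look for an $L$ at position $1$ or immediately after a $U$; deleting it gives a preimage under the first move. Otherwise $P$ has no such $L$. If $P$ consists only of $L$'s, its first step is an $L$ at position $1$, which is excluded, so $P$ has a $D^*$. Take the first $D^*$ and the last $U$ before it; the steps between them are all $L$. Any such $L$ would either follow that $U$ directly or be preceded by another $L$, and the first of them follows the $U$ directly, which is excluded. Hence the factor $UD^*$ occurs, and collapsing it to $L$ gives a preimage under the second move.

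With the characterization of terms in hand, the equality $\Omega(D^n(x_0))=M_n$ follows because $w\mapsto S$ is injective: $S$ determines $\mathcal I$ and each $v_i$.
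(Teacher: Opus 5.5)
Your proposal is correct and follows essentially the same route as the paper. Both argue by induction on $n$: applying $D$ to a term either inserts a level step $L$ (at the start or right after a $U$) or replaces an $L$ by $UD^*$, and surjectivity comes from inverting these moves. You also fill in two points the paper leaves implicit: no cancellation can occur because every coefficient lies in $\mathbb{N}[q]$, and every nonempty Motzkin path contains a leading $L$, an $L$ immediately after a $U$, or a factor $UD^*$ (found via the last $U$ before the first $D^*$).
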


\begin{proof}
We proceed by induction on $n$.

\noindent{\bf Base Case:} For $n=1$, $D(x_0) = x_0 y_1$. The index set is $\mathcal{I}=\{1\}$ with $v_1=y_1$. The sequence is $S=(L)$, which is the unique Motzkin path of length 1. Thus $M_1 = 1$.

\noindent{\bf Inductive Step:} Assume the proposition holds for $n-1$. Any term $w' \in D^n(x_0)$ is generated by applying the derivative operator $D$ to some term $w \in D^{n-1}(x_0)$. By the definition of $D$, for a word $w = w_0 w_1 \dots w_k$:
\[
D(w) = \sum_{j=0}^k \rho \Bigl( w_0 \dots w_{j-1} R(w_j) \uparrow (w_{j+1} \dots w_k) \Bigr).
\]
Let $S = (s_1, \dots, s_{n-1})$ be the Motzkin path corresponding to $w$. We analyze the effect of $R$ on each step:

 \noindent{\bf Case 1:  Derivation of $x_0$ ($j=0$).} 
    Since $R(x_0) = x_0 y_1$, the new word starts with $x_0 y_1$ and all subsequent indices are shifted by 1. The new sequence becomes $S' = (L, s_1, s_2, \dots, s_{n-1})$. Since $S$ is a Motzkin path, prefixing a Level step $L$ results in a valid Motzkin path of length $n$.
    
 \noindent{\bf Case 2: Derivation of $x_m$ ($m \in \mathcal{I}, v_m=x_m$).}
    The rule $R(x_m) = x_m y_{m+1}$ replaces $x_m$ with $x_m y_{m+1}$ and shifts subsequent indices. In terms of steps, the original $U$ step at position $m$ remains $U$, and a new $L$ step is inserted at $m+1$. The sequence becomes $S' = (s_1, \dots, s_m, L, s_{m+1},\break  \dots, s_{n-1})$. Inserting $L$ after an Up step preserves non-negativity and the final zero height.
    
 \noindent{\bf Case 3: Derivation of $y_m$ ($m \in \mathcal{I}, v_m=y_m$).}
    The rule $R(y_m) = x_m$ replaces $y_m$ with $x_m$ and shifts subsequent indices. Note that the index $m+1$ is now skipped in the resulting word because no rule produced an index $m+1$. According to our characterization, a skipped index corresponds to a Down step $D^*$. Thus, the original $L$ step at position $m$ is replaced by the pair $(U, D^*)$. The sequence becomes $S' = (s_1, \dots, s_{m-1}, U, D^*, s_{m+1}, \dots, s_{n-1})$. Replacing $L$ with $(U, D^*)$ increases the intermediate height by 1 at one point and returns to the original height, thus preserving the Motzkin property.

\noindent{\bf Bijection:} The mapping $\Phi: w \mapsto S$ is injective because $S$ uniquely reconstructs the set $\mathcal{I}$ and the variables $v_i$. To show surjectivity, any Motzkin path of length $n$ can be reduced to a path of length $n-1$ by reversing one of the three operations above (removing a leading $L$, removing an $L$ that follows a $U$, or collapsing a $(U, D^*)$ pair into an $L$). By the inductive hypothesis, all Motzkin paths of length $n$ are covered. This completes the proof. 
\end{proof}

\begin{Proposition} 
Let $\phi^{nc}$ denote the map sending $x_j \mapsto x$ and $y_j \mapsto y$, where $x$ and $y$ are non-commutative.   Then the number of distinct terms in $\phi^{nc}(D^n(x_0))$ is the $(n+1)$-th Fibonacci number $F_{n+1}$, that is, 
 \[\Omega(\phi^{nc}(D^n(x_0)))=F_{n+1}.\]  
\end{Proposition}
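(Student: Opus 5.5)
The plan is to reduce the statement to Proposition~\ref{prop:Motzkin} and then count the resulting words directly. By \eqref{eq:canonicalexpaa}, every term of $D^n(x_0)$ has the form $w=x_0\prod_{i\in\mathcal{I}}v_i$, with the product taken in increasing order of indices. Hence $\phi^{nc}(w)=x\,u$, where $u\in\{x,y\}^*$ is the word obtained by reading the step sequence $S$ of $w$ from left to right, writing $x$ for each $U$ step, $y$ for each $L$ step, and deleting the $D^*$ steps.

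\emph{Step 1: no cancellation.} All coefficients in the rules of $G_{{\rm AndI}}$ are monomials $q^j$ with positive coefficient. By induction on $n$, every $a_w$ in \eqref{eq:canonicalexp} is therefore a nonzero polynomial in $\mathbb{N}[q]$. When several terms $w$ collapse to the same noncommutative monomial $x\,u$, their coefficients add without cancellation. Consequently $\Omega(\phi^{nc}(D^n(x_0)))$ equals the number of distinct words $u$ that arise from Motzkin paths of length $n$.

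\emph{Step 2: characterize the words $u$.} Suppose $u$ has $a$ letters $x$ and $b$ letters $y$, and comes from a Motzkin path of length $n$. A Motzkin path ending at height $0$ has exactly as many $D^*$ steps as $U$ steps. It follows that $n=a+b+a=2a+b$.

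Conversely, take any word $u\in\{x,y\}^*$ with $2a+b=n$. Form the path consisting of the $U$/$L$ steps in the order prescribed by $u$, followed by $a$ down steps $D^*$. This path never goes below height $0$, since the height is nondecreasing until the final descent, and it ends at height $0$. So it is a Motzkin path of length $n$. By Proposition~\ref{prop:Motzkin}, the corresponding word $w$ is a term of $D^n(x_0)$, and its image under $\phi^{nc}$ is $x\,u$. Thus the set of distinct terms is in bijection with the words in $\{x,y\}$ in which $x$ has weight $2$ and $y$ has weight $1$, with total weight $n$.

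\emph{Step 3: count.} Words of total weight $n$ with letters of weights $2$ and $1$ are exactly compositions of $n$ into parts $1$ and $2$. Their number $c_n$ satisfies $c_0=c_1=1$ and $c_n=c_{n-1}+c_{n-2}$, by splitting on the last letter. Hence $c_n=F_{n+1}$.

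The only delicate point is Step 1, the no-cancellation claim, which ensures that merging terms under $\phi^{nc}$ cannot kill any monomial. The converse construction in Step 2 is easy precisely because all down steps can be postponed to the end of the path.
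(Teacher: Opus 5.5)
Your proof is correct and follows essentially the paper's route. You reduce to the Motzkin-path characterization of Proposition~\ref{prop:Motzkin}, delete the $D^*$ steps to obtain $\{x,y\}$-words with $2a+b=n$, and count these words. The differences are minor: you count via the recurrence for compositions of $n$ into parts $1$ and $2$ rather than the binomial sum $\sum_m\binom{n-m}{m}=F_{n+1}$. You also make explicit two points the paper leaves implicit, namely that no cancellation occurs and that every such word is realized (by postponing all down steps).
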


\begin{proof}
Let $\mathcal{W}_n$ be the set of terms in $D^n(x_0)$. From the previous result, we know there is a bijection between $\mathcal{W}_n$ and the set of Motzkin paths of length $n$. Let $P = (s_1, s_2, \dots, s_n)$ be a Motzkin path where $s_i \in \{U, L, D^*\}$ ($U$: Up-step, $L$: Level-step, $D^*$: Down-step).

\noindent{\bf Step 1. Structural Mapping:}
According to the characterization of terms, each step $s_i$ in the Motzkin path corresponds to the variables in the word $w \in D^n(x_0)$ as follows:
\begin{itemize}
    \item $s_i = U \iff$ the variable at index $i$ is $x_i$.
    \item $s_i = L \iff$ the variable at index $i$ is $y_i$.
    \item $s_i = D^* \iff$ index $i$ is  skipped (no variable exists with index $i$).
\end{itemize}
When we perform the mapping $x_j \to x$ and $y_j \to y$ and ignore indices, a term $w$ is uniquely determined by the sequence of $x$'s and $y$'s it contains. Note that $D^*$ steps (skipped indices) do not contribute a variable to the word. Thus, the word $w$ (excluding $x_0$) is the sequence of $x$ and $y$ variables corresponding to the $U$ and $L$ steps in the path, in order.

\noindent{\bf Step 2. Combinatorial Counting:}
Let $k-1$ be the number of $U$ steps in a Motzkin path of length $n$. To be a valid Motzkin path ending at height 0, there must also be exactly $k-1$ $D^*$ steps. The remaining $n - 2(k-1)$ steps must be $L$ steps.
The resulting word $w$ after mapping will have:
\begin{itemize}
    \item $k-1$ instances of variable $x$.
    \item $n - 2k + 2$ instances of variable $y$.
\end{itemize}
The total number of variables in the word $w$ is $(k-1) + (n-2k+2) = n-k+1$. 

In the non-commutative case, the number of distinct words is the number of ways to arrange $k-1$ $x$'s and $(n-k+1) - (k-1) = n-2k+2$ $y$'s such that the underlying Motzkin path is valid. 

The number of ways to choose the positions of $k-1$ units of $x$ (each effectively  covering two indices in the derivation history) out of a reduced sequence of $n-k+1$ positions is given by the binomial coefficient:
\[
\binom{n-k+1}{k-1}.
\]
Summing over all possible numbers of $x$ variables (from $k-1=0$ up to the maximum possible $\lfloor n/2 \rfloor$) to get 
\[
\Omega(\phi^{nc}(D^n(x_0))) = \sum_{k=1}^{\lfloor n/2 \rfloor + 1} \binom{n-k+1}{k-1},
\]
which equals $F_{n+1}$ because 
\[
F_{n+1} = \sum_{m=0}^{\lfloor n/2 \rfloor} \binom{n-m}{m}.
\]
 This completes the proof. 
\end{proof}

\begin{Proposition}\label{coro:Euler}
Let $\phi$ denote the map sending $x_j \mapsto 1$,  $y_j \mapsto 1$ and $q \mapsto 1$. Then   $\phi(D^n(x_0))=E_{n+1}$, the $(n+1)$-th Euler numbers. 
\end{Proposition}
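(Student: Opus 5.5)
The plan is to specialize Theorem~\ref{thm:q-AndreI} directly. That theorem gives, for every $n\ge 0$,
\[
\phi_0\bigl(D^n(x_0)\bigr)=E^{I}_{n+1}(q;x,y)=\sum_{T\in\mathcal{T}^I_{n+1}} x^{l(T)}y^{u(T)}q^{\inv(T)},
\]
where $\phi_0=\{x_j\mapsto x,\ y_j\mapsto y\}$. The evaluation in Proposition~\ref{coro:Euler} is the composite of $\phi_0$ with the further specialization $x\mapsto 1$, $y\mapsto 1$, $q\mapsto 1$. This is legitimate because $D^n(x_0)$ is a $\mathbb{K}[q]$-linear combination of words in the $x_j,y_j$ with no inverse letters. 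Indeed, $R(x_j)$ and $R(y_j)$ contain no inverses, so by induction every term of $D^n(x_0)$ is a positive word. Evaluating a coefficient $a_w(q)$ at $q=1$ therefore commutes with applying $\phi_0$.

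Consequently $\phi(D^n(x_0))=E^{I}_{n+1}(1;1,1)=|\mathcal{T}^I_{n+1}|$. It remains to identify this cardinality with $E_{n+1}$. I would cite Proposition~\ref{Andreperincr}: $\Psi$ is a bijection between $\mathcal{T}^I_{n+1}$ and $\AndI_{n+1}$. Combined with the identity $E_{n+1}=|\AndI_{n+1}|=|\mathcal{T}^I_{n+1}|$ recorded before that proposition (the classical result of Foata--Sch\"utzenberger), this gives the claim.

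As a sanity check independent of the tree interpretation, I would set $q=1$ and $x_j=x$, $y_j=y$ in the grammar. The rules then become Dumont's commutative grammar $x\to xy$, $y\to x$, which is known to generate the Andr\'e polynomials $E_{n+1}(x,y)$ with $E_{n+1}(1,1)=E_{n+1}$. This works because at $q=1$ the $q$-product rule (Proposition~\ref{prop:f_product}) reduces to the ordinary Leibniz rule once the indices are forgotten, and the order $\AIO$ becomes irrelevant after commutative evaluation.

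The only point needing care is that specialization commutes with $D^n$. In the first route this is automatic, since Theorem~\ref{thm:q-AndreI} holds for all $n$ as a polynomial identity. In the second route it requires noting that $\phi\circ D=D_{\mathrm{classical}}\circ\phi$ at $q=1$, which holds because the factors $q^j$ become $1$ and $\uparrow$ disappears under $\phi$. I expect no real obstacle; the main work is the bookkeeping that the count of Andr\'e I trees on $[n+1]$ equals the Euler number, which the paper takes from the literature.
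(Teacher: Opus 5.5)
Your proposal is correct and follows the paper's proof, which simply specializes Theorem~\ref{thm:q-AndreI} at $x=y=q=1$ and uses $E_{n+1}=|\mathcal{T}^I_{n+1}|$. Your second argument, reducing to Dumont's commutative grammar $x\to xy$, $y\to x$ at $q=1$, is a valid independent check but is not needed.
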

\begin{proof}
This follows immediately from Theorem \ref{thm:q-AndreI}.  
\end{proof}

\subsection{\texorpdfstring{$q$-Grammar for $q$-Andr\'e II polynomials}{q-Grammar for q-Andr\'e II polynomials}} \label{sec:q-AndreII}

\begin{Theorem}\label{thm:q-AndreII}
Let $G_{\rm{AndII}}$ be the $q$-grammar defined by
\begin{equation}\label{def:AndreII}
G_{\rm{AndII}}=(\{x,y\},  \{  x_j \rightarrow q^j x_j y_{j+1} , \  y_j \rightarrow q^{j+1} x_{j+1}  \}    , \AIO).
\end{equation}
Let $D$ be the $q$-derivative associated with $G_{\rm{AndII}}$ and define the
evaluation map $\phi$ by $\phi(x_j)=x$ and $\phi(y_j)=y$. Then
\begin{equation}\label{eq:AndreII}
\phi\bigl(D^n(x_0)\bigr) =E^{I\!I}_{n+1}(q;x,y).
\end{equation} 
\end{Theorem}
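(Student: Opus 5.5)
The plan is to mirror the proof of Theorem~\ref{thm:q-AndreI}: a grammatical labeling of Andr\'e II trees combined with an insertion algorithm that adds the vertex $n+1$. First I will introduce a statistic $\Delta_{I\!I}(v)$ for vertices $v$ of an Andr\'e II tree $T$ having at most one child. It plays the role of $\Delta_I$ and should equal the number of tree-inversions created when $n+1$ is hung at $v$, in the sense of Definition~\ref{defi-inv-3Andre}. Concretely, I will take it to be the number of vertices lying to the right of the path from $1$ to $v$, plus those vertices on the path whose left child lies on the path. The label of a vertex $i$ is then $x_k$ if $i$ is a leaf with $\Delta_{I\!I}(i)=k$, and $y_k$ if $i$ has one child with $\Delta_{I\!I}(i)=k$. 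The weight $\omega(T)$ is the product of the labels in $\AIO$ order. The target identity is
\[
D^n(x_0)=\sum_{T\in\mathcal{T}^{I\!I}_{n+1}} q^{\inv(T)}\omega(T),
\]
and Theorem~\ref{thm:q-AndreII} follows by applying $\phi$ together with Proposition~\ref{Andreperincr}.

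Next I will prove an analogue of Proposition~\ref{Andreperprop}: the values $\Delta_{I\!I}$ of vertices with at most one child are distinct, and they decrease as the vertex moves from left to right. This makes the $\AIO$ ordering consistent with the left-to-right position in the tree. It also guarantees that, in the $q$-product formula, the factors before $\omega_i$ correspond to vertices right of the path and the factors after $\omega_i$ to vertices on or left of it, so only the latter get shifted by $\uparrow$.

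The insertion algorithm for Andr\'e II trees should be simpler than $\phi_v$ in the type I case, because the condition on minima of sibling subtrees is not disturbed by adding a new maximum. I will use two moves. If $v$ is a leaf, make $n+1$ its left child; the new vertex is a leaf and, with no empty subtree on its right, the minimum condition holds since $+\infty$ is the convention for empty trees. If $v$ has one child, attach $n+1$ as the appropriate sibling so that the decreasing-minima condition holds; since $n+1$ is larger than everything, this means making the existing child the left child and $n+1$ the right child, possibly after relabeling or rearranging as in the type I algorithm.

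The key step is the analogue of Proposition~\ref{Andreperpropb}. I need to check that $\inv$ increases by exactly $\Delta_{I\!I}(v)$ in the leaf case. In the one-child case it should increase by $\Delta_{I\!I}(v)+1$, which would account for the exponent $q^{j+1}$ in $y_j\to q^{j+1}x_{j+1}$. I also need the label changes to match the rules. In the leaf case, $v$ goes from $x_k$ to $y_{k+1}$ and the new leaf receives $x_k$. In the one-child case, $v$ loses its label and a new leaf receives $x_{k+1}$. In both cases every vertex on or to the left of the path has its $\Delta_{I\!I}$ increased by $1$, and vertices to the right are unchanged. The $q$-product formula then gives
\[
q^{\inv(T)}\AIO\bigl(\omega_1\cdots R(\omega_i)\uparrow(\omega_{i+1}\cdots)\bigr)=q^{\inv(\widetilde T)}\omega(\widetilde T),
\]
and summing over all $i$ and $T$ completes the induction, since the insertion is reversible and therefore bijective onto $\mathcal{T}^{I\!I}_{n+2}$. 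The main obstacle is pinning down $\Delta_{I\!I}$ and the insertion map precisely enough to get the extra factor $q$ in the one-child case. If my simple definitions fail, I will first test them against the listed values of $F^{I\!I}_4$ and $F^{I\!I}_5$ and then adjust the statistic.
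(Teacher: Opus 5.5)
Your overall strategy (grammatical labeling of Andr\'e II trees, inserting $n+1$ at a vertex with at most one child, and applying the $q$-product formula under $\AIO$) is the paper's. However, both concrete ingredients you propose are wrong, so the induction cannot go through as written.

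First, your insertion moves are the mirror images of valid ones. In an Andr\'e II tree the right subtree must have the smaller minimum, with $\min\emptyset=+\infty$. Hence a lone child is always a \emph{right} child. A leaf therefore cannot receive $n+1$ as a left child, since that would require $n+1>+\infty$. Likewise, giving a one-child vertex its old child on the left and $n+1$ on the right produces increasing minima. Concretely, your leaf move applied to the one-vertex tree gives the word $21\notin\AndII_2$. Your one-child move applied at the root of $12$ gives $213\notin\AndII_3=\{123,312\}$. The paper does the opposite: a leaf $v$ gets $n+1$ as its right child, and a one-child vertex keeps its right child and gets $n+1$ as its left child. In $\Psi(T)$ this places $n+1$ immediately after, resp.\ immediately before, $v$.

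Second, even with the correct moves, your statistic is right for leaves but wrong for one-child vertices. Call it $\delta(v)$: the number of vertices to the right of the path, plus the number of path vertices whose left child is on the path. Hanging $n+1$ as the left child of a one-child vertex $v$ puts the entire right subtree of $v$ to the right of the new path. It also adds $v$ to the path vertices whose left child lies on the path. So $\inv$ grows by $\delta(v)+n_r(v)+1$, not $\delta(v)+1$. The rule $y_j\to q^{j+1}x_{j+1}$ therefore forces the label $y_{\delta(v)+n_r(v)}$. This is exactly what the paper's $\Delta_{I\!I}$ encodes by counting $v$ itself in $N_v$ and labelling by $y_{\Delta-1}$.

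With your labels, the checks you list fail:
\begin{itemize}
\item After $n+1$ is hung below a leaf $v$ labelled $x_k$, $\delta(v)$ is still $k$. So $v$ would become $y_k$ instead of the $y_{k+1}$ demanded by $x_k\to q^kx_ky_{k+1}$.
\item Your distinctness claim fails: a one-child vertex and its leaf child receive the same index.
\item Already for $n=1$, your weight of the tree in which $2$ is the right child of $1$ is $x_0y_0$, whereas $D(x_0)=x_0y_1$.
\end{itemize}
A convenient way to see the correct labeling: in the paper's scheme, every label index equals the number of letters to the right of that vertex in $\Psi(T)$. This makes the distinctness of indices and the $\uparrow$-bookkeeping in the $q$-product formula immediate.
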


To prove Theorem \ref{thm:q-AndreII}, we first introduce the following statistic on Andr\'e II trees.

For $T \in \mathcal{T}^{I\!I}_n$ and $v\in T$, let $L_v$   be the set of vertices on the path from the root 1 to $v$, say $L_v=\{v_0:=1<v_1<\cdots <v_{m-1}<v_{m}:=v\}$.  Let $N_v$ denote the set of vertices in $L_v$ whose right child is not contained in  $L_v$. For $v_i \in N_v$, let $T^r_{v_i}$ denote the subtree rooted at this right child and let  $n_r(v_i)$ be the number of  vertices in $T^r_{v_i}$ with convention that $n_r(v_i)=0$ if $T^r_{v_i}=\emptyset$. 

We define the following statistics for $v\in T$: 
\begin{align*}
\Delta^{T}_{I\!I}(v)&=\sum_{v_i \in N_v} n_{r}(v_i)+|N_v|.
\end{align*}
When the context of the tree $T$ is clear, we simplify the notation to $\Delta_{I\!I_3}(v)$.

For example, consider an Andr\'e II tree $T$ given in Fig. \ref{labelAndre2}. By definition, we see that 
\[\Delta^{T}_{I\!I}(7)=12.\]

\begin{Proposition}\label{Andreperprop2} Let $T$ be an Andr\'e II tree in $\mathcal{T}^{I\!I}_n$ and let $u, v $ be two vertices in $T$  each having  at most one child. If $u$ lies on or to the left of the path from the root $1$ to $v$, then $\Delta_{I\!I}(u)> \Delta_{I\!I}(v)$; otherwise $\Delta_{I\!I}(u)<\Delta_{I\!I}(v)$. 
\end{Proposition}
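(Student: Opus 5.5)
The plan is to identify $\Delta_{I\!I}$ with a position in the in-order (symmetric-order) reading of $T$, which is exactly the permutation $\Psi^{-1}(T)$ of Definition~\ref{defi:mappsi}. Once $\Delta_{I\!I}(v)$ is shown to equal $n+1$ minus the position of $v$ in this reading, the proposition reduces to a statement about which vertices precede $v$ in in-order. Throughout I take $u\neq v$, since otherwise both inequalities are vacuous.

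\textbf{Step 1: an in-order formula for $\Delta_{I\!I}$.} Fix a vertex $v$ with $L_v=\{v_0<\dots<v_m=v\}$. The vertices that come after $v$ in in-order are of two kinds:
\begin{itemize}
\item the vertices of the right subtree $T^r_v$;
\item for each ancestor $v_i$ ($i<m$) such that $v_{i+1}$ is the left child of $v_i$, the vertex $v_i$ itself together with all of $T^r_{v_i}$.
\end{itemize}
Ancestors at which the path turns right, together with their left subtrees, all precede $v$.

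Next I compare this with $N_v$. The set $N_v$ consists of $v$ itself, because its right child is not in $L_v$, together with exactly those ancestors at which the path turns left. Hence
\[
\Delta_{I\!I}(v)=\sum_{v_i\in N_v}\bigl(n_r(v_i)+1\bigr)=1+\#\{w:\ w \text{ follows } v \text{ in in-order}\}=n+1-\operatorname{pos}(v),
\]
where $\operatorname{pos}(v)$ is the position of $v$ in $\Psi^{-1}(T)$. In particular $\Delta_{I\!I}$ is strictly decreasing along the in-order reading.

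\textbf{Step 2: the only child of a vertex is a right child.} Let $u$ have exactly one child. If that child were a left child, the right sibling subtree would be empty, with minimum $+\infty$. The nonempty left subtree has a finite minimum, so the minima of the two sibling subtrees would be increasing from left to right, not decreasing. This violates the Andr\'e II condition, so the unique child of $u$ is its right child.

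\textbf{Step 3: the in-order comparison.} By Step~1 it suffices to show that $u$ precedes $v$ in in-order exactly when $u$ lies on or to the left of the path from $1$ to $v$.
\begin{itemize}
\item If $u$ lies on the path and $u\neq v$, then $u$ is an ancestor of $v$ with at most one child, and that child lies on the path. By Step~2 the path turns right at $u$, so $u$ precedes $v$.
\item If $u$ lies strictly to the left of the path, then $u$ sits in the left subtree of some path vertex $w$ that the path leaves to the right, or in the left subtree of $v$. Either way $u$ precedes $v$.
\item If $u$ lies strictly to the right of the path, then $u$ is in $T^r_v$ or in $T^r_{v_i}$ for some left-turn ancestor $v_i$. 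By Step~1, $u$ follows $v$.
\end{itemize}
Combining these three cases with $\Delta_{I\!I}(v)=n+1-\operatorname{pos}(v)$ gives both inequalities of the proposition.

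The main obstacle I expect is Step~1, specifically checking that the extra summand $|N_v|$ in $\Delta_{I\!I}$ exactly accounts for $v$ and its left-turn ancestors. If the paper's convention for ``to the left of the path'' differs slightly, for example in how descendants of $v$ are classified, only the case analysis in Step~3 needs adjusting. The same argument with $|N_v|$ dropped gives $\Delta_I(v)=n-\operatorname{pos}(v)$, which recovers Proposition~\ref{Andreperprop}. For that case one uses the Andr\'e I analogue of Step~2: a single child must be a right child, because the empty left subtree has maximum $0$.
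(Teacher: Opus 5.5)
Your proof is correct. The paper gives no argument for this proposition: it is simply asserted, as is its Andr\'e~I analogue, which is introduced only with ``it is not difficult to show''. Your route therefore supplies the missing argument rather than paralleling one.

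The key identity is $\Delta_{I\!I}(v)=\sum_{v_i\in N_v}(n_r(v_i)+1)=n+1-\operatorname{pos}(v)$. It holds in any binary tree, and it reduces the proposition to the standard description of which vertices precede $v$ in in-order. The Andr\'e~II condition enters only through your Step~2 (a single child is a right child). That step guarantees two things: ancestors of $v$ with at most one child precede $v$, and every descendant of $v$ lies in $T^r_v$ and follows it.

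A side benefit is that the formula makes the surrounding bookkeeping checkable. Inserting $n+1$ as the right child of a leaf $v$ places it just after $v$ in the in-order word, creating $\Delta_{I\!I}(v)-1$ new inversions. Inserting it as the left child of a one-child vertex $v$ places it just before $v$, creating $\Delta_{I\!I}(v)$ new inversions. This matches the labels actually drawn in Fig.~\ref{labelAndre2}, where leaves carry $x_{\Delta_{I\!I}-1}$: for example, leaf $11$ has $\Delta_{I\!I}(11)=1$ and label $x_0$.

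Two corrections.

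First, when $u=v$ the hypothesis ``on the path'' holds and the strict inequality fails. So $u\neq v$ is an implicit hypothesis of the statement, not a vacuous case.

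Second, and more substantively, your closing remark about $\Delta_I$ is wrong. Dropping $|N_v|$ gives $\Delta_I(v)=n+1-\operatorname{pos}(v)-|N_v|$, which is the number of \emph{non-ancestors} of $v$ that follow $v$ in in-order, not $n-\operatorname{pos}(v)$. In Fig.~\ref{labelAndre}, $\Delta_I(8)=15$, whereas $18-\operatorname{pos}(8)=17$.

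Proposition~\ref{Andreperprop} still holds, but it needs a containment argument. Suppose $u$ precedes $v$. Any non-ancestor of $v$ that follows $v$ is also a non-ancestor of $u$ that follows $u$. Indeed, an ancestor $w$ of $u$ lying after $v$ would, by contiguity of the in-order block of the subtree at $w$, have $v$ in its left subtree, making $w$ an ancestor of $v$. Moreover, $v$ itself is an extra such vertex for $u$: it follows $u$, and it is not an ancestor of $u$ because its left subtree is empty. This gives $\Delta_I(u)>\Delta_I(v)$.
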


\noindent{ \bf An insertion algorithm for Andr\'e II trees:} 
Let $T$ be an Andr\'e II tree on $[n]$.  Suppose $T$ has $m$ vertices with at most one child. We 
present an insertion algorithm to generate $m$ Andr\'e II trees on $[n+1]$ by inserting $n+1$ into $T$.

If $v$ is a leaf of $T$, then assign $n+1$ as the right child of $v$; If $v$ has one child of $T$, then assign $n+1$ as the left child of $v$. 
This yields an  Andr\'e II tree    $\widetilde{T}:=\phi^{I\!I}_v(T)$. 
It can be readily seen that the insertion algorithm is reversible.

From the construction of the insertion algorithm, it is not difficult to show  the following proposition. 

\begin{Proposition}\label{Andreperpropb2} Let $T$ be an Andr\'e II tree on $[n]$ and let $v$ be a vertex in $T$ with at most one child. Suppose that $\widetilde{T}=\phi^{I\!I}_v(T)$. Then 
\begin{equation}\label{insrelf}
 \inv (\widetilde{T})-\inv (T)=\Delta^{T}_{I\!I}(v).
 \end{equation} 
Moreover, for any vertex $u$ in $T$ with at most one child, we have the following results.
\begin{itemize}
\item[(a)] If $u$ lies on or to the left of the path from root 1 to $v$, then
\begin{equation*} 
\Delta^{\widetilde{T}}_{I\!I}(u)=\Delta^{T}_{I\!I}(u)+1.
\end{equation*}

\item[(b)] If $u$   lies to the right of the path from root 1 to $v$, then 
\begin{equation*} 
\Delta^{\widetilde{T}}_{I\!I}(u)=\Delta^{T}_{I\!I}(u).
\end{equation*}
 
\item[(c)]  If $v$ is a  leaf, then 
\begin{equation*}
\Delta^{\widetilde{T}}_{I\!I}(v)=\Delta^{T}_{I\!I}(v)+2.
\end{equation*}

\item[(d)]  If $v$ has only one child, then 
\begin{equation*}
\Delta^{\widetilde{T}}_{I\!I}(n+1)=\Delta^{T}_{I\!I}(v).
\end{equation*}

\end{itemize}
    
\end{Proposition}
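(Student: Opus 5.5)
The plan is to compare the two trees vertex by vertex. The insertion $\phi^{I\!I}_v$ adds only the new leaf $n+1$ and moves no old vertex. So for every old vertex $i$ the path from $1$ to $i$ is the same in $T$ and $\widetilde T$, and the "right of the path" and "left child on the path" relations among old vertices are unchanged. Since $n+1$ is the largest label, it can only appear as the first coordinate of an inversion. Hence every inversion $(i,j)$ of $T$ is still an inversion of $\widetilde T$, and the only new ones are the pairs $(n+1,j)$. This reduces \eqref{insrelf} to the identity $\inv_{n+1}(\widetilde T)=\Delta^{T}_{I\!I}(v)$.

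To compute $\inv_{n+1}(\widetilde T)$, note that the path from $1$ to $n+1$ in $\widetilde T$ is $L_v\cup\{n+1\}$. A vertex $j$ lies to the right of this path exactly when $j$ belongs to the right subtree of some path vertex whose path continues to the left. A path vertex satisfies condition (2) of Definition~\ref{defi-inv-3Andre} exactly when the path leaves it through its left child. So each path vertex $w$ at which the path turns left contributes $1+n_r(w)$, and every other path vertex contributes $0$. Among the proper ancestors of $v$, these left-turning vertices are precisely the elements of $N_v$ other than $v$.

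The contribution of $v$ depends on how $n+1$ is attached, so I split into two cases.

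If $v$ has exactly one child, the Andr\'e II condition forces that child to be the right child, because an empty left subtree has minimum $+\infty$. Then $n+1$ becomes the left child, so $v$ contributes $1+n_r(v)$, and the total is $\sum_{w\in N_v}(n_r(w)+1)=\Delta^T_{I\!I}(v)$.

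If $v$ is a leaf, then $n+1$ is attached as its right child and $v$ contributes nothing. Here I must check carefully how the convention on whether $v\in N_v$ is meant. The smallest example ($T$ the single vertex $1$) already pins down the $\pm1$ bookkeeping, and I would fix the reading of $N_v$ so that \eqref{insrelf} holds in both cases. \emph{This boundary convention at $v$ is the step I expect to need the most care.}

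For (a)--(d) I would use the same description of $\Delta_{I\!I}(u)$: it is the sum of $1+n_r(w)$ over the vertices $w\in L_u$ at which the path to $u$ turns away from the right child. The only quantities that can change are the sizes $n_r(w)$, and $n_r(w)$ grows by one exactly when $n+1$ lands in the right subtree of $w$. The new set $N_{n+1}$ can also differ from $N_v$.
\begin{itemize}
\item[(a)] If $u$ is on or to the left of the path to $v$, then the path to $u$ turns left at the branching vertex, and $n+1$ sits in that right subtree. This gives $+1$.
\item[(b)] If $u$ is to the right, then $n+1$ lies in a left branch relative to the path to $u$, so nothing changes.
\item[(c)] If $v$ is a leaf, then $v$ acquires a right subtree, which adds $1$ to $n_r(v)$. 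The vertex $v$ also changes status, which adds the count $1$, for a total of $+2$.
\item[(d)] If $v$ has one child, then $N_{n+1}$ consists of the old left-turns together with $v$, and $n+1$ has an empty right subtree, so $\Delta(n+1)=\Delta(v)$.
\end{itemize}
Proposition~\ref{Andreperprop2} is used only to organize the "left/right of the path" case distinction.
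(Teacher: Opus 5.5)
Your reduction to $\inv_{n+1}(\widetilde T)$ is correct: old inversions are untouched, and each left turn $w$ on the path to $n+1$ contributes $1+n_r(w)$. This direct check is what the paper leaves to the reader; it gives no proof beyond ``from the construction''. The real gap is the step you flag. It cannot be closed by fixing the reading of $N_v$ \emph{so that} \eqref{insrelf} holds, since that is circular. It has to come from the definition, and it does.

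Read $N_v$ as the set of vertices of $L_v$ that have a right child lying off $L_v$. For proper ancestors of $v$ every reading agrees, because in an Andr\'e II tree a vertex with a left child also has a right child. At $v$ this reading gives $v\in N_v$ when $v$ has one child (that child is a right child) and $v\notin N_v$ when $v$ is a leaf. That is exactly what your two cases need, so one fixed reading yields $\inv_{n+1}(\widetilde T)=\Delta^T_{I\!I}(v)$ in both.

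This is also the reading the paper actually uses. In Fig.~\ref{labelAndre2} the leaf $11$ is labeled $x_0$, and the leaf $18$ is labeled $x_2$ (only vertex $5$ contributes, with $1+n_r(5)=2$). It is also the only reading compatible with (c). Under the vacuous reading a leaf contributes $1$, and then both the leaf case of \eqref{insrelf} and (c) are off by one, already for $T=\{1\}$. You use this reading tacitly in (c) and (d), so state it once at the outset.

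In (a), ``the path to $u$ turns left at the branching vertex'' only covers $u$ strictly to the left of the path. If $u\neq v$ lies on the path, then $u$ is a proper ancestor of $v$ whose only child is its right child, which leads to $v$. So $u\in N_u$ itself and $n+1$ enters $T^r_u$; that is where the $+1$ comes from.

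Moreover, $u=v$ has to be excluded from (a), even though the statement's ``on the path'' literally includes it. For a leaf $v$ the change is $+2$ by (c). For a one-child $v$ the value $\Delta_{I\!I}(v)$ is unchanged, and $v$ is no longer labeled in $\widetilde T$.

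Finally, in (b) ``to the right'' must include the right subtree of $v$ itself when $v$ has a child. Your argument covers this case, since there the branching vertex is $v$ and $n+1$ is its left child.
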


We are ready to prove Theorem \ref{thm:q-AndreII} by using the grammatical labeling.  

Let $T$ be an Andr\'e II tree on $[n]$. For $1\leq i\leq n$, we label the vertex $i$ in $T$ as follows: 
\begin{itemize}
\item If $i$ is a leaf and $\Delta^{T}_{I\!I}(i)=k$, then label it by $x_k$; 
\item If $i$ has only one child and $\Delta^{T}_{I\!I}(i)=k$, then label it by $y_{k-1}$; 
\item If $i$ has two children, then it is left unlabeled.  
\end{itemize}
Fig. \ref{labelAndre2}  shows the grammatical labeling of the Andr\'e II tree $T$.

\begin{figure}
    \centering
  \begin{tikzpicture}
    [scale = 0.6, vertex/.style={shape=circle, draw, inner sep=1.3pt, fill=black},
    subtree/.style={shape=ellipse, draw,minimum width=1.5cm, minimum height=.5cm},
    every fit/.style={ellipse,draw,inner sep=-2pt},
    sibling distance=5cm,level distance=12mm,
    leaf/.style={label={[name=#1]below:$ $}},auto]
    
    \node[vertex, name=1, label=30:{$1$}]{}[grow=down]
    child {node [vertex, name=3, label=-180:{$3$}]{}[level distance=10mm, sibling distance = 25mm]
    child {node [vertex, label=-90:{$10(x_{17})$}]{}}
    child {node [vertex, label=-90:{$8(x_{15})$}]{}}
    edge from parent [solid] }
    child{node[vertex, name=2, label=0:{$2$}]{}[level distance=10mm, sibling distance = 25mm]
    child {node [vertex, label=-90:{$9(x_{13})$}]{}}
    child {node [vertex, name=4, label=0:{$4$}]{}
    [level distance=15mm, sibling distance = 44mm]
    child {node [vertex, name=6, label=180:{$6$}]{}
    [level distance=15mm, sibling distance = 18mm]
    child {node [vertex, name=12, label=180:{$12(y_{11})$}]{}
    child { {} edge from parent [white]}
    child {node [vertex,  label=-180:{$13(x_{10})$}]{}}}
    child {node [vertex, label=0:{$7(y_8)$}]{}
    [level distance=15mm, sibling distance = 18mm]
    child {node {} edge from parent [white]}
    child {node [vertex, label=0:{$14$}]{}
    [level distance=15mm, sibling distance = 20mm]
    child {node [vertex, label=-90:{$17(x_7)$}]{}}
    child {node [vertex, name = 15, label=-90:{$15(x_{5})$}]{}}}}}
    child {node [vertex, label=0:{$5$}]{}
    [level distance=15mm, sibling distance = 25mm]
    child {node [vertex, label=0:{$16(y_3)$}]{}
    child {{} edge from parent [white]}
    child {node [vertex, label=0:{$18(x_2)$}]{}
    }}
    child {node [vertex,  label=0:{$11(x_0)$}]{}}
    edge from parent [solid] }}};
    
    \draw[red, thick] (1) -- (2);
    \draw[red, thick] (2) -- (4);
    \draw[red, thick] (4) --  (6);
    \draw[red, thick] (12) --  (6);
    \draw[red, thick] (12) circle(6pt);
   
    \end{tikzpicture}
    \caption{The labeling of an Andr\'e II tree  $T$ on $[18]$.} \label{labelAndre2}
    \end{figure}

The weight $\omega$ of $T$ is defined to be the product of all labels of $T$ arranged in $\AIO$ order. For the example above, we see that the weight of $T$  is 
\[\omega(T)=x_0x_2y_3x_5x_7y_8x_{10}y_{11}x_{13}x_{15}x_{17}.\]

To prove Theorem  \ref{thm:q-AndreII}, we aim to show that   the following assertion: For $n\geq 0$,
\begin{equation}\label{equ-Andre2}
D^n (x_0)=\sum_{T \in \mathcal{T}^{I\!I}_{n+1}}   q^{\inv(T)}\omega(T),
\end{equation}
where $D$ is $q$-derivative associated with $q$-grammar \eqref{def:AndreII}. 

\begin{proof}[Proof of Theorem   \ref{thm:q-AndreII}] We proceed by induction on $n$. For $n=0$, the statement is evident. Assume that this statement holds for $n$, that is, the relation \eqref{equ-Andre2} is valid for $n-1$. To demonstrate that it also holds for $n$, it suffices to show, by \eqref{equ-Andre2}, that
\begin{equation}\label{gram-anreII-inva}
D\left(\sum_{T \in \mathcal{T}^{I\!I}_{n}}   q^{\inv(T)}\omega(T)\right)=\sum_{\widetilde{T} \in \mathcal{T}^{I\!I}_{n+1}}   q^{\inv(\widetilde{T})}\omega(\widetilde{T}).
\end{equation}

Suppose $T$ contains $m$ vertices with at most one child and define the weight of $T$  by  
\[\omega(T)= \omega_1(T)\omega_2(T) \cdots \omega_m(T).\] 

Applying the $q$-Leibniz formula, we have  
\begin{equation}\label{qLeibnitz-AndreII}
D\bigl(\omega(T)\bigr)
=\sum_{i=1}^{m} \AIO\Bigl(
\omega_1(T)\cdots\omega_{i-1}(T)\,
R(\omega_i(T))\,
\uparrow\!\bigl(\omega_{i+1}(T)\cdots\omega_{m}(T)\bigr)
\Bigr). 
\end{equation}
 Let $v_i$ be the vertex of $T$ labeled by $\omega_i(T)$, and  set $\widetilde{T}^{(i)}=\phi^{I\!I}_{v_i}(T)$.  To establish \eqref{gram-anreII-inva},  it is sufficient to show that 
\begin{align}\label{qLeibnitz-AndreIIaa}
& q^{\inv  (T)}\AIO\Bigl(
\omega_1(T)\cdots\omega_{i-1}(T)\,
R(\omega_i(T))\,
\uparrow\!\bigl(\omega_{i+1}(T)\cdots\omega_{m}(T)\bigr)
\Bigr)\nonumber \\
&\quad =q^{\inv  (\widetilde{T}^{(i)})}\omega(\widetilde{T}^{(i)}).
\end{align}
Since the letters of $\omega(T)$ are arranged in $\AIO$ order, Proposition \ref{Andreperprop2} implies that the  vertices labeled  $\omega_1(T), \ldots, \omega_{i-1}(T)$ lie to the right of the path from 1 to $v_i$ in $T$, while those labeled $\omega_{i+1}(T), \ldots, \omega_{m}(T)$ lie either on this path or to its left. We consider  two cases:

\begin{itemize}
\item[Case 1:]  If $\omega_i(T)=x_k$, then $v_i$ is a leaf of $T$ with $\Delta^{T}_{I\!I}(v_i)=k$. By Proposition \ref{Andreperpropb2}, \eqref{qLeibnitz-AndreIIaa} holds since $D(x_k)=q^k x_k y_{k+1}.$

\item[Case 2:]  If $\omega_i(T)=y_k$, then $v_i$ is a vertex of $T$ with exactly one child and $\Delta^{T}_{I\!I}(v_i)=k$.   In light of  Proposition \ref{Andreperpropb2},   we find that  \eqref{qLeibnitz-AndreIIaa} is satisfied since $D(y_k)=q^{k+1} x_{k+1}.$

\end{itemize}
 
Summing the results from these two cases shows that the assertion \eqref{gram-anreII-inva}  holds, and thus \eqref{equ-Andre2} is valid for $n$. This completes the proof of Theorem   \ref{thm:q-AndreII}. 
\end{proof}

We conclude this section with a grammatical derivation of the following identity for $E^{I\!I}_{n+1}(q;x,y)$, which specializes to the relation for $E_{n+1}(x,y)$ established by Chen and Fu  \cite{Chen-Fu-2017}. 
 
\begin{Theorem} \label{thm:AndrIIrec} Set $E^{I\!I}_{0}(q;x,y)=x$.  For $n\geq 1$, 
\begin{align} \label{eq:AndrIIrec}
&E^{I\!I}_{n+1}(q;x,y) \nonumber \\[5pt]
&\quad =yE^{I\!I}_{n}(q;x,y) +\sum_{k=0}^{n-2} q^{2(n-k-1)} {n-1 \brack k}_q  E^{I\!I}_{k+1}(q;x,y)E^{I\!I}_{n-k-1}(q;x,y).
\end{align}
\end{Theorem}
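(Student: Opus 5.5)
I will mirror the proof of Theorem~\ref{thm:AndrIrec}, replacing the grammar $G_{{\rm AndI}}$ by $G_{{\rm AndII}}$ from \eqref{def:AndreII}.

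\textbf{Step 1: reduce to a $q$-linear grammar.} Each rule has the form $R(s_j)=q^j\cdot(\text{expression with indices shifted by } j)$. Indeed, $R(x_j)=q^jx_jy_{j+1}$ and $R(y_j)=q^{j+1}x_{j+1}=q^j\cdot q x_{j+1}$. Hence $R(\uparrow s_j)=q\uparrow R(s_j)$, so condition \eqref{eq:rule} holds. The evaluation $\phi$ is master-linear, so Theorem~\ref{thm:product} applies. Moreover, the grammatical labeling in the proof of Theorem~\ref{thm:q-AndreII} shows that words are already in $\AIO$ order. Alternatively, Proposition~\ref{prop: q_linear_gramma-mul-2} lets us pass to $\hat G=(S,R,\KSO)$ without changing $\phi$ of any generating function. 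So we may use Proposition~\ref{q_linear_uparrow-prop} and the product rule freely under $\phi$.

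\textbf{Step 2: expand $D^n(x_0)$ via the product.} We have $D(x_0)=x_0y_1$, so $\phi(D^{n}(x_0))=\phi(D^{n-1}(x_0y_1))$. Comparing coefficients of $u^{n-1}/(q;q)_{n-1}$ in the multiplicative identity of Theorem~\ref{thm:product} gives
\[
\phi\big(D^{n-1}(x_0y_1)\big)=\sum_{k=0}^{n-1}{n-1\brack k}_q\,\phi\big(D^k(x_0)\big)\,\phi\big(D^{n-1-k}(y_1)\big).
\]
This is the $q$-Leibniz formula (Proposition~\ref{q_linear_prodrule-prop}) after $\phi$ kills the shifts $\uparrow^k$. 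By Theorem~\ref{thm:q-AndreII}, $\phi(D^k(x_0))=E^{I\!I}_{k+1}(q;x,y)$.

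\textbf{Step 3: compute $\phi(D^m(y_1))$.} For $m=0$ this is $y$, and the $k=n-1$ term gives $yE^{I\!I}_n(q;x,y)$. For $m\ge1$ we have $D(y_1)=q^2x_2=q^2\uparrow^2x_0$. Proposition~\ref{q_linear_uparrow-prop} with shift $2$ then gives
\[
D^{m-1}(\uparrow^2x_0)=q^{2(m-1)}\uparrow^2D^{m-1}(x_0).
\]
Hence $\phi(D^m(y_1))=q^{2m}E^{I\!I}_m(q;x,y)$. Substituting $m=n-1-k$ for $0\le k\le n-2$ yields exactly the factor $q^{2(n-k-1)}{n-1\brack k}_q E^{I\!I}_{k+1}E^{I\!I}_{n-k-1}$ in \eqref{eq:AndrIIrec}.

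\textbf{Main obstacle.} The delicate point is justifying the use of the $q$-linear (KSO) machinery when the grammar's order is $\AIO$. Theorem~\ref{thm:product} handles this at the level of $\phi$ of generating functions, which is all we need. Applying Proposition~\ref{q_linear_uparrow-prop} inside $\phi$ similarly requires working in $\hat G$, which is legitimate by Proposition~\ref{prop: q_linear_gramma-mul-2}. A secondary check is the convention $E^{I\!I}_0=x$: it is not actually used in this range, and the small case $n=1$ reads $E^{I\!I}_2=yE^{I\!I}_1$, with $E^{I\!I}_1=x$ and $E^{I\!I}_2=xy$, which is consistent.
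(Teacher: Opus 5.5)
Your proposal is correct and follows the paper's proof essentially step for step. Both use the multiplicativity of Theorem~\ref{thm:product} applied to $x_0y_1$ to get the $q$-binomial convolution, compute $\phi(D^m(y_1))=q^{2m}E^{I\!I}_m(q;x,y)$ via Proposition~\ref{q_linear_uparrow-prop}, and conclude with $\phi(D^n(x_0))=\phi(D^{n-1}(x_0y_1))$. The one difference is how you license Proposition~\ref{q_linear_uparrow-prop}: you pass to $\hat G$ through Proposition~\ref{prop: q_linear_gramma-mul-2}, whereas the paper asserts that $\AIO$ acts as $\KSO$ for this grammar. Your justification is slightly more careful.
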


\begin{proof}
Since the map $\phi$ from Theorem \ref{thm:q-AndreII}  is a master-linear evaluation, and the grammar $G_{\rm{AndII}}$
   from \eqref{def:AndreII} satisfies \eqref{eq:rule}, Theorem \ref{thm:product} yields the multiplicative identity:
\begin{align*}
\phi\big(\mathrm{Gen}_q^{(G_{\rm{AndII}})}(x_0y_1;u)\big)
=\phi\big(\mathrm{Gen}_q^{(G_{\rm{AndII}})}(x_0;u)\big)\phi\big(\mathrm{Gen}_q^{(G_{\rm{AndII}})}(y_1;u)\big).
\end{align*}
Consequently,
\begin{align}\label{pf:AndreIIrecc}
\phi\big(D^{n}(x_0y_1)\big)
=\sum_{k=0}^n {n\brack k}_q \phi\big(D^k(x_0)\big) \phi\big(D^{n-k}(y_1)\big).
\end{align}
By Theorem \ref{thm:q-AndreII}, we have for all $k\geq 0$: 
\begin{equation}\label{pf:AndreIIreca}
\phi\big(D^k(x_0)\big) =E^{I\!I}_{k+1}(q;x,y).
\end{equation}
Similarly,   from the proof of Theorem \ref{thm:q-AndreI},   the order $\AIO$ in the grammar $G_{{\rm AndII}}$ is equivalent to the order $\KSO$. It implies that the grammar $G_{{\rm AndII}}$ is $q$-linear. Hence, by Proposition \ref{q_linear_uparrow-prop}, we derive that  for all $k\geq 1$:
\begin{equation}\label{pf:AndreIIrecb}
\phi\big(D^k(y_1)\big)
=q^2\phi\big(D^{k-1}(x_2)\big)=q^2\phi\big(D^{k-1}(\uparrow^2\!x_0)\big)
=q^{2k}E^{I\!I}_{k}(q;x,y).
\end{equation}
Substituting \eqref{pf:AndreIIreca} and \eqref{pf:AndreIIrecb} into \eqref{pf:AndreIIrecc} gives
\begin{align}\label{pf:AndreIIrece}
\phi\big(D^{n}(x_0y_1)\big)
&=yE^{I\!I}_{n+1}(q;x,y)+\sum_{k=0}^{n-1} q^{2(n-k)} {n\brack k}_q E^{I\!I}_{k+1}(q;x,y) E^{I\!I}_{n-k}(q;x,y).
\end{align}
Meanwhile, Theorem \ref{thm:q-AndreII} also implies that for $n\geq 1$,
\begin{equation}\label{pf:AndreIIrecd}
\phi\big(D^{n}(x_0)\big)
=\phi\big(D^{n-1}(x_0y_1)\big)
=E^{I\!I}_{n+1}(q;x,y).
\end{equation}
The recurrence \eqref{eq:AndrIIrec} follows from \eqref{pf:AndreIIrece} and \eqref{pf:AndreIIrecd}.
\end{proof}

\appendix

\renewcommand{\thesection}{\Roman{section}}

\renewcommand{\theequation}{\thesection-\arabic{equation}}

\counterwithin{equation}{section}

\section{Overview of Grammatical Calculus}\label{sec:survey}
Given a finite or infinite alphabet $X = \{x_1, x_2, x_3, \ldots\}$ and the commutative algebra $\mathbb{K}[[X]]$ of formal power series in the variables $x_i$, a {\it context-free grammar} is a collection of substitution rules that replace each variable in $X$ by a formal function or a formal power series over $X$. This notion was introduced by Chen \cite{Chen-1993}. Equivalently, a context-free grammar is an application
\[
G \colon X \longrightarrow \mathbb{K}[[X]],
\]
which can be written in the form
\[
G = \{x_1 \rightarrow G(x_1),\; x_2 \rightarrow G(x_2),\; x_3 \rightarrow G(x_3),\; \ldots\},
\]
where each $G(x_i)$ is a formal function or  a formal power series over $X$. As pointed out by Chen \cite{Chen-1993}, these substitution rules resemble context-free grammars in formal language theory, which motivates the terminology. Dumont \cite{Dumont-1996} later referred to such grammars as {\it William Chen's grammars}.

To a grammar $G$, one associates a {\it formal derivation} $D_G$, or simply $D$, which coincides with $G$ on $X$ and treats each substitution rule as a differential rule. More precisely, for a variable $x_i \in X$, if there is a production $x_i \rightarrow G(x_i)$ in the grammar $G$, then we define $D(x_i) = G(x_i)$; otherwise, we set $D(x_i)=0$, and such a variable is called a {\it constant} or a {\it terminal}. For two formal functions $f$ and $g$ in $\mathbb{K}[[X]]$, the formal derivation $D$ satisfies the following relations:
\begin{align}
D(f+g) &= D(f)+D(g), \label{gramma-add-rela} \\[5pt]
D(fg) &= D(f)g+fD(g). \label{gramma-mult-rela}
\end{align}
It follows that Leibniz's rule remains valid for the formal derivative $D$:
\begin{equation}\label{leibniz}
D^{n}(fg) = \sum_{k=0}^{n} \binom{n}{k} D^{k}(f) D^{n-k}(g).
\end{equation}

In \cite{Chen-1993}, Chen developed a systematic {\it grammatical calculus}. To this end, Chen associated the derivation $D$ induced by a grammar $G$ with an exponential generating function. For a formal function $f$ in $\mathbb{K}[[X]]$, define
\begin{equation}\label{defi:gfgram}
\mathrm{Gen}^{(G)}(f;u) = \sum_{n \ge 0} D_G^n(f)\frac{u^n}{n!}.
\end{equation}
This construction provides grammatical interpretations of addition, multiplication, and functional composition of formal functions. More precisely, by \eqref{gramma-add-rela} and \eqref{gramma-mult-rela}, one readily obtains
\begin{align}
\mathrm{Gen}^{(G)}(f+g;u) &= \mathrm{Gen}^{(G)}(f;u)+\mathrm{Gen}^{(G)}(g;u), \label{gramma-add} \\[5pt]
\mathrm{Gen}^{(G)}(fg;u) &= \mathrm{Gen}^{(G)}(f;u)\mathrm{Gen}^{(G)}(g;u), \label{gramma-multiple} \\[5pt]
\mathrm{Gen}^{(G)}(D(f);u) &= \frac{d}{du}\mathrm{Gen}^{(G)}(f;u), \\
\int \mathrm{Gen}^{(G)}(f;u)\,du &= \mathrm{Gen}^{(G)}\!\left(\int f\,dG;\,u\right),
\end{align}
where if $g=\int f\,dG$, then $D(g)=f$. 

Using this framework, Chen \cite{Chen-1993} obtained elegant proofs of Fa\`a di Bruno's formula and various identities involving Bell polynomials, Stirling numbers, and symmetric functions. In particular, the Lagrange inversion formula admits a concise grammatical interpretation, from which Cayley's formula for labeled trees emerges naturally.

A context-free grammar exhibits two complementary aspects: a combinatorial aspect and a computational aspect. Given a grammar $G$ and its associated derivation $D$, one studies the sequence of formal functions: 
\[
f,\; D(f),\; D^2(f),\; \ldots,\; D^n(f),\; \ldots
\]
On the combinatorial side, the goal is to interpret $D^n(f)$ as enumerating certain combinatorial objects associated with some statistics. On the computational side, the same grammar governs algebraic manipulations of formal functions, which constitutes the essence of grammatical calculus.

As an illustrative example, consider the grammar
\begin{equation}\label{gramEuler}
G = \{x \to xy,\; y \to xy\}. 
\end{equation}
This grammar is closely related to Eulerian polynomials that record the number of descents of permutations, and it was formally introduced by Dumont \cite{Dumont-1996}. 

The associated derivation $D$ is given by
\[
D(x)=xy,\qquad D(y)=xy.
\]
Straightforward calculations yield
\[
D^2(x)=xy^2+x^2y,
\]
and further
\[
D^3(x)=xy^3+4x^2y^2+x^3y.
\]

Let $\mathfrak{S}_n$ denote the set of permutations on $[n]$.  For $n\ge1$, the bivariate Eulerian polynomials are defined by
\[
A_n(x,y)=\sum_{\sigma \in \mathfrak{S}_n} x^{\mathrm{asc}(\sigma)} y^{\mathrm{des}(\sigma)},
\]
where $\mathrm{des}(\sigma)$ and $\mathrm{asc}(\sigma)$ denote the numbers of descents and ascents of $\sigma$, respectively, with the convention that $\sigma$ is padded by zeros at both ends, see the beginning of Section  \ref{sec:qgrammars} for their definitions.  Dumont \cite{Dumont-1996} showed that for $n\ge1$,
\begin{equation}\label{eq:graEuler}
A_n(x,y)=D^n(x),  
\end{equation}
with $A_0(x,y)=x$. 

Chen and Fu \cite{Chen-Fu-2017}  introduced the notion of a {\it grammatical labeling} for \eqref{eq:graEuler}, which exhibits how the substitution rules in context-free grammar arise in the construction of the combinatorial structures. This idea was already implicit in Chen’s original work on partitions \cite{Chen-1993}. Grammatical labelings thus serve as a bridge between combinatorial structures and grammars.

The exponential generating function of Eulerian polynomials is well known:
\begin{equation}\label{gf-Euler}
\mathrm{Gen}(x;u)=\sum_{n=0}^{\infty} A_n(x,y)\frac{u^n}{n!}
= \frac{x-y}{1-x^{-1}y\,e^{(x-y)u}}.
\end{equation}
The grammatical calculus provides a transparent derivation of \eqref{gf-Euler}. Since $D(x)=D(y)=xy$, we obtain
\[
D(x^{-1})=-x^{-1}y,
\qquad
D(x^{-1}y)=(x-y)x^{-1}y.
\]
Notably, $x-y$ behaves as a constant associated with  $D$, since $D(x-y)=0$. This observation leads to
\[
D^n(x^{-1}y)=(x-y)^n x^{-1}y,
\]
and consequently to the generating function of $x^{-1}$. Using the identity
\[
\mathrm{Gen}(x;u)\mathrm{Gen}(x^{-1};u)=1,
\]
one immediately recovers \eqref{gf-Euler}.

Dumont was a strong advocate of grammatical methods, discovering grammars for numerous combinatorial families: Roselle polynomials (on permutations), second-order Eulerian polynomials (on Stirling permutations ), 0-1-2 increasing trees (Andr\'e trees), increasing binary/plane trees, Ramanujan-Shor polynomials (on rooted trees), and Schett polynomials (related to Jacobi elliptic functions), see for example,  \cite{Dumont-1980, dumont1981jacobi, Dumont-1996, Dumont-Ramamonjisoa-1996}.

  Dumont \cite{Dumont-1996} termed the following polynomials defined on   permutations as Roselle polynomials: 
\[
F_n(x,y,z) = \sum_{\sigma \in \mathfrak{S}_n} x^{\exc(\sigma)} y^{\drop(\sigma)} z^{\fix(\sigma)},
\]
where $\exc(\sigma)$, $\drop(\sigma)$ and $\fix(\sigma)$ denote the number of excedances, the number of drops and the number of fixed points of $\sigma$, respectively. For the definitions of excedances, drops and fixed points, please refer to  the beginning of Section \ref{sec:qgrammars}.  

Dumont \cite{Dumont-1996}  showed  that the polynomials $F_n(x,y,z)$ can be generated by  the following grammar  
\begin{equation}\label{gram-Rosselle}
G = \{a \to az,\ z \to xy,\ x \to xy,\ y \to xy\}.
 \end{equation}

Let $D$ be the formal derivative associated with  \eqref{gram-Rosselle},  Dumont  showed that  
\begin{equation}\label{eq:gramRoss}
D^n(a) = aF_n(x,y,z).
\end{equation}
A grammatical labeling for \eqref{eq:gramRoss} was given by Chen and Fu \cite{Chen-Fu-2023a}, who also derived the generating function of Roselle polynomials via grammatical methods.

Let $\mathcal{T}_n$ be the set of 0-1-2 increasing trees on $\{0,1,\dots,n-1\}$. The Andr\'e polynomial $E_n(x,y)$ is defined by
\[
E_n(x,y) = \sum_{T \in \mathcal{T}_n} x^{l(T)} y^{u(T)},
\]
where $l(T)$ and $u(T)$ denote the number of leaves of $T$ and the number of vertices of $T$ with degree $1$, respectively. For further details, we refer to Section \ref{sec:qAndre}.

Dumont  \cite{Dumont-1996} introduced the formal grammar
\begin{equation}\label{gram-Andretree}
G=\{x \to xy, \quad y \to x\}.
\end{equation}
Let $D$ be the formal derivative associated with  \eqref{gram-Andretree}, Dumont \cite{Dumont-1996} showed that
 \begin{equation}\label{eq:grAnr}
D^n(y) = E_n(x,y).
\end{equation}
Chen and Fu \cite{Chen-Fu-2017} provided a grammatical labeling for \eqref{eq:grAnr} and deduced the generating function formula for Andr\'e polynomials using grammatical calculus.  This formula was first obtained by Foata and Sch\"utzenberger \cite{FoataSchutzenberger1973Nombres} via a differential equation. Notably, Foata and Han \cite{FoataHan2001Arbres}  developed a method to compute the generating function of $E(x,1)$  without solving a differential equation.

In recent years, grammatical approaches to variations and generalizations of Eulerian polynomials have been extensively studied; see, for example, \cite{Chen-Fu-2022, Chen-Fu-2023a, Chen-Fu-2023, Chen-Fu-2024,  Chen-Fu-Yan-2023, Chen-Hao-Yang-2021, Fu-2018, Fu-Li-2020, Han-Ma-2024, Ji-2025SCM, Ji-Lin-2024, Ma-2012, Ma-Fang-Mansour-Yeh-2022, Ma-Ma-Yeh-2019a, Ma-Ma-Yeh-Zhu-2018}.  

Grammatical calculus for tree-enumerative polynomials was first studied by Dumont and Ramamonjisoa \cite{Dumont-Ramamonjisoa-1996}.
They established a grammar for Ramanujan–Shor polynomials, which arise from the enumeration of rooted trees counted by improper edges and provide a refinement of Cayley’s formula for rooted trees on $n$ vertices; see \cite{Shor-1995, Zeng-1999}. Subsequent grammatical labelings and recursive grammars recover classic functional equations satisfied by Ramanujan–Shor polynomials, as developed in \cite{Chen-Fu-Wang-2025, Chen-Yang-2021}. Parallel advances concern Narayana and Motzkin polynomials arising from plane tree statistics, along with their stable multivariate generalizations \cite{Dong-Du-Ji-Zhang-2025, Yang-Zhang-2025}.

Beyond generating functions, grammars have also proved effective in establishing $\gamma$-positivity of combinatorial polynomials, constructing bijections, and establishing the stability of multivariate combinatorial polynomials. 
   
Ma, Ma, and Yeh \cite{Ma-Ma-Yeh-2019a} made the observation that a transformation of Dumont's grammar \eqref{gramEuler} yields the $\gamma$-positivity of Eulerian polynomials. Chen and Fu \cite{Chen-Fu-2022} further showed that the transformed grammar not only implies $\gamma$-positivity but also provides a combinatorial interpretation of the $\gamma$-coefficients in terms of 0-1-2 increasing plane trees. Subsequently, grammatical transformations have been successfully applied to establish the $\gamma$-positivity ($e$-positivity) of various modifications and generalizations of Eulerian polynomials and Ramanujan polynomials; see Chen, Fu, and Yan \cite{Chen-Fu-Yan-2023}, Ji \cite{Ji-2025SCM}, Ji and Lin \cite{Ji-Lin-2024}, Dong et al. \cite{Dong-Du-Ji-Zhang-2025}.

Grammar also facilitates the construction of combinatorial bijections. If two combinatorial structures admit the same grammar, then grammar can be leveraged to assist in establishing explicit bijections between them, see Chen and Fu \cite{Chen-Fu-2023, Chen-Fu-2024} and Chen, Fu and Wang \cite{Chen-Fu-Wang-2025}  for example. 

Grammars for the descent polynomials of Legendre-Stirling permutations and marked Stirling permutations were constructed by Chen, Hao, and Yang \cite{Chen-Hao-Yang-2021}, yielding stable multivariate generalizations,   where stability is equivalent to real-rootedness in the univariate case.

Overall, grammatical methods allow one to derive generating functions and identities without explicit reliance on recurrence relations or differential equations. Beyond enumeration, grammatical approaches have also been applied to construct bijections, to establish $\gamma$-positivity of combinatorial polynomials, and to prove the stability of multivariate combinatorial polynomials.

\section{\texorpdfstring{Formulas for  $q$-derivatives and their $q$-exponential generating functions }{Formulas for  q-derivatives and their q-exponential generating functions }}\label{sec:qderivativeform}

We collect some important formulas for $q$-derivatives and their $q$-exponential generating functions in this section.

\begin{Proposition}
Let $D$ be the $q$-derivative associated with a $q$-grammar. For $f,g\in\mathbb{E}$ and $c \in \mathbb{K}[q]$, we have 
\begin{align}
D(c)&=0,  \\[5pt]
D(cf)&=cD(f),  \\[5pt]
D(f+g)&=D(f)+D(g), \\[5pt]
D\left(f^{-1}\right)&=-f^{-1}{D(f)}\uparrow (f^{-1}).
\end{align}
\end{Proposition}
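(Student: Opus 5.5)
The four identities in the proposition just above are collected from Section~\ref{secqgram}. My plan is to re-derive each one directly from Definition~\ref{defi:q-derivative}, with no new ideas needed.

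The first three identities hold because $D$ is defined on words and then extended $\mathbb{K}[q]$-linearly to $\mathbb{E}$. Concretely:
\begin{itemize}
\item A constant $c\in\mathbb{K}[q]$ is $c$ times the empty word, and the defining sum over letters is empty, so $D(c)=0$.
\item $D(cf)=cD(f)$ and $D(f+g)=D(f)+D(g)$ follow at once from linearity.
\item The order $\rho$ is extended linearly, so it does not interfere.
\item I would also record that $D$ is well defined on $F(\mathbb{S})$, i.e.\ compatible with free reduction. This was checked in the paper by computing $D(x_ix_i^{-1})=D(x_i^{-1}x_i)=0$, and that check together with the product rule handles insertion of cancelling pairs anywhere in a word.
\end{itemize}

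The fourth identity is Proposition~\ref{lem:f_minus_one}. I would prove it for $f\in F(\mathbb{S})$ in two steps.

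\begin{itemize}
\item \emph{Single letters.} For $f=s_i$, the identity $D(s_i^{-1})=-s_i^{-1}R(s_i)s_{i+1}^{-1}$ is exactly the rule's definition on inverses. Since $\uparrow(s_i^{-1})=s_{i+1}^{-1}$, this is the claimed formula.
\item \emph{General words, via the product rule.} Differentiate $1=ff^{-1}$ using Proposition~\ref{lem: product_formula}:
\[
0=D(f)\uparrow(f^{-1})+fD(f^{-1}).
\]
Multiplying on the left by $f^{-1}$ (legitimate in the group algebra) gives
\[
D(f^{-1})=-f^{-1}D(f)\uparrow(f^{-1}).
\]
This needs $\uparrow$ to be multiplicative, so that $\uparrow(f^{-1})=(\uparrow f)^{-1}$, which holds since $\uparrow$ only shifts indices.
\end{itemize}

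The main subtlety, and the step I expect to need care, is the order $\rho$. When $\rho\neq\KSO$, the product rule is only valid after applying $\rho$, since $D=\rho\circ D_{\hat G}$. So I would first establish the inverse formula for $D_{\hat G}$ (the $\KSO$ version), where the telescoping computation in Proposition~\ref{lem:f_minus_one} goes through verbatim. I would then read the statement for general $\rho$ as holding up to the reordering $\rho$, exactly as the paper's earlier proof implicitly does.
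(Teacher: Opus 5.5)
Your proposal is correct. For the first three identities it matches the paper: $D=\rho\circ D_{\hat G}$ is linear, and the defining sum is empty on the empty word.

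For the inverse formula your main argument differs from the paper's. The paper (Proposition~\ref{lem:f_minus_one}) expands $f^{-1}D(f)\uparrow(f^{-1})$ letter by letter for the reduced word $f=w_1\cdots w_n$. After cancellation in $F(\mathbb{S})$, the $j$-th summand becomes $w_n^{-1}\cdots w_{j+1}^{-1}\bigl(w_j^{-1}R(w_j)\uparrow(w_j^{-1})\bigr)\uparrow(w_{j-1}^{-1}\cdots w_1^{-1})$. The defining relation $w_j^{-1}R(w_j)\uparrow(w_j^{-1})=-R(w_j^{-1})$, valid whether $w_j\in\mathbb{S}$ or $w_j\in\mathbb{S}^{-1}$, then identifies it with minus the $j$-th summand of $D(w_n^{-1}\cdots w_1^{-1})$.

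You instead differentiate $ff^{-1}=1$ with the product rule and cancel the unit $f$. This is shorter, and it works verbatim for any invertible $f\in\mathbb{E}$, which is how the appendix phrases the statement. The cost is that it rests on two earlier facts:
\begin{itemize}
\item the product rule;
\item well-definedness of $D$ on $F(\mathbb{S})$, since $ff^{-1}$ is a maximally unreduced concatenation and the product rule's proof applies the defining formula to such concatenations.
\end{itemize}
The paper's computation applies the definition only to the reduced word $f^{-1}$, so it needs neither. Since the paper checks $D(x_ix_i^{-1})=D(x_i^{-1}x_i)=0$ beforehand and you record that dependency, nothing is missing. Two minor points: your single-letter step is subsumed by the general one. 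Also, multiplicativity of $\uparrow$ is not needed in this derivation, because the product rule with $g=f^{-1}$ gives $\uparrow(f^{-1})$ directly; it is used only inside the proof of the product rule.

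Your caveat about $\rho$ is correct and worth keeping. Both proofs are really proofs for $\rho=\KSO$; the paper's expansion of $D(f)$ silently drops $\rho$. For other orders the identity fails as an equality in $\mathbb{E}$. For example, in $G_{\tan\cup\sec}$ with $\DIO$ one has $D(y_0^{-1})=\DIO(-y_0^{-1}x_0)=-x_0y_0^{-1}$, whereas $-y_0^{-1}D(y_0)\uparrow(y_0^{-1})=-y_0^{-1}y_1x_0y_1^{-1}$.

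What survives is $D(f^{-1})=-\rho\bigl(f^{-1}D(f)\uparrow(f^{-1})\bigr)$. This holds for $\LPO$, $\AIO$ and $\DIO$, because for these orders $\rho(w)$ depends only on the image of $w$ in the free abelian group on $\mathbb{S}$, so $\rho(a\,\rho(b)\,c)=\rho(abc)$. The paper's applications are either $\KSO$ or pass through a master-linear evaluation. In the latter case, the comparison of $G$ with $\hat G=(S,R,\KSO)$ lets one compute with $\KSO$ throughout.
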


\begin{Proposition}
Let $D$ be the $q$-derivative associated with a $q$-linear grammar. For $f,g\in\mathbb{E}$, we have 
\begin{align}
D(fg)&=D(f)\uparrow g+fD(g),\\[5pt]
D^n(\uparrow^m f)&=q^{nm} \uparrow^{m} \left(D^n(f)\right),\\[5pt]
D^n(fg)&=\sum_{k=0}^n {n \brack k}_q D^k(f) \uparrow^k \left(D^{(n-k)}(g)\right).
\end{align}
\end{Proposition}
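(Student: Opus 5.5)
This proposition only collects results already proved in Section~\ref{secqgram}, so the plan is to re-derive the three identities in the order listed, each resting on the previous one.

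For the product rule $D(fg)=D(f)\uparrow g+fD(g)$, we use that $\rho=\KSO$ is the identity, so Definition~\ref{defi:q-derivative} is literally a sum over positions in the word. By bilinearity it suffices to take $f=w_1\cdots w_n$ and $g=w_{n+1}\cdots w_{n+m}$ with $w_i\in\mathbb{S}\cup\mathbb{S}^{-1}$. Split the sum defining $D(w_1\cdots w_{n+m})$ into the positions $j\le n$ and the positions $j>n$. Since $\uparrow$ is multiplicative on words, the first group is $D(f)\uparrow g$ and the second is $fD(g)$. This is exactly Proposition~\ref{lem: product_formula}. Well-definedness on reduced words was already checked after Definition~\ref{defi:q-derivative}.

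For $D^n(\uparrow^m f)=q^{nm}\uparrow^m D^n(f)$, first reduce to $n=m=1$. The case of general $n$ then follows by iterating $D$, and the case of general $m$ by iterating $\uparrow$, since $\uparrow$ commutes with itself and with scalars. For $n=m=1$ we check single letters first.
\begin{itemize}
\item For $s_i$, the $q$-linear condition \eqref{eq:rule} gives $D(s_{i+1})=R(s_{i+1})=q\uparrow R(s_i)$.
\item For $s_i^{-1}$, we compute
\[
R(s_{i+1}^{-1})=-s_{i+1}^{-1}R(s_{i+1})s_{i+2}^{-1}=-q\,s_{i+1}^{-1}\bigl(\uparrow R(s_i)\bigr)s_{i+2}^{-1}=q\uparrow R(s_i^{-1}).
\]
\end{itemize}
For a word $w_1\cdots w_k$, apply Proposition~\ref{prop:f_product} to $\uparrow w_1\cdots\uparrow w_k$. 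Each summand becomes $q\uparrow\bigl(w_1\cdots w_{j-1}R(w_j)\uparrow(w_{j+1}\cdots w_k)\bigr)$, using that $\uparrow$ is an algebra endomorphism. Summing over $j$ gives $q\uparrow D(w_1\cdots w_k)$.

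For the $q$-Leibniz formula we induct on $n$; the case $n=1$ is the product rule. For the step, apply $D$ to $\sum_k {n\brack k}_q D^k(f)\uparrow^k D^{n-k}(g)$ using the product rule. This produces two sums:
\begin{itemize}
\item $D^{k+1}(f)\uparrow^{k+1}D^{n-k}(g)$, coming from differentiating the left factor;
\item $D^k(f)\,D\bigl(\uparrow^k D^{n-k}(g)\bigr)=q^kD^k(f)\uparrow^kD^{n-k+1}(g)$, coming from the right factor via the second identity.
\end{itemize}
Reindex the first sum and combine the two with the $q$-Pascal rule ${n\brack k-1}_q+q^k{n\brack k}_q={n+1\brack k}_q$. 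This is Proposition~\ref{q_linear_prodrule-prop}.

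The main obstacle is producing the factor $q^k$. It comes entirely from the shift identity applied to $\uparrow^k D^{n-k}(g)$, and that identity requires \eqref{eq:rule} to be compatible with the inverse rule $R(s_i^{-1})$. The single-letter check on inverses above is therefore the crucial verification. Everything else is bookkeeping identical to the classical Leibniz argument.
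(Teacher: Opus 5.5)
Your proposal is correct and follows the paper's proofs. You prove the product rule by word-splitting, reduce the shift identity to $n=m=1$ and single letters, and obtain the $q$-Leibniz formula by induction with the $q$-Pascal rule. Your explicit check that $R(s_{i+1}^{-1})=q\uparrow R(s_i^{-1})$ fills in the step the paper dismisses as ``easy to verify''.
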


Let $D$ be the $q$-derivative associated with a $q$-grammar $G$. For $f\in\mathbb{E}$, the $q$-exponential generating function of $D^n(f)$ is defined by 
\begin{equation*}
    {\rm Gen}_q^{(G)}(f;u)=\sum_{n\geq 0}D^n(f)\frac{u^n}{(q;q)_n}.
\end{equation*}

\begin{Proposition} We have 
\begin{align}
{\rm Gen}_q^{(G)}(f+g;u) &= {\rm Gen}_q^{(G)}(f;u)+{\rm Gen}_q^{(G)}(g;u),\\[5pt]
D_q{\rm Gen}_q^{(G)}(f;u) &=
{\rm Gen}_q^{(G)}(D(f);u),
\end{align}
where $D_q$ is a real $q$-derivative defined in \eqref{defi:qderiv}. 
\end{Proposition}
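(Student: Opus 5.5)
This statement is a restatement of Proposition~\ref{pro:q_gramma}, so I would simply reproduce that argument, taking care that the setting is formal. I regard ${\rm Gen}_q^{(G)}(f;u)$ as an element of $\mathbb{E}[[u]]$. Here $u$ is a central indeterminate: it commutes with every word of $F(\mathbb{S})$. The real $q$-derivative $D_q$ of \eqref{defi:qderiv} then acts on $\mathbb{E}[[u]]$ coefficientwise in $u$, by
\[
D_q\Bigl(\sum_n c_n u^n\Bigr)=\sum_n c_n\,\frac{u^n-(qu)^n}{u},\qquad c_n\in\mathbb{E}.
\]
This is well defined because each $u^n-(qu)^n=(1-q^n)u^n$ is divisible by $u$. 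No multiplication of noncommuting elements of $\mathbb{E}$ is ever performed, so the noncommutativity of $\mathbb{E}$ plays no role.

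For additivity, the $q$-derivative $D$ associated with $G$ is linear (Definition~\ref{defi:q-derivative}). By induction on $n$ this gives $D^n(f+g)=D^n(f)+D^n(g)$ for every $n\ge 0$. Multiplying by $u^n/(q;q)_n$ and summing over $n$ yields the first identity.

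For the second identity, I expand $D_q{\rm Gen}_q^{(G)}(f;u)$ term by term, giving
\[
\sum_{n\ge 0}D^n(f)\,\frac{(1-q^n)u^{n-1}}{(q;q)_n}.
\]
The $n=0$ term vanishes because of the factor $1-q^0=0$. For $n\ge1$ I use $(q;q)_n=(1-q^n)(q;q)_{n-1}$, which turns the $n$-th term into $D^n(f)\,u^{n-1}/(q;q)_{n-1}$. Shifting $n\mapsto n+1$ and writing $D^{n+1}(f)=D^n(D(f))$ identifies the result with ${\rm Gen}_q^{(G)}(D(f);u)$.

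The only point needing care is the division by $(1-q^n)$ inside $\mathbb{K}[q]$-coefficients. It never actually occurs: the factor $1-q^n$ cancels against the factor $1-q^n$ of $(q;q)_n$ at the level of the expression $u^n/(q;q)_n$ itself. I therefore expect no genuine obstacle. Everything reduces to the definition of ${\rm Gen}_q^{(G)}$ together with the linearity of $D$, with no ordering or $q$-linearity hypothesis on $G$ needed.
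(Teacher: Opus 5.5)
Your argument is correct and essentially identical to the paper's proof. Additivity follows from the linearity of $D$. The second identity comes from expanding $D_q$ termwise, noting that the $n=0$ term vanishes, cancelling $1-q^n$ against $(q;q)_n=(1-q^n)(q;q)_{n-1}$, and shifting the index.

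One minor framing point: the coefficients $D^n(f)/(q;q)_n$ do not lie in $\mathbb{E}=\mathbb{K}[q][F(\mathbb{S})]$. So ${\rm Gen}_q^{(G)}(f;u)$ really lives in $\mathbb{K}[[q]][F(\mathbb{S})][[u]]$ rather than $\mathbb{E}[[u]]$. This is harmless, since $(q;q)_n$ has constant term $1$ and is therefore invertible in $\mathbb{K}[[q]]$.
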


\begin{Proposition} Let $G$ be a $q$-linear grammar and let $D$ be the $q$-derivative associated with $G$, then for $f,g\in\mathbb{E}$, 
\begin{align}
{\rm Gen}_q^{(G)}(fg;u)&= \sum_{k\geq 0} D^k(f) \frac{u^k}{(q; q)_k}  \uparrow^k
  {\rm Gen}_q^{(G)}(g;u),\\[5pt]
  \Gen_q^{(G)}(\uparrow^m f;u) &= \uparrow^m \Gen_q^{(G)}(f;uq^m).
\end{align}
If $\phi$ is a master-linear evaluation, then
 \begin{align}
\phi\left({\rm Gen}_q^{(G)}(fg;u)\right) &= \phi\left({\rm Gen}_q^{(G)}(f;u)\right)\cdot \phi\left({\rm Gen}_q^{(G)}(g;u)\right).  
\end{align}
\end{Proposition}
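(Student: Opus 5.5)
The plan is to assemble this collection directly from the results proved in Subsection~\ref{subsec:qEGF}, since each identity has already been proved there for $q$-linear grammars. For the first identity I would expand ${\rm Gen}_q^{(G)}(fg;u)=\sum_{n\ge0}D^n(fg)\,u^n/(q;q)_n$ and apply the $q$-Leibniz formula of Proposition~\ref{q_linear_prodrule-prop}, namely $D^n(fg)=\sum_{k}{n \brack k}_q D^k(f)\uparrow^k D^{n-k}(g)$. The identity ${n\brack k}_q/(q;q)_n=1/\bigl((q;q)_k(q;q)_{n-k}\bigr)$ lets the double sum split once the index is changed to $n\mapsto n+k$. Linearity of $\uparrow^k$ then allows it to be pulled outside the inner series, which yields $\sum_k D^k(f)\frac{u^k}{(q;q)_k}\uparrow^k{\rm Gen}_q^{(G)}(g;u)$.

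For the second identity I would use Proposition~\ref{q_linear_uparrow-prop}, which gives $D^n(\uparrow^m f)=q^{nm}\uparrow^m D^n(f)$. Absorbing $q^{nm}$ into $u^n$ gives $(uq^m)^n$, so the sum is $\uparrow^m{\rm Gen}_q^{(G)}(f;uq^m)$.

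For the multiplicative statement I would apply $\phi$ to the first identity. Two facts are needed:
\begin{itemize}
\item $\phi$ is a ring morphism into the commutative ring $\mathbb{V}$, extended coefficientwise in $u$, so it turns the product into a product of images.
\item Master-linearity gives $\phi(\uparrow^k h)=\phi(h)$ for every word $h$, because $\phi(s_{i+k})=\phi(s_i)$; by linearity this extends to all $h\in\mathbb{E}$ and then to series coefficientwise.
\end{itemize}
Hence $\phi\bigl(\uparrow^k{\rm Gen}_q^{(G)}(g;u)\bigr)=\phi\bigl({\rm Gen}_q^{(G)}(g;u)\bigr)$ is independent of $k$ and factors out of the sum. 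What remains is $\sum_k\phi(D^kf)u^k/(q;q)_k=\phi\bigl({\rm Gen}_q^{(G)}(f;u)\bigr)$.

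The only step that needs real care is this commutation of $\phi$ with $\uparrow^k$ and with infinite sums. It holds because each coefficient of $u^n$ is a finite expression, so $\phi$ acts coefficientwise. The inverse letters are covered by $\phi(s_i^{-1})=\phi(s_i)^{-1}$. The rest is routine bookkeeping already carried out in the earlier proofs.
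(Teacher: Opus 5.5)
Your proposal is correct and follows the paper's own route. The first identity comes from the $q$-Leibniz formula of Proposition~\ref{q_linear_prodrule-prop} together with ${n\brack k}_q/(q;q)_n=1/\bigl((q;q)_k(q;q)_{n-k}\bigr)$ and reindexing; the second comes from Proposition~\ref{q_linear_uparrow-prop} by absorbing $q^{nm}$ into $u^n$; and the third comes from applying the master-linear $\phi$ to the first identity. Your explicit justification that master-linearity gives $\phi\circ\uparrow^k=\phi$, including on inverse letters and coefficientwise in $u$, spells out a step the paper leaves implicit.
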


\begin{Theorem} Let $G=(S,R,\rho)$ be a $q$-grammar, where for each   variable $s_i \in \mathbb{S}$ and any $i\geq 0$, we have $
      R(s_{i+1})= R(\uparrow s_{i})=q\uparrow R(s_{i}). $ 
Let $\phi$ be a master-linear evaluation, that is,  for any two variables $s_i, s_j \in \mathbb{S}$, $\phi (s_i)= \phi (s_j)$. We have 
\begin{align}
\phi\left({\rm Gen}_q^{(G)}(fg;u)\right) &= \phi\left({\rm Gen}_q^{(G)}(f;u)\right)\cdot \phi\left({\rm Gen}_q^{(G)}(g;u)\right). 
\end{align}
\end{Theorem}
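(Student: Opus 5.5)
The statement is the appendix restatement of Theorem~\ref{thm:product}, so I would prove it in two stages. First I reduce to the $q$-linear grammar $\hat G=(S,R,\KSO)$, and then I use the multiplicativity already available for $q$-linear grammars. Concretely, I will show that for every $f\in\mathbb{E}$,
\[
\phi\bigl({\rm Gen}_q^{(G)}(f;u)\bigr)=\phi\bigl({\rm Gen}_q^{(\hat G)}(f;u)\bigr),
\]
as in Proposition~\ref{prop: q_linear_gramma-mul-2}. Applying this to $fg$, $f$ and $g$ separately, together with Proposition~\ref{prop: q_linear_gramma-mul*} for $\hat G$, gives the claim immediately.

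For the $q$-linear step I would take the following route. By Proposition~\ref{q_linear_prodrule-prop}, $D_{\hat G}^n(fg)=\sum_k {n\brack k}_q D_{\hat G}^k(f)\uparrow^k D_{\hat G}^{n-k}(g)$. Since $\phi$ is master-linear, $\phi\circ\uparrow^k=\phi$. Also, $\phi$ is a ring morphism into the commutative ring $\mathbb{V}$. Hence $\phi(D_{\hat G}^n(fg))=\sum_k{n\brack k}_q\phi(D_{\hat G}^k f)\phi(D_{\hat G}^{n-k}g)$. Multiplying by $u^n/(q;q)_n$ and using ${n\brack k}_q/(q;q)_n=1/((q;q)_k(q;q)_{n-k})$ yields the product of the two generating functions.

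For the reduction step, observe that $D_G=\rho\circ D_{\hat G}$, and that $\phi\circ\rho=\phi$ because $\rho$ only permutes letters and $\mathbb{V}$ is commutative. The key lemma I would aim for is that $\phi\circ D_{\hat G}\circ\rho=\phi\circ D_{\hat G}$. I would not prove this by inspecting $D_{\hat G}$ termwise, which is messy. Instead I would use the $q$-linear multiplicativity just proven. For a word $w$ and a permutation $w'=\rho(w)$ of its letters, the product formula for $\hat G$ gives
\[
\phi\,{\rm Gen}_q^{(\hat G)}(w;u)=\prod_{\text{letters } s}\phi\,{\rm Gen}_q^{(\hat G)}(s;u)=\phi\,{\rm Gen}_q^{(\hat G)}(w';u),
\]
where the middle product is commutative. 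Extracting coefficients of $u^n$ then shows $\phi D_{\hat G}^n(\rho h)=\phi D_{\hat G}^n(h)$ for all $n$ and all $h\in\mathbb{E}$. Two points need care here. Inverse letters are handled because the multiplicative identity holds for all $f,g\in\mathbb{E}$. Linearity extends the identity from words to arbitrary expressions.

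With the lemma in hand, I would prove by induction on $n$ the stronger statement that $\phi D_{\hat G}^m(D_G^n f)=\phi D_{\hat G}^{m+n}(f)$ for all $m$, following the proof of Proposition~\ref{prop: q_linear_gramma-mul-2}. The inductive step writes
\[
D_G^{n+1}f=\rho\bigl(D_{\hat G}D_G^n f\bigr).
\]
The lemma removes $\rho$ under $\phi D_{\hat G}^m$, and the hypothesis at $m+1$ finishes the step. Taking $m=0$ gives $\phi D_G^n f=\phi D_{\hat G}^n f$, which is the reduction. The main obstacle is precisely this invariance of $\phi\circ D_{\hat G}^m$ under reordering. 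The real content is that the $q$-linear rule condition \eqref{eq:rule} makes the $\hat G$ generating function of a word factor letterwise after evaluation, which allows the noncommutative order to be forgotten.
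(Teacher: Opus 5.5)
Your proposal is correct and is essentially the paper's proof. The paper likewise reduces to $\hat G=(S,R,\KSO)$ via $D_G=\rho\circ D_{\hat G}$ (Proposition~\ref{prop: q_linear_gramma-mul-2}), gets multiplicativity for $\hat G$ from the $q$-Leibniz formula together with $\phi\circ\uparrow^k=\phi$, and uses that multiplicativity to show that reordering letters is invisible to $\phi\circ D_{\hat G}^m$; your induction on $\phi D_{\hat G}^m(D_G^n f)=\phi D_{\hat G}^{m+n}(f)$ for all $m$ is just the coefficientwise form of the paper's induction on $\phi\bigl({\rm Gen}_q^{(\hat G)}(D_G^n f;u)\bigr)=\phi\bigl({\rm Gen}_q^{(\hat G)}(D_{\hat G}^n f;u)\bigr)$.
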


\section{\texorpdfstring{Numbers of terms of the $q$-grammars}{Numbers of terms of the q-grammars}}\label{sec:nbterms}

In this section, a \emph{term} means a distinct word appearing with
nonzero coefficient in the corresponding expression. Coefficients are ignored when terms are counted. We write $\Omega(D^n(f))$ as the number of distinct terms in $D^n(f)$. The Fibonacci numbers are denoted by $F_n$, with $F_1=F_2=1$, and the Motzkin numbers are denoted by $M_n$, with $M_0=1$.

\subsection*{The grammar $G_{\tan}$}
For the $q$-grammar $G_{\tan}$, the initial values of the numbers of terms $\Omega(D^n(x_0))$ are  $(2, 3, 9, 20, 38, 65, 101, 150, 210, 287, 377, \ldots )$.
The exact formula is (see Proposition \ref{lem: number terms03})
$$
\Omega(D^n(x_0))= 
\begin{cases}
2k^3 + 5k^2 +2 , & \text{if } n = 2k+1,\\[4pt]
(k+2)(2k^2-2k+1),        & \text{if } n = 2k.
\end{cases}
$$

\subsection*{The grammar $G_{\tan'}$}

For the $q$-grammar $G_{\tan'}$, the initial values of the numbers of terms $\Omega(D^n(x_0))$ are  $(2, 3, 5, 8, 13, 21, 34, 55, 89, 144, 233, \ldots )$.
The exact formula is $ \Omega(D^n(x_0)) = F_{n+2}$, where $F_n$ is the $n$-th Fibonacci number.

More precisely, we derive the following two results:
\begin{Proposition}\label{prop:tan'1}
Consider the $q$-grammar $G_{\tan'}$.
For $n\ge 1$, a word (including the empty word) appears as a term in $D^n(x_0)$ (ignoring coefficients) if and only if it satisfies:
\begin{enumerate}
\item The word is of the form $x_{i_1}x_{i_2}\ldots x_{i_{k}}$ where the indices are strictly increasing (i.e., $0\leq i_1<i_2<\cdots<i_k$).
\item The first index $i_1$ is even.
\item Indices alternate in parity: even, odd, even, odd, \ldots
\item The word contains $k$ variables, where $k \equiv n+1 \pmod{2}$.
\end{enumerate}
\end{Proposition}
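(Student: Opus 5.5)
We prove the statement by induction on $n$, in the same spirit as Propositions~\ref{lem: number terms02} and~\ref{prop:Motzkin}. We add one invariant that the statement leaves implicit: every index is at most $n$, i.e.\ $i_k\le n$. This is clear by induction, since one application of $D$ raises the largest index by at most~$1$. Call a word $x_{i_1}\cdots x_{i_k}$ \emph{admissible for $n$} if it satisfies (1)--(4) and $i_k\le n$.

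Two preliminary remarks. Conditions (2)--(3) together say that $i_1$ is even and every gap $i_{j+1}-i_j$ is odd. Moreover, all coefficients produced by $R(x_j)=q^j(1+x_jx_{j+1})$ lie in $\mathbb{N}[q]$. Hence no cancellation can occur, and a word is a term of $D^{n+1}(x_0)$ if and only if it arises from some term of $D^n(x_0)$ by one summand of Definition~\ref{defi:q-derivative}. The base case $n=1$ is $D(x_0)=1+x_0x_1$, which is immediate.

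\textbf{Forward direction.} Let $w=x_{i_1}\cdots x_{i_k}$ be admissible for $n$. Differentiating at position $j$ keeps the prefix, replaces $x_{i_j}$ by $1$ or by $x_{i_j}x_{i_j+1}$ (up to a power of $q$), and raises every later index by $1$; $\LPO$ then only sorts by index.
\begin{itemize}
\item The ``$1$'' summand deletes $x_{i_j}$ and shifts the suffix. The new gap is $(i_j-i_{j-1})+(i_{j+1}-i_j)+1$, a sum of two odd numbers plus $1$, hence odd. If $j=1$, the new first index is $i_2+1$, which is even. If $j=k$, we simply delete the last letter.
\item The other summand inserts $x_{i_j+1}$ right after $x_{i_j}$. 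This creates a gap $1$ and leaves all later gaps unchanged.
\end{itemize}
In both cases the indices stay strictly increasing, the largest index is at most $n+1$, and $k$ changes by $\pm1$, which flips its parity as condition~(4) requires. So every term of $D^{n+1}(x_0)$ is admissible for $n+1$.

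\textbf{Backward direction (surjectivity).} Given $w'=x_{i_1}\cdots x_{i_k}$ admissible for $n+1$, we exhibit an admissible preimage for $n$ by inverting one of the moves above. We distinguish four cases.
\begin{enumerate}
\item[(a)] Some gap $i_{j+1}-i_j$ equals $1$. Delete $x_{i_{j+1}}$ and lower all later indices by $1$.
\item[(b)] $i_1\ge 2$. Prepend $x_0$ and lower all original indices by $1$; this inverts deletion at $j=1$.
\item[(c)] Some gap $g=i_j-i_{j-1}\ge 3$. Insert $x_{i_{j-1}+1}$ and lower the suffix by $1$. The two new gaps are $1$ and $g-2$, both odd.
\item[(d)] None of (a)--(c) applies. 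Then either $w'$ is empty or $w'=x_0$, with $n$ even or odd respectively. The preimage is $x_0$ in the first case and $x_0x_1$ in the second.
\end{enumerate}
In each case one checks that the preimage satisfies (1)--(4) and has largest index at most $n$, and that applying the corresponding summand of $D$ returns $w'$.

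The main obstacle is this backward case analysis. The point needing most care is the bound on indices in case~(c) when the last index of $w'$ equals $n+1$. Lowering the suffix handles this, and it is also why case~(a) is tried first.
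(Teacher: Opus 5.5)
Your proof is correct and follows the paper's route: induction on $n$. Your forward direction is exactly the paper's analysis of the constant and product contributions, and your cases (a)--(d) supply the preimage construction that the paper only asserts with ``similarly''; your positivity remark ruling out cancellation is also left implicit there. Your added invariant $i_k\le n$ is in fact necessary for the ``if'' direction: $x_4$ satisfies (1)--(4) for $n=2$ but does not occur in $D^2(x_0)=qx_0+x_2+(1+q)x_0x_1x_2$. This is the bound the paper uses tacitly in the gap argument of Proposition~\ref{prop:tan'2}.
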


\begin{proof}
$\Rightarrow:$ We prove by induction on $n$. 

For $n=1$, we have
\[
    D(x_0)=1+x_0x_1.
\]
Thus the two terms are the empty word and $x_0x_1$, and both satisfy the
stated conditions.

Assume the assertion holds for $D^n(x_0)$. Take any term $w=x_{i_1}\cdots x_{i_k}$ in $D^n(x_0)$ (with $k\ge1$; constant terms give no contribution to $D^{n+1}$ because $D(1)=0$). Apply $D$ to $w$ and examine the two contributions for each position $j$ ($1\le j\le k$).

\paragraph{Constant contribution:}
From $R(x_{i_j})$ take the constant term $q^{i_j}$. The resulting word is
\[
w' = x_{i_1}\cdots x_{i_{j-1}}\; \uparrow\!\bigl(x_{i_{j+1}}\cdots x_{i_k}\bigr)
= x_{i_1}\cdots x_{i_{j-1}}\; x_{i_{j+1}+1}\cdots x_{i_k+1}.
\]
Its length is $k-1$. The indices of the prefix stay unchanged; the suffix indices increase by $1$, which flips their parity. Because $w$ alternates and starts with an even index, $w'$ also alternates and starts with an even index. The parity condition for $D^{n+1}$ follows from $(k-1)\equiv (n+1)-1 = n \equiv (n+1)+1 \pmod{2}$.

\paragraph{Product contribution:}
From $R(x_{i_j})$ take $q^{i_j}x_{i_j}x_{i_j+1}$. The resulting word is
\begin{align*}
w'' &= x_{i_1}\cdots x_{i_{j-1}}\; x_{i_j}x_{i_j+1}\; \uparrow\!\bigl(x_{i_{j+1}}\cdots x_{i_k}\bigr)
\\&= x_{i_1}\cdots x_{i_{j-1}}\; x_{i_j}x_{i_j+1}\; x_{i_{j+1}+1}\cdots x_{i_k+1}.
\end{align*}
Its length is $k+1$. The two inserted variables have indices $i_j$ and $i_j+1$; because $i_j$ has parity $p$, $i_j+1$ has parity $1-p$. The suffix is shifted by $1$ as before. Using the alternating property of $w$ one verifies that $w''$ also alternates and starts with an even index. The parity condition gives $k+1\equiv (n+1)+1 \pmod{2}$, which is required for $D^{n+1}$.

Thus every term arising from $D^{n+1}(x_0)=D(D^n(x_0))$ satisfies the four properties. This completes the induction. 

$\Leftarrow:$ On the other hand, similarly, each term in $D^n(x_0)$ that satisfies the four conditions in this proposition can be obtained by applying the operator $D$ on some term in $D^{n-1}(x_0)$ that satisfies the four conditions, which completes the proof.
\end{proof}

\begin{Proposition}\label{prop:tan'2}
Consider the $q$-grammar $G_{\tan'}$.
The number of distinct terms (including the empty term) in $D^n(x_0)$ is the Fibonacci number $F_{n+2}$, where $F_1=F_2=1$.
\end{Proposition}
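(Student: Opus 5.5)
By Proposition~\ref{prop:tan'1}, it suffices to count words $x_{i_1}\cdots x_{i_k}$ with $0\le i_1<\cdots<i_k$, parities alternating starting from even, and $k\equiv n+1 \pmod 2$, together with the empty word when $n$ is odd. We also need an upper bound on the indices. Each application of $D$ raises the maximal index by at most one, starting from $x_0$, so every index in a term of $D^n(x_0)$ is at most $n$. I would check that this bound is sharp and interacts correctly with parity: the term $x_0x_1\cdots x_n$ appears with coefficient $(1+q)\cdots(1+\cdots+q^{n-1})$, and it has $k=n+1$ letters. So I would refine Proposition~\ref{prop:tan'1} to state that the terms are exactly the alternating increasing words with indices in $\{0,1,\dots,n\}$ and $k\equiv n+1\pmod 2$. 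The inductive argument of Proposition~\ref{prop:tan'1} carries the bound $i_k\le n$ along unchanged. The converse direction (every such word occurs) should reuse the backward step of that proof, removing either an adjacent pair $x_{i}x_{i+1}$ or undoing a shift.

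Next I would encode such a word as a parity pattern. An alternating increasing word starting at an even index is determined by its index set, and alternation means consecutive indices differ by an odd number. Consider the complement gaps. Write the word as $i_1=2a_0$, and $i_{j+1}-i_j=2a_j+1$ with $a_j\ge0$, and $n-i_k=b\ge 0$. Then $n=2a_0+\sum_{j=1}^{k-1}(2a_j+1)+b$, and the condition $k\equiv n+1\pmod 2$ forces $b$ to be even. Hence each term corresponds bijectively to a composition of $n+1$ into $k$ odd parts $1+2a_0,\,2a_1+1,\dots,2a_{k-1}+1$ followed by an even part $b$, that is, to a composition of $n+1$ into odd parts followed by a possibly zero even remainder. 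The empty word ($k=0$) fits this scheme with $b=n+1$ even.

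Finally I would count. Let $c_m$ be the number of compositions of $m$ into odd parts; it is classical that $c_m=F_m$ for $m\ge1$, with $c_0=1$. The count is then $\sum_{b\ \text{even},\,0\le b\le n+1} c_{n+1-b}$. This equals $F_{n+1}+F_{n-1}+F_{n-3}+\cdots$, ending in $F_2$ or $F_1+c_0$ depending on parity. By the telescoping identities $F_{n+1}+F_{n-1}+\cdots=F_{n+2}$, this gives $F_{n+2}$; I would check the small cases $n=1,2$ directly. The main obstacle is the first step: Proposition~\ref{prop:tan'1} as stated omits the index bound, so the refined characterization with $i_k\le n$ must be justified. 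The forward direction is easy. For the converse I would build each word from $x_0x_1\cdots x_n$ by successive deletions, matching the constant-contribution step.
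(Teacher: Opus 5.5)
Your argument is correct and is essentially the paper's. Both rely on the characterization in Proposition~\ref{prop:tan'1} and encode a term by the odd gaps between consecutive indices. The only difference is that the paper takes the last gap to be $(n+1)-i_k=b+1$, which is odd and positive. Each term is then directly a composition of $n+2$ into odd parts, so your even tail $b$ and the Fibonacci partial-sum identity are not needed.

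You are right that Proposition~\ref{prop:tan'1} as stated omits the bound $i_k\le n$; the paper's proof uses it silently to get $(n+1)-i_k>0$. For the converse, note that each constant-contribution step raises the order of $D$ by one. So a word of length $k$ in $D^n(x_0)$ should be obtained from $x_0x_1\cdots x_m\in D^m(x_0)$ with $m=(n+k-1)/2$, by $(n+1-k)/2$ such steps (each merging two adjacent gaps $g,g'$ into $g+g'+1$), not from $x_0x_1\cdots x_n$.
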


\begin{proof}
Using Proposition~\ref{prop:tan'1}, each term $x_{i_1}\cdots x_{i_k}$ in $D^n(x_0)$ corresponds to a strictly increasing sequence of indices with alternating parity, starting with an even index. Define the gap sequence
\[
g_1 = i_1 - (-1),\quad g_2 = i_2-i_1,\quad \ldots,\quad g_k = i_k-i_{k-1},\quad g_{k+1} = (n+1)-i_k.
\]
Because $i_1$ is even, $i_1+1$ is odd; each difference $i_{t+1}-i_t$ is odd (alternating parity); and $i_k$ has parity opposite to $n+1$ (since $k\equiv n+1\pmod2$), hence $(n+1)-i_k$ is odd. Therefore every $g_t$ is a positive odd integer. Their sum telescopes:
\[
g_1+g_2+\cdots+g_{k+1} = (i_1+1)+(i_2-i_1)+\cdots+(n+1-i_k)=n+2.
\]
Thus each term yields a composition of $n+2$ into odd positive parts. Conversely, given such a composition $(g_1,\dots,g_{m})$ ($m\ge1$) of $n+2$, set $k=m-1$ and $i_t = -1+\sum_{s=1}^t g_s$. Then $i_1 = g_1-1$ is even because $g_1$ is odd; each $g_t$ odd ensures alternating parity; and $i_k = n+1-g_{m} \le n$ because $g_{m}\ge1$. Hence the term $x_{i_1}\cdots x_{i_k}$ belongs to $D^n(x_0)$. The empty term corresponds to the composition of $n+2$ into a single part $g_1=n+2$, which is odd precisely when $n+2$ is odd, i.e., when $n$ is odd.

The number of compositions of a positive integer $N$ into odd parts is the Fibonacci number $F_N$ (with $F_1=F_2=1$). Indeed, let $C_N$ be that number. Then $C_1=1$, $C_2=1$, and for $N\ge3$, a composition either starts with $1$ (leaving $N-1$) or starts with an odd number $\ge3$, then subtracting $2$ gives a bijection with compositions of $N-2$. Hence $C_N=C_{N-1}+C_{N-2}$ with the same initial values, thus $C_N=F_N$. Taking $N=n+2$ gives $\Omega(D^n(x_0))=F_{n+2}$. 
\end{proof}

\subsection*{The grammar $G_{\sec}$}

For the $q$-grammar $G_{\sec}$, the initial values of the numbers of terms $\Omega(D^n(y_0))$ are  $(1, 3, 8, 19, 36, 63, 98, 147, 206, 283, 372, \ldots )$.
The exact formula is  
$$
\Omega(D^n(y_0))= 
\begin{cases}
1 , & \text{if } n = 1,\\[4pt]
2k^3 + 5k^2 - k + 2 , & \text{if } n = 2k+1, \quad (k\geq 1)\\[4pt]
2k^3 + 2k^2 - 4k + 3,        & \text{if } n = 2k. \quad (k\geq 1)
\end{cases}
$$

More precisely, similar to Proposition~\ref{lem: number terms02} and  Lemma \ref{lem: number terms03}, we can derive the following two results (the proofs are similar and thus ignored here):

\begin{Proposition}\label{prop:sec1}
Consider the $q$-grammar $G_{\sec}$.
For $n\ge 1$, a word appears as a term in $D^n(y_0)$ (ignoring coefficients) if and only if it has one of the following forms:
\begin{enumerate}
    \item[(i)] $y_j\,x_{j-1}^{\,n}$ with $2\le j\le n$;
    \item[(ii)] $x_1^{\,n-1}y_1x_0$ or $x_1^{\,n-2}y_1$ (for $n\ge2$; for $n=1$ the only term is $y_1x_0$);
    \item[(iii)] $x_{j+1}^{\,a}\,y_{j+1}\,x_j^{\,b}$ where $1\le j\le n-2$, $0\le a,b\le n-1$, $a+b\le n$, and $a+b+n$ is even.
\end{enumerate}
\end{Proposition}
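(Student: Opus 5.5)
The plan is to prove the statement by induction on $n$, in the same way as Proposition~\ref{lem: number terms02}, using one uniform description of the words. Under the order $\DIO$ with rules $x_j\mapsto q^j(1+x_jx_{j+1})$ and $y_j\mapsto q^jx_jy_{j+1}$, I first claim that every term of $D^n(y_0)$ has the shape
$$x_{k+1}^{\,a}\,y_{k+1}\,x_k^{\,b},\qquad k\ge 0,\ a,b\ge 0,$$
where $y_{k+1}$ is the unique $y$-letter. Forms (i)--(iii) are then the special cases $a=0$ with $k=j-1$, the case $k=0$, and the case $1\le k\le n-2$, respectively. The statement therefore reduces to determining exactly which triples $(k,a,b)$ occur for each $n$.

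\emph{Step 1 (closure of the shape).} Let $w=x_{k+1}^a y_{k+1} x_k^b$ and differentiate it letter by letter using Definition~\ref{defi:q-derivative}.
\begin{itemize}
\item Hitting the $i$-th copy of $x_{k+1}$ gives the prefix $x_{k+1}^{i-1}$, then $1$ or $x_{k+1}x_{k+2}$, then the shifted suffix $x_{k+2}^{a-i}y_{k+2}x_{k+1}^b$. After applying $\DIO$, this becomes $x_{k+2}^{a-i}y_{k+2}x_{k+1}^{b+i-1}$ or $x_{k+2}^{a-i+1}y_{k+2}x_{k+1}^{b+i}$.
\item Hitting $y_{k+1}$ gives $x_{k+1}^{a+1}y_{k+2}x_{k+1}^{b}$, which reorders to $y_{k+2}x_{k+1}^{a+b+1}$.
\item Hitting the $i$-th copy of $x_k$ leaves the index pair $(k,k+1)$ unchanged and either removes one $x_k$ or moves one $x_k$ to $x_{k+1}$ while adding an $x_k$. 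Concretely, it yields $x_{k+1}^{a+1}y_{k+1}x_k^{b-1}$ or $x_{k+1}^{a+2}y_{k+1}x_k^{b}$ after reordering.
\end{itemize}
So the shape is preserved. Every step changes the number $a+b$ of $x$-letters by $\pm1$, and $y_0$ has none, so $a+b\equiv n \pmod 2$ and $a+b\le n$. This is the parity condition in (iii).

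\emph{Step 2 (range of $k$ and the boundary cases).} The pair index $k$ increases by at most one per derivative, and it increases exactly when a letter of the top level $k+1$ is hit. It follows that $k\le n-1$.

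When $k=n-1$, the index has risen at every step. Tracking the $y$-rule shows that only $y_nx_{n-1}^n$ survives; together with the terms $y_jx_{j-1}^n$ obtained by hitting $y$, this gives form (i).

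When $k=0$, i.e.\ no top-level letter has ever been hit, the only letters are $x_1$, $y_1$ and $x_0$. Here I will check directly that $b\le 1$, since $x_0$ is only ever created by $R(x_0)$ and then sits last. This gives form (ii).

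The intermediate bounds $a,b\le n-1$ in (iii) come from comparing with these two extreme families.

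\emph{Step 3 (every listed word occurs).} This is the main obstacle. As in the proof of Proposition~\ref{lem: number terms02}, I will exhibit for each admissible $(k,a,b)$ at level $n$ a term at level $n-1$ whose derivative contains it, using the moves in Step~1 in reverse:
\begin{itemize}
\item if $a+b=n$ and $b\ge1$, take the preimage $x_{k+1}^{a-1}y_{k+1}x_k^{b}$ via $x_k\mapsto x_kx_{k+1}$, or a suitable preimage with one fewer $x$;
\item if $a+b<n$, take the preimage $x_{k+1}^{a}y_{k+1}x_k^{b+1}$, which exists by the induction hypothesis unless $b+1$ hits a bound;
\item in that exceptional case, come instead from level $k-1$ through the $y$-move or the first $x_{k}$-move.
\end{itemize}
The delicate part is making sure the preimage's parameters satisfy the level-$(n-1)$ constraints, in particular the edge cases $j=1$, $j=n-2$, $a=0$ and $b=n-1$. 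I would handle these by explicit subcases and check small $n$ (namely $n=1,2,3$) against the computed $D^n(y_0)$ listed earlier.

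Finally, I must confirm that no cancellation of coefficients occurs. This holds because every coefficient produced is a polynomial in $q$ with nonnegative integer coefficients, so no term can disappear.
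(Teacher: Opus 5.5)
Your strategy is the one the paper intends: induction on $n$, the uniform shape $x_{k+1}^a y_{k+1}x_k^b$, forward closure plus explicit preimages, and positivity of coefficients to rule out cancellation. However, Step~1 miscomputes the derivative at the lower letters, and Step~3 inherits the error.

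When you hit the $i$-th copy of $x_k$ in $x_{k+1}^a y_{k+1}x_k^b$, the shift $\uparrow$ turns the $b-i$ copies of $x_k$ to its right into $x_{k+1}$'s. After reordering, the two words produced are $x_{k+1}^{a+b-i}y_{k+1}x_k^{i-1}$ and $x_{k+1}^{a+b-i+1}y_{k+1}x_k^{i}$, for $1\le i\le b$. Your formulas $x_{k+1}^{a+1}y_{k+1}x_k^{b-1}$ and $x_{k+1}^{a+2}y_{k+1}x_k^{b}$ are wrong; they change $a+b$ by $0$ and $+2$, contradicting your own parity claim. Your verbal description is correct only for $i=b$.

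This is not cosmetic. Take the form-(iii) words with $j=n-2$ given by $x_{n-1}^{n-2}y_{n-1}$ and $x_{n-1}^{n-1}y_{n-1}x_{n-2}$; for $n=3$ these are $x_2y_2$ and $x_2^2y_2x_1$, both in $D^3(y_0)$. Their \emph{only} preimage is $y_{n-1}x_{n-2}^{n-1}$, reached by hitting the \emph{first} copy of $x_{n-2}$. With your move table they are unreachable, so ``reversing the moves of Step~1'' cannot produce them. Relatedly, the true exceptional case in Step~3 is $k=n-2$, not ``$b+1$ hits a bound'': at level $n-1$ that pair index carries only $y_{n-1}x_{n-2}^{n-1}$. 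If $a+b<n$ and $b+1=n-1$, then $a=0$ and $y_{k+1}x_k^{n-1}$ is a legitimate form-(i) preimage.

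With the correct moves, the backward direction closes as follows.
For $1\le k\le n-3$, your preimages work by hitting the last copy of $x_k$: use $x_{k+1}^{a-1}y_{k+1}x_k^{b}$ if $a+b=n$, and $x_{k+1}^{a}y_{k+1}x_k^{b+1}$ if $a+b<n$.
For $k=n-2$, every word with $a+b\in\{n,n-2\}$ comes from $y_{n-1}x_{n-2}^{n-1}$ by hitting its $i$-th $x_{n-2}$.
For $k=n-2$ and $a+b\le n-4$, the word comes from $x_{n-2}^{a}y_{n-2}x_{n-3}^{b-1}$ (if $b\ge1$) or from $x_{n-2}^{a+1}y_{n-2}$ (if $b=0$), by hitting the first $x_{n-2}$.

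In the forward direction, two claims of Step~2 still need real arguments.
The bounds $a,b\le n-1$ do not come from ``comparing with the extreme families.'' They come from the fact that $b=0$ is produced only by a constant-term move, which lowers $a+b$. So $b=0$ forces $a+b\le n-2$. Hence $a+b=n$ implies $b\ge1$, and $b=n$ forces $a=0$, which is form (i).
For $k=0$, the bound $b\le1$ alone does not isolate the two words of (ii). You need that the only moves keeping $k=0$ are hits on $x_0$. Then the $k=0$ words at level $n$ are exactly the two children of $x_1^{n-2}y_1x_0$.
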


\begin{Proposition}\label{prop:sec2}
Consider the $q$-grammar $G_{\sec}$.
The number of terms in $D^n(y_0)$ is
$$
\Omega(D^n(y_0))= 
\begin{cases}
1 , & \text{if } n = 1,\\[4pt]
2k^3 + 5k^2 - k + 2 , & \text{if } n = 2k+1, \quad (k\geq 1)\\[4pt]
2k^3 + 2k^2 - 4k + 3,        & \text{if } n = 2k. \quad (k\geq 1)
\end{cases}
$$
\end{Proposition}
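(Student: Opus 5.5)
The plan is to derive the count directly from the classification of terms in Proposition~\ref{prop:sec1}. This mirrors how Proposition~\ref{lem: number terms03} was derived from Proposition~\ref{lem: number terms02}. The argument has three steps: check that the three families (i)--(iii) are pairwise disjoint, count each family, and add.

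\emph{Disjointness.} The only possible collision is between type (i) and the case $a=0$ of type (iii). Both have the shape $y_{j+1}x_j^{\,b}$, but type (i) forces $b=n$, while type (iii) requires $b\le n-1$. Type (ii) is the only family containing $y_1$, since types (i) and (iii) carry $y_{j}$ or $y_{j+1}$ with index at least $2$. The case $n=1$ is read off directly from (ii), which gives the single term $y_1x_0$. So we assume $n\ge 2$.

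\emph{Counting.} Type (i) contributes $n-1$ terms (one for each $2\le j\le n$), and type (ii) contributes $2$. For type (iii), the number of admissible pairs $(a,b)$ does not depend on $j$, so it gets multiplied by $n-2$. To count the pairs, we group them by $s=a+b$, where $s\le n$ and $s\equiv n \pmod 2$.
\begin{itemize}
\item For $s\le n-1$, the constraint $a,b\le n-1$ is automatic, so there are $s+1$ pairs.
\item For $s=n$, we must exclude $(0,n)$ and $(n,0)$, leaving $n-1$ pairs.
\end{itemize}

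\emph{Summing by parity.} For $n=2k+1$, the values $s=2d+1$ with $0\le d\le k-1$ give $\sum_{d=0}^{k-1}(2d+2)=k^2+k$. Adding the $2k$ pairs for $s=n$ gives $k^2+3k$ pairs. The total is then
\[
(2k-1)(k^2+3k)+2k+2=2k^3+5k^2-k+2 .
\]
For $n=2k$, the values $s=2d$ with $0\le d\le k-1$ give $\sum_{d=0}^{k-1}(2d+1)=k^2$. Adding the $2k-1$ pairs for $s=n$ gives $k^2+2k-1$ pairs. The total is then
\[
(2k-2)(k^2+2k-1)+(2k-1)+2=2k^3+2k^2-4k+3 .
\]
For $n=2$ the range of $j$ in (iii) is empty, and the formula correctly gives $3$.

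The only delicate point is the disjointness and the boundary case $s=n$, where the bound $a,b\le n-1$ actually bites. Since Proposition~\ref{prop:sec1} is assumed, there is no genuine obstacle beyond this bookkeeping. As a sanity check, the formulas reproduce the listed initial values $1,3,8,19,36,\ldots$.
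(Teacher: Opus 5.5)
Your proposal is correct and follows the argument the paper intends: the paper omits this proof, saying only that the count follows from the classification in Proposition~\ref{prop:sec1}, just as Proposition~\ref{lem: number terms03} follows from Proposition~\ref{lem: number terms02}. Your disjointness check, the pair counts ($s+1$ for $s\le n-1$ and $n-1$ for $s=n$) and the two parity summations are exactly the bookkeeping needed, and the arithmetic checks out.
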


\subsection*{The grammar $G_{\sec'}$}

For the $q$-grammar $G_{\sec'}$, the initial values of the numbers of terms $\Omega(D^n(y_0))$ are  $(1, 2, 3, 5, 8, 13, 21, 34, 55, 89, 144, \ldots )$.
The exact formula is $ \Omega(D^n(y_0)) = F_{n+1}$.

More precisely, similar to Propositions~\ref{prop:tan'1} and \ref{prop:tan'2}, we can derive the following two results (the proofs are similar and thus ignored here):
\begin{Proposition}\label{prop:sec'1}
Consider the $q$-grammar $G_{\sec'}$.
For $n\ge 1$, a word is a term in $D^n(y_0)$ if and only if it satisfies:
\begin{enumerate}
\item The word is of the form $x_{i_1}x_{i_2}\ldots x_{i_{k-1}}y_n$ where the indices are strictly increasing (i.e., $0\leq i_1<i_2<\cdots<i_{k-1}<n$).
\item The first index $i_1$ is even.
\item Indices alternate in parity: even, odd, even, odd, \ldots
\item The word contains $k\geq 1$ variables (at least one $y_n$), where $k \equiv n+1 \pmod{2}$.
\end{enumerate}
\end{Proposition}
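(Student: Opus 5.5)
We will argue by induction on $n$, in the same way as Proposition~\ref{prop:tan'1}. The coefficients will be tracked only to the extent of noting that no cancellation can occur. Recall that in $G_{\sec'}$ we have $R(x_j)=q^j(1+x_jx_{j+1})$ and $R(y_j)=q^jx_jy_{j+1}$, and that $\rho=\LPO$ places every $x$ before $y$ with the $x$'s sorted by index.

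\textbf{Preliminary facts.} Every coefficient produced by $D$ is a polynomial in $q$ with nonnegative integer coefficients, and each rule contributes at least one monomial. So whenever a word is produced from a term of $D^{n-1}(y_0)$, it is a genuine term of $D^n(y_0)$; the term set of $D^n(y_0)$ is exactly the set of words obtained by applying one elementary replacement to a term of $D^{n-1}(y_0)$.

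The word always contains exactly one $y$, and it sits last. Its index grows by exactly $1$ at each step. Indeed, differentiating an $x$ shifts the suffix, which contains $y$, and differentiating $y_m$ gives $x_my_{m+1}$. This explains the factor $y_n$. The base case is $D(y_0)=x_0y_1$, which has $k=2\equiv n+1$.

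\textbf{Forward direction.} Let $w=x_{i_1}\cdots x_{i_{k-1}}y_n$ satisfy conditions (1)--(4). By alternation, $i_{k-1}\equiv k-2\equiv n-1 \pmod 2$. There are three kinds of contributions, each checked for strict increase, an even first index, alternation, and the length parity $k\pm1\equiv n+2$.

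\emph{Constant part of $R(x_{i_j})$.} This deletes $x_{i_j}$ and shifts the suffix by $1$. The index $i_{j+1}+1$ has the parity of $i_j$ and exceeds $i_{j-1}$. If $j=1$, the new first index is $i_2+1$, which is even.

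\emph{Product part of $R(x_{i_j})$.} This inserts $x_{i_j}x_{i_j+1}$ and shifts the suffix. We have $i_{j+1}+1\ge i_j+2$, and the parities alternate.

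\emph{$R(y_n)$.} This appends $x_n$ and gives $y_{n+1}$. Since $n$ has parity opposite to $i_{k-1}$, alternation is preserved.

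\textbf{Converse.} Let $w=x_{i_1}\cdots x_{i_{k-1}}y_n$ satisfy (1)--(4) with $n\ge2$. We exhibit a preimage of the right shape in $D^{n-1}(y_0)$. Consider the gaps
\[
i_1+1,\ i_2-i_1,\ \dots,\ n-i_{k-1},
\]
all of which are odd, exactly as in the proof of Proposition~\ref{prop:tan'2}.

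\emph{Case $i_{k-1}=n-1$.} Then $w$ comes from $R(y_{n-1})$ applied to $x_{i_1}\cdots x_{i_{k-2}}y_{n-1}$.

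\emph{Case of two consecutive indices $i_j,\ i_j+1$ with $j+1\le k-1$.} Then $w$ comes from the product part of $R(x_{i_j})$ applied to the word in which $x_{i_j+1}$ is deleted and all later indices, including that of $y$, are lowered by $1$.

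\emph{Remaining case: some gap is $\ge3$.} Take the first such gap.
\begin{itemize}
\item If it is $i_1+1\ge3$, then $w$ comes from the constant part of $R(x_0)$ applied to $x_0x_{i_1-1}\cdots y_{n-1}$.
\item If it lies between $i_{j}$ and $i_{j+1}$, or between $i_{k-1}$ and $n$, then $w$ comes from inserting $x_{i_j+1}$ and lowering everything after it by $1$.
\end{itemize}
In each situation we check that the preimage satisfies (1)--(4) for $n-1$. The checks are the same parity bookkeeping as in the forward direction, read backwards.

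\textbf{Main obstacle.} The main care is needed in this converse case split. We must make sure the three cases are exhaustive: if no gap is $\ge3$ and no consecutive pair exists, then all gaps equal $1$, which forces $i_{k-1}=n-1$. We must also confirm that each constructed preimage has strictly increasing indices.
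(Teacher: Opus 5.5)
Your proof is correct and follows the route the paper intends. Like the paper's argument for Proposition~\ref{prop:tan'1}, to which it defers here, you argue by induction on $n$, checking that the constant part, the product part, and the $R(y_n)$ contribution each preserve conditions (1)--(4); your converse goes further than the paper's ``similarly'', building explicit preimages from the odd gaps and using the no-cancellation remark, and it is sound (read $i_0=-1$ when $k=1$; the consecutive-pair case is redundant, since all gaps equal to $1$ already force $i_{k-1}=n-1$).
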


\begin{Proposition}\label{prop:sec'2}
Consider the $q$-grammar $G_{\sec'}$.
The number of distinct terms in $D^n(y_0)$ is the Fibonacci number $F_{n+1}$, where $F_1=F_2=1$.
\end{Proposition}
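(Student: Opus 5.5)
The plan is to reduce the count to compositions into odd parts, as in the proof of Proposition~\ref{prop:tan'2}, with Proposition~\ref{prop:sec'1} taken as given. By Proposition~\ref{prop:sec'1}, each term of $D^n(y_0)$ is a word $x_{i_1}\cdots x_{i_{k-1}}y_n$. Its indices satisfy $0\le i_1<\cdots<i_{k-1}<n$, start with an even index, and alternate in parity, and $k\equiv n+1 \pmod 2$. Such a term is determined by the index set $\{i_1,\dots,i_{k-1}\}$, so it suffices to count these sets.

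To each term I attach the gap sequence
\[
g_1=i_1+1,\quad g_t=i_t-i_{t-1}\ (2\le t\le k-1),\quad g_k=n-i_{k-1}.
\]
When $k=1$ this is the single gap $g_1=n+1$. The gaps are positive and telescope to $g_1+\cdots+g_k=n+1$. I then check that each gap is odd:
\begin{itemize}
\item $g_1$ is odd because $i_1$ is even.
\item The middle gaps are odd because consecutive indices alternate in parity.
\item By alternation, $i_t\equiv t-1 \pmod 2$, so $i_{k-1}\equiv k\equiv n+1 \pmod 2$, and hence $g_k=n-i_{k-1}$ is odd.
\item In the case $k=1$, the congruence $k\equiv n+1$ forces $n$ to be even, so $g_1=n+1$ is odd.
\end{itemize}
So every term yields a composition of $n+1$ into odd parts.

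For the converse, take any composition $(g_1,\dots,g_m)$ of $n+1$ into odd parts. Set $k=m$ and $i_t=-1+\sum_{s\le t}g_s$ for $1\le t\le k-1$. Then $i_1$ is even, the indices alternate in parity, and $i_{k-1}=n-g_k<n$. Moreover $n+1=\sum g_s\equiv m=k \pmod 2$, which is exactly the parity condition. Hence, by Proposition~\ref{prop:sec'1}, the word $x_{i_1}\cdots x_{i_{k-1}}y_n$ is a term, and the two maps are mutually inverse.

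It remains to use the fact, proved inside Proposition~\ref{prop:tan'2}, that the number $C_N$ of compositions of $N$ into odd parts is $F_N$. The argument there classifies a composition by whether its first part is $1$ or at least $3$, giving $C_N=C_{N-1}+C_{N-2}$ with $C_1=C_2=1$. With $N=n+1$ this gives $\Omega(D^n(y_0))=F_{n+1}$. As a sanity check, for $n=1$ the only term is $x_0y_1$ and $F_2=1$.

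There is no real obstacle, since the substance lies in Proposition~\ref{prop:sec'1}. The one point needing care is the parity bookkeeping at the last gap: one must confirm that $k\equiv n+1$ is exactly what makes $n-i_{k-1}$ odd, including the degenerate case $k=1$.
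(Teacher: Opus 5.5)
Your proposal is correct and is exactly the argument the paper intends: the paper omits this proof as ``similar'' to Proposition~\ref{prop:tan'2}. Your gap sequence, which sends each term of $D^n(y_0)$ described in Proposition~\ref{prop:sec'1} to a composition of $n+1$ into odd parts, transcribes that proof directly, and you handle the parity at the last gap correctly, including the case $k=1$.
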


\subsection*{The grammar $G_{\operatorname{Sec}}$}

For the $q$-grammar $G_{\Sec}$, the initial values of the numbers of terms $\Omega(D^n(y_0))$ are  $(1, 3, 8, 23, 48, 86, 139, 210, 301, 415, 554, \ldots )$. Computations suggest the following closed formula.

\begin{Conjecture}\label{conj:GSec-count}
For the $q$-grammar $G_{\operatorname{Sec}}$, the number of distinct terms
in $D^n(y_0)$ is
\[
\Omega(D^n(y_0))=
\begin{cases}
1, & \text{if } n=1,\\[4pt]
3, & \text{if } n=2,\\[4pt]
\dfrac{1}{6}\bigl(20k^3+33k^2+k-6\bigr),
& \text{if } n=2k+1,\quad k\ge 1,\\[8pt]
\dfrac{1}{6}(20k-17)(k+1)k,
& \text{if } n=2k,\quad k\ge 2.
\end{cases}
\]
\end{Conjecture}

\subsection*{The grammar $G_{\operatorname{Sec}^{\prime}}$}

For the $q$-grammar $G_{\operatorname{Sec}^{\prime}}$, the initial values
of $\Omega(D^n(y_0))$, for $n\ge 1$, are
\[
    1,\,3,\,8,\,21,\,53,\,132,\,325,\,795,\,1936,\,4701,\,11393,\ldots .
\]
At present, we record these values as computational data. Finding a closed formula or a structural characterization for this sequence remains an open
problem.

\subsection*{The grammar $G_{\mathrm{maj}}$}

For the $q$-grammar $G_{\mathrm{maj}}$, the initial values of
$\Omega(D^n(x_0))$, for $n\ge 1$, are
\[
    1,\,2,\,6,\,20,\,73,\,283,\,1147,\,4814,\,20774,\ldots .
\]
We record this sequence as computational evidence. A closed formula for
$\Omega(D^n(x_0))$ is not currently known.

\subsection*{The grammar $G_{\mathrm{inv}}$}

For the $q$-grammar $G_{\inv}$, the initial values of the numbers of terms $\Omega(D^n(x_0))$ are  $(1, 2, 4, 8, 16, 32, 64, 128, 256, 512, \ldots )$.
The exact formula is $ \Omega(D^n(x_0)) = 2^{n-1}$.

More precisely, the following results are easy to derive:

\begin{Proposition}\label{prop:inv1}
Consider the $q$-grammar $G_{\inv}$.
For $n\ge 1$, a word appears as a term in $D^n(x_0)$ (ignoring coefficients) if and only if it is of the form $y_0  k_{1}k_{2}\ldots k_{n-1}x_{n}$ where $k_{i}=x_{i}$ or $y_{i}$.
\end{Proposition}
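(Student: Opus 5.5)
The plan is to read off the terms directly from the combinatorial expansion established in the proof of Theorem~\ref{q-inv-mult}. That proof shows that
\[
D^n(x_0)=\sum_{\sigma\in\mathfrak{S}_n} q^{\inv(\sigma)}\,\omega(\sigma),
\]
where $\omega(\sigma)$ is the product, read from right to left, of the labels of positions $1,\dots,n+1$. Position $i$ carries $x_{n+1-i}$ if $i$ is an ascent and $y_{n+1-i}$ if $i$ is a descent. Since the indices $n+1-i$ are pairwise distinct and occur in increasing order from right to left, each $\omega(\sigma)$ is already a word in $\AIO$ order whose $k$-th letter (counting from $0$) has index $k$. Hence $\omega(\sigma)$ is determined by which of its letters are $x$ and which are $y$.

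The first step is to pin down the two boundary letters. Because $\sigma_0=\sigma_{n+1}=0$, position $1$ (from $0$ up to $\sigma_1$) is always an ascent, so it contributes $x_n$. Position $n+1$ (from $\sigma_n$ down to $0$) is always a descent, so it contributes $y_0$. Every $\omega(\sigma)$ therefore has the form $y_0k_1\cdots k_{n-1}x_n$, where $k_j$ is $x_j$ or $y_j$ according to whether position $n+1-j\in\{2,\dots,n\}$ is an ascent or a descent of $\sigma$. This gives the ``only if'' direction, modulo the question of cancellation.

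The second step is to show that every such word actually occurs with nonzero coefficient. Cancellation cannot happen, because the coefficient of a word $w$ equals $\sum_{\omega(\sigma)=w} q^{\inv(\sigma)}$, a polynomial with nonnegative coefficients. It is nonzero exactly when some $\sigma$ has $\omega(\sigma)=w$. For positions $2,\dots,n$, the ascent/descent pattern is the ordinary descent set of $\sigma$ inside $[n-1]$, and every subset of $[n-1]$ is the descent set of some permutation. For instance, write the increasing runs determined by the desired descent set and fill them with consecutive blocks of values, taking the blocks in decreasing order. So all $2^{n-1}$ choices of $(k_1,\dots,k_{n-1})$ are realized, which gives the ``if'' direction and, as a byproduct, $\Omega(D^n(x_0))=2^{n-1}$.

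The only point needing care is the index bookkeeping: one must check that position $i$ gets index $n+1-i$, so that the $\AIO$ rearrangement is trivial and distinct permutations with the same descent set yield literally the same word. This is immediate from the labeling in the proof of Theorem~\ref{q-inv-mult}. If one prefers not to rely on that labeling, the fallback is a direct induction on $n$ using $R(x_j)=R(y_j)=q^jy_jx_{j+1}$. Applying $D$ at the letter of index $j$ inserts $y_jx_{j+1}$ and shifts every later index by one. Starting from $y_0k_1\cdots k_{n-1}x_n$, this produces words of the same shape of length $n+2$, and every such word arises from some word of length $n+1$. Nonnegativity of the coefficients again rules out cancellation.
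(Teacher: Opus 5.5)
Your proof is correct. The paper gives no proof of this proposition; it lists it among results that are ``easy to derive.'' The analogous term characterizations that the paper does prove (Propositions~\ref{prop:tan'1}, \ref{prop:Motzkin} and~\ref{prop:char}) all use a forward/backward induction on the rewriting rules, which is essentially your fallback.

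Your main route is genuinely different. You use three ingredients:
\begin{itemize}
\item the identity \eqref{equ-qEulerab} holds in $\mathbb{E}$, not merely after applying $\phi$;
\item $\omega(\sigma)$ depends only on the descent set of $\sigma$, since the letter of index $j$ is $y_j$ exactly when $n-j\in\mathrm{Des}(\sigma)$;
\item every subset of $[n-1]$ is a descent set, which gives the ``if'' direction.
\end{itemize}

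This route proves more than the statement asks. The coefficient of $y_0k_1\cdots k_{n-1}x_n$ is $\sum_{\mathrm{Des}(\sigma)=S}q^{\inv(\sigma)}$ with $S=\{n-j : k_j=y_j\}$. That settles the ``coefficients'' and ``combinatorial model'' questions of Subsection~\ref{sec:3.8} for $G_{\inv}$. For $n=3$ it gives the coefficients $1$, $q+q^2$, $q+q^2$, $q^3$ of $y_0x_1x_2x_3$, $y_0y_1x_2x_3$, $y_0x_1y_2x_3$, $y_0y_1y_2x_3$, which agrees with a direct computation.

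The price is that you depend on the index bookkeeping in the proof of Theorem~\ref{q-inv-mult}. The paper states it loosely, but it is valid as you describe: inserting $n+1$ at position $n+1-j$ corresponds exactly to replacing the letter of index $j$ by $y_jx_{j+1}$ and shifting the suffix.

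The direct induction is self-contained but yields only the support. You merely assert its surjectivity step; it works for the following reason. An admissible word $y_0k_1\cdots k_nx_{n+1}$ starts with a $y$ and ends with an $x$, so it contains an adjacent pair $y_jx_{j+1}$. Contract that pair to $y_0$ if $j=0$, to $x_n$ if $j=n$, and to either letter otherwise, shifting later indices down. The result is an admissible term of $D^n(x_0)$ that produces the given word.
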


\begin{Proposition}\label{prop:inv2}
Consider the $q$-grammar $G_{\inv}$.
The number of distinct terms in $D^n(x_0)$ is $2^{n-1}$.
\end{Proposition}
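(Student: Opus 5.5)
The plan is to read off the terms of $D^n(x_0)$ from the grammatical labeling in the proof of Theorem~\ref{q-inv-mult}, and then count them. Proposition~\ref{prop:inv1} already states the resulting characterization, so the count is a one-line consequence of it. For completeness I would justify Proposition~\ref{prop:inv1} through the identity
\[
D^n(x_0)=\sum_{\sigma\in\mathfrak{S}_n}q^{\inv(\sigma)}\omega(\sigma)
\]
established in that proof, rather than through a separate induction on words.

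First, I would observe that no cancellation can occur. Every coefficient $q^{\inv(\sigma)}$ is a monomial with coefficient $+1$. Hence the coefficient of a word $w$ is $\sum q^{\inv(\sigma)}$ over all $\sigma$ with $\omega(\sigma)=w$, which is a nonzero polynomial with nonnegative coefficients. Therefore the terms of $D^n(x_0)$ are exactly the distinct words $\omega(\sigma)$ with $\sigma\in\mathfrak{S}_n$.

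Second, I would describe $\omega(\sigma)$. Position $i$ ($1\le i\le n+1$) receives the label $x_{n+1-i}$ if $i$ is an ascent and $y_{n+1-i}$ if $i$ is a descent, and the labels are multiplied from right to left. Since $\sigma_0=\sigma_{n+1}=0$, position $1$ is always an ascent and position $n+1$ is always a descent. So
\[
\omega(\sigma)=y_0\,k_1k_2\cdots k_{n-1}\,x_n,
\]
where $k_j\in\{x_j,y_j\}$ records whether position $n+1-j$ is an ascent or a descent. The word is therefore determined by, and determines, the descent set $\mathrm{Des}(\sigma)\cap\{1,\dots,n-1\}$ under the usual indexing. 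Every subset of $\{1,\dots,n-1\}$ arises as the descent set of some permutation; for instance, reverse the increasing word on the maximal blocks prescribed by the subset. This recovers Proposition~\ref{prop:inv1}.

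Finally, the map from subsets of $\{1,\dots,n-1\}$ to terms is a bijection, so $\Omega(D^n(x_0))=2^{n-1}$. The case $n=1$ is consistent, since the only term is $y_0x_1$. The only point needing care is the no-cancellation step, which uses that $\rho=\AIO$ leaves these words in a canonical form. The labels carry pairwise distinct indices $0,\dots,n$, one letter per index, already in increasing index order. Hence distinct descent sets give distinct reduced words, and equal words are genuinely merged rather than cancelled.
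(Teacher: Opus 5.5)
Your proof is correct and essentially matches the paper, which gives no separate argument: it reads the count $2^{n-1}$ directly off the characterization in Proposition~\ref{prop:inv1}, namely the $n-1$ independent choices $k_i\in\{x_i,y_i\}$. Your justification of that characterization is a valid substitute for the direct induction on words that the paper leaves as ``easy to derive''. It combines the labeling identity $D^n(x_0)=\sum_{\sigma}q^{\inv(\sigma)}\omega(\sigma)$ from the proof of Theorem~\ref{q-inv-mult}, the absence of cancellation (all coefficients are sums of monomials $q^{\inv(\sigma)}$ in characteristic zero), and the fact that every subset of $\{1,\dots,n-1\}$ occurs as a descent set.
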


\subsection*{The grammar $G_{\mathrm{cyc}}$}

For the $q$-grammar $G_{\cyc}$, the initial values of the numbers of terms $\Omega(D^n(e_0))$ are  $(1, 2, 5, 12, 28, 65, 151, 351, 816, 1897, \ldots )$.
The exact formula is
$$
\Omega(D^n(e_0))= \sum_{k=0}^{\lfloor (n+1)/2\rfloor} \binom{n+k}{3k}.
$$

More precisely, we derive the following results:

\begin{Proposition}\label{prop:char}
Consider the $q$-grammar $G_{\cyc}$.
For every $n\ge 1$, a word $w$ appears in $D^n (e_0)$ (ignoring coefficients) if and only if it is of the form $e_0  k_{1}k_{2}\ldots k_{n}$ that satisfies:
\begin{enumerate}
    \item[(c0)] The letters have indices $0,1,\dots,n$ each exactly once, in strictly increasing order, and $k_{i}=x_{i}$, $y_{i}$ or $z_i$ for $1\leq i\leq n$; 
    \item[(c1)] The pattern $z_i x_{i+1}$ does not occur as consecutive letters;
    \item[(c2)] The pattern $y_i z_{i+1}$ does not occur as consecutive letters;
    \item[(c3)] The second letter is not $x_1$ (i.e., $x_1$ does not appear in position $2$);
    \item[(c4)] The last letter is not $y_n$.
\end{enumerate}
\end{Proposition}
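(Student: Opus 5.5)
The plan is to read off the terms from the grammatical labeling in the proof of Theorem~\ref{q-Roselle-thm}. That proof gives $D^n(e_0)=\sum_{\sigma\in\mathfrak{S}_n}q^{\inv(\sigma)}\beta^{\RLmin(\sigma)}\omega(\sigma)$. Every coefficient is a polynomial in $q,\beta$ with nonnegative coefficients, so no cancellation can occur. Hence the terms of $D^n(e_0)$ are exactly the distinct words $\omega(\sigma)$, $\sigma\in\mathfrak{S}_n$. Writing $\omega(\sigma)=e_0k_1\cdots k_n$, the letter $k_i$ is the label of position $p=n+1-i$, i.e.\ of the gap between $\sigma_{p-1}$ and $\sigma_p$. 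Condition (c0) is then immediate from the construction. The proof therefore splits into two parts: necessity, which is a local check on $\sigma$, and sufficiency, which needs a realization argument.

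For necessity, I use the bar decomposition after right-to-left minima together with one basic fact: if $\sigma_p$ is a right-to-left minimum and $\sigma_{p-1}$ lies in the same block, then $\sigma_{p-1}>\sigma_p$, because $\sigma_p$ is the minimum of the suffix starting at $p$.
\begin{itemize}
\item[(c3)] $\sigma_n$ is always a right-to-left minimum. If it were a non-isolated ascent, then $\sigma_{n-1}$ would lie in its block, forcing $\sigma_{n-1}>\sigma_n$, a contradiction.
\item[(c4)] Position $1$ compares $\sigma_0=0$ with $\sigma_1$, so it is always an ascent and cannot carry a $y$.
\item[(c1)] Since $k_{i+1}$ sits at position $p=n-i$ and $k_i$ at $p+1$, a factor $z_ix_{i+1}$ means $\sigma_{p+1}$ is isolated and $\sigma_p$ is a non-isolated ascent. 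Isolation of $\sigma_{p+1}$ puts a bar right after $\sigma_p$, so $\sigma_p$ is a right-to-left minimum. Being non-isolated, $\sigma_p$ shares its block with $\sigma_{p-1}$, which forces a descent at $p$, contradicting the $x$-label.
\item[(c2)] A factor $y_iz_{i+1}$ means $\sigma_p$ is isolated, hence a right-to-left minimum, while $\sigma_p>\sigma_{p+1}$, which is impossible.
\end{itemize}

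For sufficiency, I argue by induction on $n$, working directly with the rules of $G_{\cyc}$ rather than constructing permutations. The base case $n=1$ is $D(e_0)=\beta e_0z_1$. Each application of $D$ acts on one letter of a word, with all later letters shifted up by one index. Acting on $e_0$ produces the prefix $e_0z_1$. Acting on a letter $v_i$ with $v\in\{x,y,z\}$ replaces it by the factor $y_ix_{i+1}$.

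Conversely, given a word $w=e_0k_1\cdots k_{n+1}$ satisfying (c0)--(c4), I exhibit a preimage satisfying the same conditions at level $n$.
\begin{itemize}
\item If $k_1=z_1$, delete it and lower all later indices by one.
\item Otherwise, by (c3) we have $k_1=y_1$. Let $m$ be the first index with $k_m\ne y_m$. By (c0), (c4) and (c2), $k_m=x_m$, since $k_m$ cannot be $z_m$ right after $y_{m-1}$. Collapse the factor $y_{m-1}x_m$ into a single letter $v_{m-1}\in\{x,y,z\}$ and lower the later indices, choosing $v$ so that the result still satisfies (c1)--(c4).
\end{itemize}
By the induction hypothesis the preimage is a term of $D^n(e_0)$. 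Applying $D$ then regenerates $w$ with a nonzero, positive coefficient, so $w$ is a term of $D^{n+1}(e_0)$.

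The main obstacle is the choice of $v$. It must avoid a forbidden pattern both with the preceding letter $y_{m-2}$ (which excludes $z$) and with the new neighbour $k_{m+1}$ (which excludes $z$ if $k_{m+1}=x$). The choice must also respect (c3) when $m=2$ and (c4) when $m-1$ becomes the last index. I would try $v=y$ first. It fails only in the case (c4) at the last letter, and there $v=x$ should work; this boundary case needs a direct check.
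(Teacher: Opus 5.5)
Your necessity half is sound. Reading the terms off the labeling in the proof of Theorem~\ref{q-Roselle-thm} (positivity rules out cancellation) and checking (c1)--(c4) on permutations is a valid alternative to the paper's route. The paper instead checks directly that the local replacements $e_0\mapsto e_0z_1$ and $v_i\mapsto y_ix_{i+1}$ never create a forbidden factor.

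The gap is in the sufficiency step, exactly at the choice of $v$ that you single out as the main obstacle. You claim that $v=y$ fails only when $v_{m-1}$ becomes the last letter. That is false: by (c2), the new right neighbour also excludes $y$ whenever $k_{m+1}=z_{m+1}$. For instance, $w=e_0y_1y_2x_3z_4$ satisfies (c0)--(c4) with $m=3$. Your rule produces $e_0y_1y_2z_3$, which violates (c2) and is not a term of $D^3(e_0)$, so the induction hypothesis cannot be applied; the correct preimage is $e_0y_1x_2z_3$. Your fallback $v=x$ also breaks when $m=2$, because of (c3). For $w=e_0y_1x_2$ it gives $e_0x_1$. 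For $w=e_0y_1x_2z_3$, neither $y$ nor $x$ is allowed. The preimages actually needed there are $e_0z_1$ and $e_0z_1z_2$, so $z$ must be one of the available options.

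The repair is to list all constraints on $v$:
\begin{itemize}
\item $m=2$ excludes $x$, by (c3);
\item $m\ge 3$ excludes $z$, by (c2) with the preceding $y_{m-2}$;
\item $k_{m+1}=x_{m+1}$ excludes $z$, by (c1);
\item $k_{m+1}=z_{m+1}$ or $m=n+1$ excludes $y$, by (c2) or (c4) respectively.
\end{itemize}
So take $v=y$ unless $k_{m+1}=z_{m+1}$ or $m=n+1$. In those two exceptional situations, take $v=z$ if $m=2$ and $v=x$ if $m\ge 3$. Each of these choices meets every constraint, so an admissible preimage always exists and your induction goes through. The paper's converse is the same ``undo one local step'' argument and leaves this choice implicit.
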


\begin{proof}
The proof is by induction on $n$.

For $n=1$, the assertion follows directly from the defining rule
$R(e_0)=\beta e_0z_1$.

Assume the assertion holds for $n-1$, with $n\ge 2$. Let
$w$ be a term in $D^n(e_0)$. Then $w$ is obtained by applying $D$ to a
term $v$ of $D^{n-1}(e_0)$ at one position. If the differentiated letter is
$e_0$, the local replacement is $e_0\mapsto e_0z_1$. If the differentiated
letter is $x_i$, $y_i$, or $z_i$, the local replacement is
\[
    x_i,y_i,z_i \mapsto y_ix_{i+1}.
\]
All letters to the right of the differentiated position are shifted by
$\uparrow$. Therefore the resulting word has indices $0,1,\ldots,n$ in
strictly increasing order. The newly created adjacent pair is either
$e_0z_1$ or $y_ix_{i+1}$, neither of which violates (c1) or (c2). Conditions
(c3) and (c4) follow from the induction hypothesis and from the fact that
the only possible final letters are $x_n$ and $z_n$.

Conversely, if a word satisfies (c0)--(c4), then one reverses the last local
operation: either delete a terminal $z_n$ produced from $e_0$, or contract a
terminal admissible pair $y_ix_{i+1}$ to one of $x_i,y_i,z_i$ and shift the
suffix indices down by one. The forbidden-pattern conditions ensure that
the resulting word again satisfies (c0)--(c4) with $n$ replaced by $n-1$.
The induction completes the proof.
\end{proof}

Let
\[
    a_n=\Omega(D^n(e_0)).
\]
For $n\ge 1$, define
\[
b_n = \#\{w\in D^n (e_0) : w \text{ ends with } x_n\},
\]
\[
c_n = \#\{w\in D^n (e_0) : w \text{ ends with } z_n\}.
\]
By Condition (c4), every admissible word ends with either $x_n$ or $z_n$,
and hence $a_n = b_n + c_n$. Furthermore, we derive the following results.

\begin{Lemma}\label{lem:rec1}
For $n\ge 1$,
\[
c_n = a_{n-1},\qquad 
b_n = b_{n-1} + \sum_{i=0}^{n-2} a_i,
\]
with $a_0=1$, $b_0=0$, $c_0=0$.
\end{Lemma}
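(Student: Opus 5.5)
The plan is a purely combinatorial count based on the characterization of terms in Proposition~\ref{prop:char}. By (c0), a term of $D^n(e_0)$ is a word $e_0k_1\cdots k_n$ with $k_i\in\{x_i,y_i,z_i\}$. Once the indices are dropped, it is a string over $\{x,y,z\}$ of length $n$ subject to four restrictions:
\begin{itemize}
\item the factors $zx$ and $yz$ are forbidden;
\item $k_1\neq x$;
\item $k_n\neq y$.
\end{itemize}
I will treat $a_0=1$ as counting the single word $e_0$. Both identities then follow by conditioning on the last one or two letters.

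For $c_n$, take a valid word ending in $z_n$ and delete that letter. The remaining prefix $e_0k_1\cdots k_{n-1}$ still satisfies (c0)--(c3). It also satisfies (c4) for $n-1$, because $k_{n-1}=y_{n-1}$ would create the forbidden factor $y_{n-1}z_n$. Conversely, append $z_n$ to any term of $D^{n-1}(e_0)$. That term ends in $x_{n-1}$, $z_{n-1}$, or is just $e_0$ when $n=1$. The new adjacent pair is $xz$, $zz$ or $e_0z_1$, none of which is forbidden, and (c3) and (c4) remain true. Hence the deletion map is a bijection and $c_n=a_{n-1}$.

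For $b_n$, consider words ending in $x_n$. By (c3) this forces $n\ge 2$, which is consistent with $b_1=0$. Since $zx$ is forbidden, $k_{n-1}\in\{x_{n-1},y_{n-1}\}$, and I split into two cases.
\begin{itemize}
\item If $k_{n-1}=x_{n-1}$, deleting $x_n$ gives exactly a term of $D^{n-1}(e_0)$ ending in $x_{n-1}$. The reverse step of appending $x$ after $x$ is always allowed, so this case contributes $b_{n-1}$.
\item If $k_{n-1}=y_{n-1}$, delete the final pair $y_{n-1}x_n$. The remaining prefix $e_0k_1\cdots k_{n-2}$ satisfies (c0)--(c3), but it may end in $y$. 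This is allowed here because neither $yy$ nor $yx$ is forbidden. Conversely, appending $y_{n-1}x_n$ to any such prefix is legal, since no forbidden pair has $y$ as its second letter. So this case contributes $a'_{n-2}$, where $a'_m$ counts words of length $m$ satisfying only (c0)--(c3).
\end{itemize}

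The one remaining step, and really the only point needing care, is to show $a'_m=\sum_{i=0}^m a_i$. A word counted by $a'_m$ either satisfies (c4), giving $a_m$ words, or ends in $y_m$. In the second case, removing $y_m$ gives an arbitrary word counted by $a'_{m-1}$. This removal is reversible, because appending $y$ never creates $zx$ or $yz$, and for $m=1$ the word $e_0y_1$ respects (c3). Thus $a'_m=a_m+a'_{m-1}$ with $a'_0=a_0=1$, which telescopes to the claimed sum. Substituting into the $y$-case gives $b_n=b_{n-1}+\sum_{i=0}^{n-2}a_i$.

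Finally I will check the small cases $n=1,2$ against the conventions $a_0=1$ and $b_0=c_0=0$: $b_1=0$ has an empty sum, and $b_2=1$ corresponds to the word $e_0y_1x_2$.
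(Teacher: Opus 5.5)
Your proof is correct and follows the same route as the paper: you delete the final $z_n$ to get $c_n=a_{n-1}$, and for words ending in $x_n$ you split on $k_{n-1}\in\{x_{n-1},y_{n-1}\}$. The only difference is bookkeeping in the $y$-case: the paper strips the maximal terminal run $y_i\cdots y_{n-1}x_n$ in one step and lands on an admissible word of order $i-1$, whereas you strip only $y_{n-1}x_n$ and recover the same sum by peeling trailing $y$'s through your auxiliary count $a'_m=\sum_{i=0}^{m}a_i$, checking the reverse directions of the bijections more explicitly than the paper does.
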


\begin{proof}
Deleting the final $z_n$ gives a bijection between admissible words of
order $n$ ending in $z_n$ and admissible words of order $n-1$. Hence
$c_n=a_{n-1}$.

Now let $w$ be an admissible word of order $n$ ending in $x_n$. Write
\[
    w=u\,k_{n-1}x_n,
\]
where $k_{n-1}$ is the letter of index $n-1$. By condition (c1),
$k_{n-1}\ne z_{n-1}$, so $k_{n-1}\in\{x_{n-1},y_{n-1}\}$.

If $k_{n-1}=x_{n-1}$, deleting the final $x_n$ gives an admissible word of
order $n-1$ ending in $x_{n-1}$. This contributes $b_{n-1}$ words.

If $k_{n-1}=y_{n-1}$, then the terminal segment has the form
\[
    y_i y_{i+1}\cdots y_{n-1}x_n
\]
for a unique $1\le i\le n-1$, and the prefix preceding this segment is an
admissible word of order $i-1$. Hence this case contributes
$\sum_{i=1}^{n-1}a_{i-1}=\sum_{i=0}^{n-2}a_i$ words. Therefore
\[
    b_n=b_{n-1}+\sum_{i=0}^{n-2}a_i.
\]
\end{proof}

From Lemma~\ref{lem:rec1}, we derive a recurrence for $a_n$.

\begin{Lemma}\label{lem:rec2}
For $n\ge 4$,
\[
a_n = 3a_{n-1} - 2a_{n-2} + a_{n-3}.
\]
Furthermore,
\[
\Omega(D^n(e_0))=a_n=\sum_{k=0}^{\lfloor (n+1)/2\rfloor} \binom{n+k}{3k}.
\]
\end{Lemma}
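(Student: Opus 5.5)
We plan to derive both statements from Lemma~\ref{lem:rec1}: first turn its two relations into a single linear recurrence for $a_n$, then check that the proposed binomial sum satisfies the same recurrence and initial values.

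\textbf{Eliminating $b_n$ and $c_n$.} Since $a_n=b_n+c_n$ and $c_n=a_{n-1}$, we have $b_n=a_n-a_{n-1}$ for $n\ge1$. Substituting into $b_n=b_{n-1}+\sum_{i=0}^{n-2}a_i$ gives
\[
a_n-a_{n-1}=a_{n-1}-a_{n-2}+\sum_{i=0}^{n-2}a_i .
\]
We use this identity for $n\ge2$, so that $b_{n-1}$ is also covered by the formula $b_{n-1}=a_{n-1}-a_{n-2}$. We then take the difference of this identity at $n$ and at $n-1$, which needs $n\ge3$. The sum then collapses to the single term $a_{n-2}$, and we get
\[
a_n-3a_{n-1}+3a_{n-2}-a_{n-3}=a_{n-2},
\]
that is, $a_n=3a_{n-1}-2a_{n-2}+a_{n-3}$. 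Differencing alone only needs $n\ge3$; the statement's restriction to $n\ge4$ presumably reflects caution about the conventions $b_0=c_0=0$, which break $a_0=b_0+c_0$. We will double-check the small cases against the data $a_0=1$, $a_1=1$, $a_2=2$, $a_3=5$, $a_4=12$, $a_5=28$, which also confirms the recurrence (for instance $12=15-4+1$).

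\textbf{The closed form.} Set $s_n=\sum_{k}\binom{n+k}{3k}$. We would show that $s_n$ satisfies the same recurrence and matches the initial values. The cleanest route is through generating functions. The recurrence has characteristic polynomial $x^3-3x^2+2x-1$, so the generating function $\sum a_nu^n$ equals $P(u)/(1-3u+2u^2-u^3)$ for a polynomial $P$ of degree at most $2$, which is fixed by the initial values. For the sum we use the standard diagonal identity
\[
\sum_{n}\binom{n+k}{3k}u^n=\frac{u^{2k}}{(1-u)^{3k+1}} ,
\]
valid because $\binom{n+k}{3k}$ is the coefficient of $u^{n-2k}$ in $(1-u)^{-(3k+1)}$. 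Summing the geometric series over $k$ then gives
\[
\sum_n s_nu^n=\frac{(1-u)^2}{(1-u)^3-u^2}=\frac{(1-u)^2}{1-3u+2u^2-u^3}.
\]
This has exactly the expected denominator, so the rational functions agree once the first three coefficients $1,1,2$ match. For example, $s_2=\binom{2}{0}+\binom{3}{3}=2$.

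\textbf{Expected obstacle.} The main obstacle is bookkeeping with the index ranges and the boundary conventions at $n=0,1$, rather than any conceptual difficulty. A secondary point to address is the upper limit $\lfloor (n+1)/2\rfloor$: the terms with $3k>n+k$, i.e.\ $k>n/2$, vanish, so the finite sum agrees with the full sum over $k\ge0$ used in the generating-function argument.
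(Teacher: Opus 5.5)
Your proposal is correct and follows the paper's proof essentially step by step: use Lemma~\ref{lem:rec1} to eliminate $b_n$ and $c_n$, difference the resulting identity to get the three-term recurrence, and then match $(1-u)^2/(1-3u+2u^2-u^3)$ against $\sum_{k\ge 0}u^{2k}/(1-u)^{3k+1}$. Your observation that the differencing already works for $n\ge 3$ is a correct minor sharpening: it means only $a_0,a_1,a_2$ are needed as initial data, whereas the paper, stating the recurrence for $n\ge 4$, also uses $a_3=5$.
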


\begin{proof}
We have $a_n = b_n + c_n$ and $c_n = a_{n-1}$, thus $b_n = a_n - a_{n-1}$. Substitute into the recurrence for $b_n$:
\[
a_n - a_{n-1} = b_{n-1} + \sum_{i=0}^{n-2} a_i = (a_{n-1} - a_{n-2}) + \sum_{i=0}^{n-2} a_i.
\]
Thus
\[
a_n = 2a_{n-1} - a_{n-2} + \sum_{i=0}^{n-2} a_i.
\]
Replace $n$ by $n-1$:
\[
a_{n-1} = 2a_{n-2} - a_{n-3} + \sum_{i=0}^{n-3} a_i.
\]
Subtract the second equation from the first:
\[
a_n - a_{n-1} = (2a_{n-1} - a_{n-2}) - (2a_{n-2} - a_{n-3}) + a_{n-2} = 2a_{n-1} - 2a_{n-2} + a_{n-3}.
\]
Hence
\[
a_n = 3a_{n-1} - 2a_{n-2} + a_{n-3}.
\]
It remains to identify the closed form. Let
\[
    A(t)=\sum_{n\ge 0}a_nt^n.
\]
The recurrence, together with the initial values $a_0=1$, $a_1=1$,
$a_2=2$, and $a_3=5$, gives
\[
    A(t)=\frac{(1-t)^2}{1-3t+2t^2-t^3}.
\]
On the other hand,
\[
\begin{aligned}
\sum_{n\ge 0}
\left(
\sum_{k=0}^{\lfloor (n+1)/2\rfloor}\binom{n+k}{3k}
\right)t^n
&=
\sum_{k\ge 0}\sum_{m\ge 0}
\binom{m+3k}{3k}t^{m+2k}  \\
&=
\sum_{k\ge 0}\frac{t^{2k}}{(1-t)^{3k+1}}  \\
&=
\frac{(1-t)^2}{1-3t+2t^2-t^3}.
\end{aligned}
\]
Thus the two sequences have the same generating function, proving the
formula.
\end{proof}

\subsection*{The grammar $G_{\rm AndI}$}

For the $q$-grammar $G_{{\rm AndI}}$, the initial values of the numbers of terms $\Omega(D^n(x_0))$ are  $(1, 2, 4, 9, 21, 51, 127, 323, 835, 2188, 5798, \ldots )$.
The exact formula is $ \Omega(D^n(x_0)) = M_{n}$, where $M_n$ is the $n$-th Motzkin number (Proposition \ref{prop:Motzkin}).

\subsection*{The grammar $G_{{\rm AndII}}$}

For the $q$-grammar $G_{\rm AndII}$, the initial values of the numbers of terms $\Omega(D^n(x_0))$ are  $(1, 2, 3, 5, 8, 13, 21, 34, 55, 89, 144, \ldots )$.
The exact formula is $ \Omega(D^n(x_0)) = F_{n+1}$.

More precisely, the following results are easy to derive:

\begin{Proposition}\label{prop:andii1}
Consider the $q$-grammar $G_{\rm AndII}$.
For $n\ge 1$, a word appears as a term in $D^n(x_0)$ (ignoring coefficients) if and only if it is of the form $x_0  k_{i_1}k_{i_2}\ldots k_{i_j}$ where 
\begin{itemize}
\item $0<i_1<i_2<\cdots<i_j$;
    \item 
$k_{i}=x_{i}$ or $y_{i}$; 
\item if $k_{i}=x_{i}$, then $x_{i-1}$ or $y_{i-1}$ doesn't occur in the word (i.e., the index $i-1$ doesn't occur).
\end{itemize}
\end{Proposition}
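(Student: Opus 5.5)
The plan is to prove the characterization by induction on $n$. First I record two bookkeeping facts. Every term of $D^n(x_0)$ is $x_0$ followed by letters with strictly increasing indices whose largest index is exactly $n$. This is because each application of $D$ raises the index of the last letter by one: the last letter is either rewritten ($x_j\mapsto x_jy_{j+1}$, $y_j\mapsto q^{j+1}x_{j+1}$) or shifted by $\uparrow$. Since indices strictly increase, the $\AIO$ reordering acts as $\KSO$, as already noted in the proof of Theorem~\ref{thm:q-AndreI}. Moreover, all coefficients produced lie in $\mathbb{N}[q]$, so no cancellation occurs. Hence a word is a term of $D^n(x_0)$ if and only if it arises from a term of $D^{n-1}(x_0)$ by one local rewriting followed by $\uparrow$ on the suffix.

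I will prove the characterization in the following slightly sharpened form, which I believe is what the statement intends. The indices form a set $\{0\}\cup I$ with $\max I=n$. Each missing index $i\in\{1,\dots,n-1\}$ is immediately followed (in index) by a letter $x_{i+1}$, and conversely every $x_i$ with $i\ge1$ has $i-1$ missing. Equivalently, reading $1,\dots,n$ left to right, the word is a tiling by blocks ``$y_i$'' of length one and ``gap, $x_{i+1}$'' of length two.

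This sharpening is needed: for $n=2$ one computes $D(x_0y_1)=x_0y_1y_2+qx_0x_2$. So $x_0y_2$ satisfies the three bullets literally but is not a term; the converse half of the third bullet, ``a missing index is followed by an $x$'', must be included.

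For the forward direction, I take a tiling word $w$ of order $n-1$ and check the three kinds of moves.
\begin{enumerate}
\item Differentiating $x_0$ produces $x_0y_1$ followed by $\uparrow$(rest). This prepends a $y$-tile, and the shifted suffix keeps its tiling.
\item Differentiating an $x_i$ with $i\ge1$ inserts $y_{i+1}$ right after the tile ``gap, $x_i$''. Again one $y$-tile is inserted, and the suffix is shifted rigidly.
\item Differentiating $y_i$ replaces the $y$-tile at $i$ by the tile ``gap at $i$, $x_{i+1}$'', with the suffix shifted by one.
\end{enumerate}
In all three cases the tiles are simply translated, so the tiling property is preserved. The only point to verify is that the first suffix tile is not broken. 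Because the suffix moves as a whole, a gap-then-$x$ tile stays intact, and the newly created letter is never an $x$ whose predecessor index is present. The one subtle case is move~3 when the next letter was $k_{i+1}$: since $y_i$ was present, $k_{i+1}$ must have been $y_{i+1}$, which shifts to the harmless $y_{i+2}$.

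For the converse, given a tiling word of order $n\ge2$, I exhibit a preimage by looking at the first tile after $x_0$.
\begin{itemize}
\item If the first tile is $y_1$, delete it and shift the rest down by one; this inverts move~1.
\item If the first tile is ``gap at $1$, $x_2$'', replace $x_2$ by $y_1$ and shift the rest down by one; this inverts move~3.
\end{itemize}
In both cases the result is a tiling of $\{1,\dots,n-1\}$, hence by induction a term of $D^{n-1}(x_0)$. The base case $n=1$ is $x_0y_1$.

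Finally, the count $\Omega(D^n(x_0))=F_{n+1}$ follows as the number of tilings of a length-$n$ strip by monominoes and dominoes. I expect the main obstacle to be stating the characterization correctly, since the literal bullets miss the ``gap forces $x$'' condition. The case check in the forward direction, move~3 adjacent to the suffix, is the only place where care is needed.
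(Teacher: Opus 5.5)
Your proof is correct, and your diagnosis of the statement is right. As written, the three bullets never involve $n$ and never force a missing index to be followed by an $x$. So $x_0y_2$ (for $n=2$) and $x_0x_3$ (for $n=3$) satisfy them even though their largest index is $n$, yet they are not terms. Indeed $D^2(x_0)=x_0y_1y_2+q^2x_0x_2$. The coefficient is $q^2$ rather than your $q$, since $y_1\mapsto q^{2}x_2$, but this is immaterial when counting terms. Your monomino/domino version (largest index $n$; every gap is immediately followed by an $x$; every $x_i$ has $i-1$ missing) is the statement that actually holds. I checked it against $D^3(x_0)$ and $D^4(x_0)$, which have $3$ and $5$ terms of exactly the predicted shapes.

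The paper gives no proof of this proposition; it only says it is easy to derive. Your scheme is the same one the paper uses for the analogous Propositions~\ref{prop:tan'1} and~\ref{prop:char}: first observe positivity, so there is no cancellation and $\AIO$ acts as the identity on increasing words; then check the forward direction by the local rewritings; then prove the converse by undoing the last step. More importantly, the paper's proof of Proposition~\ref{prop:andii2} asserts that the first letter after $x_0$ is $y_1$ or $x_2$ and peels it off. That assertion does not follow from the bullets as printed, as $x_0y_2$ and $x_0x_3$ show, but it is exactly your first-tile decomposition. So your sharpened characterization is what is needed to make the Fibonacci count $F_{n+1}$ valid.

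The tiling formulation makes preservation under the three moves automatic, since tiles are only translated. Your converse also needs only moves~1 and~3, which is a clean way to get surjectivity.
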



\begin{Proposition}\label{prop:andii2}
Consider the $q$-grammar $G_{\rm AndII}$.
The number of distinct terms in $D^n(x_0)$ is $\Omega(D^n(x_0))=F_{n+1}$.
\end{Proposition}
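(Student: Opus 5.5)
The plan is to reduce the count to a tiling problem. The characterization in Proposition~\ref{prop:andii1} is stated without an explicit upper bound on the indices, so I would first sharpen it and then count.

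Looking at small cases ($D(x_0)=x_0y_1$ and $D^2(x_0)=x_0y_1y_2+q^2x_0x_2$) suggests the following picture. A letter $y_i$ occupies the single slot $\{i\}$. A letter $x_i$ with $i\ge1$ occupies the two slots $\{i-1,i\}$. The initial $x_0$ occupies slot $0$ and is never moved.

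\textbf{Sharpened claim.} The terms of $D^n(x_0)$ are exactly the words $x_0k_{i_1}\cdots k_{i_j}$ in which these occupied slots form a tiling of $\{1,\dots,n\}$ by monominoes (the $y$'s) and dominoes (the $x$'s), read in increasing index order. This is consistent with Proposition~\ref{prop:andii1}: the condition ``$x_i$ present forces $i-1$ absent'' says precisely that the domino $\{i-1,i\}$ does not overlap a letter of index $i-1$. The new content is that the slots cover $[n]$ exactly, i.e.\ the largest index is $n$ and every skipped index is the left half of a domino.

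\textbf{Forward direction (induction on $n$).} Since $\AIO$ acts as $\KSO$ on these increasing words, applying $D$ at one letter and shifting the suffix by $\uparrow$ has a simple effect on tilings.
\begin{itemize}
\item Applying $R(x_i)=q^ix_iy_{i+1}$ (including $i=0$) inserts a new monomino immediately after the tile ending at slot $i$. The suffix shift by one keeps the tiling contiguous and extends it from $[n-1]$ to $[n]$.
\item Applying $R(y_i)=q^{i+1}x_{i+1}$ turns the monomino $\{i\}$ into the domino $\{i,i+1\}$. Again the suffix shifts by one, and the tiling extends to $[n]$.
\end{itemize}
Hence every term of $D^n(x_0)$ is a tiling of $[n]$.

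\textbf{Converse.} Given a tiling of $[n]$ with $n\ge1$, I would exhibit a tiling of $[n-1]$ in $D^{n-1}(x_0)$ that produces it.
\begin{itemize}
\item If it contains a monomino at slot $m$, its left neighbour is either a tile ending at $m-1$ or $x_0$ (when $m=1$). Deleting the monomino and shifting the suffix down reverses the $x$-rule applied to that neighbour.
\item Otherwise every tile is a domino. Replacing the last domino $x_n$ by the monomino $y_{n-1}$ reverses the $y$-rule.
\end{itemize}
By induction the reduced word is a term of $D^{n-1}(x_0)$, so the original tiling is a term of $D^n(x_0)$.

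\textbf{Counting.} Distinct tilings give distinct words, so $\Omega(D^n(x_0))$ equals the number $T_n$ of tilings of $[n]$ by monominoes and dominoes. Conditioning on the last tile gives $T_n=T_{n-1}+T_{n-2}$, with $T_1=1$ and $T_2=2$, hence $T_n=F_{n+1}$.

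The main obstacle is the first step: making sure that the support of each term is exactly $[n]$ with no stray gaps. Proposition~\ref{prop:andii1} does not state this, so the induction must be run with the sharpened tiling invariant rather than quoting Proposition~\ref{prop:andii1} directly.
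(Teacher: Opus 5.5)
Your argument is correct in substance. It rests on the same monomino--domino (Fibonacci) structure as the paper, but it is organized differently. The paper invokes Proposition~\ref{prop:andii1}, asserts that the letter after $x_0$ is $y_1$ or $x_2$, and deletes it to land in $D^{n-1}(x_0)$ or $D^{n-2}(x_0)$. You instead prove by a two-sided induction that the terms are exactly the tilings of $[n]$, and then condition on the last tile. That extra work is not wasted. As literally stated, Proposition~\ref{prop:andii1} puts no upper bound on the indices and does not forbid stray gaps; for example, $x_0y_2$ satisfies its conditions but is never a term. So the paper's step ``the first letter after $x_0$ is $y_1$ or $x_2$'', and the index shifts hidden in ``deleting gives a term of $D^{n-1}(x_0)$'', depend on exactly the sharpened description you prove. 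The paper's route buys brevity; yours gives a self-contained proof.

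One step of your converse needs repair. Suppose the chosen monomino $y_m$ has a monomino $y_{m-1}$ as its left neighbour. Then there is no $x$-rule at that neighbour to reverse, so the justification as written fails. The reduced word is still a valid preimage: deleting any monomino of a maximal run gives the same word as deleting the first one, and that one follows $x_0$ or a domino. So simply choose the leftmost monomino. Alternatively, note that every domino $x_j$ (necessarily $j\ge 2$) can be reverted to $y_{j-1}$ by undoing the $y$-rule; only the all-monomino tiling then requires deleting $y_1$. Finally, state explicitly that all coefficients lie in $\mathbb{N}[q]$. This is what guarantees that no word produced in the converse can be cancelled out of $D^n(x_0)$.
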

\begin{proof}
Let $a_n=\Omega(D^n(x_0))$. By Proposition~\ref{prop:andii1}, the
first letter after $x_0$ is either $y_1$ or $x_2$. If it is $y_1$, deleting
$y_1$ gives a term of $D^{n-1}(x_0)$. If it is $x_2$, deleting $x_2$ gives a
term of $D^{n-2}(x_0)$. These two cases are disjoint and exhaustive. Hence
\[
    a_n=a_{n-1}+a_{n-2}.
\]
Together with $a_1=1$ and $a_2=2$, this gives
$a_n=F_{n+1}$.
\end{proof}

\vskip 0.2cm
\noindent{\bf Acknowledgment.} This work was initiated during the 2025 Workshop on Enumerative Combinatorics at Shandong University, Qingdao Campus. The authors would like to thank Zhicong Lin, the workshop organizer,  for providing a valuable platform for academic exchange.  This work was supported by the National Natural Science Foundation of China.




\bibliographystyle{plain}

\end{document}